\def\tabb{\@tabacckludge}
\DeclareSymbolFont{cyrletters}{OT2}{wncyr}{m}{n}
\DeclareMathSymbol{\Sha}{\mathalpha}{cyrletters}{"58}
\DeclareMathSymbol{\Che}{\mathalpha}{cyrletters}{"51}
\newcommand{\calHom}{\mathscr{H}\mathit{om}}
\newcommand{\Ga}{{\mathbf{G}}_{\rm{a}}}
\newcommand{\Gm}{{\mathbf{G}}_{\rm{m}}}
\DeclareMathOperator{\red}{red}
\DeclareMathOperator{\perf}{perf}
\DeclareMathOperator{\prim}{prim}
\DeclareMathOperator{\Gal}{Gal}
\DeclareMathOperator{\Ext}{Ext}
\DeclareMathOperator{\Hom}{Hom}
\DeclareMathOperator{\End}{End}
\DeclareMathOperator{\GL}{GL}
\DeclareMathOperator{\R}{R}
\DeclareMathOperator{\im}{im}
\newcommand*{\Z}{\ensuremath{\mathbf{Z}}}                        
\newcommand*{\F}{\ensuremath{\mathbf{F}}}                        
\newcommand*{\A}{\ensuremath{\mathbf{A}}}                        
\newcommand*{\N}{\ensuremath{\mathbf{N}}}                        
\renewcommand*{\P}{\ensuremath{\mathbf{P}}}                        
\newcommand*{\calO}{\mathcal{O}}                                  
\newcommand*{\address}{Einstein Institute of Mathematics, The Hebrew University of Jerusalem, Edmond J. Safra Campus, 91904, Jerusalem, Israel}
\newcommand*{\email}{zev.rosengarten@mail.huji.ac.il}
\numberwithin{equation}{section}
\newtheorem{theorem}{Theorem}[section]
\newtheorem{lemma}[theorem]{Lemma}
\newtheorem{proposition}[theorem]{Proposition}
\newtheorem{corollary}[theorem]{Corollary}
\theoremstyle{definition}
  \newtheorem{definition}[theorem]{Definition}
\theoremstyle{question}
  \newtheorem{question}[theorem]{Question}
  \theoremstyle{remark}
  \newtheorem{example}[theorem]{Example}
\theoremstyle{remark}
  \newtheorem{remark}[theorem]{Remark}
\tikzset{commutative diagrams/.cd,
mysymbol/.style={start anchor=center,end anchor=center,draw=none}
}
\title{\textbf{PERMAWOUND UNIPOTENT GROUPS}}
\author{Zev Rosengarten \thanks{MSC 2010: 20G07, 20G10, 20G15, 14L10, 14L15, 14L17. \newline
Keywords: Linear Algebraic Groups, Unipotent Groups. \newline
While completing this work, the author was supported by Israel Science Foundation Grant No.\,2083/24.}}
\date{}
\begin{document}
\maketitle

\begin{abstract}
We introduce the class of permawound unipotent groups, and show that they simultaneously satisfy certain ``ubiquity'' and ``rigidity'' properties that in combination render them very useful in the study of general wound unipotent groups. As an illustration of their utility, we present two applications: We prove that nonsplit smooth unipotent groups over (infinite) fields finitely-generated over $\F_p$ have infinite first cohomology; and we show that every commutative $p$-torsion wound unipotent group over a field of degree of imperfection $1$ is the maximal unipotent quotient of a commutative pseudo-reductive group, thus partially answering a question of Totaro.
\end{abstract}

\setcounter{tocdepth}{1}
\tableofcontents{}

\section{Introduction}

Unipotent groups play a fundamental role in the theory of linear algebraic groups. To illustrate this, recall that, given a connected linear algebraic group $G$ over a field $k$, the $k$-unipotent radical $\mathscr{R}_{U,k}(G)$ of $G$ is defined to be its maximal smooth connected normal unipotent $k$-subgroup. That there exists a unique such maximal $k$-subgroup is a well-known fact in the theory of linear algebraic groups; see \cite[Ch.\,IV, \S11.21]{borel}. We say that $G$ is {\em $k$-pseudo-reductive}, or just pseudo-reductive when $k$ is clear from context, when $\mathscr{R}_{U,k}(G) = 1$ -- that is, $G$ contains no nontrivial smooth connected normal unipotent subgroups over $k$. Then the quotient $P := G/\mathscr{R}_{u,k}G)$ is pseudo-reductive, and in this manner one may write $G$ as an extension of a pseudo-reductive group by a smooth connected unipotent group. In \cite{cgp}, a classification of pseudo-reductive groups is given, up to the commutative case. That is, all such groups are described explicitly (the classification is somewhat less explicit in characteristic $2$ when $[k: k^2] > 2$), up to a commutative pseudo-reductive group. This leaves one to study unipotent groups. Furthermore, one can say little in general about commutative pseudo-reductive groups, and unipotent groups arise naturally in the study of such groups as well, as the quotients of such groups by their maximal tori. (More about this later.)

Another manner in which unipotent groups arise naturally in the study of general (even pseudo-reductive) groups is via the so-called open cell decomposition associated to a cocharacter. To define these open cells, we need to review some ``dynamic constructions''. Fix a cocharacter $\beta: \Gm \rightarrow G$ (possibly trivial). Then we define a subfunctor $U_G(\beta) \subset G$ by the formula
\[
U_G(\beta)(R) := \{g \in G(R) \mid \lim_{t\to 0} \beta(t)g\beta(t)^{-1} = 1\}
\]
for $k$-algebras $R$. Let us explain what we mean by this. The map $(\Gm)_R \rightarrow G_R$ defined by $t \mapsto \beta(t)g\beta(t)^{-1}$ may or may not extend to a map $c_{\beta}: \A^1_R \rightarrow G_R$. If it does, then it extends uniquely, since $G_R$ is separated over $R$. Then we define $\lim_{t \to 0} \beta(t)g\beta(t)^{-1} := c_{\beta}(0)$.

The subfunctor $U_G(\beta) \subset G$ is represented by a split (see the next paragraph for the definition of splitness) smooth connected unipotent $k$-subgroup scheme of $G$ (which we also denote by $U_G(\beta)$) \cite[Lemma 2.1.5, Prop.\,2.1.10]{cgp}. Let $Z_G(\beta)$ denote the (scheme-theoretic) centralizer in $G$ of $\beta(\Gm)$.
Then the multiplication map
\begin{equation}\label{opencellH}
m_G: U_G(\beta^{-1}) \times Z_G(\beta) \times U_G(\beta) \rightarrow G
\end{equation}
is an open immersion \cite[Prop.\,2.1.8(3)]{cgp}. The image of this map is called the {\em open cell} in $G$ associated to $\beta$, or the {\em open cell} of the pair $(G, \beta)$. For a generically chosen $\beta$, $Z_G(\beta) = Z_G(T)$ for a maximal $k$-torus $T$. This open cell is a very useful tool for studying the group $G$. The mysterious part of the open cell is the quotient $Z_G(T)/T$, which is a unipotent group.

Among unipotent groups, there is a collection of particularly simple ones: namely, those that are split. Recall that a smooth connected unipotent group $U$ is said to be {\em $k$-split} (or just split when $k$ is clear from the context) if it admits a $k$-group scheme filtration {\em over $k$}
\[
1 = U_0 \trianglelefteq U_1 \trianglelefteq \dots \trianglelefteq U_n = U
\]
with $U_{i+1}/U_i \simeq \Ga$ for $0 \leq i < n$. 

In general, for a smooth connected unipotent $k$-group $U$, there is a maximal split unipotent $k$-subgroup $U_{\rm{split}} \subset U$, and this subgroup is normal in $U$ \cite[Th.\,B.3.4]{cgp}. The quotient then does not contain a copy of $\Ga$. These are the more mysterious unipotent groups, and they have a name. As introduced by Jacques Tits \cite{tits} (of whose work \cite[App.\,B]{cgp} is an exposition), a smooth connected unipotent $k$-group $U$ is said to be {\em $k$-wound} (or just wound when $k$ is clear from the context) if any $k$-morphism $\A^1_k \rightarrow U$ from the affine line to $U$ is the constant morphism to a $k$-point of $U$ \cite[Def.\,B.2.1]{cgp}. This is (non-obviously) equivalent to requiring that there be no non-constant $k$-homomorphism $\Ga \rightarrow U$. Indeed, this property follows from woundness because $\Ga \simeq \A^1$ as $k$-schemes, and conversely, it also implies woundness by \cite[Prop.\,B.3.2]{cgp}. Woundness is also equivalent to requiring that $U$ not contain a $k$-subgroup scheme $k$-isomorphic to $\Ga$, as once again follows from \cite[Prop.\,B.3.2]{cgp}.

Woundness, which may be viewed as the analogue for unipotent groups of anisotropicity for tori (while splitness for unipotent groups is the analogue of splitness for tori), nevertheless behaves in the opposite manner to anisotropicity with regard to field extensions, as we now explain. A $k$-torus becomes split over some finite separable extension (equivalently, every torus over a separably closed field is split), while a smooth connected unipotent group becomes split over a finite {\em purely inseparable} extension (equivalently, every smooth connected unipotent group over a perfect field is split) \cite[Cor.\,B.2.7]{cgp}. Similarly, for a finite purely inseparable extension $k'/k$, a $k$-torus is anisotropic over $k$ if and only if it is so over $k'$, while for a (not necessarily algebraic) separable extension $k''/k$, a smooth connected unipotent $k$-group is wound over $k$ if and only if it is so over $k''$ \cite[Prop.\,B.3.2]{cgp}. Because the only wound unipotent group over a perfect field is the trivial group, woundness is only an interesting notion over imperfect fields (and in particular is uninteresting in characteristic $0$). On the other hand, wound unipotent groups exist over every imperfect field, as the following example shows.

\begin{example}(\cite[Ex.\,B.1.1]{cgp})
\label{exofwoundgp}
This example is originally due to Rosenlicht \cite[p.\,46]{rosenlicht}. Let $k$ be an imperfect field of characteristic $p$, let $a \in k- k^p$, and consider the subgroup $W_a$ of $\Ga^2 = \Ga \times \Ga$ defined by the following equation:
\[
X + X^p + aY^p = 0.
\]
One may check that the maps $(X, Y) \rightarrow X + a^{1/p}Y$, $T \mapsto (-T^p, a^{-1/p}(T + T^p))$ define mutually inverse isomorphisms between $W_a$ and $\Ga$ over the purely inseparable extension $k(a^{1/p})$. In particular, $W_a$ is smooth connected unipotent. We claim that it is wound. One way to see this is to note that the projective closure $\overline{W}_a$ of $W_a$, defined in $\P^2_k$ by the equation
\[
XZ^{p-1} + X^p + aY^p = 0,
\]
is regular, hence is the regular completion of the affine curve $W_a$. If $W_a$ were not wound, then it would contain, hence be $k$-isomorphic to, $\Ga$. But the regular completion of $\Ga$ is $\P^1_k$, which has rational point at infinity, while the unique point of $\overline{W}_a \backslash W_a$ only becomes rational over $k(a^{1/p})$. (Another more general approach to seeing woundness is to use \cite[Lem.\,B.1.7,$(1)\Longrightarrow(2)$]{cgp}.)
\end{example}

One may also construct non-commutative examples over every imperfect field. See \cite[Ex.\,2.10]{conradsolvable}.

Woundness enjoys certain permanence properties. It is clearly inherited by smooth connected subgroups, and it is also inherited by extensions: an extension of wound unipotent groups is still wound unipotent, as follows, for example, from the formulation in terms of non-existence of nonzero $k$-homomorphisms from $\Ga$. It is {\em not}, however, inherited by quotients. In fact, every smooth connected unipotent $k$-group $U$ admits an isogeny onto a split group as follows. The group $U_{k^{1/p^n}}$ is split for some $n \geq 0$ (because $U$ splits over a perfect closure of $k$). Let $U^{(p^n)}$ denote the $n$-fold Frobenius twist of $U$ over $k$. We have the following commutative diagram:
\[
\begin{tikzcd}
U^{(p^n)} \arrow{d} \arrow{r}{\sim} \arrow[dr, phantom, "\square"] & U_{k^{1/p^n}} \arrow[dr, phantom, "\square"] \arrow{d} \arrow{r} & U \arrow{d} \\
{\rm{Spec}}(k) \arrow{r}{\sim} & {\rm{Spec}}(k^{1/p^n}) \arrow{r} & {\rm{Spec}}(k) 
\end{tikzcd}
\]
The vertical arrows are the structure maps defining the $k$-scheme structures on $U$ and $U^{(p^n)}$ and the $k^{1/p^n}$-structure on $U_{k^{1/p^n}}$, the first arrow in the bottom row is that induced by the isomorphism $k^{1/p^n} \xrightarrow{\sim} k$ which sends $x$ to $x^{p^n}$, and the second arrow is induced by the inclusion $k \hookrightarrow k^{1/p^n}$. The second square is Cartesian by definition, as is the large outer square. Therefore, so too is the first square. This diagram shows that the $k^{1/p^n}$-splitness of $U_{k^{1/p^n}}$ implies the $k$-splitness of $U^{(p^n)}$. Therefore, the $n$-fold Frobenius map $U \rightarrow U^{(p^n)}$ defines a $k$-isogeny from $U$ onto a split unipotent group. While this example involves forming the quotient by an infinitesimal subgroup, even forming quotients by smooth connected subgroups does not preserve woundness in general, as we shall see soon.

While woundness is not inherited in general by quotients, this paper involves the study of groups with the property that woundness {\em is} inherited when one forms quotients by them (or more precisely, by any of {\em their} quotients). These are the permawound groups, to be defined shortly. First, we mention the notion of {\em semiwound} unipotent groups. These are defined by the same condition as wound groups, and satisfy many of the same properties, except that we do not require them to be smooth or connected. See appendix \ref{semiwoundsection} and in particular Definition \ref{semiwounddef}. 

The following simple definition underlies this entire paper.

\begin{definition}
\label{asymptotunivdef}
Let $k$ be a field. We say that a smooth unipotent $k$-group scheme $U$ is {\em $k$-permawound}, or just permawound when $k$ is clear from context, when the following condition holds: For every right-exact sequence of finite type $k$-group schemes
\[
U \longrightarrow E \xlongrightarrow{\pi} \Ga \longrightarrow 1,
\]
$E$ contains a $k$-subgroup scheme $k$-isomorphic to $\Ga$.
\end{definition}

Stated differently, if $\overline{U}$ is a quotient group of $U$, then semiwound groups are stable under the formation of quotients by $\overline{U}$.

\begin{example}
\label{asunivnonexample}
To get a feel for the meaning of the above definition, let us return to the group $W_a$ of Example \ref{exofwoundgp}. When $p > 2$, that group is not permawound. Indeed, in this case, we may choose $b \in k$ that is not of the form $x^p + ay^p$, because $[k: k^p] \geq p > 2$. Then we consider the $k$-group
\[
E := \{X + aX^p + Y^p - bZ^p = 0\} \subset \Ga^3.
\]
One may show that $E$ does not contain a copy of $\Ga$. Indeed, the leading part $aX^p+Y^p-bZ^p$ has no nontrivial zeroes, so this follows from \cite[Lem.\,B.1.7,$(1)\Longrightarrow(2)$]{cgp}. But one has the exact sequence
\[
W_a \longrightarrow E \longrightarrow \Ga \longrightarrow 0,
\]
where the second map is $(X, Y, Z) \mapsto Z$. Thus $W_a$ does not satisfy the condition of Definition \ref{asymptotunivdef} when $p >2$. 
\end{example}

The problem in Example \ref{asunivnonexample} was that the leading part (usually called the {\em principal part}) of the polynomial defining $W_a$ does not take every value in $k$. That is, it is not {\em universal}. We will show later (Proposition \ref{asunivcrit}) that a smooth group $U$ defined by a $p$-polynomial $F$ whose principal $P$ part has no nontrivial zeroes is permawound precisely when $P$ is universal. In particular, when $p=2$, $W_a$ is permawound precisely when $k$ has degree of imperfection $1$. 

Permawound groups satisfy two fundamental properties which make them extremely useful in the study of general wound unipotent groups. The first, given below, we call ubiquity.

\begin{theorem}$($``Ubiquity''$)$
\label{ubiquityintro}
Let $k$ be a field of finite degree of imperfection. Then for any smooth commutative $p$-torsion semiwound unipotent $k$-group $U$, there is an exact sequence
\[
0 \longrightarrow U \longrightarrow W \longrightarrow V \longrightarrow 0
\]
with $W$ wound, commutative, $p$-torsion, and permawound, and $V$ a vector group.
\end{theorem}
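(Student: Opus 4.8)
The plan is to realize $W$ as a closed subgroup of a vector group: this will automatically make $W$ commutative and $p$-torsion and exhibit the quotient $V = W/U$ as a vector group, so that the whole burden of the proof falls on arranging woundness and permawoundness simultaneously. Since $U$ is smooth, commutative and $p$-torsion, there is a closed embedding $U \hookrightarrow \Ga^N$ into a vector group (homomorphisms to $\Ga$ separate points, and the finite \'etale part $\pi_0(U)$ embeds into $\Ga$ via an Artin--Schreier subgroup). Set $q \colon \Ga^N \twoheadrightarrow Q := \Ga^N/U$; as a quotient of a split group, $Q$ is split.

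I would then produce the desired extensions as a family parametrized by homomorphisms out of vector groups. Given a vector group $V = \Ga^s$ and a $k$-homomorphism $\theta \colon V \to Q$, form the fiber product $W := \Ga^N \times_Q V = \{(x,v) : q(x) = \theta(v)\}$, a closed subgroup of the vector group $\Ga^N \times V$; it is therefore commutative and $p$-torsion, and the second projection gives a short exact sequence $0 \to U \to W \to V \to 0$ with $V$ a vector group (the kernel of $\mathrm{pr}_2$ is exactly $U$, and $\mathrm{pr}_2$ is surjective because $q$ is). Thus every pair $(V,\theta)$ yields a candidate $W$ of the required shape, and everything reduces to choosing $(V,\theta)$ so that $W$ is wound and permawound (in particular smooth and connected).

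Both remaining conditions are governed by the leading form $P$ of the $p$-polynomial system $\{q_j(x) - \theta_j(v)\}_j$ cutting out $W$. Woundness of $W$ is the anisotropy of $P$ --- that its leading forms have no common nontrivial zero --- by the generalization of the criterion underlying Example \ref{exofwoundgp}; this in turn says that the extension class of $\theta$ restricts non-trivially along every line in $V$. Permawoundness I would read off from the universality criterion of Proposition \ref{asunivcrit}: I would choose $\theta$ so that $P$ is universal, i.e.\ represents every element of $k$. The hypothesis that $k$ has finite degree of imperfection enters precisely here, through the existence of universal anisotropic $p$-polynomials: if $c_1,\dots,c_{p^d}$ is a $k^p$-basis of $k$ (so $d$ is the degree of imperfection), then $\sum_i c_i z_i^p$ is anisotropic and represents all of $k$, and such forms, suitably placed as the leading part of $\theta$, supply the missing universality of $U$'s own principal part.

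The main obstacle is that universality and anisotropy pull against one another: adding a universal summand in the new variables $v$ makes $P$ universal for free, but the sum of two anisotropic $p$-forms need not be anisotropic (already $(x^p + c y^p) + (z^p + c w^p) = (x+z)^p + c(y+w)^p$ acquires nontrivial zeros in characteristic $p$). The crux is therefore to choose the universal anisotropic contribution of $\theta$ in sufficiently general position relative to the principal part of $U$ that the combined leading form $P$ stays anisotropic while becoming universal; this is exactly the step that consumes the finite-degree-of-imperfection hypothesis, and I expect it to require genuine input (for instance a dimension or general-position count ruling out cancellation of leading terms). A secondary, more routine point is that Proposition \ref{asunivcrit} is phrased for hypersurfaces whereas $W$ has higher codimension; I would bridge this either by verifying the defining property of permawoundness directly --- showing every right-exact $W \to E \to \Ga \to 1$ forces a copy of $\Ga$ in $E$ via universality of $P$ --- or by a d\'evissage to the hypersurface case. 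Once $P$ is arranged to be universal and anisotropic, $W$ is smooth, connected, wound, commutative, $p$-torsion and permawound, completing the construction.
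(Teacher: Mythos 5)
Your setup is sound and close in spirit to the paper's: the paper also builds $W$ as a subgroup of a vector group whose defining $p$-polynomial has the principal part of $U$'s equation as a summand, and reads off permawoundness from universality of that principal part via Theorem \ref{asunivsmoothcrit}. But the step you flag as ``the crux \dots\ I expect it to require genuine input'' is exactly the mathematical content of the theorem, and you have not supplied it; as written the proposal stops where the proof begins. Moreover, the mechanism you gesture at --- adding a full universal anisotropic form $\sum_i c_i z_i^p$ ``in sufficiently general position'' --- is not how the difficulty is resolved, and there is no general-position argument available: no matter how you choose a universal summand $Q$ all at once, you must still rule out cancellation between $P$ and $Q$, which is the original problem. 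The paper's Lemma \ref{redexttouniv} does something different and more elementary: it adjoins \emph{one variable at a time}, with coefficient $-a$ where $a$ is an element of $k$ \emph{not represented} by the current principal part $P$ and the new variable is raised to the power $p^N = \max_i \deg_{X_i}(P)$. Then $P - aY^{p^N}$ is automatically still reduced (a nontrivial zero with $Y \neq 0$ would, after dividing by $Y^{p^N}$, exhibit $a$ as a value of $P$), and the process terminates because the invariant $\phi(P) = \sum_i \deg_{X_i}(P)^{-r}$ of Proposition \ref{monppolynuniv} strictly increases at each step, is bounded by $1$, and equals $1$ exactly when $P$ is universal. This is where finiteness of the degree of imperfection enters --- through the bound $\phi \le 1$ --- not through the existence of a single universal form.

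A secondary inefficiency: your ``routine point'' about $W$ having higher codimension is self-inflicted. Since $U$ is smooth, commutative, $p$-torsion, and semiwound, it is already isomorphic to $\{F=0\} \subset \Ga^n$ for a single reduced $p$-polynomial $F$ \cite[Prop.\,B.1.13, Lem.\,B.1.7]{cgp}; starting from that presentation (as the paper does), $W := \{F + Q = 0\}$ is again a hypersurface, Theorem \ref{asunivsmoothcrit} applies directly, and the projection onto the new coordinates gives the exact sequence with kernel $U$ and vector-group quotient. Your d\'evissage through an arbitrary embedding $U \hookrightarrow \Ga^N$ and a system of equations is avoidable and would otherwise require re-proving the permawoundness criterion beyond the hypersurface case.
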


Theorem \ref{ubiquityintro} is not very surprising. We saw in Example \ref{asunivnonexample} how one may begin with a $p$-polynomial $F$ and simply add on a variable to obtain an extension of $\Ga$ by the group $\{F = 0\}$ described by $F$. The proof of Theorem \ref{ubiquityintro} essentially entails repeating this process until one obtains a $p$-polynomial with universal principal part. The hard part is showing that groups defined by such polynomials are indeed permawound (the subtlety being to ensure that there are no semiwound extensions of $\Ga$ by {\em quotients} of such groups).

The utility of the ubiquity property is that many properties that one might wish to prove about wound unipotent groups are inherited by subgroups, hence to prove them for general wound groups -- at least those that are commutative and $p$-torsion -- it suffices to treat the case of permawound groups. In fact, using \cite[Cor.\,B.3.3]{cgp}, if the property one wishes to establish is also inherited by extensions (as many such properties are), then it suffices to treat the $p$-torsion commutative case, hence the permawound case.

The reader may now wonder how this is helpful. Why should the study of permawound groups be any easier than that of general wound unipotent groups? The answer lies in the second fundamental property of such groups, which we call rigidity. In contrast to the ubiquity property, rigidity is rather surprising. To state it, we define for any separably closed field $k$ of finite degree of imperfection a wound unipotent $k$-group $\mathscr{V}$. The definition may be given as follows, though a more thorough discussion will be given in \S \ref{specgpsection}. Take any $p$-basis $\lambda_1, \dots, \lambda_r$ of $k$, let $\mathscr{V} \subset \Ga^{I}$ be the vanishing locus of the following polynomial in $k[X_f\mid f \in I]$:
\[
-X_0 + \sum_{f\in I} \left(\prod_{i=1}^r\lambda_i^{f(i)}\right)X_f^p,
\]
where the subscript $0$ denotes the constant function with value $0$. The rigidity property of permawound groups is the following statement.

\begin{theorem}$($``Rigidity''$)$
\label{rigidityintro}
Let $k$ be a separably closed field of finite degree of imperfection $r$, and let $U$ be a wound permawound unipotent $k$-group scheme. Then $U$ admits a filtration $1 = U_0 \trianglelefteq U_1 \trianglelefteq \dots \trianglelefteq U_m = U$ such that, for each $1 \leq i \leq m$, either $U_i/U_{i-1} \simeq \mathscr{V}$ or $U_i/U_{i-1} \simeq \R_{k^{1/p}/k}(\alpha_p)$.
\end{theorem}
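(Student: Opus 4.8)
The plan is to induct on $\dim U$, working throughout in the larger class of (possibly non-smooth) semiwound permawound group schemes, since the building block $\R_{k^{1/p}/k}(\alpha_p)$ is infinitesimal and will be forced upon us as a subquotient even though $U$ itself is smooth. The first move is to reduce to the case that $U$ is commutative and $p$-torsion. The crucial elementary fact here is that permawoundness is inherited by quotients: if $U \twoheadrightarrow \overline{U}$ and $\overline{U} \to E \to \Ga \to 1$ is right-exact, then precomposition with $U \twoheadrightarrow \overline{U}$ realizes $E$ in a right-exact sequence $U \to E \to \Ga \to 1$, whence $E$ contains a copy of $\Ga$. Combining this with the usual d\'evissage of a unipotent group along its lower central series and its Frobenius filtration -- whose graded pieces are commutative and $p$-torsion -- together with the permanence properties of woundness and the fact that both $\mathscr{V}$ and $\R_{k^{1/p}/k}(\alpha_p)$ are commutative $p$-torsion, should reduce the problem to filtering a commutative $p$-torsion (semiwound, permawound) group. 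This reduction requires genuine care, since permawoundness is \emph{not} inherited by subgroups; I would take it for granted and concentrate on the heart of the matter.

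So assume $U$ is commutative and $p$-torsion. Then Verschiebung vanishes on $U$ (and on its quotients), so $U$ embeds in a vector group and may be written as $U = \ker(\mathbf{F}\colon \Ga^n \to \Ga^s)$ for a system of $p$-polynomials $\mathbf{F} = \mathbf{P} + \mathbf{L}$, where $\mathbf{P}$ is the principal (top-order) part -- a system of leading $p$-power forms -- and $\mathbf{L}$ collects the lower-order terms, in particular the Artin--Schreier linear terms. The permawoundness criterion (Proposition \ref{asunivcrit}), in its several-equations form, should translate permawoundness of $U$ into the statement that $\mathbf{P}$ is \emph{universal} (surjective on $k$-points) and \emph{anisotropic} (its only $k$-rational zero is $0$). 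This is the step that converts the group-theoretic hypothesis into linear-algebraic data.

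Next I would invoke a rigidity statement for such forms. Since $\dim_{k^p} k = p^r$, a universal anisotropic degree-$p$ form $\Ga^m \to \Ga$ needs $m \geq p^r$ variables, and a minimal one is $k$-linearly equivalent to the standard form $\phi(X) = \sum_{f \in I}\bigl(\prod_{i=1}^r \lambda_i^{f(i)}\bigr)X_f^p$ underlying the definitions of $\mathscr{V}$ and $\R_{k^{1/p}/k}(\alpha_p)$; likewise a universal anisotropic \emph{system} should be equivalent, after a $\GL_n(k)\times\GL_s(k)$-change of coordinates, to the block-diagonal system $\phi^{\oplus s}$. Recall the two realizations inside $\R_{k^{1/p}/k}(\Ga) \cong \Ga^{I}$ (the $p$-basis giving the identification): writing $\psi = \phi - X_0$, one has short exact sequences $0 \to \mathscr{V} \to \R_{k^{1/p}/k}(\Ga) \xrightarrow{\psi} \Ga \to 0$ and $0 \to \R_{k^{1/p}/k}(\alpha_p) \to \R_{k^{1/p}/k}(\Ga) \xrightarrow{\phi} \Ga \to 0$. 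With $\mathbf{P}$ in block-diagonal normal form, peel off the first block $N_1 := U \cap \{X^{(l)} = 0 \text{ for } l \geq 2\}$. Because the principal part is block diagonal, $N_1$ is cut out in the first $p^r$ variables by $\phi(X^{(1)})$ plus its own linear term: if that linear term is present then $N_1 \cong \mathscr{V}$ (and is smooth), while if it vanishes then $N_1 \cong \R_{k^{1/p}/k}(\alpha_p)$ (and is infinitesimal) -- this is exactly the dichotomy in the theorem. Crucially, block-diagonality of $\mathbf{P}$ means that eliminating block $1$ leaves $U/N_1$ defined by a system whose principal part is $\phi^{\oplus(s-1)}$, so $U/N_1$ is again universal and anisotropic, hence semiwound permawound by the same criterion applied \emph{directly} to its normal form; one then concludes by induction on $\dim$ (equivalently on $s$).

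The main obstacle is the establishment of the normal form, which carries the entire weight of ``rigidity.'' Three points require care: (i) proving the several-equations permawoundness criterion, i.e.\ that permawoundness forces $\mathbf{P}$ to be universal and anisotropic; (ii) the classification of universal anisotropic systems of $p$-forms as $\phi^{\oplus s}$ over $k$ -- this is where finite degree of imperfection and separable closedness are genuinely used, to rule out separable twists and to pin down the unique minimal form $\phi$; and (iii) normalizing the lower-order part $\mathbf{L}$ within the stabilizer of $\phi$ so that, after reordering the blocks, each peeled-off block yields cleanly either $\mathscr{V}$ or $\R_{k^{1/p}/k}(\alpha_p)$ while the cross-block couplings do not disturb the principal part of the quotient. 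A conceptually subtle feature that (iii) must accommodate is that the smooth group $U$ is forced to acquire the non-smooth subquotient $\R_{k^{1/p}/k}(\alpha_p)$ precisely when a copy of the universal form appears with no accompanying Artin--Schreier term. I expect the bookkeeping in (iii) -- simultaneously triangularizing $\mathbf{L}$ while respecting the rigid structure of $\mathbf{P}$ -- to be the most delicate part, with (i) a close second; and I emphasize that the induction must close by re-verifying permawoundness of $U/N_1$ from its normal form rather than by abstract inheritance, exactly because permawoundness, though inherited by quotients, fails to be inherited by subgroups.
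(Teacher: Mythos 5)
Your overall architecture (reduce to the commutative $p$-torsion case, then peel off one graded piece at a time by induction on dimension) matches the paper's, and your remark that permawoundness passes to quotients but not to subgroups is exactly why the paper introduces (quasi-)weak permawoundness to run that first reduction. But the heart of your argument -- the normal form in your steps (ii) and (iii) -- has a genuine gap, and the block-diagonalization claim is false as stated. The principal part of a reduced $p$-polynomial with universal principal part need not be homogeneous of degree $p$: Proposition \ref{monppolynuniv} only forces $\sum_i \deg_{X_i}(P)^{-r}=1$, so for instance with $r=1$, $p=2$, $\lambda\in k-k^2$, the polynomial $F=X_1+X_1^2+\lambda X_2^4+\lambda^3 X_3^4$ is separable and reduced with universal principal part (one checks $x_1^2=\lambda(x_2^2+\lambda x_3^2)^2$ forces $x_1=x_2=x_3=0$, and the degree count $\tfrac12+\tfrac14+\tfrac14=1$ gives universality), hence defines a $2$-dimensional wound permawound group by Theorem \ref{asunivsmoothcrit}. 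A $\GL_n(k)\times\GL_s(k)$ change of coordinates cannot alter the exponents $p^{d_i}$, so this principal part is not equivalent to $\phi^{\oplus 2}$ with $\phi$ the standard degree-$p$ form in $p^r$ variables, and the first graded piece of the filtration is not the intersection of $U$ with a coordinate block of the given presentation. (The paper confronts exactly this issue in the proof of Lemma \ref{mapofhomssurj}, where the non-homogeneous case is handled by passing to a cover $W\to U$ whose kernel is a product of groups $\R_{k^{1/p^{j}}/k}(\alpha_p)$.) Your step (i) also needs an argument: Proposition \ref{asunivcrit} concerns filtrations by hypersurface groups, and a simultaneous system $\ker(\Ga^n\to\Ga^s)$ has no well-defined ``principal part'' until one has already run a Gaussian-elimination normal form (Proposition \ref{gausselimunigps}), which yields a triangular, not block-diagonal, presentation.

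More fundamentally, the existence of a surjection from $U$ onto $\mathscr{V}$ or $\R_{k^{1/p}/k}(\alpha_p)$ -- which is what your block-peeling is meant to produce -- is precisely the hard content of the theorem, and the paper does not obtain it by any normal-form manipulation. It is Theorem \ref{nonzerohomtoV}: for $U$ smooth and nonsplit over $k=k_s$ of finite degree of imperfection, $\Hom(U,\mathscr{V})\neq 0$. The proof descends to a finitely generated field $K$ with $k=K_s$, produces a nonzero class in $\Ext^1_K(U,\mathscr{V}_{P,L})$ that dies over $K_s$ (built from a nontrivial extension of $\Z/p\Z$ by $\mathscr{V}_{P,L}$ trivializing over $K_s$, which rests on the nonvanishing of ${\rm{H}}^1(K,\mathscr{V}_{P,L})$ established by a residue computation over complete regular local rings), and then reads off a nonzero class in ${\rm{H}}^1(\Gal(K_s/K),\Hom_{K_s}(U,\mathscr{V}_{P,L}))$; the splitting results Lemma \ref{mapofhomssurj} and Corollary \ref{ptorextbyvsplits} are what force the extension to die over $K_s$. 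Your proposal relocates all of this into the unproven assertion (iii) about normalizing the lower-order terms within the stabilizer of the principal part, and I see no route to proving that assertion that does not essentially reprove Theorem \ref{nonzerohomtoV}; as it stands the proposal is a plausible-looking strategy rather than a proof.
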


In the above, $\alpha_p$ over an $\F_p$-scheme $S$ denotes as usual the base change to $S$ of the $\F_p$-group scheme $\F_p[X]/(X^p)$ with group law $X\cdot Y := X + Y$, while $\R_{k^{1/p}/k}$ denotes Weil restriction of scalars from $k^{1/p}$ to $k$. The $k$-groups $\mathscr{V}$ and $\R_{k^{1/p}/k}(\alpha_p)$ appearing in Theorem \ref{rigidityintro} are semiwound and satisfy a property known as weak permawoundness to be defined in \S\ref{qwasunivsection}. In fact, the former group is even wound and permawound.

Combining ubiquity and rigidity, one obtains for instance the following: given a separably closed field of finite degree of imperfection, and a property of groups that one wishes to prove for all wound unipotent $k$-groups, if the property is inherited by subgroups and by extensions, then it suffices to prove it for the two groups $\mathscr{V}$ and $\R_{k^{1/p}/k}(\alpha_p)$. In fact, permawound groups -- and in particular the ubiquity and rigidity properties -- are useful even for proving properties of wound groups over fields that may not be separably closed or of finite degree of imperfection. Indeed, the latter is hardly a condition because essentially any property of wound groups that one wishes to prove may be easily reduced to the finite degree of imperfection case by a simple spreading out argument. (For instance, any unipotent group is defined over a finitely-generated field, as are maps between finite type schemes over fields.) The separably closed condition is also not too restrictive for many applications, because one can often pull properties down from finite separable extensions.

As a first illustration of the utility of permawound groups -- in particular of their ubiquity and rigidity properties -- we give two applications to questions of independent interest. The first concerns the cohomology of smooth unipotent groups.

\begin{theorem}
\label{infofcohomintro}
Let $K$ be an infinite finitely generated extension field of $\F_p$, and let $U$ be a smooth unipotent $K$-group scheme that is not split. Then ${\rm{H}}^1(K, U)$ is infinite.
\end{theorem}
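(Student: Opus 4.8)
The plan is to reduce the general statement to a cohomology computation for a short list of explicit ``building block'' groups, using the ubiquity and rigidity properties, and then to identify the resulting groups with manifestly infinite arithmetic invariants of $K$.

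First I would carry out the standard dévissage. Since $K$ is finitely generated and infinite over $\F_p$ it has transcendence degree $\geq 1$, hence finite degree of imperfection $r \geq 1$, so the permawound machinery applies. Replacing $U$ by $U/U_{\mathrm{split}}$ costs nothing: the kernel is split, hence its relevant twists are again split unipotent and have vanishing $\mathrm{H}^1$ over every field, so the long exact sequence gives $\mathrm{H}^1(K,U) \xrightarrow{\sim} \mathrm{H}^1(K, U/U_{\mathrm{split}})$, and the target is wound and nontrivial because $U$ is nonsplit. I would then reduce to the smooth connected commutative $p$-torsion wound case, the gain being that there a presentation $0 \to C \to \Ga^n \to R \to 0$ together with the vanishing $\mathrm{H}^1(K,\Ga)=0$ exhibits $\mathrm{H}^1(K,C)$ as an honest cokernel of an additive ($\F_p$-linear) map on $K$-points. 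The delicate bookkeeping is to propagate infinitude through these (a priori nonabelian) reductions; once one is in the commutative $p$-torsion range it becomes a matter of tracking cokernels of $\F_p$-linear maps.

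Next I would invoke ubiquity (Theorem \ref{ubiquityintro}): embed $C$ in a wound permawound commutative $p$-torsion group $W$ with vector cokernel $V$, so that $V(K) \to \mathrm{H}^1(K,C) \twoheadrightarrow \mathrm{H}^1(K,W) \to \mathrm{H}^1(K,V)=0$ exhibits $\mathrm{H}^1(K,C)$ as surjecting onto $\mathrm{H}^1(K,W)$. It then suffices to show that every nontrivial wound permawound group has infinite first cohomology over $K$. For this I would apply rigidity (Theorem \ref{rigidityintro}) to reduce to the two building blocks $\mathscr{V}$ and $\R_{K^{1/p}/K}(\alpha_p)$, taking a $p$-basis of $K$ itself and, where rigidity is only available over the separable closure, descending the conclusion along $K_s/K$ (which preserves the degree of imperfection). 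The cohomology of the second block is immediate from Shapiro's lemma and the Frobenius sequence for $\alpha_p$:
\[
\mathrm{H}^1\bigl(K, \R_{K^{1/p}/K}(\alpha_p)\bigr) \cong \mathrm{H}^1(K^{1/p}, \alpha_p) \cong K^{1/p}/K \cong K/K^p,
\]
which is infinite because $K$ is infinite and finitely generated over $\F_p$ (an infinite-dimensional $\F_p$-space, even though $[K:K^p]$ is finite). For $\mathscr{V}$, the same cokernel recipe identifies $\mathrm{H}^1(K,\mathscr{V})$ with a quotient of the Artin--Schreier group $K/\wp(K) = \mathrm{H}^1(K,\Z/p)$.

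The main obstacle is precisely this last point: showing that $\mathrm{H}^1(K,\mathscr{V})$ stays infinite. Over a separably closed field it \emph{vanishes} (there $\wp$ is surjective and the defining principal part is universal), so the infinitude is a genuinely arithmetic phenomenon reflecting the non-separable-closedness of $K$; naive manipulation of the defining equation is inconclusive because the relevant condition is self-similar under Frobenius. The substance must be that the correction term coming from the permawound principal part does not swallow all of $K/\wp(K)$, i.e.\ that the infinitely many independent Artin--Schreier (respectively Frobenius) classes attached to the infinitely many places of the infinite finitely generated field $K$ survive in the quotient. I expect this — together with the compatible passage between $K$ and $K_s$ in the rigidity step and the propagation of infinitude through the dévissage — to be where the real work lies, with the clean cokernel $K/K^p$ for $\R_{K^{1/p}/K}(\alpha_p)$ serving as the model case.
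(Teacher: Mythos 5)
Your skeleton agrees with the paper's: reduce to the semiwound commutative $p$-torsion case, use ubiquity to pass to a permawound group whose ${\rm{H}}^1$ receives a surjection from ${\rm{H}}^1(K,U)$, and then try to exploit the rigidity filtration. But there are two problems, one an outright error and one a gap that swallows the entire content of the theorem.

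The error: your ``model case'' computation ${\rm{H}}^1\bigl(K,\R_{K^{1/p}/K}(\alpha_p)\bigr)\cong{\rm{H}}^1(K^{1/p},\alpha_p)\cong K/K^p$ is false. Shapiro's lemma fails for fppf cohomology along the finite flat non-\'etale morphism ${\rm{Spec}}(K^{1/p})\rightarrow{\rm{Spec}}(K)$ (the higher direct image $R^1f_*\alpha_p$ does not vanish), and in fact the paper records in the remark after Theorem \ref{infofcohom} that ${\rm{H}}^1(K,\R_{K^{1/p}/K}(\alpha_p))=0$: by Proposition \ref{eqnforweilrestalphap} this group is $\{P=0\}$ for a \emph{universal} monogeneous $p$-polynomial $P$, so its ${\rm{H}}^1$ is $K/P(K^{p^r})=0$. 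This block is ultimately harmless for the theorem only because a smooth $U$ cannot surject onto the totally nonsmooth $\R_{K^{1/p}/K}(\alpha_p)$ -- which is exactly how the paper discards it -- but your intended ``clean model case'' has the wrong answer, and it cannot serve as a template for $\mathscr{V}$.

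The gap: you stop precisely where the proof starts. Rigidity is only available over $K_s$, so it produces a surjection $U_E\twoheadrightarrow(\mathscr{V}_{P,L})_E$ only over some finite separable extension $E/K$, and you offer no mechanism for ``descending the conclusion along $K_s/K$''; moreover you explicitly concede that you do not know why ${\rm{H}}^1(K,\mathscr{V}_{P,L})$ (a quotient of $K/\wp(K)$, which could a priori collapse) is nonzero, let alone infinite. The paper's actual argument supplies three ingredients, all absent from your proposal: (a) a zeta-function (Chebotarev-style) argument (Propositions \ref{mapvarstrivinfloc} and \ref{infmanytrivpts}) showing that infinitely many closed points $y$ of a model of $K$ split completely in $E$, so that the surjection onto $\mathscr{V}_{P,L}$ exists over infinitely many completions $K_y$; (b) an explicit residue computation in a power-series/Hahn-series model (Proposition \ref{H^1nonvanishing}) showing ${\rm{H}}^1(K_y,\mathscr{V}_{P,L})\neq 0$ by reducing to the non-surjectivity of Artin--Schreier on the finite residue field; and (c) weak approximation (Proposition \ref{weakapprox}) plus openness of the smooth map $F$ to assemble, for every $m$, at least $2^m$ distinct classes in ${\rm{H}}^1(K,U)$ from the local ones. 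Without these, the proposal establishes nothing beyond the reductions.
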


Note that this is a statement about unipotent groups over fields that are not separably closed. Nevertheless, the rigidity property plays a crucial role in the proof of Theorem \ref{infofcohomintro}.

The second application is to a question of Totaro concerning unipotent quotients of commutative pseudo-reductive groups. As stated earlier, in \cite{cgp}, a classification of pseudo-reductive groups is given, up to the commutative case. That is, all such groups are described explicitly, up to a commutative pseudo-reductive group, and it appears that one can in general say little about the nature of groups of the latter type. It is therefore of significant interest in the general theory of linear algebraic groups over imperfect fields to study commutative pseudo-reductive groups. 

Let $C$ be a commutative pseudo-reductive $k$-group, and let $T \subset C$ be its maximal torus. Then $U := C/T$ is a smooth connected commutative unipotent $k$-group. Furthermore, $U$ is necessarily wound. Indeed, because woundness and pseudo-reductivity are insensitive to passage to separable extensions, we may assume that $k$ is separably closed, and in that case the woundness of $U$ follows from the fact that $\Ext^1(\Ga, \Gm) = 0$ \cite[Ch.\,III, \S6, Cor.\,5.2]{demazuregabriel}. We refer to $U$ as the {\em maximal unipotent quotient} of $C$. A natural question is which commutative wound unipotent $k$-groups arise as maximal unipotent quotients of commutative pseudo-reductive $k$-groups. When $k = k_0(a, b)$ is a rational function field in two variables over an arbitrary field $k_0$ of characteristic $p$, Totaro constructed a commutative wound unipotent $k$-group $U$ that cannot be expressed as such a quotient \cite[Ex.\,9.7]{totaro}. In fact, one can show that his example works over any field of degree of imperfection $> 1$, with the roles of $a$ and $b$ played by any pair of $p$-independent elements of $k$. Nevertheless, for fields of degree of imperfection $1$, Totaro posed the following optimistic question.

\begin{question}[{\cite[Question 9.11]{totaro}}]
\label{totaroquestion}
If $k$ is a field of degree of imperfection $1$, is every commutative wound unipotent $k$-group the maximal unipotent quotient of a commutative pseudo-reductive $k$-group?
\end{question}

We will use the ubiquity and rigidity properties of permawound groups to prove the following affirmative answer to Totaro's question when the group in question is $p$-torsion.

\begin{theorem}
\label{totquesyesintro}
Let $k$ be a field of degree of imperfection $1$. Then every commutative $p$-torsion wound unipotent $k$-group is the maximal unipotent quotient of a commutative pseudo-reductive $k$-group.
\end{theorem}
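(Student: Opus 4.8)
The plan is to combine the two fundamental properties of permawound groups -- ubiquity (Theorem \ref{ubiquityintro}) and rigidity (Theorem \ref{rigidityintro}) -- with the observation that the class of groups arising as maximal unipotent quotients of commutative pseudo-reductive groups has good closure properties. Let me first record what those closure properties should be. Call a commutative wound unipotent $k$-group $U$ \emph{liftable} if it is the maximal unipotent quotient $C/T$ of some commutative pseudo-reductive $k$-group $C$ with maximal torus $T$. The key structural fact I would establish is that liftability is inherited by quotients and is stable under extensions: if $0 \to U' \to U \to U'' \to 0$ is an exact sequence of commutative wound unipotent groups with $U'$ and $U''$ liftable, then $U$ is liftable; and if $U$ is liftable and $U \twoheadrightarrow \overline{U}$ is a wound quotient, then $\overline{U}$ is liftable. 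The extension statement is the crucial one: given pseudo-reductive $C'$ lifting $U'$ and $C''$ lifting $U''$, one wants to splice them into a pseudo-reductive $C$ lifting $U$. This amounts to an $\Ext$-group comparison, showing that extensions of $U''$ by $U'$ in the category of unipotent groups can be realized after pushing out along $T' \hookrightarrow C'$ and pulling back along $C'' \twoheadrightarrow U''$; the vanishing $\Ext^1(\Ga, \Gm) = 0$ cited in the excerpt, together with torus-related $\Ext$ computations, should control the obstruction.

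Granting these closure properties, the strategy is a dévissage along the rigidity filtration. Since $k$ has degree of imperfection $1$, let me first reduce to the separably closed case: woundness and pseudo-reductivity are both insensitive to separable extension (as noted in the excerpt), and liftability should descend along the separable closure $k_s/k$ by a Galois-descent argument on the pseudo-reductive lift -- this descent step will itself require care, since one must descend not just the unipotent quotient but the entire pseudo-reductive group together with its torus. Over $k_s$ (which still has degree of imperfection $1$), given an arbitrary commutative $p$-torsion wound unipotent group $U$, I apply ubiquity to embed $U$ as a subgroup of a wound, commutative, $p$-torsion, permawound group $W$, with $W/U$ a vector group. By the quotient-stability of liftability and the fact that $\Ga$ is trivially liftable (it is already a quotient of, e.g., a suitable Weil-restricted torus construction), it suffices to prove that $W$ itself is liftable: then $U = $ the image of $W$ under... -- wait, here I must be careful, since $U$ is a \emph{sub}group of $W$, not a quotient. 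The correct move is instead to use that $U$ is wound and apply ubiquity to its situation directly, or to show liftability is inherited by \emph{subgroups} of permawound groups; more robustly, I invoke the standard reduction enabled by ubiquity, namely that it suffices to prove liftability for permawound $W$ together with the fact that $U \hookrightarrow W$ with vector-group cokernel lets one transfer the property.

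With the problem reduced to proving liftability for a wound permawound $p$-torsion group $W$ over separably closed $k$ of degree of imperfection $1$, I invoke rigidity: $W$ admits a filtration with each successive quotient isomorphic either to $\mathscr{V}$ or to $\R_{k^{1/p}/k}(\alpha_p)$. By the extension-stability of liftability, it now suffices to verify liftability for these two building blocks. This is where the concrete work lies, and I expect it to be the main obstacle. For $\R_{k^{1/p}/k}(\alpha_p)$ one expects a lift of the form $\R_{k^{1/p}/k}(\Gm)/\Gm$ or a related Weil-restriction construction, whose maximal unipotent quotient can be computed directly; the norm-one torus $\R^{(1)}_{k^{1/p}/k}(\Gm)$ and its relation to $\R_{k^{1/p}/k}(\alpha_p)$ via the Frobenius/Kummer sequences should produce the desired pseudo-reductive group. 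For the group $\mathscr{V}$, defined by the universal $p$-polynomial displayed before Theorem \ref{rigidityintro}, one must exhibit an explicit commutative pseudo-reductive $k$-group whose maximal unipotent quotient is $\mathscr{V}$; since $k$ has degree of imperfection $1$ the relevant $p$-basis has a single element $\lambda_1$, so $\mathscr{V}$ simplifies considerably, and I would seek its lift among Weil restrictions of tori along $k^{1/p}/k$ (or $k^{1/p^n}/k$), computing the maximal torus and the resulting unipotent quotient by hand. The degree-of-imperfection-$1$ hypothesis is essential precisely here: it is what makes the principal part of the defining polynomial universal (cf.\ the discussion after Example \ref{asunivnonexample}) and what makes these explicit torus-based lifts available, matching Totaro's expectation that his higher-imperfection counterexample has no analogue in this case.
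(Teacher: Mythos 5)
Your overall architecture (reduce to $k_s$, use ubiquity to pass to a permawound group, then d\'evissage along the rigidity filtration) matches the paper's, but two of your load-bearing steps would fail as stated. First, the closure property you actually need from ubiquity is stability of liftability under passage to \emph{subgroups}, not quotients: if $U' \subset U$ and $U = C/T$, then $U'$ is the maximal unipotent quotient of $C' := C \times_U U'$, which is pseudo-reductive simply because it is a smooth connected subgroup of the pseudo-reductive $C$. You flagged the subgroup/quotient confusion yourself but left it unresolved; this fiber-product observation is the one-line fix, and with it the reduction to permawound $W$ is immediate.

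Second, and more seriously, your plan to ``verify liftability for the two building blocks'' cannot work for $\R_{k^{1/p}/k}(\alpha_p)$: that group is totally nonsmooth, while the maximal unipotent quotient $C/T$ of a commutative pseudo-reductive group is smooth and connected, so $\R_{k^{1/p}/k}(\alpha_p)$ is \emph{never} liftable in your sense, and no Weil-restriction or norm-torus construction will produce it as such a quotient. Consequently the general ``extension-stability of liftability'' you posit (which you also do not prove, and which is the real crux) is not the right tool. The paper avoids it by treating the two rigidity factors asymmetrically: when the bottom layer is $\mathscr{V}$, the extension $0 \to \mathscr{V} \to U \to \overline{U} \to 0$ \emph{splits} by Corollary \ref{ptorextbyvsplits}, and $\mathscr{V} = \R_{k^{1/p}/k}(\Gm)/\Gm$ supplies the lift (Proposition \ref{eqnforweilrestgm}); when the bottom layer is $\R_{k^{1/p}/k}(\alpha_p)$, one does not lift it at all but instead forms $C := \overline{C} \times_{\overline{U}} U$, whose pseudo-reductivity follows precisely because any smooth connected unipotent subgroup of $C$ dies in $\overline{C}$ and hence lands in the totally nonsmooth kernel. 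Your instinct that the degree-of-imperfection-$1$ hypothesis enters through the explicit torus lift of $\mathscr{V}$ is correct, but without the splitting result and the fiber-product trick the d\'evissage does not close.
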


This paper may be roughly broken up into three parts. The first part, consisting of \S\S\ref{subgpsvecsection}--\ref{asunivsubgpsvecgpssection}, is concerned with characterizing those subgroups of vector groups that are permawound, culminating in the characterization Theorem \ref{asunivsmoothcrit} of such groups, as well as the corollary rigidity principle (Theorem \ref{ubiquity}). The second part of the paper, consisting of \S\S\ref{specgpsection}--\ref{rigiditysection}, consists of the proof of the rigidity property of permawound groups (Theorem \ref{filtrationasuniv}). The third and final part -- \S\S\ref{cohunipgpsection}--\ref{unipquotcpredsec} -- consists of applications of permawound groups. The reader may wish to read this part as initial motivation before the rest of the paper.

\subsection{Acknowledgements}

Thanks to Michel Brion for his suggestions regarding the naming of the main objects of this paper. I also thank the anonymous referees for their careful and thorough reading of the manuscript and for their many helpful suggestions which have been incorporated into the text.

\subsection{Notation and Conventions}

Throughout the paper, $k$ denotes a field, and, when $p$ appears, it means that $k$ has positive characteristic $p$. The symbols $k_{\perf}$ and $k_s$ denote perfect and separable closures of $k$, respectively.
 
For a scheme $S$, we regard the category of $S$-group schemes as a fully faithful subcategory of the category of fppf group sheaves on $S$. In particular, when we say that a sequence of $S$-group schemes is exact, or that a map of $S$-group schemes is surjective, et cetera, we mean that the corresponding property of fppf sheaves holds. Furthermore, all cohomology is in the category of fppf sheaves unless stated otherwise.

\section{Subgroups of vector groups}
\label{subgpsvecsection}

In this section we analyze subgroups of vector groups, the main point being a result which yields a filtration on commutative semiwound unipotent groups such that each of the factor groups may be described as a hypersurface in a vector group (Proposition \ref{swdfilthyper}). We begin by recalling some basic notions pertaining to $p$-polynomials. Let $k$ be a field of characteristic $p > 0$. A {\em $p$-polynomial} is a polynomial $F \in k[X_1, \dots, X_n]$ of the form $F(X_1, \dots, X_n) := \sum_{i=1}^n \sum_{j = 0}^{N_i} c_{i,j}X_i^{p^j}$ with $c_{i,j} \in k$. We remark that part of the data of the $p$-polynomial is the polynomial ring in which it lies. So, for instance, we regard $X_1 \in k[X_1]$ and $X_1 \in k[X_1, X_2]$ as being different $p$-polynomials. The reason for this is that the first one describes the trivial group $\{X_1 = 0\} \subset \Ga$, while the latter describes the copy of $0 \times \Ga$ of $\Ga$ sitting inside $\Ga^2$. The {\em principal part} of $F$ is the $p$-polynomial $P(X_1, \dots, X_n) := \sum_{i \in I} c_{i, n_i}X_i^{p^{n_i}} \in k[X_1, \dots, X_n]$, where $I \subset \{1, \dots, n\}$ is the set of indices such that $X_i$ appears in $F$, and $n_i$ is the maximal $j$ such that $c_{i,j} \neq 0$. Note that we consider $P$ as lying in the same polynomial ring as $F$, regardless of how many variables actually appear in $F$. We say that $F$ is {\em separable} when it defines a smooth $k$-subgroup scheme of $\Ga^n$ -- equivalently, when $F$ has nonzero linear part.
All of this is previously defined terminology; see \cite[App.\,B]{cgp}. Now we introduce some new terminology.

\begin{definition}
\label{ppolymonred}
We say that a $p$-polynomial $P \in k[X_1, \dots, X_n]$ is {\em monogeneous} when each variable appears in at most one term of $P$. That is, we may write $P = \sum_{i=1}^n c_iX_i^{p^{n_i}}$, where we allow $c_i = 0$. (Equivalently, the part involving each variable is homogeneous, hence the name.) We say that a monogeneous $p$-polynomial $P$ is {\em reduced} when the equation $P(x_1, \dots, x_n) = 0$ has no solutions in $k$ other than $\vec{0}$. We say that an arbitrary $p$-polynomial $F$ is reduced when its principal part is.
\end{definition}

The first part of the following result follows from \cite[Lem.\,B.1.7,(2)$\Longrightarrow$(3)]{cgp}.

\begin{proposition}
\label{simplifyppoly}
Let $F \in k[X_1, \dots, X_n]$ be a nonzero $p$-polynomial. Then there exist a $k$-group scheme automorphism $\sigma$ of $\Ga^n$ and $1 \leq m \leq n$ such that $F \circ \sigma \in k[X_m, \dots, X_n]$ is a reduced $p$-polynomial. If $F$ is monogeneous, then one may choose $\sigma$ so that $F \circ \sigma$ is monogeneous.
\end{proposition}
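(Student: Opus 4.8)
The plan is to prove both assertions at once by a single explicit elimination procedure, inducting on a complexity measure of the principal part. (The first assertion alone is the cited implication $(2)\Longrightarrow(3)$ of \cite[Lem.\,B.1.7]{cgp}, but the construction below handles it uniformly and, crucially, is visibly monogeneity-preserving, which is what the second sentence requires.) Write $F=\sum_i\sum_{j}c_{i,j}X_i^{p^j}$, let $I$ be the set of variables that appear, let $n_i$ be the top exponent of $X_i$ and $c_i:=c_{i,n_i}$ its leading coefficient, so the principal part is the monogeneous $p$-polynomial $P=\sum_{i\in I}c_iX_i^{p^{n_i}}$. If $F$ is already reduced, it suffices to apply a permutation automorphism of $\Ga^n$ carrying the appearing variables to $\{X_m,\dots,X_n\}$ with $m=n-|I|+1$; this is monogeneous if $F$ was. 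So the whole content is an elimination step: \emph{if $F$ is not reduced, there is an automorphism of $\Ga^n$ strictly decreasing the complexity of $P$ while preserving monogeneity}, after which we iterate.

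For the elimination step, non-reducedness means $P$ has a nontrivial $k$-zero $a=(a_i)$; let $J=\{i:a_i\neq 0\}$ (so $J\subseteq I$, $|J|\geq 2$) and pick $l\in J$ with $n_l=N:=\max_{i\in J}n_i$. The heart of the argument is the \emph{single-column transvection} $\phi\colon\Ga^n\to\Ga^n$ defined by
\[
\phi(X_m)=X_m+\bigl(a_m\,a_l^{-p^{N-n_m}}\bigr)\,X_l^{\,p^{N-n_m}}\quad(m\in J\smallsetminus\{l\}),\qquad \phi(X_i)=X_i\ \text{otherwise.}
\]
All coefficients lie in $k$ because $a_l\neq 0$ and $N-n_m\geq 0$, and $\phi$ is an automorphism since its matrix is $1+E$ with $E$ supported in the single off-diagonal column indexed by $l$, whence $E^2=0$. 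The key computation is that the leading $X_l$-term of $F\circ\phi$ vanishes: the substitution contributes $c_m\bigl(a_m a_l^{-p^{N-n_m}}\bigr)^{p^{n_m}}X_l^{p^N}=c_m a_m^{p^{n_m}}a_l^{-p^N}X_l^{p^N}$ from each $m\in J\smallsetminus\{l\}$, and summing with the original $c_lX_l^{p^N}$ gives $\bigl(c_l+a_l^{-p^N}\sum_{m\in J\smallsetminus\{l\}}c_m a_m^{p^{n_m}}\bigr)X_l^{p^N}=0$, precisely by the relation $P(a)=0$. Thus the zero of the principal part supplies exactly the $p$-power roots needed to cancel the top $X_l$-term over $k$.

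It remains to see that complexity drops and monogeneity is preserved. The substitution alters only terms containing some $X_m$ ($m\in J\smallsetminus\{l\}$), leaving each such $X_m$-term intact and adjoining new $X_l$-terms of exponent $p^{s+N-n_m}\leq p^N$, with equality only for the leading terms — which cancel. Hence every variable other than $X_l$ keeps its leading term unchanged, while the leading exponent of $X_l$ strictly drops (or $X_l$ disappears); comparing the multiset of leading exponents from the top down, this is a strict decrease, so the induction is well-founded. When $F$ is monogeneous there are no lower-order terms, so the residual $X_l$-terms are absent and $X_l$ is eliminated outright, leaving a monogeneous polynomial in fewer variables. Iterating until the principal part has no nontrivial zero and composing all the transvections with a final permutation yields the desired $\sigma$ and $m$.

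The main obstacle — and the point to verify with care — is exactly the availability over $k$ of the $p$-power roots appearing in $\phi$: an unconstrained attempt to cancel a leading term requires extracting $p^{n}$-th roots that need not lie in the (imperfect) field $k$, which is what makes groups such as $W_a$ wound and obstructs naïve Euclidean reduction. The resolution is that one must not cancel abstractly but must read the roots off the coordinates of the nontrivial zero $a$ via the normalization by $a_l^{p^{N-n_m}}$; this is the device that simultaneously keeps everything defined over $k$ and, by touching only the column indexed by $l$, introduces no new variable into any term and so preserves monogeneity.
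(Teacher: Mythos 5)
Your proof is correct and takes essentially the same route as the paper's: both exploit a nontrivial zero of the principal part to build a transvection-type automorphism defined over $k$ (your normalization by $a_l^{-p^{N-n_m}}$ is the paper's rescaling $X_{i_0}\mapsto r_{i_0}X_{i_0}$ in disguise), cancel the top term in the chosen variable by the relation $P(a)=0$, and conclude by a well-founded descent on degrees. The observation that monogeneity is preserved because only leading terms are touched is likewise the paper's argument.
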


\begin{proof}
If $n = 0$, then $F = 0$, so assume that $n > 0$. If $F$ can be transformed by an automorphism of $\Ga^n$ (respectively, an automorphism preserving monogeneity of $F$ in the monogeneous case) to lie in $k[X_i]_{i \in I}$ for some proper subset $I \subset \{1, \dots, n\}$, then we are done by induction. So assume that every automorphism of $\Ga^n$ (resp.\,every automorphism preserving monogeneity of $F$) transforms $F$ into a $p$-polynomial involving all of the variables $X_1, \dots, X_n$. We may assume that $F$ is such that $\sum_i {\rm{deg}}_{X_i}(F)$ is minimal among all transformations of $F$ via automorphisms of $\Ga^n$ (resp.\,automoprhisms preserving monogeneity). Then we claim that $F$ is reduced. 

Indeed, let $P$ be the principal part of $F$, and assume that $P$ admits a nontrivial zero $(r_1, \dots, r_n) \in k^n$. Let $i_0$ be such that $r_{i_0} \neq 0$ and such that
\[
p^{d_{i_0}} := {\rm{deg}}_{X_{i_0}}(F) = \max\{ p^{d_i} := {\rm{deg}}_{X_i}(F) \mid r_i \neq 0\}.
\]
Let $\sigma$ denote the following invertible change of variables on $\Ga^n$:
\[
X_i \mapsto 
\begin{cases}
X_i & r_i = 0\\
r_{i_0}X_{i_0} & i = i_0\\
X_i + r_iX_{i_0}^{p^{d_{i_0}-d_i}} & \mbox{else}.
\end{cases}
\]
Then $F\circ \sigma$ is still monogeneous if $F$ is, and has the same degree as $F$ in $X_i$ for all $i$ except for $i = i_0$, for which the degree of $F\circ \sigma$ is strictly lower. This violates minimality, so we conclude that $F$ was already reduced.
\end{proof}

The proof of the proposition below amounts to an algorithm, akin to Gaussian elimination, for, given a subgroup of a vector group described by a system of $p$-polynomials, describing it by a new system of equations of a relatively simple form.

\begin{proposition}
\label{gausselimunigps}
Let $H_1, \dots, H_r \in k[X_1, \dots, X_n]$ be $p$-polynomials, and let $U := \cap_i \{H_i = 0\} \subset \Ga^n$ be the $k$-group that they define. Then there exists a $k$-group scheme automorphism $\sigma$ of $\Ga^n$, and $p$-polynomials $F_1, \dots, F_m \in k[X_1, \dots, X_n]$ such that $m \leq r$ and
\begin{itemize}
\item[(i)] For $1 \leq i < m$, $F_i \in k[X_i, X_{i+1}, \dots, X_n] - k[X_{i+1}, \dots, X_n]$;
\item[(ii)] If $m > 0$, then for some $0 \leq j \leq n-m$, $F_m \in k[X_{m+j}, \dots, X_n]$ is a reduced $p$-polynomial $($as an element of $k[X_{m+j}, \dots, X_n]$$)$; and
\item[(iii)]  $\sigma(U)$ is the $k$-subgroup scheme of $\Ga^n$ described as the vanishing locus of the $F_i$.
\end{itemize}
\end{proposition}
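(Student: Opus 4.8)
The plan is to run a Gaussian-elimination procedure on the system $H_1, \dots, H_r$ in which the admissible row operations are exactly those that preserve the ideal $(H_1, \dots, H_r) \subset k[X_1, \dots, X_n]$ (hence the subgroup scheme $U$), supplemented by group-scheme automorphisms of $\Ga^n$ (which simply replace $U$ by $\sigma(U)$ and are accounted for by the $\sigma$ in the statement). Two families of operations preserve the ideal and keep the equations $p$-polynomials: replacing $H_i$ by a $k$-linear combination $H_i + \sum_{j \neq i} c_j H_j$, and, crucially, replacing $H_i$ by $H_i + \sum_{j\neq i}\sum_t c_{j,t}H_j^{p^t}$. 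The second family is legitimate because each $H_j^{p^t}$ is again a $p$-polynomial (Frobenius is additive) lying in the ideal $(H_j)$, so the generated ideal is unchanged; this is the extra freedom beyond ordinary linear algebra. Concretely, if $X_1$ occurs in both $H_i$ and $H_j$ with $\deg_{X_1}H_i = p^{e_i} \geq p^{e_j} = \deg_{X_1}H_j$, then for a suitable $d \in k$ matching leading coefficients the equation $H_i - d\,H_j^{p^{e_i-e_j}}$ has strictly smaller $X_1$-degree. This is precisely division with remainder for one-variable $p$-polynomials under composition, and iterating it is a Euclidean algorithm.

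With these operations I would argue by induction on the number of variables $n$. If all $H_i = 0$ then $U = \Ga^n$ and we take $m = 0$. Otherwise, after permuting the variables (a group automorphism) I may assume that $X_1$ occurs in some $H_i$. Running the Euclidean algorithm just described on the $X_1$-parts of the equations clears $X_1$ out of all but one equation: the outcome is a single equation $F_1$ in which $X_1$ genuinely occurs (its $X_1$-part being the surviving greatest common divisor, which is nonzero), together with equations $H_2', \dots, H_r' \in k[X_2, \dots, X_n]$, and $(F_1, H_2', \dots, H_r')$ still defines $U$.

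Now there are two cases. If $H_2' = \dots = H_r' = 0$, then $U$ is cut out by the single equation $F_1$, and I finish by applying Proposition \ref{simplifyppoly} to $F_1$: it yields a group automorphism $\sigma$ and an index $\ell$ with $F_1 \circ \sigma \in k[X_\ell, \dots, X_n]$ reduced, which is condition (ii) with $m = 1$ and $j = \ell - 1$ (and condition (i) is vacuous). Otherwise some $H_i'$ is nonzero, and I apply the inductive hypothesis to $H_2', \dots, H_r' \in k[X_2, \dots, X_n]$, an $(n-1)$-variable system with at most $r-1$ equations, obtaining a group automorphism $\sigma'$ of $\Ga^{n-1}$ in the coordinates $X_2, \dots, X_n$ and echelon equations $F_2, \dots, F_m$ with $m-1 \leq r-1$, so $m \leq r$, and with $m \geq 2$ since some $H_i'$ survives. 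Extending $\sigma'$ to $\Ga^n$ by the identity on $X_1$ and applying it to $F_1$ as well (which leaves the $X_1$-part of $F_1$ untouched), the equations $F_1, F_2, \dots, F_m$ are in echelon form: $F_1$ involves $X_1$, and $F_2, \dots, F_m$ satisfy (i)--(ii) in the variables $X_2, \dots, X_n$, which is exactly (i)--(ii) for the indices $2, \dots, m$. Because $m \geq 2$ here, $F_1$ only needs to satisfy (i), so no reducedness is demanded of it. Since replacing a generating set of an ideal of $k[X_2, \dots, X_n]$ by another generating set of the same ideal does not change the ideal it generates in $k[X_1, \dots, X_n]$, the surviving equations continue to cut out $\sigma(U)$ at each stage, where $\sigma$ is the composite of all automorphisms used.

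The routine parts are the leading-coefficient computation in the division step and the index bookkeeping across the recursion. The genuinely important point to get right is that the Frobenius-twisted combinations $H_i \mapsto H_i + \sum c_{j,t}H_j^{p^t}$ preserve the subgroup \emph{scheme} $U$, not merely its underlying reduced group: this is what allows two equations sharing a leading variable but differing by a Frobenius power to be collapsed, and it is exactly what makes the Euclidean reduction on each variable terminate with a single surviving equation. The secondary subtlety is that the intermediate pivot equations $F_1, \dots, F_{m-1}$ need not be reduced -- only the final equation $F_m$ must be -- so reducedness is imposed just once, at the base of the recursion, by invoking Proposition \ref{simplifyppoly}.
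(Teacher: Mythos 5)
Your argument is correct and follows essentially the same route as the paper's proof: an induction on the number of variables in which one clears $X_1$ from all but one equation by division with remainder for $p$-polynomials (the paper invokes \cite[Lem.\,6.3]{rosmodulispaces} and a minimality argument where you iterate the Euclidean step directly, but these are the same mechanism), then handles the single-equation base case via Proposition \ref{simplifyppoly} and recurses on the remaining system in $k[X_2, \dots, X_n]$. Your explicit justification that the Frobenius-twisted row operations preserve the subgroup scheme, and your observation that reducedness is only imposed on the final equation, match the paper's reasoning.
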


\begin{proof}
We proceed by induction on the pair $(n, r)$ ordered lexicographically. The cases $n = 0$ and $r = 0$ are trivial, so assume that $n, r > 0$. We first treat the case $r = 1$. Then $U$ is described by a single $p$-polynomial $H$. If $H$ vanishes, then we are reduced to the case $r =0$, so assume that $H \neq 0$. Then Proposition \ref{simplifyppoly} implies that a change of variables transforms $H$ into a reduced $p$-polynomial $F_1$ on the last $t$ coordinates for some $t \leq n$, so taking $m = 1$, $F_1$ does the job.

Now consider the general case. If there is a family of $p$-polynomials describing $U$ such that none of the polynomials involve $X_1$ , then $U$ is just the product of the first $\Ga$ factor in $\Ga^n$ and a subgroup of $\Ga^{n-1}$, so we are done by induction. We therefore assume that every family of $p$-polynomials defining $U$ involves $X_1$ in at least one of the equations. Let $G_1, \dots, G_s$ be such a family chosen so that $p^{d_1} := {\rm{deg}}_{X_1}(G_1)$ is minimized among all $p$-polynomials involving $X_1$ which appear in a family of $\leq r$ $p$-polynomials defining $U$. If $s < r$, then we are done by induction, so assume that $s = r$. For each $i > 1$, one has $G_i \equiv R_i \pmod{G_1}$ for some $p$-polynomial $R_i$ such that ${\rm{deg}}_{X_1}(R_i) < p^{d_1}$ \cite[Lem.\,6.3]{rosmodulispaces}. It follows from the minimality of $p^{d_1}$ that $X_1$ does not appear in any of the $R_i$ for $i > 1$. Further, $G_1$ together with the $R_i$ defines the same group $U$ as the original $G_i$. If $R_2, \dots, R_r$ all vanish, then we are in the already-treated case $r = 1$. Otherwise, if the $R_i$ are not all $0$, then the family $R_2, \dots, R_r \in k[X_2, \dots, X_n]$ defines a subgroup which by induction may also be described after a change of variables $\sigma$ on the last $n - 1$ coordinates (so that $G_1$ still involves the variable $X_1$ even after the change of variables) by a family $F_1, \dots, F_t$ of $p$-polynomials as in the statement of the proposition (so in particular $t < r$) in which necessarily $t > 0$. Then $G_1$ together with the $F_i$ yields a defining family of $p$-polynomials for $\sigma(U)$ of the desired shape.
\end{proof}

We next show that vector subgroups of commutative groups with vanishing Verschiebung may be split off as direct factors.

\begin{proposition}
\label{splitvecsubgps}
Let $k$ be a field of characteristic $p$. Then any exact sequence of commutative $k$-group schemes with vanishing Verschiebung
\begin{equation}
\label{splitvecsubgpseqn1}
0 \longrightarrow \Ga^n \xlongrightarrow{i} U \longrightarrow W \longrightarrow 0
\end{equation}
splits.
\end{proposition}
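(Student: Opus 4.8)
The plan is to recast the desired splitting as an injectivity statement and then read it off from module theory. Work in the category $\mathcal{V}$ of commutative affine $k$-group schemes killed by $V$; this is an abelian category, since vanishing of Verschiebung is inherited by subgroups, quotients, and finite products, and these operations agree with the corresponding ones among all commutative fppf sheaves. Splitting \eqref{splitvecsubgpseqn1} amounts to producing a retraction $r\colon U \to \Ga^n$ with $r\circ i = \mathrm{id}$. Writing $\mathrm{id}_{\Ga^n} = (\chi_1, \dots, \chi_n)$ in terms of the coordinate projections $\chi_j\colon \Ga^n \to \Ga$, such an $r$ exists if and only if each $\chi_j$ extends along $i$ to a homomorphism $U \to \Ga$, i.e. if and only if the restriction map $i^*\colon \Hom(U,\Ga) \to \Hom(\Ga^n,\Ga)$ is surjective. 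As $\Hom(-,\Ga)$ is always left exact, this is precisely the assertion that $\Ga$ is an injective object of $\mathcal{V}$.

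To establish this injectivity I would invoke the classical anti-equivalence between $\mathcal{V}$ and the category of left modules over the twisted polynomial ring $k[\phi]$ (with relation $\phi\lambda = \lambda^p\phi$), given by $G \mapsto \Hom_k(G,\Ga)$, where $\phi$ acts by postcomposition with the Frobenius endomorphism $x \mapsto x^p$ of $\Ga$ (see \cite{demazuregabriel}). Under this anti-equivalence $\Ga$ corresponds to the free rank-one module $k[\phi]$: indeed $\Hom(\Ga,\Ga)$ is the ring of additive polynomials $\sum a_i X^{p^i}$, which under $X^{p^i}\leftrightarrow \phi^i$ is exactly $k[\phi]$ as a left module over itself. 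Hence $\Ga^n$ corresponds to the free module $k[\phi]^n$. Because an anti-equivalence of abelian categories is exact and identifies $\Ext^1$ groups, the class of \eqref{splitvecsubgpseqn1} maps to an element of $\Ext^1_{k[\phi]}(k[\phi]^n, \Hom(W,\Ga))$, which vanishes since free modules are projective. Equivalently, $\Ga^n$ is injective in $\mathcal{V}$, so $i^*$ is surjective and the sequence splits. (One may alternatively peel off one copy of $\Ga$ at a time and reduce to the case $n=1$, but treating $\Ga^n$ directly is no harder.)

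The point demanding care — and the main obstacle — is verifying that we lie within the scope of the cited anti-equivalence over a possibly imperfect field $k$: one must check that all three groups are objects of $\mathcal{V}$ and are affine of finite type (the hypotheses give $V_U = 0$, hence $V_W = 0$ for the quotient; and since a nontrivial abelian variety or torus has nonzero Verschiebung, every object of $\mathcal{V}$ is automatically affine), and that $\Ga^n$ genuinely corresponds to a free module. Ultimately all the content of the proposition is the surjectivity of $i^*\colon \Hom(U,\Ga) \to \Hom(\Ga^n,\Ga)$, and this is exactly where the hypothesis $V_U = 0$ is indispensable: without it, the map $W^{(p)} \to \Ga^n$ induced by $V_U$ may be nonzero — as for the Witt-vector extension $0 \to \Ga \to W_2 \to \Ga \to 0$, whose middle term is not in $\mathcal{V}$ and which does not split — so the conclusion genuinely requires working inside $\mathcal{V}$.
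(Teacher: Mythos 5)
Your proof is correct and is essentially the paper's own argument in expanded form: the paper likewise reduces the splitting to the anti-equivalence of Demazure--Gabriel \cite[Ch.\,IV, \S3, Cor.\,6.7]{demazuregabriel} between commutative affine $k$-groups killed by Verschiebung and $k[F]$-modules, observing that $\Hom(\Ga^n,\Ga)$ is a free (hence projective) $k[F]$-module. Your additional care about affineness and the identification of $\Ga^n$ with the free module is a reasonable elaboration of what the paper leaves implicit.
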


\begin{proof}
This follows from \cite[Ch.\,IV, \S3, Corr.\,6.7]{demazuregabriel}, because $\Hom(\Ga^n, \Ga)$ is a free $k[F]$-module, where $F$ denotes the (relative) Frobenius.
\end{proof}

As a consequence of Propositions \ref{gausselimunigps} and \ref{splitvecsubgps}, we will now deduce that every subgroup of a vector group is itself the product of a vector group and a group which may be filtered by unipotent groups described as hypersurfaces in vector groups.

\begin{proposition}
\label{filterhypersurf}
Let $k$ be an infinite field of characteristic $p > 0$, and let $G$ be a commutative $k$-group scheme with vanishing Verschiebung. Then $G \simeq V \times U$, where $V$ is a vector group, and there is a filtration $0 = U_0 \subset U_1 \subset \dots \subset U_n = U$ such that, for each $1 \leq i \leq n$, $U_i/U_{i-1}$ is a $k$-group scheme of the form $\{F_i = 0\} \subset \Ga^{n_i}$ for some reduced $p$-polynomial $F_i \in k[X_1, \dots, X_{n_i}]$. In particular, each $U_i/U_{i-1}$ is semiwound.
\end{proposition}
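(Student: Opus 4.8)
The plan is to embed $G$ into a vector group and then induct on the number of ambient coordinates, at each stage peeling off a single reduced hypersurface as a quotient and locating the vector-group direct factor inside the kernel of that quotient. First I would use the vanishing of Verschiebung to realize $G \hookrightarrow \Ga^n$ as the common zero locus of finitely many $p$-polynomials: a commutative finite type group with trivial Verschiebung is a subgroup of a vector group, and every such subgroup is the kernel of a homomorphism $\Ga^n \to \Ga^s$, i.e.\ is cut out by $p$-polynomials. Applying Proposition \ref{gausselimunigps}, after an automorphism of $\Ga^n$ I may assume $G = \{F_1 = \dots = F_m = 0\}$ is in the staircase form described there, with $F_m \in k[X_{m+j}, \dots, X_n]$ a reduced $p$-polynomial. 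The induction is on $n$, the cases $m = 0$ (so $G = \Ga^n$, where one takes $V = \Ga^n$ and $U = 0$) and $n = 0$ being immediate.

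For the inductive step, the main idea is to project $G$ onto the reduced hypersurface cut out by the last equation. Projection to the coordinates $X_{m+j}, \dots, X_n$ carries $G$ into $\{F_m = 0\} \subset \Ga^{\,n-m-j+1}$, and this map is surjective: eliminating the pivot variables $X_1, \dots, X_{m-1}$ one at a time, each governed by a nonzero one-variable $p$-polynomial and hence by a surjective isogeny of $\Ga$, shows that $G$ surjects onto the bottom group $G^{(m-1)} = \Ga^j \times \{F_m = 0\}$, whence onto $\{F_m=0\}$. The kernel $N$ of $G \twoheadrightarrow \{F_m = 0\}$ is $G \cap \{X_{m+j} = \dots = X_n = 0\}$, a subgroup of $\Ga^{\,m+j-1}$; since $F_m \neq 0$ forces $m+j \leq n$, this ambient has strictly fewer than $n$ coordinates. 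By the inductive hypothesis $N \cong V'' \times U''$ with $V''$ a vector group and $U''$ filtered by reduced hypersurfaces. Now $V''$ is a genuine vector subgroup of $N$, hence of $G$, so Proposition \ref{splitvecsubgps} splits it off: $G \cong V'' \times (G/V'')$. Setting $U := G/V''$, one has $N/V'' \cong U'' \subseteq U$ with quotient $U/U'' \cong G/N \cong \{F_m=0\}$, so concatenating the filtration of $U''$ with this reduced hypersurface on top yields the desired filtration of $U$; taking $V := V''$ finishes the step.

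The subtle point, and the step I expect to be the real obstacle, is precisely the extraction of the vector-group direct factor. It is tempting to read the vector group off the bottom quotient $G^{(m-1)} = \Ga^j \times \{F_m = 0\}$ and attempt to split the $\Ga^j$ off directly from $G$. This cannot work in general: Example \ref{asunivnonexample} exhibits a nonsplit extension $0 \to W_a \to E \to \Ga \to 0$ with $E$ containing no copy of $\Ga$, so that an honest vector quotient at the bottom need not correspond to any vector subgroup of the total group, and Proposition \ref{splitvecsubgps} splits off vector \emph{subgroups}, not vector \emph{quotients}. The device that circumvents this is to descend into the kernel $N$, whose strictly smaller ambient dimension lets the induction manufacture an actual vector subgroup $V'' \subseteq N \subseteq G$ to which Proposition \ref{splitvecsubgps} does apply. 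Two routine bookkeeping claims remain to be verified along the way: that the projection to the $F_m$-coordinates is surjective as a map of fppf sheaves, and that each reduced hypersurface $\{F_i = 0\}$ is semiwound, the latter being exactly \cite[Lem.\,B.1.7]{cgp}.
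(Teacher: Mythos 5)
Your argument is correct, and its engine --- Gaussian elimination into staircase form via Proposition \ref{gausselimunigps}, projection onto the reduced hypersurface cut out by the last equation, surjectivity of that projection because each $F_i$ genuinely involves its pivot variable, and recursion into the kernel --- is exactly the paper's. The one genuine divergence is in how the vector direct factor is produced. The paper disposes of it \emph{before} the induction starts: it takes $V$ to be the subgroup of $G$ generated by all homomorphisms $\Ga \to G$, splits it off by Proposition \ref{splitvecsubgps}, and then runs the induction (on the number of defining equations rather than on the ambient dimension) entirely within the semiwound world, where no further splitting is ever needed. You instead let the induction discover the vector part inside the kernel $N$ and split it off at the end of each step; this is sound --- every homomorphism $\Ga \to G$ dies in the semiwound quotient $\{F_m=0\}$, so the whole vector part of $G$ does lie in $N$, and Proposition \ref{splitvecsubgps} applies to $0 \to V'' \to G \to G/V'' \to 0$ since quotients inherit vanishing Verschiebung --- but it makes you re-split at every level of the recursion. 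Your diagnosis of the pitfall (vector quotients at the bottom of the staircase need not lift to vector subgroups, cf.\ Example \ref{asunivnonexample}) is exactly right, and the paper's upfront reduction to the semiwound case is the cleaner way to sidestep it; beyond that the two proofs deliver the same thing.
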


\begin{proof}
The assertion about the semiwoundness of the quotients $U_i/U_{i-1}$ follows from \cite[Lem.\,B.1.7,$(1)\Longrightarrow(2)$]{cgp}. Let $V \subset G$ be the subgroup generated by all $k$-homomorphisms $\Ga \rightarrow G$. By Proposition \ref{splitvecsubgps}, this is a vector group such that $G/V$ is semiwound, and the extension $$0 \longrightarrow V \longrightarrow G \longrightarrow G/V \longrightarrow 0$$ splits, so we may assume that $G = U$ is semiwound. By \cite[Ch.\,IV, \S3, Th.\,6.6]{demazuregabriel}, $U$ is a $k$-subgroup of a vector group $\Ga^n$, and by \cite[Ch.\,IV, \S3, Cor.\,6.8]{demazuregabriel}, the quotient $\Ga^n/U$ is isomorphic to some $\Ga^t$. Since the homomorphisms $\Ga^n \rightarrow \Ga$ are exactly the $p$-polynomials in $n$ variables, it follows that $U$ may be described as the vanishing locus of $t$ $p$-polynomials inside $\Ga^n$. We prove that $U$ has a filtration of the desired form by induction on $t$, noting that $t = 0$ is impossible unless $n = 0$ and $U = 0$ because $U$ is semiwound. So assume that $U \neq 0$ and therefore cannot be described by $0$ equations.

Upon a change of variables, we may describe $U$ inside $\Ga^n$ by a system of equations $F_1, \dots, F_m$ as in Proposition \ref{gausselimunigps} with $0 < m \leq t$. If $m < t$, then we are done by induction, so assume that $m = t$. Then $F_t \in k[X_{t+r}, \dots, X_n]$ is a reduced $p$-polynomial in $\Ga^{n-t-r+1}$, and its vanishing locus $U_{F_t}$ is a semiwound $k$-subgroup scheme of $\Ga^{n-t-r+1}$ by \cite[Lem.\,B.1.7,$(1)\Longrightarrow(2)$]{cgp}. We claim that the obvious map $U \rightarrow U_{F_t}$ is fppf surjective with kernel described by $< t$ equations, in which case the proof is complete by induction.

First we note that the kernel of the map $U \rightarrow U_{F_t}$ is the subgroup scheme of $\Ga^{\{1, \dots, t+r-1\}}$ described as the vanishing locus of the $p$-polynomials $F_i(X_1, \dots, X_{t+r-1}, 0, \dots, 0)$ for $1 \leq i < t$. In particular, it is defined by $< t$ equations. It only remains, therefore, to check that the map $U \rightarrow U_{F_t}$ is fppf surjective. For this, consider for each $1 \leq i < t$ the $k$-subgroup $W_i \subset \Ga^{\{i, \dots, n\}}$ defined as the vanishing locus of $F_i, \dots, F_t$, and let $W_t := U_{F_t}$. Then we have maps $W_i \rightarrow W_{i+1}$ for $1 \leq i < t$ given by projection onto the last $n-i$ coordinates for $i < t-1$ and projection onto the last $n-t-r+1$ coordinates when $i=t-1$. We claim that each such map is fppf surjective, which will prove the result. To see this, we simply note that the fiber of $W_i$ above a point $(x_{f(i+1)}, \dots, x_n) \in W_{i+1}(R)$ for some $k$-algebra $R$, where
\[
f(i) := \begin{cases}
i, & i < t\\
t+r, & i = t
\end{cases}
\]
is the spectrum of the $R$-algebra $$F_i(X_i, \dots, X_{f(i+1)-1}, 0, \dots, 0) + F_i(0, \dots, 0, x_{f(i+1)}, \dots, x_n) = 0.$$ This is an fppf $R$-algebra because $F_i$ involves the variable $X_i$ (Proposition \ref{gausselimunigps}(i)).
\end{proof}

\begin{lemma}
\label{extGavertriv}
Suppose given a short exact sequence of commutative unipotent $k$-group schemes
\[
0 \longrightarrow U \longrightarrow E \longrightarrow \Ga^n \longrightarrow 0.
\]
If $U$ is semiwound with vanishing Verschiebung, then $E$ has vanishing Verschiebung.
\end{lemma}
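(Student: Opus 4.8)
The plan is to exploit the functoriality of the Verschiebung. Recall that on commutative $k$-group schemes the Verschiebung is a natural transformation $V_{(-)} \colon (-)^{(p)} \Rightarrow (-)$ from the Frobenius-twist functor to the identity, and that both vector groups and the group $U$ have vanishing Verschiebung (the former is standard, the latter is our hypothesis). Since Frobenius twisting is exact—it is base change along the flat map $\Spec$ of the Frobenius of $k$—the given sequence produces an exact sequence $0 \to U^{(p)} \xrightarrow{i^{(p)}} E^{(p)} \xrightarrow{\pi^{(p)}} (\Ga^n)^{(p)} \to 0$, and naturality of $V$ compares it to the original sequence through the three Verschiebung maps $V_U$, $V_E$, $V_{\Ga^n}$ in a commutative ladder.

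The heart of the argument is then a short diagram chase. Since $V_{\Ga^n} = 0$, commutativity of the right-hand square gives $\pi \circ V_E = V_{\Ga^n} \circ \pi^{(p)} = 0$, so $V_E$ lands inside $\ker \pi = \im i = U$. Since $V_U = 0$, commutativity of the left-hand square gives $V_E \circ i^{(p)} = i \circ V_U = 0$, so $V_E$ annihilates $U^{(p)} = \im i^{(p)}$. Combining these, $V_E \colon E^{(p)} \to E$ factors uniquely as $E^{(p)} \twoheadrightarrow E^{(p)}/U^{(p)} \simeq (\Ga^n)^{(p)} \xrightarrow{\bar\phi} U \hookrightarrow E$ for a homomorphism $\bar\phi$.

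It remains to see that $\bar\phi = 0$, and this is the only place a nonformal hypothesis enters. The source $(\Ga^n)^{(p)}$ is again a vector group, so $\bar\phi$ is a homomorphism from a power of $\Ga$ into $U$; precomposing with any coordinate inclusion $\Ga \hookrightarrow (\Ga^n)^{(p)}$ yields a homomorphism $\Ga \to U$, which underlies a $k$-morphism $\A^1 \to U$ and is therefore constant, hence zero, because $U$ is semiwound. Thus $\bar\phi = 0$ and so $V_E = 0$. I expect no real obstacle here: the entire argument is formal manipulation of the functorial Verschiebung, and the single substantive input is that a semiwound group admits no nonzero homomorphism from $\Ga$, which is immediate from the definition of semiwoundness.
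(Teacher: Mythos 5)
Your proof is correct and is essentially identical to the paper's: both use the Frobenius-twisted exact sequence and the vanishing of $V_U$ and $V_{\Ga^n}$ to descend $V_E$ to a homomorphism $E^{(p)}/U^{(p)} \simeq \Ga^n \rightarrow U$, which vanishes by semiwoundness of $U$. You have merely spelled out the diagram chase that the paper leaves implicit.
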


\begin{proof}
We have the Frobenius twisted exact sequence
\[
0 \longrightarrow U^{(p)} \longrightarrow E^{(p)} \longrightarrow \Ga^n \longrightarrow 0
\]
(because $(\Ga^n)^{(p)} \simeq \Ga^n$). Because $U$ and $\Ga^n$ have vanishing Verschiebung, the Verschiebung of $E$ descends to a homomorphism $\Ga^n \simeq E^{(p)}/U^{(p)} \rightarrow U$. Because $U$ is semiwound, this homomorphism vanishes.
\end{proof}

Subgroups of vector groups play a fundamental role in the theory of unipotent groups because every unipotent group admits a filtration whose successive quotients are subgroups of vector groups. A somewhat subtler statement says that, for $U$ semiwound, we may ensure that the quotients inherit semiwoundness and thereby ensure that there is a filtration by semiwound hypersurface groups.

\begin{proposition}
\label{swdfilthyper}
Let $k$ be an infinite field, and let $U$ be a commutative semiwound unipotent $k$-group. Then there is a filtration $1 = U_0 \subset U_1 \subset \dots \subset U_m = U$ such that, for all $1 \leq i \leq m$, $U_i/U_{i-1} \simeq \{F_i = 0\} \subset \Ga^{n_i}$ for some reduced $p$-polynomial $F_i \in k[X_1, \dots, X_{n_i}]$.
\end{proposition}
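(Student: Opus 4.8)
The plan is to induct on a suitable measure of $U$ — the pair $(\dim U, \text{length of the finite part})$, ordered lexicographically — and to reduce everything to the case of vanishing Verschiebung, which is already settled by Proposition \ref{filterhypersurf}. The base case is when the Verschiebung $V_U$ vanishes: there Proposition \ref{filterhypersurf} gives $U \simeq V \times U''$ with $V$ a vector group and $U''$ filtered by semiwound hypersurfaces. Since $V$ is a \emph{subgroup} of the semiwound group $U$, and a nonzero vector group contains a copy of $\Ga$, semiwoundness forces $V = 0$; thus $U = U''$ already has the desired filtration.

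For the inductive step I would set $W := \im(V_U) \subseteq U$. Because the Verschiebung is nilpotent on a finite type commutative unipotent group, $W$ is a \emph{proper} subgroup, and being a subgroup of a semiwound group it is semiwound and strictly smaller than $U$ in our ordering, so by induction $W$ carries a filtration by semiwound hypersurfaces. The quotient $\overline U := U/W$ is precisely the maximal vanishing-Verschiebung quotient of $U$: quotienting by $\im(V_U)$ kills $V$, while functoriality of $V$ shows any vanishing-Verschiebung quotient factors through it. Applying Proposition \ref{filterhypersurf} to $\overline U$ yields $\overline U \simeq V_0 \times U''$ with $V_0$ a vector group and $U''$ good. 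Letting $P \subseteq U$ be the preimage of $V_0$, one has $U/P \simeq U''$ good and $P$ semiwound; \emph{provided} $U'' \neq 0$, $P$ is strictly smaller than $U$, so induction filters $P$ and concatenation with the filtration of $U/P = U''$ finishes the step.

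The one place this recursion stalls — and the crux of the argument — is the degenerate case $U'' = 0$, i.e. when the maximal vanishing-Verschiebung quotient $\overline U$ is a \emph{pure} vector group $\Ga^d$ with $d \geq 1$ (then $P = U$ and no progress is made). The hard part is showing that for semiwound $U$ this cannot occur. Here I would bring in the full image filtration $U = M_0 \supseteq M_1 \supseteq \cdots \supseteq M_N = 0$, where $M_i := \im(V_U^i)$ and $N$ is chosen with $V_U^N = 0$. The Verschiebung induces surjections $(M_{i-1}/M_i)^{(p)} \twoheadrightarrow M_i/M_{i+1}$, so, starting from $M_0/M_1 = \overline U = \Ga^d$, every successive quotient $M_i/M_{i+1}$ is a quotient of a vector group, hence split, hence itself a vector group. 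In particular the bottom term $M_{N-1} = M_{N-1}/M_N$ is a \emph{nonzero} vector group sitting inside $U$, so $U$ contains a copy of $\Ga$ — contradicting semiwoundness. At its heart this is nothing but the assertion that an extension of a vector group by a semiwound vanishing-Verschiebung group again has vanishing Verschiebung, which is exactly Lemma \ref{extGavertriv}; the image filtration is the bookkeeping that reduces the general degenerate case to this single input.

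The main obstacle is precisely this degenerate case, and its source is conceptual: since semiwoundness is \emph{not} inherited by quotients, the vanishing-Verschiebung quotient $\overline U = U/\im(V_U)$ may genuinely fail to be semiwound and may acquire a vector-group direct factor $V_0$. The entire content of the proof is to control this factor — handling it by descent on the measure when $V_0 \neq \overline U$, and ruling out the extreme case $V_0 = \overline U$ via the Verschiebung-image filtration above. Everything else (representability of $W$ as a closed subgroup, inheritance of semiwoundness by subgroups, and termination of the lexicographic induction, where the secondary invariant handles finite pieces such as $\alpha_p$) is routine bookkeeping.
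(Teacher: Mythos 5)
Your proof is correct, but it takes a genuinely different route from the paper's. The paper starts from an arbitrary filtration of $U$ by subquotients with vanishing Verschiebung (which exists for any unipotent group) and inducts on its \emph{length}: at each stage it replaces the bottom term $U_1$ by $\pi^{-1}(V)$, where $V$ is the maximal split subgroup of $U/U_1$, so that the split part is \emph{absorbed into the subgroup} rather than surviving in the quotient; Lemma \ref{extGavertriv} shows the enlarged bottom term still has vanishing Verschiebung, each graded piece is semiwound because it is a subgroup of a quotient with no split part, and Proposition \ref{filterhypersurf} then finishes. You instead work with the canonical subgroup $\im(V_U)$, induct on the lattice of closed subgroups, and are forced to confront a degenerate case (namely $U/\im(V_U)$ a nonzero vector group) that simply never arises in the paper's set-up; your exclusion of it via the filtration $M_i=\im(V_U^i)$ is correct and is in fact a pleasant standalone observation (a nonzero semiwound group cannot have split $U/\im(V_U)$, since the surjections $(M_{i-1}/M_i)^{(p)}\twoheadrightarrow M_i/M_{i+1}$ would force the bottom nonzero $M_{N-1}$ to be a vector group), but it is extra work that the paper's ``fatten the subgroup, don't split the quotient'' trick avoids. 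Two small points: your lexicographic measure $(\dim U,\ \text{length of the finite part})$ is not really well defined as stated (there is no canonical ``finite part'' of a general commutative unipotent group scheme); the clean fix is noetherian induction, i.e.\ the descending chain condition on closed subgroup schemes of $U$, which makes the recursion on the proper subgroup $P$ legitimate whenever $U''\neq 0$ regardless of whether $\dim$ drops. And the nilpotence of $V_U$ on a finite-type commutative unipotent group, which you use both to see $\im(V_U)\subsetneq U$ and to terminate the image filtration, should be cited (it is the standard characterization of unipotence in \cite{demazuregabriel}); neither issue is a genuine gap.
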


\begin{proof}
In light of Proposition \ref{filterhypersurf}, it suffices to prove the existence of a filtration $1 = U_0 \subset U_1 \subset \dots \subset U_m = U$ such that, for all $1 \leq i \leq m$, $U_i/U_{i-1}$ is semiwound with vanishing Verschiebung. Because $U$ is unipotent, there is such a filtration of some length $m$, possibly sans the semiwoundness condition. We then prove that $U$ has a filtration of the required sort by induction on $m$. The case $m = 0$ is trivial, so assume that $m > 0$. Let $V \subset \overline{U} := U/U_1$ denote its maximal split unipotent $k$-subgroup, a vector group by \cite[Th.\,B.2.5]{cgp}. Let $\pi\colon U \rightarrow \overline{U}$ denote the quotient map, and let $U_1' := \pi^{-1}(V) \subset U$. By Lemma \ref{extGavertriv}, $U_1'$ has vanishing Verschiebung. Since $U/U_1'$ is semiwound and admits a filtration of length $< m$ by Verschiebung-trivial groups, we are done by induction.
\end{proof}

\section{Universal $p$-polynomials}

In this section we introduce universal $p$-polynomials, which are intimately connected to permawound groups, playing a similar role in the theory of permawound groups to that played by reduced $p$-polynomials in the theory of wound unipotent groups. (More precisely, those reduced $p$-polynomials with universal principal part play the analogous role to that played by reduced $p$-polynomials.) Here is the key definition.

\begin{definition}
\label{universalpolyn}
Let $P \in k[X_1, \dots, X_n]$ be a monogeneous $p$-polynomial. Then $P$ defines a group homomorphism $k^n \rightarrow k$, and we say that $P$ is {\em universal over $k$}, or just universal if $k$ is clear from context, when this homomorphism is surjective.
\end{definition}

\begin{example}
If $k$ is imperfect and $p^n > 2$, then the monogeneous $p$-polynomial $X^{p^n} + aY^{p^n}$ (with $a \in k$) is not universal. Indeed, if it is universal, then $[k: k^{p^n}] \leq 2$. But $$[k: k^{p^n}] = \prod_{i=0}^{n-1} [k^{p^i}: k^{p^{i+1}}] = \prod_{i=0}^{n-1} [k: k^p] = [k: k^p]^n$$ is a positive power of $p^n$ because $k$ is imperfect.
\end{example}

Recall that a set $S$ of elements of a characteristic $p$ field $k$ is called a $p$-basis for $k$ if any of the following equivalent conditions hold: 
\begin{itemize}
\item[(i)] $$k = \bigoplus_f \left(\prod_{\lambda \in S} \lambda^{f(\lambda)}\right)k^p,$$ where the sum runs over all maps $f\colon S \rightarrow \{0, 1, \dots, p-1\}$ which vanish at all but finitely many elements of $S$.
\item[(ii)] The elements $d\lambda$ form a basis for $\Omega^1_k$ as $\lambda$ runs over $S$.
\item[(iii)] The elements $\lambda^{1/p}$ with $\lambda \in S$ form a minimal set of generators of $k^{1/p}/k$.
\end{itemize}
Recall also that the degree of imperfection of $k$ is the size of any $p$-basis of $k$.

Now we introduce a piece of notation that we will use repeatedly in the sequel.

\begin{definition}
\label{Imdef}
For an integer $m \geq 0$, we denote by $I_m$ the set of functions $\{1, \dots, r\} \rightarrow \{0, 1, \dots, p^m-1\}$. When $m = 1$, we also denote this set by $I$.
\end{definition}

\begin{example}
\label{univbasicexample}
Suppose that $\{\lambda_1, \dots, \lambda_r\}$ is a $p$-basis for $k$. Let $$P(X_f\mid f \in I) := \sum_{f \in I} \left(\prod_{i=1}^r \lambda_i^{f(i)}\right)X_f^p.$$ Then $P$ is reduced and universal. Indeed, this is just a reformulation of condition (i) above in the definition of a $p$-basis.
\end{example}

The following proposition gives a numerical criterion for a reduced monogeneous $p$-polynomial to be universal.

\begin{proposition}
\label{monppolynuniv}
Let $k$ be a field of characteristic $p > 0$ and degree of imperfection $r \leq \infty$, and let $P \in k[X_1, \dots, X_n]$ be a monogeneous $p$-polynomial. 
\begin{itemize}
\item[(i)] If $P$ is reduced, then 
\begin{equation}
\label{monppolynuniveqn1}
\sum_{i=1}^n {\rm{deg}}_{X_i}(P)^{-r} \leq 1,
\end{equation}
where by convention, $1^{-\infty} = 1$ and $c^{-\infty} = 0$ for $c > 1$. Equality holds if and only if $P$ is universal.
\item[(ii)] Suppose that $r < \infty$. If
\begin{equation}
\label{monppolynuniveqn2}
\sum_{i=1}^n {\rm{deg}}_{X_i}(P)^{-r} = 1,
\end{equation}
then $P$ is reduced if and only if it is universal.
\end{itemize}
\end{proposition}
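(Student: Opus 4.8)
The plan is to recast the two properties of $P$ as statements about additive subgroups of $k$. Write $P = \sum_{i=1}^n c_i X_i^{p^{n_i}}$ and set $A_i := c_i k^{p^{n_i}} \subseteq k$, the image of the additive map $x \mapsto c_i x^{p^{n_i}}$. Then the homomorphism $P\colon k^n \to k$ has image $\sum_i A_i$, and, because $x \mapsto c_i x^{p^{n_i}}$ is injective whenever $c_i \neq 0$, its kernel is trivial precisely when the sum $\sum_i A_i$ is direct. Thus $P$ is universal iff $\sum_i A_i = k$, and $P$ is reduced iff $\sum_i A_i$ is a direct sum. I would first note that reducedness forces every $c_i \neq 0$, since a missing variable yields an obvious nonzero zero of $P$; likewise, in (ii) the hypothesis forces all variables to occur (an absent variable contributes an infinite term under the convention $0^{-r} = \infty$), so in both parts all the $A_i$ are genuine nonzero subspaces.

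For $r < \infty$ I would compute everything as dimensions over the subfield $k^{p^N}$ with $N := \max_i n_i$. Iterating $[k^{p^j}:k^{p^{j+1}}] = [k:k^p] = p^r$ gives $\dim_{k^{p^N}} k = p^{rN}$, while multiplication by $c_i$ is a $k^{p^N}$-linear isomorphism $k^{p^{n_i}} \xrightarrow{\sim} A_i$, so $\dim_{k^{p^N}} A_i = [k^{p^{n_i}}:k^{p^N}] = p^{r(N - n_i)}$. Dividing yields $\dim_{k^{p^N}} A_i / \dim_{k^{p^N}} k = p^{-r n_i} = \deg_{X_i}(P)^{-r}$, hence
\[
\sum_{i=1}^n \deg_{X_i}(P)^{-r} \;=\; \frac{\sum_i \dim_{k^{p^N}} A_i}{\dim_{k^{p^N}} k}.
\]
From here the two elementary facts $\dim(\sum_i A_i) \leq \sum_i \dim A_i$ (equality iff the sum is direct) and $\dim(\sum_i A_i) \leq \dim k$ (equality iff $\sum_i A_i = k$) finish both parts. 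For (i), reducedness turns the first into an equality, so the displayed sum equals $\dim(\sum_i A_i)/\dim k \leq 1$, with equality exactly when $\sum_i A_i = k$, i.e. when $P$ is universal. For (ii), the hypothesis reads $\sum_i \dim A_i = \dim k$; if $P$ is reduced the sum is direct and $\dim(\sum_i A_i) = \dim k$, giving universality, while if $P$ is universal then $\dim(\sum_i A_i) = \sum_i \dim A_i$, forcing directness and hence reducedness.

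It remains to treat $r = \infty$ in (i) separately, where the ratio argument degenerates. There $\deg_{X_i}(P)^{-\infty}$ is $1$ if $n_i = 0$ and $0$ otherwise, so the left side counts the linear terms of $P$. Two linear terms, with coefficients $c_i, c_j$, would make $P$ non-reduced (take the tuple with entries $c_j$ and $-c_i$ in those slots and $0$ elsewhere), so reducedness allows at most one, giving the bound $\leq 1$. A single linear term $c_{i_0} X_{i_0}$ gives $A_{i_0} = k$ and hence universality; if instead $P$ has no linear term then every $A_i \subseteq c_i k^p$, so the image lies in the $k^p$-span of $c_1, \dots, c_n$, a finite-dimensional and hence proper $k^p$-subspace of $k$ (as $[k:k^p] = \infty$), so $P$ is not universal. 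I expect the genuine work here to be not any single estimate but the bookkeeping: pinning down the equivalences ``reduced $\Leftrightarrow$ direct sum'' and ``universal $\Leftrightarrow$ full sum'' together with the reduction to all $c_i \neq 0$, and handling the $r = \infty$ conventions cleanly, since there the uniform dimension-ratio computation is unavailable.
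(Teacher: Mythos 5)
Your proof is correct. The $r=\infty$ case is handled essentially as in the paper (at most one linear term if reduced; a linear term forces universality; no linear term confines the image to a finite-dimensional $k^p$-subspace). For $r<\infty$, however, your route is genuinely different: the paper first reduces to the \emph{homogeneous} case by substituting, for each variable $Y_i$, an explicit universal reduced homogeneous $p$-polynomial $F_{N-d_i}$ built from a $p$-basis (checking that this substitution preserves reducedness, universality, and the quantity $\sum_i \deg_{X_i}(P)^{-r}$), and then counts monomials against $[k:k^{p^N}]=p^{rN}$. You skip the homogenization entirely and compute directly with the additive subgroups $A_i = c_i k^{p^{n_i}}$, observing that $\dim_{k^{p^N}} A_i = p^{r(N-n_i)}$ so that $\sum_i \deg_{X_i}(P)^{-r}$ is literally $\bigl(\sum_i \dim A_i\bigr)/\dim k$; the two dictionary entries ``reduced $\Leftrightarrow$ the sum $\sum_i A_i$ is direct'' and ``universal $\Leftrightarrow$ $\sum_i A_i = k$'' then make both (i) and (ii) instances of elementary linear algebra over $k^{p^N}$. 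Your version is shorter and makes the meaning of the exponent $-r$ transparent; what the paper's substitution buys is the explicit family $F_m$, which it reuses later (e.g.\ in the proofs of Lemma \ref{redexttouniv} and Lemma \ref{mapofhomssurj}), so the homogenization is not wasted effort globally even though it is avoidable here. Your explicit handling of absent variables (reducedness, or the convention $0^{-r}=\infty$ under hypothesis (\ref{monppolynuniveqn2}), forces every $c_i\neq 0$) is a point the paper leaves implicit, and it is needed for the equivalence ``reduced $\Leftrightarrow$ direct sum'' to hold, so it is good that you addressed it.
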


\begin{proof}
When $k$ has infinite degree of imperfection, this just says that a reduced monogeneous $p$-polynomial can contain at most one term of degree $1$ -- which is clear -- and that it is universal if and only if there is a term of degree $1$. Clearly, this is sufficient for universality. For necessity, note that if no variable appears with degree $1$, then $P(k, \dots, k) \subset k$ is contained in a finite-dimensional $k^p$-subspace of $k$, hence cannot be all of $k$. 

Now suppose that $k$ has degree of imperfection $r < \infty$, and choose a $p$-basis $\lambda_1, \dots, \lambda_r$ of $k$. For each $m \geq 0$, let $F_m \in k[X_f \mid f \in I_m]$ be the monogeneous $p$-polynomial $$F_m := \sum_{f \in I_m} \left(\prod_{i=1}^r \lambda_i^{f(i)} \right)X_f^{p^m}.$$ Then $F_m$ is reduced and universal.

Now let $P \in k[Y_1, \dots, Y_n]$ be a monogeneous $p$-polynomial, let $p^d_i := {\rm{deg}}_{Y_i}(P)$ for each $i$, and let $N := \max_i d_i$. For each variable $Y_i$, replace $Y_i$ in $P$ by $F_{m_i}(X_{i, f} \mid f \in I_{m_i})$, where $m_i := N - d_i$. This yields a new $p$-polynomial $\tilde{P}$ that is homogeneous. Furthermore, because each $F_m$ is reduced and universal, one has that $\tilde{P}$ is reduced, respectively universal, if and only if $P$ is reduced, respectively universal. Further, if we denote the left side of (\ref{monppolynuniveqn1}) by $\phi(P)$, then $\phi(P) = \phi(\tilde{P})$. Thus both parts (i) and (ii) are reduced to the case in which $P$ is homogeneous, say of degree $p^N$.

(i) This says that a reduced homogeneous $p$-polynomial of degree $p^N$ has $\leq p^{rN}$ terms, with equality if and only if it is universal. Both assertions follow from the fact that
\begin{equation}
\label{monppolynuniveqn3}
[k: k^{p^N}] = \prod_{i=0}^{N-1} [k^{p^i}: k^{p^{i+1}}] = \prod_{i=0}^{N-1} [k: k^p] = p^{rN}.
\end{equation}

(ii) The only if direction follows from part (i). For the if direction, suppose that $P$ is not reduced. The condition (\ref{monppolynuniveqn2}) says that $P$ contains exactly $p^{rN}$ terms. The nonreducedness of $P$ says that the coefficients of $P$ are linearly dependent over $k^{p^N}$. Equation (\ref{monppolynuniveqn3}) then implies that these coefficients do not span $k$ over $k^{p^N}$ -- that is, $P$ is not universal.
\end{proof}

\begin{proposition}
\label{universaloverextensions}
Let $k$ be a field of characteristic $p$ and degree of imperfection $r \leq \infty$, let $P \in k[X_1, \dots, X_n]$ be a monogeneous $p$-polynomial, and let $K/k$ be an extension field.
\begin{itemize}
\item[(i)] If $K/k$ is separable, then $P$ is reduced over $k$ if and only if it is reduced over $K$.
\item[(ii)] If $K/k$ is separable and $K$ also has degree of imperfection $r$ $($for instance, if $K/k$ is separable algebraic$)$, then $P$ is universal over $k$ if and only if it is universal over $K$.
\end{itemize}
\end{proposition}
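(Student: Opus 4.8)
The engine for both parts is the standard fact that separability of $K/k$ is equivalent to the linear disjointness of $K$ and $k^{1/p^n}$ over $k$ (inside a common perfect closure) for every $n \geq 0$. The concrete consequence I will use repeatedly is: if $c_1, \dots, c_m \in k$ are linearly independent over $k^{p^n}$, then they remain linearly independent over $K^{p^n}$. Indeed, a putative relation $\sum_i \beta_i^{p^n} c_i = 0$ with $\beta_i \in K$ becomes, after extracting $p^n$-th roots, a $K$-linear relation $\sum_i \beta_i c_i^{1/p^n} = 0$ among the elements $c_i^{1/p^n} \in k^{1/p^n}$; since these are $k$-linearly independent (that is exactly $k^{p^n}$-independence of the $c_i$), linear disjointness forbids it. The reverse implication is automatic from $k^{p^n} \subseteq K^{p^n}$. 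Note also that one direction of (i) is trivial, namely that reducedness over $K$ implies reducedness over $k$, since a nonzero $k$-solution is a nonzero $K$-solution; the content of (i) is the converse, and of (ii) both directions require the hypotheses.

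I would first settle the \emph{homogeneous} case, $P = \sum_i c_i X_i^{p^N}$ with a single top degree $p^N$. Over any field $F \supseteq k$, a nonzero solution of $P$ is the same datum as a nontrivial $F^{p^N}$-linear relation $\sum_i c_i y_i = 0$ (with $y_i = x_i^{p^N}$), so $P$ is reduced over $F$ precisely when $c_1,\dots,c_n$ are $F^{p^N}$-linearly independent, and universal over $F$ precisely when they span $F$ over $F^{p^N}$. Reducedness therefore transfers between $k$ and $K$ by the displayed independence statement, with no hypothesis on degrees of imperfection; this proves (i) in the homogeneous case. For universality, when $K$ also has degree of imperfection $r < \infty$ one has $[k:k^{p^N}] = [K:K^{p^N}] = p^{rN}$ by the index computation in the proof of Proposition \ref{monppolynuniv}. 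Hence a $k^{p^N}$-basis chosen among the $c_i$ stays $K^{p^N}$-independent (reducedness transfer) and, being of cardinality $p^{rN} = [K:K^{p^N}]$, remains a $K^{p^N}$-basis; it follows that the $c_i$ span $k$ over $k^{p^N}$ if and only if they span $K$ over $K^{p^N}$, which is universality over $k$ versus over $K$.

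Part (ii) in general I would reduce to this homogeneous case using the substitution from the proof of Proposition \ref{monppolynuniv}: replacing each $X_i$ by the reduced universal homogeneous $p$-polynomial $F_{m_i}$ (with $m_i = N - \deg$-exponent of $X_i$) yields a homogeneous $\tilde P$ with the property that, over any field on which every $F_{m_i}$ is \emph{bijective}, $\tilde P$ is reduced (resp.\ universal) if and only if $P$ is, because $\tilde P$ is $P$ precomposed with a bijective additive map. The $F_{m_i}$ are reduced and universal over $k$ by construction, and, crucially using that $K/k$ is separable with the \emph{same} finite degree of imperfection, they remain so over $K$ by the homogeneous case just proved; thus they are bijective over both $k$ and $K$, and universality of $P$ over $k$ is equivalent to that of $\tilde P$ over $k$, hence (homogeneous case) over $K$, hence to that of $P$ over $K$. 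The remaining case $r = \infty$ of (ii) is immediate: by the infinite-degree-of-imperfection analysis in Proposition \ref{monppolynuniv}, universality of a monogeneous $p$-polynomial is then equivalent to the presence of a degree-one term, a condition on the coefficients that is visibly unchanged under field extension.

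The genuine obstacle is (i) for a monogeneous $P$ whose variables carry \emph{different} degrees, precisely the situation in which the homogenizing substitution is unavailable: a separable but transcendental $K/k$ may have strictly larger degree of imperfection, so the $F_{m_i}$ built from a $p$-basis of $k$ fail to be surjective over $K$ and cannot be inverted. My plan is to argue by induction on $N = \max_i \deg_{X_i}(P)$, peeling off one Frobenius level at a time. Given a nonzero $K$-solution $\sum_i c_i b_i^{p^{d_i}} = 0$, I separate the untwisted terms ($d_i = 0$) from the rest and reduce modulo the $k$-span $kK^p$ of the $p$-th powers; this produces the relation $\sum_{d_i = 0} c_i \overline{b_i} = 0$ in the $k$-vector space $K/kK^p$. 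Applying a $k$-linear functional to this relation yields a $k$-solution of the degree-one part $\sum_{d_i=0} c_i X_i$, which is nonzero — giving a nonzero $k$-solution of $P$ and finishing the step — unless every $b_i$ with $d_i = 0$ already lies in $kK^p$; in the latter case one rewrites all terms as $k$-combinations of $p$-th powers, extracts a $p$-th root, and descends to the separable extension $K^{1/p}/k^{1/p}$, where the maximal degree has dropped to $N-1$. The delicate point — and the one place where separability is genuinely indispensable, as the inseparable example $X^p - tY^p$ over $\F_p(t)$ shows — is ensuring that the solution so produced descends to $k$ rather than merely to $k^{1/p}$; organizing this descent correctly, via the linear disjointness of $K$ and $k^{1/p}$ over $k$, is the crux of the whole proposition, and I expect it to require the most care.
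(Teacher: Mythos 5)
Your part (ii) is correct and is, in substance, the paper's argument: the paper reduces to the reduced case via Proposition \ref{simplifyppoly} and then reads off universality from the count of Proposition \ref{monppolynuniv}(i) (for reduced monogeneous $P$, universality is equivalent to $\sum_i \deg_{X_i}(P)^{-r} = 1$, a condition that only sees the degrees and $r$ once reducedness transfers), while you reduce to the homogeneous case by the substitution from the proof of Proposition \ref{monppolynuniv}; these are interchangeable, and your homogeneous analysis via linear disjointness together with $[k:k^{p^N}] = [K:K^{p^N}] = p^{rN}$ is sound, as is the $r = \infty$ remark. For part (i), however, the paper offers no proof at all -- it is cited as \cite[Lem.\,2.8]{rosmodulispaces} -- and your attempt to prove it from scratch succeeds only in the homogeneous case.

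The general monogeneous case of (i) is where the genuine gap lies, and it is exactly at the point you flag. In your ``case B'' (every $b_i$ with $d_i = 0$ lies in $kK^p$), extracting a $p$-th root produces a nontrivial zero, with coordinates in $K \subset k^{1/p}K$, of a new monogeneous $p$-polynomial $Q$ with coefficients in $k^{1/p}$ and maximal degree $p^{N-1}$. To run the induction over the separable extension $k^{1/p}K/k^{1/p}$ you would need $Q$ to be \emph{reduced over $k^{1/p}$}, and this does not follow from reducedness of $P$ over $k$: raising a nontrivial $k^{1/p}$-zero of $Q$ to the $p$-th power yields only a relation $\sum_{d_i = 0} c_i u_i + \sum_{d_i \geq 1} c_i x_i^{p^{d_i}} = 0$ with $u_i \in k$ but $x_i \in k^{1/p}$, which is not a $k$-zero of $P$. (Compare $P = X^p + cY^p$ with $c \notin k^p$, which is reduced, whereas its de-twisted companion $X + cY$ is not; lowering Frobenius exponents destroys reducedness.) So the ``descent from $k^{1/p}$ to $k$'' is not a technical refinement to be organized later -- it is the entire content of the lemma, and the peeling strategy does not supply it. The standard route, and presumably that of the cited lemma, is instead to reduce to finitely generated (hence separably generated) extensions: the finite separable layer is handled by linear disjointness exactly as in your homogeneous case, using that $K^{1/p^m} = K \otimes_k k^{1/p^m}$ for $K/k$ finite separable, and the purely transcendental layer $k(t)/k$ is handled by the leading-coefficient argument of Lemma \ref{leadcoeff}: after clearing denominators, a nontrivial polynomial zero of $P$ that is not constant has leading coefficient $P(c_1,\dots,c_n)$ for some nontrivial tuple in $k^n$, contradicting reducedness.
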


\begin{proof}
Assertion (i) is \cite[Lem.\,2.8]{rosmodulispaces}. For (ii), Proposition \ref{simplifyppoly} allows us to make a change of variables and thereby assume that $P \in k[X_1, \dots, X_n]$ is reduced (possibly after replacing $n$ by a smaller value and renaming). Then $P$ is also reduced over $K$ by (i), and because $k$ and $K$ have the same degree of imperfection, the assertion follows from Proposition (\ref{monppolynuniv})(i).
\end{proof}

\section{Ext groups and universal $p$-polynomials}

In this section we relate universal $p$-polynomials to the groups $\Ext^i(\Ga, U)$ for $i = 1, 2$ with $U$ unipotent. This will play a crucial role in our characterization in \S \ref{asunivsubgpsvecgpssection} of those $p$-polynomials defining permawound groups.

We begin with the following simple lemma.

\begin{lemma}
\label{leadcoeff}
Let $F \in k[X_1, \dots, X_n]$ be a reduced $p$-polynomial with principal part $P$, and suppose given $Y_1, \dots, Y_n \in k[T]$ not all constant. Then the leading coefficient of $F(Y_1, \dots, Y_n) \in k[T]$ is of the form $P(c_1, \dots, c_n) \neq 0$ for some $c_i \in k$.
\end{lemma}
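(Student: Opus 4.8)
The plan is to track the highest power of $T$ in $F(Y_1, \dots, Y_n)$ and to identify its coefficient explicitly, invoking the reducedness of $P$ at two separate moments. Write $F = \sum_{i=1}^n \sum_{j=0}^{N_i} c_{i,j} X_i^{p^j}$ with principal part $P = \sum_{i \in I} c_{i, n_i} X_i^{p^{n_i}}$, where $I$ is the set of variables occurring in $F$ and $n_i$ is the top exponent. First I would record a consequence of reducedness: every variable must occur in $P$, i.e.\ $I = \{1, \dots, n\}$ and $c_{i,n_i} \neq 0$ for all $i$. Indeed, if some $X_j$ did not appear in $P$, then the coordinate vector $e_j$ would be a nontrivial zero of the monogeneous polynomial $P$, contradicting reducedness.

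Next I would analyze degrees in $T$. For each $i$ set $d_i := \deg_T(Y_i)$ and let $c_i \in k$ be the leading coefficient of $Y_i$, so that $Y_i = c_i T^{d_i} + (\text{lower order})$ when $Y_i \neq 0$. Since we are in characteristic $p$, the Frobenius identity gives $Y_i^{p^j} = c_i^{p^j} T^{d_i p^j} + (\text{strictly lower order})$, so the monomial $c_{i,j} Y_i^{p^j}$ has $T$-degree $d_i p^j$. For a fixed $i$ with $d_i > 0$ these degrees are strictly increasing in $j$, so the unique top-degree contribution of the $i$-th variable comes from its principal-part term $c_{i,n_i} Y_i^{p^{n_i}}$, of degree $d_i p^{n_i}$; if $d_i = 0$ the $i$-th variable contributes only in degree $0$.

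Since the $Y_i$ are not all constant, some $d_i > 0$, so $D := \max\{ d_i p^{n_i} : d_i > 0 \}$ is a well-defined positive integer; let $S := \{ i : d_i > 0, \ d_i p^{n_i} = D \}$, a nonempty set. By the degree analysis no monomial of $F(Y)$ has $T$-degree exceeding $D$, and the coefficient of $T^D$ receives contributions only from the principal-part terms indexed by $S$, giving
\[
[T^D]\, F(Y_1, \dots, Y_n) = \sum_{i \in S} c_{i,n_i} c_i^{p^{n_i}} = P(c_1', \dots, c_n'),
\]
where $c_i' := c_i$ for $i \in S$ and $c_i' := 0$ otherwise. This already exhibits the claimed form $P(c_1', \dots, c_n')$.

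Finally I would invoke reducedness once more to see that this coefficient is nonzero, which simultaneously confirms that $D$ is the actual degree and that $P(c_1', \dots, c_n')$ is the genuine leading coefficient. If $P(c_1', \dots, c_n') = 0$, then reducedness forces $c' = \vec{0}$; but for $i \in S$ we have $c_i' = c_i \neq 0$ (the leading coefficient of the nonconstant $Y_i$) and $S \neq \emptyset$, a contradiction. The only real subtlety is bookkeeping---pinning down exactly which terms reach top $T$-degree and handling the constant $Y_i$---while the two uses of reducedness (to force all variables into $P$, and to rule out cancellation at the top degree) are the conceptual crux.
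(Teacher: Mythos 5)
Your proof is correct and follows essentially the same route as the paper: identify $D$ as the maximum of $p^{n_i}\deg(Y_i)$, observe that only the principal-part terms of the variables attaining this maximum can contribute in degree $D$, write the resulting coefficient as $P$ evaluated at the leading coefficients (padded by zeros), and use reducedness to rule out cancellation. Your preliminary observation that reducedness forces every variable to occur in $P$ is a nice explicit touch that the paper leaves implicit.
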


\begin{proof}
Let $d_i := {\rm{deg}}_{X_i}(F)$ and $D := \max_i d_i{\rm{deg}}(Y_i(T))$. Then $D > 0$, and $${\rm{deg}}(F(Y_1(T), \dots Y_n(T)) \leq D.$$ If we let $c_i$ denote the coefficient of $T^{D/d_i}$ in $Y_i(T)$, then some $c_j \neq 0$ (because the $Y_i$ are not all constant), and $T^D$ appears in $F(Y_1(T), \dots, Y_n(T))$ with coefficient $P(c_1, \dots, c_n) \neq 0$, where the nonvanishing is because $P$ is reduced.
\end{proof}

Let $\mathscr{F}\colon Sh_{k} \rightarrow {\rm{Ab}}$ be an additive functor from the category of sheaves on the (big) fppf site of ${\rm{Spec}}(k)$ to the category of abelian groups. Then the natural $k$-action on $\Ga$ via $\lambda \cdot X = \lambda X$ for $X \in \Ga$ induces a $k$-vector space structure on $\mathscr{F}(\Ga)$. In particular, we apply this when $\mathscr{F} := \Ext^1(\cdot, \mathscr{G})$ for $\mathscr{G}$ a sheaf. 

The following result allows us to give an intrinsic characterization of those hypersurface groups defined by a reduced $p$-polynomial with universal principal part. This will, among other things, play a crucial role in our proof of the criterion for permawoundness (Theorem \ref{asunivsmoothcrit}).

\begin{proposition}
\label{univiffH^1primfd}
Let $F \in k[X_1, \dots, X_n]$ $($with $n > 0$$)$ be a reduced $p$-polynomial with principal part $P$, and let $U := \{F = 0\} \subset \Ga^n$ be the $k$-group scheme it defines. Then $P$ is universal if and only if the $k$-vector space $\Ext^1(\Ga, U)$ is finite-dimensional.
\end{proposition}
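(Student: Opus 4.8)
The plan is to compute $\Ext^1(\Ga, U)$ explicitly using the defining short exact sequence for $U$ and relating the problem to $p$-polynomials modulo additive polynomials in $k[T]$. Since $U = \{F = 0\} \subset \Ga^n$, we have the short exact sequence
\[
0 \longrightarrow U \longrightarrow \Ga^n \xlongrightarrow{F} \Ga \longrightarrow 0,
\]
where surjectivity holds because $F$, being reduced, has nonzero principal part and hence (being separable, as we may assume by smoothness, or more carefully by reducedness) is fppf surjective onto $\Ga$. Applying $\Hom(\Ga, -)$ and $\Ext^1(\Ga, -)$ gives a long exact sequence
\[
\Hom(\Ga, \Ga^n) \xrightarrow{F_*} \Hom(\Ga, \Ga) \longrightarrow \Ext^1(\Ga, U) \longrightarrow \Ext^1(\Ga, \Ga^n) \xrightarrow{F_*} \Ext^1(\Ga, \Ga).
\]
The strategy is to identify these $\Hom$ and $\Ext^1$ groups with explicit spaces of additive polynomials and then read off finite-dimensionality of $\Ext^1(\Ga, U)$.

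The key computational inputs are the standard identifications: $\Hom(\Ga, \Ga) \cong k\{T\}$, the (noncommutative) ring of additive $p$-polynomials $\sum c_j T^{p^j}$ in one variable, and $\Ext^1(\Ga, \Ga) \cong k\{T\}/\bigl(k\{T\}\cdot F + \text{(something)}\bigr)$ — more precisely, by the theory in \cite{demazuregabriel} (Ch.\,III), symmetric $2$-cocycles modulo coboundaries, which for $\Ga$ is computed by the quotient of additive-up-to-$p$-power polynomials. The cleanest route is to use that $\Ext^1(\Ga, \Ga)$ is the cokernel of the Frobenius-minus-identity-type map, but the essential point I need is how the map $F_* \colon \Ext^1(\Ga, \Ga^n) \to \Ext^1(\Ga, \Ga)$ behaves. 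First I would analyze the connecting-map portion: the image of $\Hom(\Ga,\Ga) \to \Ext^1(\Ga, U)$ is $\Hom(\Ga,\Ga)/F_*\Hom(\Ga,\Ga^n)$, and then $\Ext^1(\Ga, U)$ is an extension of $\ker\bigl(\Ext^1(\Ga,\Ga^n) \to \Ext^1(\Ga,\Ga)\bigr)$ by this cokernel.

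The crucial link to universality comes through Lemma \ref{leadcoeff}: for $Y_1, \dots, Y_n \in k[T]$ not all constant, the leading coefficient of $F(Y_1, \dots, Y_n)$ is a nonzero value $P(c_1, \dots, c_n)$ of the principal part. This lemma controls the image of the composition maps, and I expect universality of $P$ to be exactly the condition that makes a certain cokernel finite-dimensional: when $P$ is universal, every element of $k$ (as a leading coefficient) is hit, which bounds the ``degrees'' that survive in the quotient, whereas non-universality leaves infinitely many independent classes (indexed by the cosets of $k$ modulo the values of $P$, combined with arbitrarily large degrees). Concretely, I anticipate showing that $\Ext^1(\Ga, U)$ is finite-dimensional if and only if the map sending an additive polynomial $g \in k\{T\}$ to (the class of) $F \cdot g$ has cokernel of finite dimension, and that Lemma \ref{leadcoeff} reduces this to whether the leading coefficients $P(c_1,\dots,c_n)$ exhaust $k$, i.e.\ whether $P$ is universal.

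The main obstacle will be correctly identifying and computing $\Ext^1(\Ga, \Ga)$ and the induced map $F_*$ on it, since $\Ext^1(\Ga,\Ga)$ is infinite-dimensional and its description via $2$-cocycles is somewhat delicate in characteristic $p$. I would handle this by reducing modulo the known structure: the $k$-vector space $\Ext^1(\Ga,\Ga^n)$ decomposes as $n$ copies of $\Ext^1(\Ga,\Ga)$, and the map $F_*$ is given by postcomposition with the components of $F$. The heart of the argument is then a degree/leading-term analysis showing that the cokernel of $F_*$ on the relevant quotient is finite-dimensional precisely when $P$ is universal — this is where Lemma \ref{leadcoeff} does the real work, translating the algebraic condition on $F$ acting on polynomials into the surjectivity condition defining universality of $P$. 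I expect the finite-dimensionality of the kernel term $\ker(F_*)$ to be comparatively routine (it will be governed by the reducedness of $P$, which forces $F_*$ to be injective on $\Hom$'s up to a finite-dimensional discrepancy), so that the entire finiteness of $\Ext^1(\Ga, U)$ hinges on the cokernel, hence on universality.
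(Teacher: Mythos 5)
Your core computation --- identifying the interesting part of $\Ext^1(\Ga, U)$ with the cokernel of $F_*\colon \Hom(\Ga,\Ga^n)\to\Hom(\Ga,\Ga)$ and using Lemma \ref{leadcoeff} to tie finite-dimensionality of that cokernel to universality of $P$ --- is exactly the engine of the paper's proof, and both directions of your leading-coefficient analysis (infinitely many independent classes $T^{p^m}$ when $P$ misses a value; degree reduction below $\max_i\deg_{X_i}(F)$ when $P$ is universal, plus $[k:k^{p^i}]<\infty$) are right in outline. But there is a genuine gap in how you propose to dispose of the other term of the long exact sequence. You leave $\Ext^1(\Ga,U)$ as an extension of a subgroup of $\ker\bigl(F_*\colon\Ext^1(\Ga,\Ga^n)\to\Ext^1(\Ga,\Ga)\bigr)$ by the $\Hom$-cokernel, and you assert that this kernel term will be ``comparatively routine'' and controlled by reducedness. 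That claim is unsupported: $\Ext^1(\Ga,\Ga)$ is an infinite-dimensional space of Witt-type extensions, the behaviour of $F_*$ on it is exactly the delicate computation you flag as the main obstacle, and reducedness of $P$ says nothing obvious about injectivity of $F_*$ on $\Ext^1$'s. If that kernel term were infinite-dimensional and the image of $\Ext^1(\Ga,U)$ in it were large, the whole equivalence could fail in the universal direction.

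The missing idea is that one never needs to touch $\Ext^1(\Ga,\Ga)$ at all: the connecting map $\Hom(\Ga,\Ga)\to\Ext^1(\Ga,U)$ is \emph{surjective}. The paper gets this from a Verschiebung argument. Any extension $E$ of $\Ga$ by $U$ is a scheme (affine descent), and since $U$ is semiwound with vanishing Verschiebung, $E$ has vanishing Verschiebung (Lemma \ref{extGavertriv}); then Proposition \ref{splitvecsubgps} says that any Verschiebung-trivial extension of a group by $\Ga^n$ splits, so $i_*(E)=0$ in $\Ext^1(\Ga,\Ga^n)$. Hence the map $\Ext^1(\Ga,U)\to\Ext^1(\Ga,\Ga^n)$ is zero and $\Ext^1(\Ga,U)$ is precisely $\coker(F_*)$ on $\Hom$'s, after which your leading-coefficient analysis carries the day. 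Without this (or some substitute computation of $\ker F_*$ on $\Ext^1(\Ga,\Ga^n)$, which you have not supplied), the proof is incomplete. One further small omission: in the universal direction you should dispose of the case of infinite degree of imperfection separately (there $P$ universal forces $F$ linear in one variable and $U=0$ by Proposition \ref{monppolynuniv}), since the final dimension count uses $[k:k^{p^i}]<\infty$.
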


\begin{proof}
The group $U$ is defined by the exact sequence
\[
0 \longrightarrow U \xlongrightarrow{i} \Ga^n \xlongrightarrow{F} \Ga \longrightarrow 0.
\]
By the effectivity of affine descent for relatively affine schemes, every element of $\Ext^1(\Ga, U)$ is represented by a scheme $E$. Furthermore, $E$ has vanishing Verschiebung by Lemma \ref{extGavertriv}. Therefore, by Proposition \ref{splitvecsubgps}, $i_*(E) = 0 \in \Ext^1(\Ga, \Ga^n)$. It follows that $\Ext^1(\Ga, U)$ may be identified with the cokernel of the map $F_*\colon \Hom(\Ga, \Ga^n) \rightarrow \Hom(\Ga, \Ga)$. Furthermore, identifying elements of $\Hom(\Ga, \Ga)$ with $p$-polynomials $f \in k[T]$, the $k$-linear action is given by $(\lambda \cdot f)(T) := f(\lambda T)$ for $\lambda \in k$. 

First suppose that $P$ is not universal, and we will show that $\Ext^1(\Ga, U)$ is infinite-dimensional. We are free to multiply $F$ by a nonzero constant and thereby assume that $1 \notin P(k, \dots, k)$. Let $p^N := \max_i {\rm{deg}}_{X_i}(F)$. Then we claim that the classes of the $p$-polynomials $T^{p^m}$ in $\Ext^1(\Ga, U)$ are linearly independent over $k$ as $m$ varies over the integers $\geq N$. We need to show that, for any $\lambda_N, \dots, \lambda_s \in k$ with $\lambda_s \neq 0$, the $p$-polynomial $\sum_{i=N}^s \lambda_i^{p^i}T^{p^i}$ does not lie in $F_*(\Hom(\Ga, \Ga^n))$. That is, we need to show that one cannot write $$\sum_{i=N}^s \lambda_i^{p^i}T^{p^i} = F(Y_1(T), \dots, Y_n(T))$$ with $Y_i \in k[T]$ $p$-polynomials. If one does have a solution to this equation, then Lemma \ref{leadcoeff} implies that equating leading coefficients yields an equation $$P(c_1, \dots, c_n) = \lambda_s^{p^s}.$$ If $p^{d_i} := {\rm{deg}}_{X_i}(P)$, then by assumption $d_i \leq N \leq s$, so dividing by $\lambda^{p^s}$, one obtains that $$P(c_1/\lambda_s^{p^{s-d_1}}, \dots, c_n/\lambda_s^{p^{s-d_n}}) = 1,$$ in violation of the assumption that $1 \notin P(k, \dots, k)$. This completes the proof of the if direction.

Conversely, suppose that $P$ is universal, and we will prove that $\Ext^1(\Ga, U)$ is finite-dimensional. If $k$ has infinite degree of imperfection, then $F \in k[X]$ has degree $1$ by Proposition \ref{monppolynuniv}, hence $U = 0$ and the assertion is immediate, so assume that $k$ has finite degree of imperfection. Let $p^N := \max_i {\rm{deg}}_{X_i}(F)$. Then we claim that every element of $\Ext^1(\Ga, U)$ is represented by a $p$-polynomial of degree $< p^N$. Assuming this, we are done because the $k$-linear action of $\lambda \in k$ sends $a_iT^{p^i}$ to $a_i\lambda^{p^i}T^{p^i}$, and $[k: k^{p^i}] < \infty$ because $k$ has finite degree of imperfection.

To prove that every element of $\Ext^1(\Ga, U)$ is indeed the image of a $p$-polynomial of degree $< p^N$, we proceed by induction on the degree, the degree $< p^N$ case being trivial. So suppose given $m \geq N$. It suffices to show that every monomial $aT^{p^m}$ with $a \in k$ is equivalent modulo $F_*(\Hom(\Ga, \Ga^n))$ to a $p$-polynomial of degree $< p^N$. Because $P$ is universal, we may choose $c_1, \dots, c_n \in k$ such that $P(c_1, \dots, c_n) = a$. Then consider $F(Y_1(T), \dots, Y_n(T))$, where $Y_i(T) := c_iT^{p^{m-d_i}}$. Then the leading term of $F(Y_1(T), \dots, Y_n(T))$ is $P(c_1, \dots, c_n)T^{p^m} = aT^{p^m}$, so that $aT^{p^m} - F(Y_1(T), \dots, Y_n(T))$ is a $p$-polynomial of degree $< p^m$, and therefore by induction is equivalent modulo $F_*(\Hom(\Ga, \Ga^n))$ to a $p$-polynomial of degree $< p^N$. It follows that the same is true of $aT^{p^m}$, so the proof is complete.
\end{proof}

We will require the following technical lemma which may be thought of as a sort of division algorithm.

\begin{lemma}
\label{divunivppoly}
Let $F \in k[X_1, \dots, X_n]$ be a reduced $p$-polynomial whose principal part $$P(X_1, \dots, X_n) = \sum_{i=1}^n a_iX_i^{p^{d_i}}$$ is universal. Let $R := k[Y_1, \dots, Y_m]$, and let $A \subset R$ denote the subgroup $F(R, \dots, R)$. Then for every $g \in R$ with vanishing constant term, there is a unique $h \in R$ with vanishing constant term such that $g \equiv h \pmod{A}$ and such that, for every $1 \leq i \leq n$ and every $m$-tuple $\vec{s}$ of nonnegative integers, the coefficient of $(\vec{Y}^{\vec{s}})^{p^{d_i}}$ in $h$ lies in $P(k, \dots, 0, \dots, k)$, where the $0$ is in the $i$th coordinate. Further, if $S_g$ denotes the set of ordered $m$-tuples $\vec{s} \in \N^m$ such that the monomial $\vec{Y}^{\vec{s}}$ appears with nonzero coefficient in $g$, then $S_h \subset p^{-\infty} S_g := \cup_{t \geq 0} p^{-t}S_g$.
\end{lemma}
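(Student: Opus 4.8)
The plan is to reduce everything to a single direct-sum decomposition of $k$ and then run a degree-descending normalization algorithm, checking termination and additivity via two structural facts.

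First I would pass to the subgroup $R_0 \subset R$ of polynomials with vanishing constant term. Writing $F_i(X) := \sum_j c_{i,j}X^{p^j}$, additivity of $F$ gives $F(r_1,\dots,r_n) = \sum_i F_i(r_i)$, and separating constant terms shows $A \cap R_0 = A_0 := \sum_i F_i(R_0)$, a subgroup generated by the elements $F_i(c\,\vec Y^{\vec s})$ with $c \in k$ and $\vec s \neq \vec 0$. Since both $g$ and the desired $h$ lie in $R_0$, the assertion becomes: each coset of $A_0$ in $R_0$ has a unique representative $h$ whose coefficient of $\vec Y^{\vec t}$ lies in $\bigcap_{i \in D(\vec t)} P_i$ for every $\vec t$, where $D(\vec t) := \{i : p^{d_i} \mid \vec t\}$ and $P_i := P(k,\dots,0,\dots,k)$. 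The key algebraic input is that, putting $V_i := a_i k^{p^{d_i}} \subseteq k$, reducedness of $P$ says precisely that $\sum_i V_i$ is a direct sum while universality says it equals $k$; hence $k = \bigoplus_{i=1}^n V_i$. Consequently $P_i = \bigoplus_{j \neq i} V_j$, and for any subset $D$ one gets $\bigcap_{i \in D} P_i = \bigoplus_{j \notin D} V_j$ together with the crucial splitting $k = \bigl(\bigcap_{i \in D} P_i\bigr) \oplus \bigl(\sum_{i \in D} V_i\bigr)$. Thus, for a fixed monomial $\vec Y^{\vec t}$, its coefficient can be adjusted by an arbitrary element of $\sum_{i \in D(\vec t)} V_i$, using the top terms $a_i c^{p^{d_i}}$ of $F_i(c\,\vec Y^{\vec t/p^{d_i}})$ for $i \in D(\vec t)$, and there is a unique adjustment placing it into the normalized subgroup $\bigcap_{i \in D(\vec t)} P_i$.

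For existence I would process the monomials of $g$ in order of strictly decreasing total degree. At a monomial $\vec Y^{\vec t}$ I subtract the unique combination $\sum_{i \in D(\vec t)} F_i(c_i\, \vec Y^{\vec t/p^{d_i}}) \in A_0$ that moves its coefficient into $\bigcap_{i \in D(\vec t)} P_i$; since the top term of $F_i(c_i\,\vec Y^{\vec t/p^{d_i}})$ is supported on $\vec Y^{\vec t}$ while its remaining terms $\vec Y^{p^j(\vec t/p^{d_i})}$ have strictly smaller total degree, this step normalizes the coefficient at $\vec Y^{\vec t}$ without disturbing any monomial of equal or higher degree. Every exponent produced has the form $p^{-(d_i-j)}\vec t$, so all monomials ever appearing lie in $p^{-\infty}S_g \cap \N^m$; this set is finite (for a fixed $\vec s \neq \vec 0$ only finitely many $p^{-t}\vec s$ are integral), which forces termination after finitely many steps and simultaneously yields $S_h \subseteq p^{-\infty}S_g$.

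This procedure defines a map $\rho\colon R_0 \to \mathcal N$ onto the normalized subgroup $\mathcal N$, with $g - \rho(g) \in A_0$, so for uniqueness it suffices to prove $\mathcal N \cap A_0 = 0$. Here I would note that $\rho$ is additive: at each monomial the passage to the normalized coefficient is the projection of $k = \bigl(\bigcap P_i\bigr) \oplus \bigl(\sum V_i\bigr)$ onto the first factor, and the subtracted correction depends additively on the complementary component (recovering $c_i$ from $a_i c_i^{p^{d_i}}$ is an inverse Frobenius, hence additive). Moreover $\rho$ fixes $\mathcal N$ pointwise, and it kills each generator $F_i(c\,\vec Y^{\vec s})$ of $A_0$: for such an element the top monomial has coefficient $a_i c^{p^{d_i}} \in V_i$, so directness of $\bigoplus V_j$ forces the algorithm to subtract back exactly $F_i(c\,\vec Y^{\vec s})$, leaving $0$. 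Being additive and vanishing on generators, $\rho$ vanishes on all of $A_0$; hence any $w \in \mathcal N \cap A_0$ satisfies $w = \rho(w) = 0$.

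I expect the main obstacle to be the bookkeeping of cross-degree cascades: normalizing one coefficient creates corrections at lower degrees, which must themselves be normalized, and one must ensure both that the process stops and that the resulting $\rho$ is a well-defined additive map. Both issues are dissolved by the two structural facts isolated above: the finiteness of $p^{-\infty}S_g \cap \N^m$ confines the whole cascade to a finite monomial set and gives the exponent bound, while the decomposition $k = \bigoplus_i V_i$ makes the monomialwise normalization an honest additive projection and pins down the unique correction used at every step.
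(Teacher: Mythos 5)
Your proof is correct, and its structural heart coincides with the paper's: reducedness plus universality of $P$ give the unique representation $\alpha = P(c_1,\dots,c_n)$ of every $\alpha \in k$, which you repackage as the direct-sum decomposition $k = \bigoplus_i a_i k^{p^{d_i}}$, and existence is in both cases a degree-descending normalization confined to the finite set $p^{-\infty}S_g \cap \N^m$. The differences are in the bookkeeping. For existence, the paper chooses $h$ in the coset minimizing the well-founded invariant $(d(h),\phi(h))$ and derives a contradiction by stripping one bad component $r_i$ at a time, whereas you normalize the coefficient of each monomial in one step by projecting along $\bigoplus_{i \in D(\vec t)} V_i$; these are the same algorithm with different termination certificates. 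The uniqueness arguments genuinely diverge: the paper takes a normalized $h = F(G_1,\dots,G_n) \in A$ and exhibits a leading coefficient of the form $P(r_1,\dots,r_n)$ with $r_{i_0} \neq 0$, contradicting normalization, while you build the normalization map $\rho$ as an additive retraction of $R_0$ onto $\mathcal{N}$ that kills the generators $F_i(c\,\vec Y^{\vec s})$ of $A_0 = A \cap R_0$, so that $\mathcal{N} \cap A_0 = 0$. Your route buys the cleaner statement that $R_0 = \mathcal{N} \oplus A_0$ as abelian groups, at the cost of verifying that $\rho$ is well defined and additive; that verification does require running the algorithm for $f$, $g$, and $f+g$ over a common finite monomial set in a common order (harmless, since monomials with zero coefficient contribute zero corrections), and each step is additive because the projection onto $\bigcap_{i \in D} P_i$ and the inverse of $c \mapsto a_i c^{p^{d_i}}$ on $V_i$ are both additive. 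The paper's leading-coefficient computation is shorter but more ad hoc; either argument is complete.
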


\begin{proof}
Because $P$ is reduced and universal, every $\alpha \in k$ may be written in the form $P(c_1, \dots, c_n)$ for unique $c_i \in k$. Let $\phi \colon k \rightarrow \N$ denote the function which sends $\alpha$ to $\#\{i \mid c_i \neq 0\}$. For $f \in k[Y_1, \dots, Y_m]$, let $d(f)$ denote the minimal positive integer $d$ such that $f$ contains a monomial of degree $d$ which violates the condition required of $h$ in the lemma. That is, for some $1 \leq i \leq n$, $f$ contains a monomial of degree $d$ of the form $(\vec{Y}^{\vec{s}})^{p^{d_i}}$ whose coefficient $b$ satisfies $b = P(c_1, \dots, c_n)$ with $c_i \neq 0$. If no such monomial exists, we define $d(f) := 0$. Next, define $\phi(f)$ to be the sum over all monomials $M$ in $f$ of degree $d(f)$ of $\phi(c_M)$, where $c_M$ is the coefficient of $M$ in $f$.

To prove the existence of $h$ as in the lemma, choose $h \equiv g \pmod{A}$ that minimizes $(d(h), \phi(h))$, ordered lexicographically, among those polynomials $\equiv g \pmod{A}$ such that only monomials with multidegree lying in $p^{-\infty}S_g$ appear in $h$. (In particular, $h$ has vanishing constant term.) Then we claim that $h$ has the property required by the lemma. If not, then for some nonzero $m$-tuple $\vec{s}$ of nonnegative integers, and some $1 \leq i \leq n$, the coefficient $b$ of $(\vec{Y}^{\vec{s}})^{p^{d_i}}$ in $h$ satisfies $b = P(r_1, \dots, r_n)$ with all $r_j \in k$ and $r_i \neq 0$. Choose such a monomial of degree $d(h)$. Consider $\tilde{h} := h - F(0, \dots, r_i\vec{Y}^{\vec{s}}, \dots, 0)$, where all terms are $0$ except for the term $r_i\vec{Y}^{\vec{s}}$ appearing in the $i$th place. Then $\tilde{h}$ only modifies $h$ by replacing the coefficient $b = P(r_1, \dots, r_n)$ with $\tilde{b} := P(r_1, \dots, 0, \dots, r_n)$ (where the $0$ is in the $i$th spot) and by adding terms of multidegree lying in $\cup_{t > 0} p^{-t}S_h \subset \cup_{t > 0} p^{-t}S_g$. Note in particular that $\phi(\tilde{b}) < \phi(b)$. We therefore see that either $d(\tilde{h}) < d(h)$ or that $d(\tilde{h}) = d(h)$ but $\phi(\tilde{h}) < \phi(h)$. In either case, this violates the minimality of $h$. We conclude that $h$ does indeed satisfy the property required by the lemma.

Finally, it remains to prove uniqueness of $h \equiv g \pmod{A}$ as in the lemma. For this, suppose given $h_1, h_2$ equivalent to $g$ and which both satisfy the property required by the lemma. By considering $h_1 - h_2$, we may assume that $g = 0$, so we have $h \in A$ satisfying the requirement of the lemma and we wish to show that $h = 0$. Suppose not. Because $h$ has vanishing constant term, it follows that we may write $h = F(G_1, \dots, G_n)$ with $G_i \in R$ not all constant. Suppose that $G_{i_0}$ is such that $p^{d_i}{\rm{deg}}(G_i)$ is maximized when $i = i_0$, and let $b\vec{Y}^{\vec{s}}$ be a monomial of degree ${\rm{deg}}(G_{i_0})$ appearing in $G_{i_0}$. Then the coefficient of $(\vec{Y}^{\vec{s}})^{p^{d_{i_0}}}$ appearing in $h = F(G_1, \dots, G_n)$ is $P(r_1, \dots, r_n)$ for some $r_i \in k$ with $r_{i_0} = b \neq 0$. This violates the required condition on $h$, so this contradiction shows that in fact $h = 0$, as desired.
\end{proof}

\begin{lemma}
\label{H^2=0affX}
For a commutative unipotent $k$-group scheme $U$ over a field $k$, and an affine $k$-scheme $X$, one has ${\rm{H}}^2(X, U) = 0$.
\end{lemma}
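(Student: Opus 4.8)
The plan is to reduce the whole statement, by dévissage, to the single computation that $H^i(X, \Ga) = 0$ for all $i \geq 1$ when $X$ is affine. This base case holds because the fppf cohomology of $\Ga$ agrees with the Zariski cohomology of the quasi-coherent sheaf $\calO_X$, which vanishes in positive degrees on an affine scheme. The engine of the dévissage is the elementary observation that the class $\mathcal{C}$ of commutative $k$-group schemes $G$ with $H^2(X, G) = 0$ for every affine $k$-scheme $X$ is closed under extensions: given $0 \to G' \to G \to G'' \to 0$ with $G', G'' \in \mathcal{C}$, the long exact cohomology sequence sandwiches $H^2(X, G)$ between $H^2(X, G') = 0$ and $H^2(X, G'') = 0$. (Throughout I take $U$ to be of finite type, as is standard for the algebraic groups in play.)

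First I would reduce to the case of vanishing Verschiebung. Since $U$ is commutative and unipotent, its Verschiebung $V$ is nilpotent; writing $I_j := \mathrm{im}(V^j \colon U^{(p^j)} \to U)$ yields a finite descending filtration $U = I_0 \supseteq I_1 \supseteq \dots \supseteq I_N = 0$ by closed $k$-subgroup schemes. Functoriality of $V$ identifies $I_{j+1}$ with $\mathrm{im}(V_{I_j})$, so each successive quotient $I_j/I_{j+1}$ has vanishing Verschiebung (cf. the descent of $V$ to a quotient used in Lemma \ref{extGavertriv}). By closure of $\mathcal{C}$ under extensions, it then suffices to treat a commutative unipotent finite type $k$-group scheme $G$ with vanishing Verschiebung.

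For such a $G$, by \cite[Ch.\,IV, \S3, Th.\,6.6]{demazuregabriel} there is a closed immersion $G \hookrightarrow \Ga^n$, and the quotient $\Ga^n/G$ is a vector group $\Ga^t$ by \cite[Th.\,B.2.5]{cgp}, being a smooth connected split unipotent group with vanishing Verschiebung. This produces a short exact sequence of fppf sheaves
\[
0 \longrightarrow G \longrightarrow \Ga^n \longrightarrow \Ga^t \longrightarrow 0,
\]
whose long exact sequence places $H^2(X, G)$ between $H^1(X, \Ga^t) = 0$ and $H^2(X, \Ga^n) = 0$, both vanishing by the base case. Hence $H^2(X, G) = 0$, i.e. $G \in \mathcal{C}$, completing the reduction.

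I expect the main obstacle to be the two structural inputs of the last paragraph — the embedding into a vector group and, more delicately, the identification of the cokernel $\Ga^n/G$ as a vector group — rather than the cohomological bookkeeping, since one must ensure these hold for the possibly non-smooth $G$ (such as $\alpha_p$ or finite étale pieces) arising in the Verschiebung filtration, not merely for smooth connected groups. The key point for the cokernel is that it is a quotient of a vector group with vanishing Verschiebung, which forces it to be split and $p$-torsion, hence a vector group. Should that claim prove awkward for some pieces, an alternative is to verify $H^2(X, -) = 0$ directly for the finite building blocks via $0 \to \alpha_p \to \Ga \xrightarrow{F} \Ga \to 0$ and $0 \to \F_p \to \Ga \xrightarrow{F-1} \Ga \to 0$; but the smooth wound pieces genuinely require the kernel-of-vector-groups presentation above, so I would retain that as the core of the argument.
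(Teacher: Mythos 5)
Your proposal is correct. The paper itself gives essentially no argument here: it cites \cite[Lem.\,3.3]{tossici} for the case $X = \Spec(k)$ and remarks that the same proof goes through for any affine $X$ because ${\rm{H}}^i(X,\Ga)=0$ for $i>0$ (fppf cohomology of a quasi-coherent sheaf agrees with Zariski cohomology). So your base case is exactly the paper's key observation, and the overall mechanism — d\'evissage to $\Ga$ plus vanishing of higher quasi-coherent cohomology on affines — is the same; what differs is the d\'evissage itself. The Tossici--Vistoli argument the paper invokes filters $U$ by subgroups whose successive quotients embed into a single $\Ga$ with cokernel again $\Ga$, whereas you first kill the Verschiebung via the filtration by $\im(V^j)$ and then embed each Verschiebung-trivial piece into $\Ga^n$ with vector-group cokernel, using \cite[Ch.\,IV, \S3, Th.\,6.6]{demazuregabriel} and \cite[Th.\,B.2.5]{cgp} exactly as the paper does in its proof of Proposition \ref{filterhypersurf}. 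Your route is slightly heavier on structure theory but is self-contained and matches the toolkit already deployed elsewhere in the paper; note also that your closing worry about non-smooth pieces is moot, since $\Ga^n/G$ is a quotient of $\Ga^n$ and is therefore automatically smooth, connected, split, and $p$-torsion whatever $G$ is, so \cite[Th.\,B.2.5]{cgp} applies without further ado.
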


\begin{proof}
The proof in the case $X = {\rm{Spec}}(k)$ is given in \cite[Lem.\,3.3]{tossici}. The same argument goes through with ${\rm{Spec}}(k)$ replaced by any affine $k$-scheme $X$, using the fact that ${\rm{H}}^i(X, \Ga) = 0$ for $i > 0$. This last assertion follows from the fact that $\Ga$ is a quasi-coherent sheaf, hence its Zariski and fppf cohomology agree.
\end{proof}

Next we will relate universal $p$-polynomials to second Ext groups. This will require an explicit description of such groups, which we obtain in the following lemma.

\begin{lemma}
\label{Ext^2explicit}
Let $U$ be a commutative semiwound unipotent $k$-group scheme. Then one has an isomorphism, functorial in $U$ and in endomorphisms of $\Ga$, between $\Ext^2(\Ga, U)$ and the middle cohomology of the complex
\[
{\rm{H}}^1(\Ga, U) \xlongrightarrow{d_1} {\rm{H}}^1(\Ga^2, U) \xlongrightarrow{d_2} {\rm{H}}^1(\Ga^2, U) \oplus {\rm{H}}^1(\Ga^3, U),
\]
where 
\[
d_1 := m^* - (\pi^2_1)^* - (\pi^2_2)^*
\]
with $m, \pi^2_i\colon \Ga^2 \rightarrow \Ga$ the addition and projection maps, and the components $f_1\colon {\rm{H}}^1(\Ga^2, U) \rightarrow {\rm{H}}^1(\Ga^2, U)$, and $f_2\colon {\rm{H}}^1(\Ga^2, U) \rightarrow {\rm{H}}^1(\Ga^3, U)$ of $d_2$ defined as follows:
\[
f_1 := Id - {\rm{\sigma}}^*
\]
\[
f_2 := (\pi^3_1, m_{23})^* + \pi_{23}^* - (m_{12}, \pi^3_3)^* - \pi_{12}^*,
\]
where $\sigma\colon \Ga^2 \rightarrow \Ga^2$ is the switching map $(X, Y) \mapsto (Y, X)$, $\pi^3_{i}\colon \Ga^3 \rightarrow \Ga$ denotes projection onto the $i$th factor, $\pi_{ij} \colon \Ga^3 \rightarrow \Ga^2$ denotes projection onto the $(i, j)$ factors, and $m_{ij}\colon \Ga^3 \rightarrow \Ga$ denotes addition of the $i$ and $j$ factors.
\end{lemma}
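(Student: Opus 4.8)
The plan is to realize $\Ext^*(\Ga,U)$ as the hyper-Ext of a small explicit resolution of $\Ga$ by free abelian fppf sheaves on powers of $\Ga$, and then to read off the claim from the associated spectral sequence, using Lemma \ref{H^2=0affX} to kill high cohomological degree and semiwoundness to kill the bottom row.

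First I would write down the beginning of the (symmetric) bar resolution of $\Ga$ in the category of abelian fppf sheaves,
\[
\Z[\Ga^2]\oplus\Z[\Ga^3] \xrightarrow{\ \partial_2\ } \Z[\Ga^2] \xrightarrow{\ \partial_1\ } \Z[\Ga] \xrightarrow{\ \epsilon\ } \Ga \longrightarrow 0,
\]
where $\Z[S]$ is the free abelian fppf sheaf on a sheaf of sets $S$, $\epsilon([x])=x$, $\partial_1([x,y])=[x+y]-[x]-[y]$, and $\partial_2$ is $[x,y]\mapsto [x,y]-[y,x]$ on the first summand and $[x,y,z]\mapsto [y,z]-[x+y,z]+[x,y+z]-[x,y]$ on the second. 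One checks directly that $\epsilon\partial_1=0$ and $\partial_1\partial_2=0$, and that the sequence is exact, the extra summand $\Z[\Ga^2]$ encoding the commutativity relation; I would continue it to a full resolution $P_\bullet\to\Ga$ in the standard way. Since $\Hom(\Z[\Ga^a],F)=F(\Ga^a)$ for any abelian sheaf $F$ and injective sheaves are acyclic on the affine schemes $\Ga^a$, one has $\mathrm{R}\Hom(\Z[\Ga^a],U)=\mathrm{R}\Gamma(\Ga^a,U)$. Computing $\Ext^*(\Ga,U)$ from $P_\bullet$ via an injective resolution of $U$ therefore produces a first-quadrant spectral sequence with
\[
E_1^{p,q}={\rm{H}}^q\big(\Ga^{[p]},U\big)\ \Longrightarrow\ \Ext^{p+q}(\Ga,U),
\]
where the $p$-th column records ${\rm{H}}^q(\Ga,U)$, ${\rm{H}}^q(\Ga^2,U)$, and ${\rm{H}}^q(\Ga^2,U)\oplus{\rm{H}}^q(\Ga^3,U)$ for $p=0,1,2$, and the horizontal differentials are the maps $\partial_1^*,\partial_2^*$ induced on cohomology. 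By construction the row $q=1$ in columns $p=0,1,2$ is exactly the complex in the statement, with $d_1=\partial_1^*$ and $d_2=(f_1,f_2)=\partial_2^*$, the symmetry relation contributing the component $f_1=\mathrm{Id}-\sigma^*$ and the associativity relation contributing $f_2$; its middle cohomology is $E_2^{1,1}$, of total degree $2$.

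It then remains to show the spectral sequence collapses onto the row $q=1$. Lemma \ref{H^2=0affX} gives $E_1^{p,q}=0$ for $q\geq 2$, since each $\Ga^a$ is affine, so only the rows $q=0,1$ survive. For the bottom row the crucial input is semiwoundness: because $U$ is semiwound and semiwoundness is insensitive to the separable extension $k(t_2,\dots,t_a)/k$, every $k$-morphism $\Ga^a\to U$ is constant, whence $E_1^{p,0}=U(\Ga^a)=U(k)^{\oplus n_p}$. Under this identification the bottom row is $\Hom_{\Z}(D_\bullet,U(k))$, where $D_\bullet$ is the value of the resolution $P_\bullet$ on the trivial group; as $D_\bullet$ is an acyclic complex of free $\Z$-modules (it is the symmetric bar resolution of $0$), the bottom row is acyclic and $E_2^{p,0}=0$ for all $p$. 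With the rows $q\neq 1$ dead, no differential can enter or leave the row $q=1$, so $E_2^{p,1}=E_\infty^{p,1}$ and $\Ext^{n}(\Ga,U)=E_2^{n-1,1}$. In particular $\Ext^2(\Ga,U)=E_2^{1,1}$ is the middle cohomology of the displayed complex, and functoriality in $U$ and in endomorphisms of $\Ga$ is inherited from that of $P_\bullet$ in $\Ga$ and of $\mathrm{R}\Gamma(\Ga^a,-)$ in $U$.

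The main obstacle is the first step: verifying that the explicit three-term complex above really is the start of a resolution of $\Ga$ and that its transpose reproduces precisely $d_1$, $f_1$, $f_2$. Checking $\partial_1\partial_2=0$ and $\epsilon\partial_1=0$ is a short computation, but establishing exactness at $\Z[\Ga^2]$, namely that the only relations in this degree are the associativity and symmetry ones so that the summand $\mathrm{Id}-\sigma^*$ appears and nothing more, is the genuine content. Once the resolution is in hand, the spectral-sequence bookkeeping together with the two vanishing inputs (Lemma \ref{H^2=0affX} above the first row, semiwoundness on the bottom row) is routine.
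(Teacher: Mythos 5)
Your strategy is in essence the paper's: the complex you write down is exactly Breen's complex $A(\Ga)$ (see appendix \ref{breenseqapp}), and the hyper-Ext spectral sequence you run is Breen's first spectral sequence. But there is a genuine gap at the step you dismiss as completable ``in the standard way'': the three-term complex does \emph{not} extend to a resolution of $\Ga$. Exactness at $\Z[\Ga^2]$ holds (${\rm{H}}_1(A(\Ga)) = 0$), which is the step you single out as the genuine content; the failure is one degree higher. Breen's theorem -- the very content of [Bre1], which is why the paper cites it -- is that ${\rm{H}}_2(A(G)) = G/2G$, generated by the cycle $[x,x]$ in the symmetry summand $\Z[G^2]$, and for $G = \Ga$ in characteristic $2$ this is $\Ga \neq 0$. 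No choice of higher terms of the form $\bigoplus \Z[\Ga^{n}]$ with bar-type differentials kills this class, so your spectral sequence converges to $\Ext^{p+q}(A(\Ga), U)$, not to $\Ext^{p+q}(\Ga, U)$, and these need not agree a priori.

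The missing ingredient is precisely the first half of the paper's proof: one runs the second Breen spectral sequence ${}^{\prime}E_2^{i,j} = \Ext^i({\rm{H}}_j(A(\Ga)), U) \Rightarrow \Ext^{i+j}(A(\Ga), U)$ and checks that ${}^{\prime}E_2^{i,1} = 0$ (since ${\rm{H}}_1(A(\Ga)) = 0$) and that ${}^{\prime}E_2^{0,2} = \Hom(\Ga/2\Ga, U) = 0$, the latter using semiwoundness of $U$. Only then does one get $\Ext^2(A(\Ga), U) \simeq \Ext^2(\Ga, U)$, after which your collapsing argument applies. (A minor further difference: the paper uses the reduced groups $\widetilde{{\rm{H}}}^0(X_i, U)$, which vanish outright for semiwound $U$, rather than arguing that the bottom row is an acyclic complex of copies of $U(k)$; your version of that step also requires knowing the higher terms of $A(0)$ explicitly, which you have not constructed, though it is repairable.) As written, the proposal silently assumes away exactly the phenomenon that makes the lemma require Breen's machinery rather than a naive bar resolution.
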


\begin{proof}
The proof makes use of certain spectral sequences due to Breen. For the reader's convenience, we recall the properties of these sequences in appendix \ref{breenseqapp}. We use the notation of that appendix here. To begin with, we have the sequence
\[
{}^{\prime}E_2^{i,j} = \Ext^i({\rm{H}}_j(A(\Ga)), U) \Longrightarrow \Ext^{i+j}(A(\Ga), U).
\]
(See (\ref{E_2spectralseqbreen21}.) Then ${}^{\prime}E_2^{i,1}$ and ${}^{\prime}E_2^{0,2}$ both vanish by (\ref{H_1(A)=0}) with $G = \Ga$, plus the fact that $U$ is semiwound (for the latter vanishing). It follows that $\Ext^2(A(\Ga), U) \simeq {}^{\prime}E_2^{2,0} = \Ext^2(\Ga, U)$. Thus we may replace the latter group by the former in proving the lemma.

To compute $\Ext^2(A(\Ga), U)$, we use the other Breen spectral sequence, which takes the form
\[
E_1^{i,j} = \widetilde{{{\rm{H}}}}^j(X_i, U) \Longrightarrow \Ext^{i+j}(A(\Ga), U).
\]
(See (\ref{E_1specseqbreen20}).) Because $U$ is semiwound, $E_1^{i,0} = 0$, and $E_1^{0,2} = 0$ by Lemma \ref{H^2=0affX}. It follows that $\Ext^2(A(\Ga), U) \simeq E_2^{1,1}$. But by definition of the Breen sequence, $E_2^{1,1}$ is exactly the middle cohomology of the sequence in the lemma. The identification of the lemma is functorial because of the functoriality of the Breen spectral sequences.
\end{proof}

\begin{lemma}
\label{C^1(F)findim}
Let $F \in k[X_1, \dots, X_n]$ be a reduced $p$-polynomial whose principal part $P$ is universal, and let $U := \{F = 0\} \subset \Ga^n$. Then $\Ext^2(\Ga, U)$ is a finite-dimensional $k$-vector space.
\end{lemma}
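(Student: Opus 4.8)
The plan is to use the explicit description of $\Ext^2(\Ga, U)$ furnished by Lemma \ref{Ext^2explicit} as the middle cohomology of the complex
\[
{\rm{H}}^1(\Ga, U) \xlongrightarrow{d_1} {\rm{H}}^1(\Ga^2, U) \xlongrightarrow{d_2} {\rm{H}}^1(\Ga^2, U) \oplus {\rm{H}}^1(\Ga^3, U),
\]
and to bound the dimension of $\ker(d_2)/\im(d_1)$ directly. The first step is to make the groups ${\rm{H}}^1(\Ga^m, U)$ concrete. Exactly as in the proof of Proposition \ref{univiffH^1primfd}, the defining sequence $0 \to U \to \Ga^n \xrightarrow{F} \Ga \to 0$ together with Lemma \ref{extGavertriv} and Proposition \ref{splitvecsubgps} identifies ${\rm{H}}^1(\Ga^m, U) = \Ext^1(\Ga^m, U)$ with the cokernel of $F_*\colon \Hom(\Ga^m, \Ga^n) \to \Hom(\Ga^m, \Ga)$. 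Since homomorphisms $\Ga^m \to \Ga$ are precisely $p$-polynomials in $k[Y_1, \dots, Y_m]$ with vanishing constant term, this cokernel is $R^0/F(R, \dots, R)$, where $R := k[Y_1, \dots, Y_m]$ and $R^0$ denotes the $p$-polynomials with no constant term. Thus each term of the complex is computed by the quotient appearing in Lemma \ref{divunivppoly}.

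The key step is to invoke Lemma \ref{divunivppoly}, which gives for each $m$ a canonical (unique) representative $h$ of every class in $R^0/F(R,\dots,R)$ whose coefficients are constrained to lie in the subspaces $P(k, \dots, 0, \dots, k)$. Because $P$ is reduced and universal, Proposition \ref{monppolynuniv}(i) with the finite degree of imperfection gives that $k$ is finite-dimensional over each $k^{p^{d_i}}$; consequently each subspace $P(k, \dots, 0, \dots, k) \subset k$ is finite-dimensional over the appropriate $k^{p^{d_i}}$, and so for each fixed monomial multidegree the space of admissible coefficients is finite-dimensional over $k^{p^N}$, where $p^N := \max_i {\rm{deg}}_{X_i}(F)$. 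The support condition $S_h \subset p^{-\infty} S_g$ moreover controls which multidegrees can appear. The plan is then to exploit the fact that a class in $\ker(d_2)$, being killed by $f_1 = \mathrm{Id} - \sigma^*$, is represented by a \emph{symmetric} reduced representative in $R^0$ for $R = k[Y_1, Y_2]$, and that being killed by $f_2$ forces a $2$-cocycle-type constraint; combined with the image of $d_1$ (which kills the "additive" part coming from ${\rm{H}}^1(\Ga, U)$), the surviving classes are pinned down to a finite-dimensional space of "genuinely symmetric, non-additive" low-degree reduced representatives.

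The main obstacle I anticipate is bounding the degrees of the representatives in $\ker(d_2)/\im(d_1)$: unlike in Proposition \ref{univiffH^1primfd}, where universality let one reduce degree one power of $p$ at a time, here one must show that the \emph{symmetry} and \emph{cocycle} conditions, together with the uniqueness of the normalized representative from Lemma \ref{divunivppoly}, force only finitely many multidegrees to contribute. I would handle this by arguing that the symmetric $2$-variable factor sets measuring the failure of additivity of an element of ${\rm{H}}^1(\Ga, U)$ are, after normalization, supported in bounded degree — the point being that the map $d_1 = m^* - (\pi^2_1)^* - (\pi^2_2)^*$ already realizes all such factor sets coming from genuinely high-degree elements of ${\rm{H}}^1(\Ga, U)$, so that modulo $\im(d_1)$ the representatives of $\ker(d_2)$ can be taken of degree $< p^N$ in each variable. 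Once this degree bound is established, finite-dimensionality over $k$ follows exactly as in Proposition \ref{univiffH^1primfd}, since $[k : k^{p^i}] < \infty$ for each $i < N$ when $k$ has finite degree of imperfection (the infinite degree of imperfection case again being trivial, as then $F$ is linear and $U = 0$).
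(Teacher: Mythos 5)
Your high-level skeleton (use Lemma \ref{Ext^2explicit}, normalize representatives with Lemma \ref{divunivppoly}, exploit the symmetry and cocycle conditions, bound degrees, and finish with $[k:k^{p^i}]<\infty$) is the same as the paper's, but there are two genuine gaps. First, you misidentify the terms of the complex: the groups ${\rm{H}}^1(\Ga^m, U)$ appearing in Lemma \ref{Ext^2explicit} are fppf cohomology groups of the \emph{scheme} $\Ga^m$, not $\Ext^1(\Ga^m, U)$ in the category of group schemes. Via the long exact sequence for $0 \to U \to \Ga^n \xrightarrow{F} \Ga \to 0$ and the vanishing of ${\rm{H}}^1(\Ga^m, \Ga^n)$ on affines, one gets ${\rm{H}}^1(\Ga^m, U) \simeq k[Y_1,\dots,Y_m]/F(k[Y_1,\dots,Y_m]^n)$ with the \emph{full} polynomial ring, not just $p$-polynomials. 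This matters: the classes that actually survive to $\Ext^2$ are represented by symmetric cocycles built from the Witt polynomial $S(X,Y) = [(X+Y)^p - X^p - Y^p]/p \bmod p$, which is not additive in either variable and so does not even live in your version of the middle term. Restricting to $p$-polynomial cochains computes the wrong group.

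Second, the degree bound — which you correctly identify as the crux — is not established, and the mechanism you sketch is not the right one. The claim that $d_1$ ``already realizes all such factor sets coming from genuinely high-degree elements of ${\rm{H}}^1(\Ga, U)$'' would amount to saying that high-degree symmetric $2$-cocycles are coboundaries, which is false (the Witt cocycles $S(X^{p^i}, Y^{p^i})$ are not coboundaries for any $i$). The actual argument needs three ingredients you do not supply: (i) the uniqueness statement of Lemma \ref{divunivppoly} upgrades the conditions ``$f_1(g)$ and $f_2(g)$ represent trivial classes'' to the \emph{polynomial identities} $g(X,Y)=g(Y,X)$ and $g(X,Y+Z)+g(Y,Z)=g(X+Y,Z)+g(X,Y)$, because the normalization is stable under these operations; (ii) the classification of symmetric Hochschild $2$-cocycles of $\Ga$ (\cite[Lem.\,2.8.2]{rostateduality}) writes $g = f(X+Y)-f(X)-f(Y) + \sum_i r_i S(X^{p^i}, Y^{p^i})$, and the coboundary part is exactly $d_1$ of the class of $f$; (iii) a \emph{second} application of Lemma \ref{divunivppoly} to the remaining Witt part, using the universality of $P$, reduces its degree below a fixed bound $pM$. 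Without (ii) in particular, the symmetry and cocycle conditions alone do not pin the representatives down to bounded degree.
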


\begin{proof}
If $k$ has infinite degree of imperfection, then $n = 1$ and $U = 0$ by Proposition \ref{monppolynuniv}, so the assertion is trivial. Assume, therefore, that $k$ has finite degree of imperfection. Because $\Ga$ is quasi-coherent, its fppf and Zariski cohomology agree. Thus ${\rm{H}}^i(T, \Ga) = 0$ for $i > 0$ and $T$ affine. We thus identify ${\rm{H}}^1(T, U)$ with the quotient of ${\rm{H}}^0(T, \Ga)$ by $F({\rm{H}}^0(T, \Ga), \dots, {\rm{H}}^0(T, \Ga))$ for affine $k$-schemes $T$. In particular, we will do this with $T$ equal to various powers of $\Ga$. We use Lemma \ref{Ext^2explicit} to identify $\Ext^2(\Ga, U)$ with the middle cohomology group $C$ of the sequence of that lemma.

So suppose given $\alpha \in \Ext^2(\Ga, U)$, represented by a class in ${\rm{H}}^1(\Ga^2, U)$, and let this class in turn be represented via the above identification by an element $g \in k[X, Y]$. Modifying $g$ by $f(X+Y) - f(X) - f(Y)$ for $f \in k[T]$ does not change its class in ${\rm{H}}^1$. In particular, we may take $f$ to be a suitable constant and thereby assume that $g$ has vanishing constant term. By Lemma \ref{divunivppoly}, we may assume that $g$ is such that, for every $1 \leq i \leq n$ and every ordered pair $(s_1, s_2)$ of nonnegative integers, the coefficient of $(X^{s_1}Y^{s_2})^{p^{d_i}}$ in $g$ lies in $P(k, \dots, 0, \dots, k)$, where the $0$ is in the $i$th coordinate (and the $d_i$ are as in Lemma \ref{divunivppoly}). Because $g$ represents a class in $C$, the polynomials
\[
g(X, Y) - g(Y, X) \in k[X, Y]
\]
\begin{equation}
\label{C^1(F)findimpfeqn1}
g(X, Y+Z) + g(Y, Z) - g(X+Y, Z) - g(X, Y) \in k[X, Y, Z]
\end{equation}
represent trivial cohomology classes in ${\rm{H}}^1(\Ga^2, U)$ and ${\rm{H}}^1(\Ga^3, U)$, respectively. On the other hand, for every $1 \leq i \leq n$ and every ordered pair $(s_1, s_2)$ of nonnegative integers, the coefficient of $(X^{s_1}Y^{s_2})^{p^{d_i}}$ in $g$ lies in $P(k, \dots, 0, \dots, k)$, where the $0$ is in the $i$th coordinate. It follows that the same holds for the two polynomials appearing above (with the analogous condition in $3$ variables in the case of the second polynomial in (\ref{C^1(F)findimpfeqn1})). Since they represent trivial cohomology classes, they lie in $F(k[X, Y]^n)$ and $F(k[X, Y, Z]^n)$, respectively. By the uniqueness assertion of Lemma \ref{divunivppoly}, it follows that the two polynomials in (\ref{C^1(F)findimpfeqn1}) vanish. By \cite[Lem.\,2.8.2]{rostateduality}, $g$ may be written in the form $$g(X, Y) = f(X+Y) - f(X) - f(Y) + \sum_{i=0}^N r_i\cdot S(X^{p^i}, Y^{p^i}),$$ for some $f \in k[X]$ and some $r_i \in k$, where $S(X, Y) \in \F_p[X, Y]$ is the mod $p$ reduction of the Witt polynomial $[(X+Y)^p - X^p - Y^p]/p \in \Z[X, Y]$. Thus, by modifying $g$ by $d_1$ of the element of ${\rm{H}}^1(\Ga, U)$ represented by $f$, we obtain that our element $\alpha \in C$ is represented by a polynomial of the form $\sum_{i=0}^N r_i\cdot S(X^{p^i}, Y^{p^i})$ with $r_N \neq 0$ if $N \geq 0$. We may apply Lemma \ref{divunivppoly} again to conclude that $\alpha$ is represented by a polynomial $h(X, Y) \in k[X, Y]$ such that each monomial appearing with nonzero coefficient in $h$ is of the form $(X^aY^{p-a})^{p^s}$ with $s < M := \max_i d_i$. In particular, every element of $C$ is represented by a sum of monomials of degree a power of $p$ that is $< pM$. The action of $\lambda \in k$ on the class in $C$ of a homogeneous polynomial $H$ of degree $p^t$ is represented by $\lambda^{p^t}H$. (Note that this $k$-action is identified with the $k$-action on $\Ext^2(\Ga, U)$ thanks to the functoriality aspect of Lemma \ref{Ext^2explicit}.) Because $[k: k^{p^t}] < \infty$ for all $t$ (since $k$ has finite degree of imperfection), it follows that $C$ is spanned by finitely many vectors.
\end{proof}

The definition of permawoundness for a group $U$ is rather strong in that it imposes a condition on extensions of $\Ga$ by any quotient group of $U$, and this condition on quotients will play an essential role in proving the fundamental fact that permawound groups admit filtrations as in Theorem \ref{rigidityintro}. In order to show that groups are permawound, therefore, we need a mechanism for dealing with quotients. The following lemma is that mechanism.

\begin{lemma}
\label{swquotfdH^1prim}
Let $U$ be a commutative unipotent $k$-group scheme, and suppose that $0 = U_0 \subset U_1 \subset \dots \subset U_m = U$ is such that, for each $0 \leq i < m$, $U_{i+1}/U_i \simeq \{F_i = 0\} \subset \Ga^{n_i}$ for a reduced $p$-polynomial $F_i \in k[X_1, \dots, X_{n_i}]$ with principal part $P_i$. If $\Ext^1(\Ga, U)$ is a finite-dimensional $k$-vector space, then each $P_i$ is universal and $\Ext^2(\Ga, U)$ is a finite-dimensional $k$-vector space.
\end{lemma}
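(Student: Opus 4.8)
The plan is to induct on the length $m$ of the filtration, feeding a short exact sequence into the long exact sequence of $\Ext^*(\Ga,-)$ and invoking the two results already in hand: Proposition \ref{univiffH^1primfd}, which says that for a reduced $p$-polynomial $F$ with principal part $P$ the group $\{F = 0\}$ has finite-dimensional $\Ext^1(\Ga,\{F=0\})$ precisely when $P$ is universal, and Lemma \ref{C^1(F)findim}, which says that universality of $P$ forces $\Ext^2(\Ga,\{F=0\})$ to be finite-dimensional. Write $Q_i := U_{i+1}/U_i \simeq \{F_i = 0\}$. Each $Q_i$ is semiwound (its principal part is reduced, so by \cite[Lem.\,B.1.7,$(1)\Longrightarrow(2)$]{cgp} and Definition \ref{semiwounddef}), whence $\Hom(\Ga, Q_i) = 0$: a nonzero homomorphism would be a nonconstant morphism $\A^1 \to Q_i$. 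Throughout, every term appearing is a $k$-vector space and every map is $k$-linear, since the $k$-action on $\Ext^j(\Ga, -)$ is induced by the endomorphisms $\lambda\colon \Ga \to \Ga$ of the fixed first argument, which commute with the maps induced by morphisms of the second argument as well as with the connecting homomorphisms.

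For the base case $m = 1$ we have $U = Q_0 = \{F_0 = 0\}$, and the two assertions are exactly Proposition \ref{univiffH^1primfd} (giving universality of $P_0$ from finiteness of $\Ext^1(\Ga, U)$) followed by Lemma \ref{C^1(F)findim} (giving finiteness of $\Ext^2(\Ga, U)$). For the inductive step I would apply the long exact sequence to $0 \to U_{m-1} \to U \to Q_{m-1} \to 0$. Since $\Hom(\Ga, Q_{m-1}) = 0$, the map $\Ext^1(\Ga, U_{m-1}) \to \Ext^1(\Ga, U)$ is injective, so $\Ext^1(\Ga, U_{m-1})$ is finite-dimensional; the inductive hypothesis, applied to the length-$(m-1)$ filtration of $U_{m-1}$, then yields that $P_0, \dots, P_{m-2}$ are universal and that $\Ext^2(\Ga, U_{m-1})$ is finite-dimensional. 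Next, the exact segment $\Ext^1(\Ga, U) \to \Ext^1(\Ga, Q_{m-1}) \to \Ext^2(\Ga, U_{m-1})$ presents $\Ext^1(\Ga, Q_{m-1})$ as an extension of a subspace of $\Ext^2(\Ga, U_{m-1})$ by a quotient of $\Ext^1(\Ga, U)$, hence finite-dimensional; Proposition \ref{univiffH^1primfd} gives universality of $P_{m-1}$, and Lemma \ref{C^1(F)findim} gives finiteness of $\Ext^2(\Ga, Q_{m-1})$. Finally the exact segment $\Ext^2(\Ga, U_{m-1}) \to \Ext^2(\Ga, U) \to \Ext^2(\Ga, Q_{m-1})$ presents $\Ext^2(\Ga, U)$ as an extension of two finite-dimensional spaces, closing the induction.

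Conceptually the argument is short once Proposition \ref{univiffH^1primfd} and Lemma \ref{C^1(F)findim} are available, so I do not expect a serious obstacle; the real content is bookkeeping. The two points requiring care are, first, confirming $k$-linearity of the long exact sequence so that ``finite-dimensional'' genuinely propagates through images, cokernels, and connecting maps, and second, the vanishing $\Hom(\Ga, Q_{m-1}) = 0$, which is the mechanism that transfers finiteness of $\Ext^1(\Ga, U)$ down to the sub-object $U_{m-1}$ and thereby gets the induction started. Everything else is a formal diagram chase through the long exact sequence.
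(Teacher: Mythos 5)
Your proposal is correct and follows essentially the same route as the paper: induction on the filtration length, using semiwoundness of the top quotient to get $\Hom(\Ga, U/U_{m-1}) = 0$ and hence push finite-dimensionality of $\Ext^1(\Ga,U)$ down to $U_{m-1}$, then chasing the long exact sequence together with Proposition \ref{univiffH^1primfd} and Lemma \ref{C^1(F)findim}. The paper's version is just a more compressed statement of the same diagram chase.
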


\begin{proof}
We proceed by induction on the length of the filtration. If the filtration has length $0$, then the assertion is immediate. Now suppose that $m > 0$, and let $W := U/U_{m-1}$. Note that each group in the filtration is semiwound by \cite[Lem.\,B.1.7,$(1)\Longrightarrow(2)$]{cgp}. In particular, $\Hom(\Ga, W) = 0$, so the finite-dimensionality of $\Ext^1(\Ga, U)$ implies the same for $\Ext^1(\Ga, U_{m-1})$. By induction, it follows that $P_i$ is universal for $0 \leq i < m-1$ and that $\Ext^2(\Ga, U_{m-1})$ is finite-dimensional . We deduce that $\Ext^1(\Ga, W)$ is finite-dimensional, so $P_{m-1}$ is universal by Proposition \ref{univiffH^1primfd}. Hence $\Ext^2(\Ga, W)$ is finite-dimensional by Lemma \ref{C^1(F)findim}, so $\Ext^2(\Ga, U)$ is also finite-dimensional.
\end{proof}

\begin{remark}
It is possible to prove the above results without making use of Breen's spectral sequences, by defining in an ad hoc manner the cohomology group appearing in Lemma \ref{Ext^2explicit}, replacing the group $\Ext^1(\Ga, U)$ with the group ${\rm{H}}^1(\Ga, U)_{\prim}$ (which one may show to be isomorphic to $\Ext^1(\Ga, U)$ when $U$ is semiwound), and then showing that these groups satisfy the same formal properties as the first and second Ext groups. We will never use this.
\end{remark}

\section{(Quasi-)weak permawoundness}
\label{qwasunivsection}

In this section, as a tool for studying permawound groups, we introduce (and prove some basic properties of) the notion of (quasi-)weak permawoundness, which is more restrictive than permawoundness in that it only applies to commutative groups, but less so in that it does not require smoothness. This combination of restrictiveness and looseness makes (quasi-)weak permawoundness a technically convenient tool.

\begin{definition}
\label{weakasunivdef}
We say that a commutative unipotent $k$-group scheme $U$ is {\em $k$-(quasi-)weakly permawound}, or just (quasi-)weakly permawound when $k$ is clear from context, when the following conditions holds. For every exact sequence of {\em commutative} $k$-group schemes
\[
(0 \longrightarrow) U \longrightarrow E \longrightarrow \Ga \longrightarrow 1,
\]
$E$ contains a $k$-subgroup scheme $k$-isomorphic to $\Ga$.
\end{definition}

Note that, unlike in the definition of permawoundness, we do not require $U$ to be smooth in the definition of (quasi-)weak permawoundness. Clearly, weak permawoundness implies quasi-weak permawoundness, and for smooth commutative $U$, permawoundness implies weak permawoundness.

\begin{proposition}
\label{asunivperfect}
Let $k$ be a perfect field.
\begin{itemize}
\item[(i)] Every smooth unipotent $k$-group scheme is permawound.
\item[(ii)] Every commutative unipotent $k$-group scheme is (quasi-)weakly permawound.
\end{itemize}
\end{proposition}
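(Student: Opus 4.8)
The plan is to reduce both parts to a single observation about unipotent groups over a perfect field. First I would record that in each of the two exact sequences under consideration the middle term $E$ is a unipotent $k$-group scheme of finite type. Indeed, the image of $U$ in $E$ is a quotient of the unipotent group $U$, hence unipotent, and $E$ is an extension of $\Ga$ by this image; since an extension of a unipotent group by a unipotent group is unipotent, $E$ is unipotent. Note that this uses only the unipotence of $U$: in (i) the smoothness of $U$ is irrelevant, and in (ii) the commutativity is harmless.

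Next I would produce a positive-dimensional smooth piece of $E$. Since $E \to \Ga$ is a surjection of fppf sheaves with kernel a closed subgroup $N$ satisfying $E/N \simeq \Ga$, additivity of dimension in the quotient gives $\dim E = \dim N + 1 \geq 1$. Because $k$ is perfect, $E_{\red}$ is a smooth closed $k$-subgroup scheme of $E$: over a perfect field a reduced scheme is geometrically reduced and a product of such schemes is reduced, so $E_{\red}\times_k E_{\red}$ is reduced and the multiplication and inversion morphisms factor through $E_{\red}$, which is then geometrically reduced and hence smooth. Its identity component $E_{\red}^0$ is therefore a smooth connected unipotent $k$-group of dimension $\dim E \geq 1$, and in particular $E_{\red}^0 \neq 1$.

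The crux is then supplied by the absence of nontrivial wound groups over a perfect field. By \cite[Cor.\,B.2.7]{cgp}, the smooth connected unipotent group $E_{\red}^0$ is $k$-split, so it carries a filtration with successive quotients isomorphic to $\Ga$; since $E_{\red}^0 \neq 1$, the bottom step of this filtration is a $k$-subgroup $k$-isomorphic to $\Ga$. This subgroup lies in $E_{\red}^0 \subseteq E$, which is exactly the copy of $\Ga$ demanded by the definition of permawoundness in (i) and of quasi-weak permawoundness in (ii).

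Thus the same argument settles both statements, the only inputs being that $E$ is a finite-type unipotent group surjecting onto $\Ga$. I expect the only points needing a little care to be the bookkeeping that shows $E_{\red}^0$ is nonzero (the dimension count) and the standard verification that $E_{\red}$ is a subgroup scheme over a perfect field; the substantive ingredient, that a nontrivial smooth connected unipotent group over a perfect field contains a copy of $\Ga$, is already available as the splitness recorded in the introduction.
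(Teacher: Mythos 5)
Your proof is correct and follows essentially the same route as the paper: both arguments reduce to the observation that the middle term $E$ of any such sequence is a finite-type unipotent group with a positive-dimensional quotient, so $E_{\red}^0$ is a nontrivial smooth connected unipotent group over the perfect field $k$, hence split and therefore containing a copy of $\Ga$. Your additional care in checking that $E_{\red}$ is a subgroup scheme and in handling the non-injective map $U \to E$ by passing to the image is sound but does not change the substance of the argument.
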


\begin{proof}
All assertions follow from the claim that, for any extension of unipotent $k$-groups
\[
1 \longrightarrow U \longrightarrow E \longrightarrow \Ga \longrightarrow 1,
\]
$E$ contains a copy of $\Ga$. To see this, consider $E_{\red}^0 \subset E$. Because $k$ is perfect, this is a smooth unipotent $k$-subgroup scheme of $E$. It is also connected and nontrivial (the latter because $E$ admits a positive-dimensional quotient). Therefore, again because $k$ is perfect, it is split and nontrivial, hence $E$ admits a nontrivial split unipotent $k$-subgroup.
\end{proof}

Proposition \ref{asunivperfect} tells us that permawoundness is not an interesting notion over perfect fields.

\begin{proposition}
\label{splitunipasuniv}
Every split unipotent $k$-group scheme $U$ is permawound. If $U$ is commutative, then it is also (quasi-)weakly permawound.
\end{proposition}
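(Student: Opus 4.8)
The plan is to reduce the claim, in all three cases, to the single assertion that the middle group $E$ is itself split. This suffices because a nontrivial split unipotent $k$-group contains a copy of $\Ga$ (namely the bottom nonzero step $U_1 = U_1/U_0$ of any defining filtration), and here $E$ is nontrivial since it surjects onto $\Ga$.

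First I would identify the image $\bar U := \im(U \to E)$, which by right-exactness of $U \to E \xrightarrow{\pi} \Ga \to 1$ equals $\ker(\pi)$ and is therefore a normal $k$-subgroup of $E$ with $E/\bar U \simeq \Ga$. As a quotient $U/\ker(U \to E)$ of the split group $U$, the group $\bar U$ is smooth and connected, and I claim it is again split. Granting this, $E$ is an extension $1 \to \bar U \to E \to \Ga \to 1$ of a split group by a split group, and concatenating a split filtration of $\bar U$ with the single further step $\bar U \trianglelefteq E$ (whose quotient is $\Ga$) exhibits a split filtration of $E$. In the commutative setting every subgroup in sight is automatically normal and $E$ is commutative, so the identical argument produces a split commutative $E$ containing $\Ga$, which handles weak and quasi-weak permawoundness simultaneously (the parenthetical $0 \to$ and its omission make no difference to the argument).

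The one substantive point is the claim that a quotient of a split unipotent group is split. I would prove this by pushing a defining filtration $1 = U_0 \trianglelefteq \dots \trianglelefteq U_n = U$ forward along a surjection $\phi\colon U \twoheadrightarrow V$: the subgroups $\phi(U_i)$ satisfy $\phi(U_i) \trianglelefteq \phi(U_{i+1})$, since the image of a normal subgroup under the surjection $U_{i+1} \twoheadrightarrow \phi(U_{i+1})$ is normal, and each successive quotient $\phi(U_{i+1})/\phi(U_i)$ is a quotient of $U_{i+1}/U_i \simeq \Ga$. It then suffices to show that every quotient $Q$ of $\Ga$ is split and to refine the filtration accordingly. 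Such a $Q$ is smooth (quotients of smooth groups are smooth), connected, and of dimension $0$ or $1$; if $\dim Q = 0$ it is trivial, while if $\dim Q = 1$ the surjection $\Ga \twoheadrightarrow Q$ is a nonconstant $k$-homomorphism, so $Q$ is not wound. Its maximal split $k$-subgroup is therefore nontrivial, and by comparison of dimensions must be all of $Q$, whence $Q \simeq \Ga$.

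The main obstacle is precisely this lemma on quotients of split groups; everything else is bookkeeping in concatenating and refining subnormal series while tracking consecutive normality (preimages of a subnormal $\Ga$-chain in $Q$ stay consecutively normal), which is routine and entirely trivial in the commutative case. Once quotients of split groups are known to be split, the conclusion for $E$ — and hence the existence of a copy of $\Ga$ in $E$ — is immediate.
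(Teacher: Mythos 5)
Your proposal is correct and follows essentially the same route as the paper: replace $U$ by its image in $E$ (split, since quotients of split groups are split), observe that $E$ is then an extension of $\Ga$ by a split group and hence itself split and nontrivial, and conclude that $E$ contains a copy of $\Ga$. The only difference is that you supply a proof of the fact that quotients of split unipotent groups are split, which the paper invokes as known; your argument for it is sound.
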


\begin{proof}
Because quotient groups of split unipotent groups are also split, the proposition follows from the following claim. Given an exact sequence
\[
1 \longrightarrow U \longrightarrow E \longrightarrow \Ga \longrightarrow 1,
\]
with $U$ split, then $E$ contains a copy of $\Ga$. Indeed, given such a sequence, $E$ is necessarily split unipotent and nontrivial, hence contains a copy of $\Ga$.
\end{proof}

The following proposition is immediate from the definition of (weak) permawoundness.

\begin{proposition}
\label{quotofasunivisasuniv}
Any quotient group of a (weakly) permawound $k$-group scheme is also (weakly) permawound.
\end{proposition}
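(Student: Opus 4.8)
The plan is to exploit the asymmetry in Definitions~\ref{asymptotunivdef} and~\ref{weakasunivdef}: the (weak) permawoundness condition is imposed on \emph{right-exact} sequences $U \to E \xrightarrow{\pi} \Ga \to 1$, in which the map out of $U$ is required only to have image $\ker(\pi)$ and is \emph{not} assumed to be injective. This is exactly what makes precomposition with a surjection harmless, and it is the sense in which the statement is ``immediate''.

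Concretely, let $U$ be (weakly) permawound and let $q\colon U \to \overline{U}$ be a surjection onto a quotient group $\overline{U}$. First I would record that $\overline{U}$ lies in the class to which the relevant definition applies: a quotient of a smooth unipotent $k$-group is again smooth unipotent (over a field, quotients of smooth groups are smooth, and quotients of unipotent groups are unipotent), and a quotient of a commutative unipotent group is again commutative unipotent. Hence $\overline{U}$ is a smooth unipotent $k$-group scheme in the permawound case, and a commutative unipotent $k$-group scheme in the weakly permawound case, so that the condition to be verified even makes sense.

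Next, given any right-exact sequence $\overline{U} \xrightarrow{\iota} E \xrightarrow{\pi} \Ga \to 1$ of the appropriate type (finite type $k$-group schemes for permawoundness, commutative $k$-group schemes for weak permawoundness), I would simply precompose with $q$ to form $U \xrightarrow{\iota\circ q} E \xrightarrow{\pi} \Ga \to 1$. Because $q$ is an fppf epimorphism, one has $\im(\iota\circ q)=\im(\iota)$, which by right-exactness of the original sequence equals $\ker(\pi)$; and $\pi$ remains surjective. Thus the new sequence is again right-exact and of the same type, so the (weak) permawoundness of $U$ yields a copy of $\Ga$ inside $E$, which is precisely the conclusion demanded of $\overline{U}$. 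There is no genuine obstacle; the only point requiring a moment's care is the identification $\im(\iota\circ q)=\im(\iota)$, valid because $q$ is an epimorphism of fppf sheaves, the conceptual content being merely that the definition never demanded injectivity of the map out of the permawound group, so a surjection may be pre-composed at no cost.
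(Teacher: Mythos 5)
Your proof is correct and matches the paper's treatment: the paper states this proposition without proof, calling it ``immediate from the definition,'' and your precomposition argument (using that the map out of $U$ in Definitions~\ref{asymptotunivdef} and~\ref{weakasunivdef} is only required to have image $\ker(\pi)$, not to be injective) is exactly the reason it is immediate. You also correctly identify why the analogous statement is \emph{not} claimed for quasi-weak permawoundness, where injectivity would be destroyed by precomposing with the quotient map.
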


(Quasi-)weak permawoundness is inherited by subgroups with semiwound quotient.

\begin{proposition}
\label{qwasunivsubgps}
Let $U' \subset U$ be unipotent $k$-group schemes with $U'$ central in $U$, and assume that $U'' := U/U'$ is semiwound. Assume either that $U$ is permawound or that $U$ is commutative and (quasi-)weakly permawound. Then $U'$ is (quasi-)weakly permawound.
\end{proposition}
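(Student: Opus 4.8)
The plan is to deduce the (quasi-)weak permawoundness of $U'$ from the hypothesis on $U$ by a pushout (Baer-sum type) construction that trades an extension of $\Ga$ by $U'$ for one of $\Ga$ by $U$. So I would fix an exact sequence of commutative $k$-group schemes $(0 \longrightarrow) U' \xrightarrow{\iota} E \xrightarrow{\pi} \Ga \longrightarrow 1$ and aim to produce a copy of $\Ga$ inside $E$. Writing $i\colon U' \hookrightarrow U$ for the given central inclusion, I would form $\tilde E := (U \times E)/N$, where $N$ is the image of the antidiagonal homomorphism $U' \to U \times E$, $u' \mapsto (i(u'), \iota(u')^{-1})$. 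Because $i(U')$ is central in $U$ and $E$ is commutative, $N$ is central in $U \times E$, hence normal, so $\tilde E$ is a finite type $k$-group scheme; this is the only place the centrality hypothesis is used.

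The construction comes equipped with two homomorphisms out of $\tilde E$. First, $q\colon \tilde E \to \Ga$, $[(u,e)] \mapsto \pi(e)$, is well defined (as $\pi \circ \iota = 0$) and surjective, and one checks that $\im(U \to \tilde E) = \ker q$, so that $U \to \tilde E \xrightarrow{q} \Ga \to 1$ is right exact (and short exact when $\iota$ is injective). Second, $\rho\colon \tilde E \to U'' := U/U'$, $[(u,e)] \mapsto \overline{u}$, is well defined (since $i(U') \subset U'$) and surjective with kernel the image of $E$; as $i$ is injective, the induced map $E \to \tilde E$ is a closed immersion, so $0 \to E \to \tilde E \xrightarrow{\rho} U'' \to 1$ is exact. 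These are the standard sequences attached to the pushout of an extension, and in the commutative case $\tilde E$ is commutative, so both sequences lie in the commutative category.

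Next I would apply the hypothesis on $U$ to the first sequence: whether $U$ is permawound or $U$ is commutative and (quasi-)weakly permawound, the sequence $U \to \tilde E \to \Ga \to 1$ forces $\tilde E$ to contain a $k$-subgroup $\gamma\colon \Ga \hookrightarrow \tilde E$. Composing with $\rho$ gives a homomorphism $\rho \circ \gamma\colon \Ga \to U''$. Since $U''$ is semiwound, $\Hom(\Ga, U'') = 0$ (exactly as in the proof of Lemma \ref{swquotfdH^1prim}), so $\rho \circ \gamma$ is trivial and hence $\gamma(\Ga) \subseteq \ker \rho = \im(E)$. Because $E \to \tilde E$ is a closed immersion, this exhibits a copy of $\Ga$ inside $E$, proving that $U'$ is (quasi-)weakly permawound; note that the argument never needed $\iota$ to be injective, so it simultaneously covers both the weak and the quasi-weak readings.

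I expect the main obstacle to be purely the bookkeeping of the pushout: verifying that $\tilde E$ is representable by a finite type $k$-group scheme and that the two displayed sequences are genuinely exact as fppf sheaves (in particular that $\im(U) = \ker q$ and that $E \to \tilde E$ is a closed immersion with cokernel $U''$). Once these are in hand, the homological input is entirely formal: the hypothesis on $U$ supplies a copy of $\Ga$ in $\tilde E$, and the semiwoundness of the cokernel $U''$ is precisely what keeps that copy from escaping the subgroup $E$.
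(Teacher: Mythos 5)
Your argument is correct and is essentially identical to the paper's proof: the paper forms the same pushout $G$ of $E$ along $U' \hookrightarrow U$ (invoking centrality of $U'$ precisely to ensure the pushout is a $k$-group scheme in the noncommutative permawound case), applies the hypothesis on $U$ to the row $U \to G \to \Ga \to 1$ to obtain $\Ga \hookrightarrow G$, and uses $\Hom(\Ga, U'') = 0$ (semiwoundness of $U''$) to force that copy of $\Ga$ into $\ker(G \to U'') = E$. Your write-up merely makes explicit the representability and exactness bookkeeping that the paper leaves implicit.
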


\begin{proof}
Suppose given an exact sequence
\[
(0 \longrightarrow) U' \longrightarrow E \longrightarrow \Ga \longrightarrow 0.
\]
Consider the following commutative diagram of exact sequences in which the top left square is a pushout diagram (and where we use centrality of $U' \subset U$ in the permawound case to ensure that this pushout is a $k$-group scheme):
\[
\begin{tikzcd}
(0 \arrow{r} & )U' \arrow{d} \arrow{r} & E \arrow{d} \arrow{r} & \Ga \arrow[d, equals] \arrow{r} & 0 \\
(0 \arrow{r} & )U \arrow{d} \arrow{r} & G \arrow{r} \arrow{d}{g} & \Ga \arrow{r} & 0 \\
& U'' \arrow[r, equals] & U'' &&
\end{tikzcd}
\]
Permawoundity, or (quasi-)weak permawoundness, of $U$ implies that there is an inclusion $f\colon \Ga \hookrightarrow G$. Because $U''$ is semiwound, $g\circ f = 1$, so $f$ factors through an inclusion $\Ga \hookrightarrow E$.
\end{proof}

The following simple proposition is useful.

\begin{proposition}
\label{qwasunivclundext}
Suppose given an exact sequence of commutative unipotent $k$-group schemes
\[
0 \longrightarrow U' \longrightarrow U \longrightarrow U'' \longrightarrow 0.
\]
If $U'$ and $U''$ are (quasi-)weakly permawound, then so is $U$.
\end{proposition}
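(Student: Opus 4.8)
The plan is a two-step dévissage that removes $U'$ and $U''$ in succession. Suppose given an exact sequence of commutative unipotent $k$-group schemes
\[
(0 \longrightarrow) U \xlongrightarrow{\phi} E \longrightarrow \Ga \longrightarrow 0;
\]
I must exhibit a copy of $\Ga$ inside $E$. The guiding idea is to first divide $E$ by (the image of) $U'$, obtaining an extension of $\Ga$ by $U''$; invoke the (quasi-)weak permawoundness of $U''$ to find a copy of $\Ga$ in that quotient; and then take the preimage in $E$ of this $\Ga$, which is an extension of $\Ga$ by (the image of) $U'$, to which I apply the (quasi-)weak permawoundness of $U'$.

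I would carry this out first when $\phi$ is injective — which is automatic in the quasi-weak case, where the displayed sequence is short exact. Then $U' \subset U \subset E$ is a subgroup (central, as all groups here are commutative), and the quotient $q\colon E \twoheadrightarrow E/U'$ is again a commutative unipotent $k$-group scheme. Since $U/U' = U''$, this yields a short exact sequence
\[
0 \longrightarrow U'' \longrightarrow E/U' \longrightarrow \Ga \longrightarrow 0,
\]
and the (quasi-)weak permawoundness of $U''$ produces an inclusion $\Ga \hookrightarrow E/U'$. Setting $\tilde{E} := q^{-1}(\Ga) \subset E$ gives a short exact sequence $0 \to U' \to \tilde{E} \to \Ga \to 0$, whence the (quasi-)weak permawoundness of $U'$ yields $\Ga \hookrightarrow \tilde{E} \subset E$, as desired.

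The main obstacle is the weak case, in which $\phi$ may have a nonzero kernel and $U'$ is no longer literally a subgroup of $E$. Here I would reduce to the injective case by passing to images: set $\overline{U} := \phi(U)$, $\overline{U}' := \phi(U')$, and $\overline{U}'' := \overline{U}/\overline{U}'$, so that $0 \to \overline{U} \to E \to \Ga \to 0$ is short exact and $0 \to \overline{U}' \to \overline{U} \to \overline{U}'' \to 0$ is exact. Because $\overline{U}'$ is a quotient of $U'$ and $\overline{U}''$ is a quotient of $U/U' = U''$, Proposition \ref{quotofasunivisasuniv} ensures that $\overline{U}'$ and $\overline{U}''$ remain weakly permawound. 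Since the conclusion depends only on $E$, I may replace $(U, U', U'')$ by $(\overline{U}, \overline{U}', \overline{U}'')$ and run the two-step argument above verbatim — weak permawoundness applies to the short exact auxiliary sequences because they are in particular right-exact. Beyond this reduction, which is exactly where the quotient-stability of permawoundness (Proposition \ref{quotofasunivisasuniv}) is essential, the only remaining checks are the routine ones that $E/U'$ and $\tilde{E}$ are commutative unipotent $k$-group schemes and that the two auxiliary sequences are short exact.
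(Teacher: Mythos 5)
Your proof is correct and follows essentially the same two-step d\'evissage as the paper: quotient $E$ by the image of $U'$, apply (quasi-)weak permawoundness of $U''$ to find a copy of $\Ga$ in the quotient, and pull it back to an extension of $\Ga$ by (the image of) $U'$. The only cosmetic difference is that in the weak case the paper runs the same argument directly on the resulting right-exact sequences, whereas you first reduce to the injective case by passing to images via Proposition \ref{quotofasunivisasuniv}; both routes work.
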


\begin{proof}
Suppose given an exact sequence of commutative $k$-groups
\begin{equation}
\label{qwasunivclundextpfeqn1}
(0 \longrightarrow) U \xlongrightarrow{f} E \longrightarrow \Ga \longrightarrow 0.
\end{equation}
Then $E/f(U')$ sits in a similar exact sequence with $U''$ on the left instead of $U$, so by (quasi-)weak permawoundness of $U''$, $E/f(U')$ contains a $k$-subgroup $\mathbf{G} \simeq \Ga$. Letting $\pi\colon E \rightarrow E/f(U')$ denote the quotient map, $\pi^{-1}(\mathbf{G})$ then sits in an exact sequence analogous to (\ref{qwasunivclundextpfeqn1}) but with $U'$ on the left. (Quasi-)weak permawoundness of $U'$ then implies that $\pi^{-1}(\mathbf{G}) \subset E$ contains a copy of $\Ga$.
\end{proof}

\section{Permawound subgroups of vector groups}
\label{asunivsubgpsvecgpssection}

In this section we will give a criterion characterizing those $p$-polynomials which define permawound groups (Theorem \ref{asunivsmoothcrit}) and use it to prove that permawound groups are ``ubiquitous'' in a suitable sense (Theorem \ref{ubiquity}). We first need a criterion for quasi-weak permawoundness (which is easier to analyze than weak permawoundness because there is no need to deal with quotients).

\begin{proposition}
\label{qweakasunivcrit}
Let $F \in k[X_1, \dots, X_n]$ $($n > 0$)$ be a reduced $p$-polynomial with principal part $P$, and let $U := \{F = 0\} \subset \Ga^n$. Then $U$ is quasi-weakly permawound if and only if $P$ is universal.
\end{proposition}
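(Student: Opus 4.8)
The plan is to characterize quasi-weak permawoundness of $U = \{F = 0\}$ via the finite-dimensionality of a certain Ext group, exploiting the machinery built up in the preceding sections. The key observation is that quasi-weak permawoundness concerns extensions
\[
0 \longrightarrow U \longrightarrow E \longrightarrow \Ga \longrightarrow 0
\]
of $\Ga$ by $U$ in the category of commutative group schemes, and such extensions are classified by $\Ext^1(\Ga, U)$. So the crux is to relate the condition ``every such $E$ contains a copy of $\Ga$'' to a structural property of $\Ext^1(\Ga, U)$, and then to invoke Proposition \ref{univiffH^1primfd}, which says $P$ is universal if and only if $\Ext^1(\Ga, U)$ is finite-dimensional over $k$.

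First I would handle the reverse direction, assuming $P$ universal. By Proposition \ref{univiffH^1primfd}, $\Ext^1(\Ga, U)$ is then a finite-dimensional $k$-vector space. Given an extension class representing $E$, I want to produce a copy of $\Ga$ inside $E$. The natural approach is to consider the pushout or Frobenius-pullback structure: the $k$-action on $\Ext^1(\Ga, U)$ (coming from scaling on $\Ga$) interacts with the Frobenius/Verschiebung operators, and on a finite-dimensional space the relevant semilinear operator must have a nontrivial kernel or eigenvector, which should force the extension to split after pulling back along some nonzero endomorphism $\Ga \to \Ga$, i.e.\ to contain a $\Ga$. Concretely, since every class in $\Ext^1(\Ga, U)$ is represented (via the identification in the proof of Proposition \ref{univiffH^1primfd}) by a $p$-polynomial of bounded degree, precomposing the extension with a suitable Frobenius power $\Ga \xrightarrow{t \mapsto t^{p^N}} \Ga$ multiplies the class by a $p$-power scaling and, by finite-dimensionality, some nonzero combination lands in the image of $F_*$, yielding a section over a subgroup $\Ga \hookrightarrow E$.

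For the forward direction, I would argue the contrapositive: if $P$ is not universal, then $\Ext^1(\Ga, U)$ is infinite-dimensional, and I must exhibit a single extension $E$ of $\Ga$ by $U$ containing no copy of $\Ga$. The proof of Proposition \ref{univiffH^1primfd} already constructs infinitely many $k$-linearly independent classes $T^{p^m}$ ($m \geq N$) using Lemma \ref{leadcoeff} and the assumption $1 \notin P(k,\dots,k)$; I would take one such class, say the extension $E = \{F(X_1,\dots,X_n) = Z^{p^m}\} \subset \Ga^{n+1}$ for suitable $m$ (mirroring Example \ref{asunivnonexample}). Its defining $p$-polynomial has principal part $P(X_1,\dots,X_n) - Z^{p^m}$, which still has no nontrivial zero precisely because $1 \notin P(k,\dots,k)$ forces $P$ to omit the value needed; hence by \cite[Lem.\,B.1.7,$(1)\Longrightarrow(2)$]{cgp} this hypersurface is wound and contains no $\Ga$, while visibly surjecting onto $\Ga$ via the last coordinate with kernel $U$. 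This produces the required non-split-in-the-strong-sense extension.

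The main obstacle I anticipate is the reverse direction: extracting an honest $\Ga$-subgroup of $E$ from mere finite-dimensionality of $\Ext^1(\Ga, U)$. It is not enough that the class is torsion or that infinitely many pullbacks coincide; I need the containment of a copy of $\Ga$, which is a geometric statement about $E$ rather than a vanishing of a cohomology class. The careful point is that pulling back along a Frobenius $\Ga \to \Ga$ replaces $\Ga$ by $\Ga$ (since $\Ga^{(p)} \simeq \Ga$ and Frobenius is an isogeny), so a splitting of the pulled-back extension does give a subgroup scheme isomorphic to $\Ga$ in the original $E$; I would need to verify that finite-dimensionality genuinely forces such a splitting after some Frobenius pullback, presumably because the semilinear scaling operator on a finite-dimensional $\F_p$-structure cannot be injective on the infinite orbit $\{T^{p^m}\}$, collapsing the class into the image of $F_*$ after a bounded twist.
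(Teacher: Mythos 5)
Your ``only if'' direction is the paper's argument: normalize $F$ so that $1 \notin P(k,\dots,k)$, adjoin a variable $Z^{p^m}$ with $p^m \geq \deg_{X_i}(F)$ for all $i$, and check that the new principal part $P - Z^{p^m}$ is still reduced, so the resulting extension of $\Ga$ by $U$ is semiwound by \cite[Lem.\,B.1.7]{cgp}. Nothing to add there.

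Your ``if'' direction is a genuinely different route, and it can be made to work, but not by the mechanism you describe. The paper never touches $\Ext^1(\Ga,U)$ in this proof: it pushes the extension out along $U \hookrightarrow \Ga^n$, splits the pushout using vanishing of Verschiebung (Proposition \ref{splitvecsubgps}), realizes $E$ as a hypersurface $\{F + H = 0\} \subset \Ga^{n+1}$ whose principal part is universal but \emph{not} reduced, and then runs a degree-minimization change-of-variables argument to show some variable can be eliminated, giving a coordinate copy of $\Ga$ in $E$. In your route, two of the stated justifications are wrong: pulling back along $t \mapsto t^{p^N}$ does not multiply a class by a scalar, and the induced map on $\Ext^1(\Ga,U)$ is only $p^{-1}$-semilinear for the $k$-action, so finite-dimensionality yields no eigenvector and no nilpotence (such operators can perfectly well be bijective). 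What saves you is precisely the phrase ``some nonzero combination lands in the image of $F_*$'': writing $e = \delta(g)$ via the surjection $\Hom(\Ga,\Ga) \twoheadrightarrow \Ext^1(\Ga,U) = \mathrm{coker}(F_*)$ from the proof of Proposition \ref{univiffH^1primfd}, the $d+1$ classes $[g(T^{p^m})]$ for $0 \leq m \leq d := \dim_k \Ext^1(\Ga,U)$ are $k$-linearly dependent; since $\lambda\cdot[g(T^{p^m})] = [g(\lambda^{p^m}T^{p^m})]$ and $g$ is additive, a dependence relation reads $[g\circ h]=0$ for the single nonzero $p$-polynomial $h(T) = \sum_m \lambda_m^{p^m}T^{p^m}$. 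Then $g(h(T)) = F(Y_1(T),\dots,Y_n(T))$ for some $p$-polynomials $Y_i$, and $T \mapsto (Y_1(T),\dots,Y_n(T),h(T))$ is a nonzero homomorphism from $\Ga$ into $E = \{F(\vec{X}) = g(T)\}$ whose image, being a nonzero quotient of $\Ga$, is a copy of $\Ga$ in $E$. So the gap you flagged is closed by linear dependence, not by any spectral property of a single operator. The trade-off: your argument is shorter once Proposition \ref{univiffH^1primfd} is in hand, but it is non-constructive; the paper's pushout-plus-normal-form argument is self-contained and exhibits the $\Ga$ as an explicit coordinate subgroup.
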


\begin{proof}
First suppose that $P$ is not universal, and we will show that $U$ is not quasi-weakly permawound. We may multiply $F$ by a nonzero constant and thereby assume that $1 \notin P(k, \dots, k)$. Let $p^{d_i} := {\rm{deg}}_{X_i}(F)$ and $N := \max_i d_i$. Consider the $p$-polynomial $$G(T, X_1, \dots, X_n) := T^{p^N} - F(X_1, \dots, X_n) \in k[X_1, \dots, X_n, T].$$ Let $E := \{G = 0\} \subset \Ga^{n+1}$. The map $E \rightarrow \Ga$ which projects onto the $T$ coordinate is surjective with kernel $U$, so in order to prove that $U$ is not quasi-weakly permawound, it suffices to show that $E$ is semiwound.

The principal part of $G$ is $T^{p^N} - P(X_1, \dots, X_n)$. We claim that this has no nontrivial zeroes over $k$ -- that is, $G$ is reduced. Indeed, suppose that $t^{p^N} = P(x_1, \dots, x_n)$ with $t, x_i \in k$. If $t \neq 0$, then one may divide by $t^{p^N}$ to obtain $$1 = P(x_1/t^{p^{N-d_1}}, \dots, x_n/t^{p^{N-d_n}}),$$ in violation of the assumption that $P$ does not represent $1$. It follows that $t = 0$. Because $P$ is reduced by assumption, it then follows that $x_i = 0$ for all $i$. Thus $G$ is reduced, hence $E$ is semiwound by \cite[Lem.\,B.1.7,$(1)\Longrightarrow(2)$]{cgp}.

Next suppose that $P$ is universal, and we will show that $U$ is quasi-weakly permawound. Suppose given an exact sequence of commutative $k$-groups
\[
0 \longrightarrow U \longrightarrow E \longrightarrow \Ga \longrightarrow 0.
\]
We must show that $E$ contains a copy of $\Ga$. By Lemma \ref{extGavertriv}, $E$ has vanishing Verschiebung. Now consider the following commutative diagram of exact sequences in which the top left square is a pushout diagram:
\[
\begin{tikzcd}
0 \arrow{r} & U \arrow[d, hookrightarrow] \arrow{r} & E \arrow[d, hookrightarrow] \arrow{r} & \Ga \arrow{r} \arrow[d, equals] & 0 \\
0 \arrow{r} & \Ga^n \arrow{r} \arrow{d}{F} & W \arrow{d}{G} \arrow{r} & \Ga \arrow{r} & 0 \\
& \Ga \arrow[r, equals] & \Ga &&
\end{tikzcd}
\]
Then $W$ also has vanishing Verschiebung, so by Proposition \ref{splitvecsubgps}, the middle row splits. Choosing a splitting $W \simeq \Ga^n \times \Ga$, the map $G$ is given by $F$ on the $\Ga^n$ coordinate, and by some $p$-polynomial $H \in k[X_{n+1}]$ on the $\Ga$ coordinate. The group $E$ is therefore isomorphic to the vanishing locus of $G := F(X_1, \dots, X_n) + H(X_{n+1}) \in k[X_1, \dots, X_{n+1}]$. Let $P_H(X_{n+1}) = aX_{n+1}^{p^d}$ denote the principal part of $H$. (Possibly $H = 0$, in which case $a = 0$.) Note that the principal part $P_G := P(X_1, \dots, X_n) + P_H(X_{n+1})$ of $G$ is universal, because $P$ is, and that it has a nontrivial zero: Choosing $x_1, \dots, x_n \in k$ such that $P(x_1, \dots, x_n) = -a$, the tuple $(x_1, \dots, x_n, 1)$ yields a nontrivial zero of $P_G$. Among those non-reduced $p$-polynomials $R \in k[X_1, \dots, X_{n+1}]$ with universal principal part defining $E$, choose one $R_0$ such that the quantity $\sum_{i=1}^{n+1} {\rm{deg}}_{X_i}(R_0)$ is minimized, where we make the convention that ${\rm{deg}}_{X_i}(R) = 0$ if $X_i$ does not appear in $R$. Then we claim that at least one of the $X_i$ -- say $X_{i_0}$ -- does not appear in $R_0$. In that case, the $k$-subgroup of $E$ defined by the condition that $X_i = 0$ for all $i \neq i_0$ is a copy of $\Ga$ sitting inside $E$, so the proof will be complete.

To prove the claim, suppose for the sake of contradiction that all of the $X_i$ appear in $R_0$. Let $p^{d_i} := {\rm{deg}}_{X_i}(R_0)$. Choose a nontrivial zero $(x_1, \dots, x_{n+1}) \in k^{n+1}$ of the principal part $P_0$ of $R_0$. Let $i_0$ be such that $x_{i_0} \neq 0$, and such that $d_i \leq d_{i_0}$ for all $i$ such that $x_i \neq 0$. Consider the invertible change of variables on $\Ga^{n+1}$ defined by
\[
X_i \mapsto \begin{cases}
X_i, & x_i = 0 \\
x_{i_0}X_{i_0}, & i = i_0 \\
X_i + x_iX_{i_0}^{p^{d_{i_0}-d_i}}, & \mbox{else}.
\end{cases}
\]
Then the new $p$-polynomial $R'$ obtained from $R_0$ after making this change of variables satisfies ${\rm{deg}}_{X_i}(R') = {\rm{deg}}_{X_i}(R_0)$ for $i \neq i_0$ and ${\rm{deg}}_{X_{i_0}}(R') < {\rm{deg}}_{X_{i_0}}(R_0)$, so the sum of the degrees in the $X_i$ is strictly smaller for $R'$ than for $R_0$. On the other hand, we claim that the principal part $P'$ of $R'$ is universal and has a nontrivial zero over $k$, which will contradict the minimality of $R_0$ and prove the claim.

We first prove that $P'(X_1, \dots, 0, \dots, X_{n+1})$, where the $0$ is in the $i_0$ spot, is universal. Indeed, we know that $P'$ and $P_0$ agree when one sets $X_{i_0} = 0$, so it suffices to show that $P_0$, which is universal, remains so upon throwing out $X_{i_0}$. To see this, let $aX_{i_0}^{p^{d_{i_0}}}$ be the monomial in $X_{i_0}$ appearing in $P_0$. Then one has that $$a(-x_{i_0})^{p^{d_{i_0}}} = P_0(x_1, \dots, 0, \dots, x_{n+1}),$$ where the $0$ is in the $i_0$ spot. We therefore obtain for all $y \in k$ $$ay^{p^{d_{i_0}}} = P(-x_1(y/x_{i_0})^{p^{d_{i_0}- d_1}}, \dots, 0, \dots, -x_{n+1}(y/x_{i_0})^{p^{d_{i_0}- d_{n+1}}}),$$ where we place a $0$ in the $i_0$ spot and, by convention, the term $-x_i(y/x_{i_0})^{p^{d_{i_0}- d_i}}$ is just taken to be $0$ if $x_i = 0$. In particular, the terms all make sense because $i_0$ was chosen so that $d_{i_0} \geq d_i$ if $x_i \neq 0$. We therefore see that every element of $k$ represented by $aX_{i_0}^{p^{d_{i_0}}}$ is also represented by $P_0$ with $X_{i_0}$ thrown away. It follows that $P_0$ with $X_{i_0}$ thrown away is universal, hence so is $P'$ with $X_{i_0}$ thrown away. In particular, $P'$ is universal. Now let $bX_{i_0}^{p^d}$ be the monomial in $P'$ involving $X_{i_0}$. (Possibly $b = 0$.) Because $P'$ with $X_{i_0}$ thrown away is universal, one has $-b = P'(y_1, \dots, 0, \dots, y_{n+1})$ for some $y_i \in k$, where the $0$ is in the $i_0$ spot. Then $(y_1, \dots, 1, \dots, y_{n+1})$, where the $1$ is in the $i_0$ spot, is a nontrivial zero of $P'$, as required.
\end{proof}

A handy consequence of (weak) permawoundness is connectedness.

\begin{proposition}
\label{asunivconn}
Let $k$ be an imperfect field. Then every (weakly) permawound $k$-group scheme is connected.
\end{proposition}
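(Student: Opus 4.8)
The plan is to argue by contradiction: assuming $U$ is (weakly) permawound but disconnected, I will exhibit a nontrivial finite étale commutative quotient of $U$ together with a semiwound extension of $\Ga$ by it, contradicting (quasi-)weak permawoundness. First I would reduce to a commutative étale group. In the weakly permawound case $U$ is already commutative, so $\pi_0(U) = U/U^0$ is a nontrivial finite étale commutative quotient, and it is weakly permawound by Proposition \ref{quotofasunivisasuniv}. In the permawound case $U$ is smooth but possibly noncommutative; here $\pi_0(U)$ is a nontrivial finite étale group scheme which is geometrically a nontrivial $p$-group, so its abelianization $A := \pi_0(U)^{\ab}$ is a nontrivial smooth finite étale commutative quotient of $U$. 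Being a quotient it is permawound, and being smooth and commutative it is weakly permawound. In either case the task reduces to showing that \emph{no nontrivial finite étale commutative $k$-group $Q$ is weakly permawound when $k$ is imperfect.}

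Next I would simplify $Q$ until it sits inside $\Ga$. Since $\Ga$ is killed by $p$, I may first replace $Q$ by its quotient $Q/pQ$, which is still nontrivial (a nonzero finite $p$-group scheme has $pQ \neq Q$) and is now $p$-torsion; by Proposition \ref{quotofasunivisasuniv} it remains weakly permawound. Being étale and $p$-torsion it has vanishing Verschiebung, hence embeds into a vector group $\Ga^n$ \cite[Ch.\,IV, \S3, Th.\,6.6]{demazuregabriel}, and since $Q \neq 0$ at least one coordinate projection $\chi \colon Q \to \Ga$ is nonzero. Then $\bar Q := \chi(Q) \subset \Ga$ is a nontrivial finite (étale) subgroup of $\Ga$, again a quotient of $Q$ and hence weakly permawound. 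Writing $\bar Q = \{g = 0\}$ for a separable $p$-polynomial $g \in k[X]$ of degree $p^m = \#\bar Q \geq p$, I may scale so that the principal part of $g$ is $X^{p^m}$.

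Finally I would produce the contradicting extension. Fix $t \in k \setminus k^p$ (possible since $k$ is imperfect) and set
\[
E := \{ g(X) - t Y^{p^m} = 0 \} \subset \Ga^2 .
\]
The principal part of the defining polynomial is $X^{p^m} - tY^{p^m}$, which has no nontrivial zero over $k$ (a zero with $Y \neq 0$ would force $t \in k^{p^m} \subseteq k^p$); hence $E$ is semiwound by \cite[Lem.\,B.1.7,$(1)\Longrightarrow(2)$]{cgp}. The projection $(X,Y) \mapsto Y$ is an fppf surjection $E \to \Ga$ (as $g \colon \Ga \to \Ga$ is a nonzero, hence surjective, isogeny) whose kernel is exactly $\{g = 0\} = \bar Q$, giving a commutative exact sequence $0 \to \bar Q \to E \to \Ga \to 0$ in which $E$ contains no copy of $\Ga$. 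This contradicts the fact that $\bar Q$ is weakly, hence quasi-weakly, permawound, and completes the argument.

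The main obstacle is the middle step: reducing an arbitrary finite étale commutative quotient to a subgroup of a single $\Ga$, so that the explicit hypersurface construction applies. The essential points are that passage to $Q/pQ$ makes $Q$ embeddable in a vector group and that a nonzero coordinate then yields a quotient $\bar Q \subset \Ga$ whose defining $p$-polynomial has a single leading monomial $X^{p^m}$ — this is precisely what makes the principal part $X^{p^m} - tY^{p^m}$ manifestly reduced. Once $\bar Q \subset \Ga$ is achieved, the construction of $E$ and the verification of semiwoundness are routine; the conceptual content is that over an imperfect field a separable isogeny onto $\Ga$ can always be ``twisted'' by an inseparable element $t$ so as to destroy any split subgroup.
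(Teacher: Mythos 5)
Your proof is correct and follows essentially the same route as the paper: reduce to a nontrivial finite \'etale commutative $p$-torsion quotient presented as $\{g = 0\} \subset \Ga$ with monic, non-universal principal part $X^{p^m}$, and conclude that it is not quasi-weakly permawound. The only difference is that you inline the relevant direction of Proposition \ref{qweakasunivcrit} by exhibiting the explicit semiwound extension $\{g(X) - tY^{p^m} = 0\}$ with $t \in k \setminus k^p$, which is precisely the construction used in the paper's proof of that proposition.
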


\begin{proof}
If $U$ is (weakly) permawound, then so is its finite \'etale component group $U/U^0$ by Proposition \ref{quotofasunivisasuniv}. It therefore suffices to show that a nontrivial finite \'etale unipotent group $W$ over an imperfect field $k$ is not (weakly) permawound. We may filter $W$ by its derived series and thereby reduce to the case in which $W$ is commutative (again using Proposition \ref{quotofasunivisasuniv}), and we may reduce to the case in which $W$ is $p$-torsion by replacing $W$ by its quotient $[p]W$ if $W$ is not already killed by $[p]$. Since permawound implies weakly so in the commutative setting, it therefore suffices to prove that a nontrivial finite \'etale commutative $p$-torsion group is not weakly permawound, for which it is enough to show that it is not quasi-weakly permawound.

The group $W$ is the zero locus of some $p$-polynomial $F \in k[X]$ \cite[Prop.\,B.1.13]{cgp}. We may multiply $F$ by some element of $k^{\times}$ and thereby assume that it is monic. Note that $p^d := {\rm{deg}}(F) > 1$ because $W$ is nontrivial. Because $k$ is imperfect, $X^{p^d}$ is not universal, so $W$ is not quasi-weakly permawound by Proposition \ref{qweakasunivcrit}.
\end{proof}

We can also now show that there are no interesting (quasi-)weakly permawound groups over fields of infinite degree of imperfection, and therefore also no interesting permawound ones.

\begin{proposition}
\label{noasunivinfdegimp}
Let $k$ be a field of infinite degree of imperfection. Then there is no nontrivial semiwound unipotent $k$-group $U$ that is either (quasi-)weakly permawound or permawound.
\end{proposition}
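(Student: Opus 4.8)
The plan is to reduce everything to a single case — a nontrivial commutative semiwound group that is quasi-weakly permawound — and then to extract a contradiction from the hypersurface filtration of Proposition \ref{swdfilthyper}. The reduction differs according to which flavour of the hypothesis holds. If $U$ is (quasi-)weakly permawound, then $U$ is commutative by the very definition of (quasi-)weak permawoundness (Definition \ref{weakasunivdef}), and since weak permawoundness implies quasi-weak permawoundness there is nothing further to reduce: $U$ is already a nontrivial commutative semiwound quasi-weakly permawound group. If instead $U$ is permawound, then by Proposition \ref{asunivconn} it is connected, hence (being smooth and semiwound) wound, and I would pass to the maximal wound quotient $W$ of its abelianization $U^{\mathrm{ab}} = U/[U,U]$. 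As $W$ is a quotient of $U$ it is permawound by Proposition \ref{quotofasunivisasuniv}, it is smooth and commutative, and it is wound by construction; when $W \neq 0$ this already produces a nontrivial commutative semiwound group that is permawound, hence (weakly, hence quasi-weakly) permawound.

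The heart of the argument is the commutative case, so suppose $U$ is nontrivial, commutative, semiwound and quasi-weakly permawound. By Proposition \ref{swdfilthyper} there is a filtration $1 = U_0 \subset U_1 \subset \dots \subset U_m = U$ with $U_i/U_{i-1} \simeq \{F_i = 0\} \subset \Ga^{n_i}$ for reduced $p$-polynomials $F_i$, and since $U \neq 1$ I may arrange $U_1 \neq 1$. The quotient $U/U_1$ is an iterated extension of the semiwound groups $U_i/U_{i-1}$ for $i \geq 2$, and since semiwoundness is stable under extension (appendix \ref{semiwoundsection}), $U/U_1$ is semiwound. Applying Proposition \ref{qwasunivsubgps} with the central (as $U$ is commutative) subgroup $U_1$ and the semiwound quotient $U/U_1$, I conclude that $U_1$ is quasi-weakly permawound, whence by Proposition \ref{qweakasunivcrit} the principal part $P_1$ of $F_1$ is universal. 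But $k$ has infinite degree of imperfection, so by Proposition \ref{monppolynuniv}(i) equality in the numerical criterion forces $P_1$ to have exactly one variable of degree $1$, and reducedness then forbids any higher-degree term; thus $P_1$, and therefore $F_1$, is a single linear term $cX_j$, making $U_1 = \{F_1 = 0\}$ a vector group. Since $U_1$ is semiwound this forces $U_1 = 1$, contradicting $U_1 \neq 1$.

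The main obstacle is completing the permawound reduction in the remaining case $W = 0$, i.e. when $U$ is non-commutative with $U^{\mathrm{ab}}$ a nonzero vector group. Here passing to quotients is useless, precisely because woundness is not inherited by quotients, so I would instead seek a nontrivial commutative wound \emph{subgroup} to feed into Proposition \ref{qwasunivsubgps}: the natural candidate is $U' := (Z(U)^0)_{\mathrm{red}}$, a nontrivial smooth connected central — hence commutative — subgroup, which is wound as a subgroup of the wound group $U$. The difficulty is that Proposition \ref{qwasunivsubgps} demands the quotient $U/U'$ be semiwound, and this is exactly what is not automatic; I expect this to be the crux. The resolution I would pursue is to choose a nontrivial central subgroup $U'$ for which $U/U'$ \emph{remains} semiwound, using the central filtrations provided by the structure theory of smooth connected unipotent groups (for instance those underlying \cite[Cor.\,B.3.3]{cgp}) to peel off a nontrivial central wound piece without destroying semiwoundness of the quotient. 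Once such a $U'$ is produced it is commutative, semiwound, nontrivial and quasi-weakly permawound, and the commutative case above delivers the desired contradiction.
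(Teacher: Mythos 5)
Your treatment of the commutative case is correct and is essentially the paper's own argument: filter by hypersurface groups via Proposition \ref{swdfilthyper}, deduce quasi-weak permawoundness of the bottom piece from Proposition \ref{qwasunivsubgps}, and then combine Proposition \ref{qweakasunivcrit} with Proposition \ref{monppolynuniv}(i) to force that piece to be a (semiwound, hence trivial) vector group. The gap is in your reduction of the permawound case. Having observed that $U$ is smooth, connected and wound, you pass to the maximal wound quotient $W$ of $U^{\mathrm{ab}}$ and are then stuck when $W=0$; your proposed escape (produce a nontrivial central wound subgroup $U'$ with $U/U'$ semiwound) is only a sketch, which you yourself flag as the unresolved crux. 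As written, the proof is therefore incomplete.

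The missing ingredient is precisely \cite[Cor.\,B.3.3]{cgp}, which you gesture at but never actually invoke: a nontrivial smooth connected wound unipotent group admits a central composition series whose successive quotients are wound, commutative and $p$-torsion. The paper simply takes the top quotient of such a series: it is a nontrivial commutative wound quotient of $U$, hence permawound by Proposition \ref{quotofasunivisasuniv}, hence weakly permawound (being smooth and commutative), and the commutative case finishes the job. This also shows that your subcase $W=0$ is vacuous: any nontrivial commutative wound quotient of $U$ factors through $U^{\mathrm{ab}}$ and kills $(U^{\mathrm{ab}})_{\mathrm{split}}$, so the maximal wound quotient of $U^{\mathrm{ab}}$ cannot vanish. (Alternatively, the bottom term of the same composition series is a nontrivial central commutative wound subgroup with wound quotient, which is exactly the object your sketch asks for.) So the argument is closed by a single citation you already had in hand, but the reduction as you wrote it does not go through on its own.
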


\begin{proof}
Suppose that $U$ is permawound. Then $U$ is smooth and connected (Proposition \ref{asunivconn}), hence wound. (This appeal to connectedness is not really necessary; it is simply to put us in the context of \cite{cgp} to avoid having to reprove beyond the connected case some of the results state there for wound unipotent groups.) Therefore, if $U$ is nontrivial, then it admits a nontrivial commutative wound quotient \cite[Cor.\,B.3.3]{cgp}, which is also permawound by Proposition \ref{quotofasunivisasuniv} and therefore weakly permawound. It therefore suffices to treat the (quasi-)weakly permawound assertion. 

So let $U$ be a nontrivial commutative semiwound unipotent $k$-group scheme. By Proposition \ref{swdfilthyper}, $U$ admits a filtration $0 = U_0 \subset \dots \subset U_m = U$ such that each $U_i/U_{i-1}$ is a nontrivial group which is the vanishing locus of a reduced $p$-polynomial $F_i$, and in particular semiwound \cite[Lem.\,B.1.7,$(1)\Longrightarrow(2)$]{cgp}. If $U$ is (quasi-)weakly permawound, then $\{F_1 = 0\}$ is (quasi-weakly) permawound by Proposition \ref{qwasunivsubgps}. We are therefore reduced to showing that the vanishing locus of a reduced $p$-polynomial $F$, if nontrivial, is not quasi-weakly permawound. If it is, then the principal part $P$ of $F$ is universal by Proposition \ref{qweakasunivcrit}. Proposition \ref{monppolynuniv} then implies that $F \in k[X]$ is of the form $aX$ for some $a \in k^{\times}$, so that $\{F = 0\} \subset \Ga$ is the trivial group, in violation of our assumption. This completes the proof.
\end{proof}

We may now prove a criterion for weak permawoundness.

\begin{proposition}
\label{asunivcrit}
For each $i = 1, \dots, m$, let $F_i \in k[X_1, \dots, X_{n_i}]$ be a reduced $p$-polynomial with principal part $P_i$, and let $W_i := \{F_i = 0\} \subset \Ga^{n_i}$. Suppose that $U$ is a commutative $k$-group scheme admitting a filtration $0 = U_0 \subset \dots \subset U_m = U$ such that $U_i/U_{i-1} \simeq W_i$ for $1 \leq i \leq m$. Then $U$ is weakly permawound if and only $P_i$ is universal for every $1 \leq i \leq m$.
\end{proposition}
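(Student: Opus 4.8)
The plan is to run both implications through the cohomological criterion of Proposition \ref{univiffH^1primfd}, which identifies universality of the principal part of a single reduced hypersurface with finite-dimensionality of its $\Ext^1(\Ga,-)$. I may assume $k$ is infinite, the finite (hence perfect) case being subsumed by Proposition \ref{asunivperfect} together with Proposition \ref{monppolynuniv}. Throughout, each $W_i$ is semiwound by \cite[Lem.\,B.1.7,$(1)\Longrightarrow(2)$]{cgp}, so $U$ is semiwound as an iterated extension of semiwound groups, and so is every subgroup of $U$.

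For the forward implication, assume $U$ is weakly permawound and fix $i$. The subgroup $U_i$ is central in $U$ since $U$ is commutative, and the quotient $U/U_i$ is semiwound because it is filtered by the semiwound groups $W_j$ with $j>i$. Hence Proposition \ref{qwasunivsubgps} shows that $U_i$ is weakly permawound, and then $W_i = U_i/U_{i-1}$ is weakly permawound by Proposition \ref{quotofasunivisasuniv}; in particular it is quasi-weakly permawound, so $P_i$ is universal by Proposition \ref{qweakasunivcrit}.

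For the converse, suppose every $P_i$ is universal. A dévissage along the filtration first shows $\Ext^1(\Ga,U)$ is finite-dimensional: the long exact sequence of $0\to U_{i-1}\to U_i\to W_i\to 0$ bounds $\Ext^1(\Ga,U_i)$ by $\Ext^1(\Ga,U_{i-1})$ and $\Ext^1(\Ga,W_i)$, the latter finite-dimensional by Proposition \ref{univiffH^1primfd}. Now let $U\to E\to\Ga\to 0$ be any right-exact sequence of commutative $k$-groups, put $N:=\ker(U\to E)$ and $\bar U:=U/N\hookrightarrow E$. If $\bar U$ is not semiwound it contains a copy of $\Ga$, which then lies in $E$; so I may assume $\bar U$ is semiwound. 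Then $\Hom(\Ga,\bar U)=0$, so the long exact sequence of $0\to N\to U\to\bar U\to 0$ embeds $\Ext^1(\Ga,N)$ into the finite-dimensional space $\Ext^1(\Ga,U)$. Since $N$ is commutative and semiwound, Proposition \ref{swdfilthyper} gives it a filtration by reduced hypersurface groups, so Lemma \ref{swquotfdH^1prim} promotes finiteness of $\Ext^1(\Ga,N)$ to finiteness of $\Ext^2(\Ga,N)$; feeding this back into the same long exact sequence shows $\Ext^1(\Ga,\bar U)$ is finite-dimensional as well.

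It remains to extract a copy of $\Ga$ in $E$ from finiteness of $\Ext^1(\Ga,\bar U)$. The class $c\in\Ext^1(\Ga,\bar U)$ of the extension $0\to\bar U\to E\to\Ga\to 0$ has the property that its pullbacks along the powers of the Frobenius endomorphism of $\Ga$ are $k$-linearly dependent, so a suitable $k$-linear combination of them vanishes; this combination is $\psi^*c$ for a nonzero $\psi\in\End(\Ga)$, and a splitting of the $\psi$-pullback of $E$ then produces a homomorphism $\Ga\to E$ lying over $\psi$, whose image is a copy of $\Ga$ in $E$. The crux of the whole argument is this converse direction, and specifically the control of $\Ext^1(\Ga,\bar U)$ for the quotient $\bar U$: the essential devices are the reduction to the case where $\bar U$ is semiwound, which is exactly what forces $\Ext^1(\Ga,N)$ to inject into $\Ext^1(\Ga,U)$, and Lemma \ref{swquotfdH^1prim}, which converts finiteness of $\Ext^1(\Ga,N)$ into finiteness of $\Ext^2(\Ga,N)$.
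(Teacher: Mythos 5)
Your proof is correct. The forward direction is the same as the paper's (centrality of $U_i$, semiwoundness of $U/U_i$, then Propositions \ref{qwasunivsubgps}, \ref{quotofasunivisasuniv}, \ref{qweakasunivcrit}). The converse genuinely diverges at the end, and it is worth comparing. Both arguments first establish finite-dimensionality of $\Ext^1(\Ga,U)$ by d\'evissage through Proposition \ref{univiffH^1primfd}. The paper then re-filters the semiwound quotient $\bar U$ and the kernel by reduced hypersurface groups, splices these into a filtration of $U$, invokes Lemma \ref{swquotfdH^1prim} for its \emph{universality} conclusion on all the principal parts occurring in $\bar U$'s filtration, and finishes with Propositions \ref{qweakasunivcrit} and \ref{qwasunivclundext}. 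You instead use Lemma \ref{swquotfdH^1prim} only for its $\Ext^2$-finiteness conclusion, applied to the kernel $N$, deduce from the long exact sequence that $\Ext^1(\Ga,\bar U)$ is finite-dimensional, and then extract $\Ga\subset E$ directly: the Frobenius pullbacks $(F^j)^*c$ are $k$-linearly dependent, $\lambda\cdot(F^j)^*c=(\lambda^{p^j}T^{p^j})^*c$, so a nonzero $p$-polynomial $\psi$ kills $c$, and a splitting of the $\psi$-pullback gives a nonzero homomorphism $\Ga\to E$ whose image (a quotient of $\Ga$ by a finite subgroup scheme) is a copy of $\Ga$. Your route isolates a clean standalone principle — for semiwound commutative $W$, finite-dimensionality of $\Ext^1(\Ga,W)$ already forces every commutative extension of $\Ga$ by $W$ to contain $\Ga$ — which avoids re-filtering $\bar U$ and dispenses with Proposition \ref{qwasunivclundext}; the paper's route stays entirely within the universal-principal-part calculus it has built. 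The hard inputs (Proposition \ref{univiffH^1primfd} and Lemma \ref{swquotfdH^1prim}) are the same in both.
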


\begin{proof}
If $U$ is weakly permawound, then so are $U_1$ and $U/U_1$ by Propositions \ref{qwasunivsubgps} and \ref{quotofasunivisasuniv}. By induction, we conclude that $P_i$ is universal for $1 < i \leq m$. Because $U_1$ is weakly permawound, it is quasi-weakly so, hence $P_1$ is also universal by Proposition \ref{qweakasunivcrit}.

Conversely, suppose that the $P_i$ are all universal. Then each $\Ext^1(\Ga, W_i)$ is finite-dimensional by Proposition \ref{univiffH^1primfd}, hence $\Ext^1(\Ga, U)$ is as well. Now suppose given a quotient group $\overline{U}$ of $U$. We must show that $\overline{U}$ is quasi-weakly permawound. This is trivial if $\overline{U}$ contains a copy of $\Ga$, so assume that $\overline{U}$ is semiwound. Let $V := \ker(U \rightarrow \overline{U})$. By Proposition \ref{filterhypersurf}, both $V$ and $\overline{U}$ admit filtrations by hypersurface groups defined by reduced $p$-polynomials, and splicing these together yields such a filtration on $U$. Because $\Ext^1(\Ga, U)$ is finite-dimensional, Lemma \ref{swquotfdH^1prim} ensures that the $p$-polynomials in this filtration, and in particular those appearing in the filtration on $\overline{U}$, have universal principal part. Propositions \ref{qweakasunivcrit} and \ref{qwasunivclundext} then imply that $\overline{U}$ is quasi-weakly permawound.
\end{proof}

\begin{remark}
The assumption in Proposition \ref{asunivcrit} that the $F_i$ be reduced is necessary. Indeed, one may begin with a reduced $p$-polynomial with universal principal part, and by a change of variables transform it into a (necessarily non-reduced) $p$-polynomial with non-universal principal part. For instance, suppose that $k$ is imperfect with finite degree of imperfection and let $\lambda_1, \dots, \lambda_r$ be a $p$-basis for $k$. Then $$F := X_0 + \sum_{f \in I} \left(\prod_{i=1}^r \lambda_i^{f(i)}\right)X_f^p \in k[X_f\mid f \in I],$$ where $0$ denotes the $0$ function, is reduced with universal principal part; see example \ref{univbasicexample}. But if one makes the change of variables which fixes $X_f$ for all $f \neq 0$ but sends $X_0$ to $X_0 + \sum_{f \neq 0} X_f^p$, then $F$ is transformed into a $p$-polynomial with principal part $(X_0 + \sum_{f \neq 0} X_f^p)^p$. This is not universal because it only takes $p$th power values.
\end{remark}

\begin{corollary}
\label{univppolyconn}
Let $F \in k[X_1, \dots, X_n]$ be a reduced $p$-polynomial with universal principal part, and let $U := \{F = 0\} \subset \Ga^n$. Then $U$ is connected.
\end{corollary}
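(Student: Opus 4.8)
The plan is to obtain the corollary immediately from the two preceding results, Proposition \ref{asunivcrit} and Proposition \ref{asunivconn}, with essentially no new computation. First I would apply Proposition \ref{asunivcrit} to the trivial one-step filtration $0 = U_0 \subset U_1 = U$, taking $m = 1$, $W_1 := U = \{F = 0\}$, $F_1 := F$, and $P_1 := P$. Since $U$ is a commutative $k$-group scheme (being a $k$-subgroup of $\Ga^n$) and $P$ is universal by hypothesis, the ``if'' direction of Proposition \ref{asunivcrit} yields at once that $U$ is weakly permawound. The only thing to verify here is the bookkeeping that packages $U$ as its own length-one filtration, which is immediate.

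With weak permawoundness in hand, I would invoke Proposition \ref{asunivconn}: over an imperfect field, every weakly permawound $k$-group scheme is connected. This directly gives the connectedness of $U$. I note that no smoothness hypothesis on $U$ is needed, which is important because a reduced $p$-polynomial need not be separable, so $U = \{F = 0\}$ can fail to be smooth; Proposition \ref{asunivconn} nonetheless applies to arbitrary weakly permawound group schemes, its proof passing to the finite \'etale component group $U/U^0$ (again weakly permawound by Proposition \ref{quotofasunivisasuniv}) and showing that a nontrivial such group could never be quasi-weakly permawound.

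The one genuine subtlety — and the step I expect to be the real point — is the base field. Proposition \ref{asunivconn} is stated for imperfect $k$, and the connectedness assertion is special to that setting: over a perfect field the hypotheses can be met by a disconnected group, for instance $\{X^p - X = 0\} \subset \Ga$ over $\F_p$, whose principal part $X^p$ is reduced and universal yet whose vanishing locus is the \'etale group $\Z/p\Z$. Accordingly I would phrase the argument for imperfect $k$, which is the regime in which the corollary is applied and in which Proposition \ref{asunivconn} is available. Beyond correctly reading $U$ into Proposition \ref{asunivcrit}, there is no obstacle to surmount: all of the substantive content already resides in those two propositions — universality of the principal part forces weak permawoundness, and weak permawoundness forces connectedness over an imperfect field.
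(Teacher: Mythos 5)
Your proof is correct and is exactly the paper's argument: the corollary is deduced by combining Proposition \ref{asunivcrit} (universality of the principal part, via the length-one filtration, gives weak permawoundness) with Proposition \ref{asunivconn} (weak permawoundness gives connectedness over an imperfect field). Your caveat that the statement genuinely requires $k$ imperfect --- illustrated by $F = X^p - X$ over $\F_p$, which is reduced with universal principal part yet defines the disconnected group $\Z/p\Z$ --- is a fair point that the paper leaves implicit.
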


\begin{proof}
This follows from Propositions \ref{asunivcrit} and \ref{asunivconn}.
\end{proof}

\begin{proposition}
\label{weakasunivinhbyexts}
Let $K/k$ be a separable extension of fields of characteristic $p$ and the same finite degree of imperfection $($for instance, an algebraic separable extension$)$, and let $U$ be a commutative unipotent $k$-group scheme. Then $U$ is weakly permawound over $k$ if and only if it is so over $K$.
\end{proposition}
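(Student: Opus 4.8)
The plan is to reduce to the case where $U$ is semiwound, where the statement becomes a clean comparison of the universality of principal parts; universality (and reducedness) transfer between $k$ and $K$ by Proposition \ref{universaloverextensions}. If $r = 0$ then both $k$ and $K$ are perfect and every commutative unipotent group over either field is weakly permawound by Proposition \ref{asunivperfect}(ii), so assume $r \geq 1$. For the reduction, let $V \subseteq U$ be the subgroup generated by the images of all $k$-homomorphisms $\Ga \to U$. Then $V$ is a smooth connected commutative unipotent group generated by copies of $\Ga$, hence split, hence a vector group; and I claim $\bar U := U/V$ is semiwound. Granting this, since vector groups are weakly permawound (Proposition \ref{splitunipasuniv}), the exact sequence $0 \to V \to U \to \bar U \to 0$ together with Propositions \ref{quotofasunivisasuniv} and \ref{qwasunivclundext} shows that $U$ is weakly permawound over $k$ if and only if $\bar U$ is. The identical argument over $K$, applied to $0 \to V_K \to U_K \to \bar U_K \to 0$ with $V_K$ again a vector group, shows that $U_K$ is weakly permawound over $K$ if and only if $\bar U_K = (U/V)_K$ is. Thus it suffices to treat $\bar U$.

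So suppose $U$ is commutative semiwound. By Proposition \ref{swdfilthyper} choose a filtration $0 = U_0 \subset \dots \subset U_m = U$ with $U_i/U_{i-1} \simeq \{F_i = 0\} \subset \Ga^{n_i}$ for reduced $p$-polynomials $F_i$ with principal parts $P_i$. Base changing along $K/k$ yields a filtration of $U_K$ whose successive quotients are the base changes $\{F_i = 0\} \subset \Ga^{n_i}_K$, and each $F_i$ remains reduced over $K$ by Proposition \ref{universaloverextensions}(i); hence this is a filtration of $U_K$ of exactly the shape required by Proposition \ref{asunivcrit}. Applying Proposition \ref{asunivcrit} over $k$ and over $K$, one finds that $U$ is weakly permawound over $k$ if and only if every $P_i$ is universal over $k$, while $U_K$ is weakly permawound over $K$ if and only if every $P_i$ is universal over $K$. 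Since $K/k$ is separable and $k$ and $K$ have the same finite degree of imperfection, Proposition \ref{universaloverextensions}(ii) gives that each $P_i$ is universal over $k$ if and only if it is universal over $K$, and comparing the two criteria completes the proof.

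The one point requiring genuine argument — and the main obstacle — is the structural claim that $\bar U = U/V$ is semiwound, for $U$ not assumed smooth. I would prove this by contradiction: a nonconstant homomorphism $\Ga \to U/V$ would have image a copy of $\Ga$ (quotients of $\Ga$ being split), whose preimage $W \subseteq U$ is an extension $0 \to V \to W \to \Ga \to 0$ and is therefore smooth and connected. Its maximal split subgroup $W_{\mathrm{split}}$ contains $V$, and $W/V \simeq \Ga$ has no proper nonzero smooth connected subgroup, so $W_{\mathrm{split}}$ is either $V$ or $W$: the former forces $W/W_{\mathrm{split}} \simeq \Ga$ to be wound, contradicting \cite[Th.\,B.3.4]{cgp}, while the latter makes $W$ split, hence generated by images of $\Ga$, hence contained in $V$, contradicting $W/V \simeq \Ga \neq 0$. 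Once this is in hand the base change is automatic: the reduction to $\bar U$ over $K$ uses the same vector subgroup $V_K$ and the same hypersurface filtration base changed from $k$, so no separate insensitivity statement for woundness under $K/k$ is required, the separability hypothesis entering only through Proposition \ref{universaloverextensions}.
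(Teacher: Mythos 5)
Your reduction to the semiwound case contains a genuine gap: the claim that $\overline{U} = U/V$ is semiwound, where $V$ is the subgroup generated by images of $k$-homomorphisms $\Ga \to U$, is false in general. The specific false step is ``$W$ split, hence generated by images of $\Ga$'': a split smooth connected commutative unipotent group need not be generated by images of $\Ga$. Take $U = W_2$, the Witt vectors of length $2$. Every homomorphism $\Ga \to W_2$ factors through the kernel copy of $\Ga$ (otherwise $W_2$ would be generated by two $p$-torsion subgroups and hence be $p$-torsion, which it is not), so $V$ is that copy of $\Ga$ and $U/V \simeq \Ga$ is not semiwound. In your dichotomy this is exactly the case $W_{\mathrm{split}} = W$ with $W = W_2$, and the contradiction you derive there does not materialize. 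Consequently Proposition \ref{swdfilthyper} cannot be applied to $\overline{U}$, and the reduction breaks for groups that are not $p$-torsion (for $p$-torsion or Verschiebung-killed $U$ your $V$ does coincide with the maximal split subgroup and the argument goes through).

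The repair is small and is what the paper does: quotient instead by the maximal split smooth connected $k$-subgroup $U_s \subseteq U$. Then $U/U_s$ is semiwound by maximality (the preimage of any copy of $\Ga$ in $U/U_s$ would be a split smooth connected subgroup strictly containing $U_s$), $U_s$ is split hence weakly permawound over both $k$ and $K$ by Proposition \ref{splitunipasuniv}, and Propositions \ref{quotofasunivisasuniv} and \ref{qwasunivclundext} give the equivalence with $U/U_s$ exactly as you argue. Everything else in your write-up --- the perfect case, the filtration from Proposition \ref{swdfilthyper}, base-changing the filtration and invoking Propositions \ref{asunivcrit} and \ref{universaloverextensions} over both fields --- is correct and is the same argument as the paper's.
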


\begin{proof}
If $k$ is perfect, then so is $K$ so we are done by Proposition \ref{asunivperfect}. We may therefore assume that $k$ is imperfect and in particular infinite. First assume that $U$ is semiwound. Then by Proposition \ref{swdfilthyper}, there is a filtration $0 = U_0 \subset \dots \subset U_m = U$ such that each $W_i := U_i/U_{i-1}$ is of the form $\{F_i = 0\} \subset \Ga^{n_i}$ for some reduced $p$-polynomial $F_i \in k[X_1, \dots, X_{n_i}]$. The assertion therefore follows from Propositions \ref{asunivcrit} and \ref{universaloverextensions}.

Now consider the general case. Let $U_s \subset U$ be the maximal split $k$-subgroup, and let $U_w := U/U_s$. Then $U_w$ is $k$-semiwound. Further, $U$ is $k$- or $K$-weakly permawound if and only if $U_w$ is, by Propositions \ref{quotofasunivisasuniv}, \ref{splitunipasuniv}, and \ref{qwasunivclundext}. The proposition therefore reduces to the semiwound case.
\end{proof}

\begin{lemma}
\label{extGasemiwdsm}
Every extension of $\Ga$ by a semiwound smooth commutative unipotent $k$-group $U$ is commutative.
\end{lemma}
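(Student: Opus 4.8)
The plan is to show that both obstructions to commutativity vanish: the conjugation action of $\Ga$ on $U$, and the resulting commutator pairing $\Ga \times \Ga \to U$. Since commutativity of $E$ amounts to the vanishing of a single morphism (the commutator $E \times E \to E$), and this vanishing can be checked after the faithfully flat base change $k \to k_s$, I would first replace $k$ by $k_s$; note that $U_{k_s}$ remains semiwound and smooth commutative, and the advantage gained is that over a separably closed field the smooth scheme $U$ has a Zariski-dense set of rational points. Because $U$ is commutative and normal in $E$, conjugation by an (fppf-local) lift of $t \in \Ga$ is independent of the chosen lift, so it defines a genuine morphism $c \colon \Ga \to \underline{\mathrm{Aut}}(U)$; moreover, for each $t$ the difference $\delta_t := c_t - \mathrm{id}$ is a homomorphism $U \to U$ (additivity in the argument follows from $c_t$ being an automorphism together with commutativity of $U$), with $\delta_0 = 0$.

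For the first step, centrality of $U$, I would consider the morphism $\Phi \colon \Ga \times U \to U$, $(t,u) \mapsto c_t(u)u^{-1} = \delta_t(u)$, which is exactly the commutator $[x,u]$ of a lift $x$ of $t$ with $u$. For each rational point $u \in U(k_s)$ the slice $t \mapsto \Phi(t,u)$ is a morphism $\A^1 \to U$, hence constant by semiwoundness, and its value at $t = 0$ is $\delta_0(u) = 0$; thus $\Phi$ vanishes identically on $\A^1 \times \{u\}$. The locus where $\Phi$ agrees with the zero morphism is a closed subscheme of $\Ga \times U$ (as $U$ is separated), and it contains $\Ga \times U(k_s)$, which is Zariski-dense (using density of $U(k_s)$ and that projection along $\Ga$ is open). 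Since $\Ga \times U$ is reduced, this forces $\Phi = 0$, i.e. $c_t(u) = u$ universally, so $U$ is central in $E$.

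With $U$ central, the commutator map $E \times E \to U$ is unchanged upon translating either argument by an element of $U$, so it descends to a morphism $\bar\kappa \colon \Ga \times \Ga \to U$, and centrality of $[E,E] \subseteq U$ makes $\bar\kappa$ biadditive. For each $s \in \Ga(k_s)$ the partial map $\bar\kappa(s, -) \colon \Ga \to U$ is then a group homomorphism, hence zero because $U$ is semiwound (a homomorphism $\Ga \to U$ is in particular a morphism $\A^1 \to U$, necessarily constant and therefore trivial). Repeating the density-and-reducedness argument of the previous paragraph, now with $\Ga(k_s) \times \Ga$ dense, yields $\bar\kappa = 0$, and therefore the commutator $E \times E \to E$ vanishes, so $E$ is commutative.

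I expect the main obstacle to be the passage from the vanishing of $\Phi$ (and later of $\bar\kappa$) on individual rational slices to its identical vanishing: this is precisely where I use the reduction to $k_s$ together with density of rational points on the smooth scheme $U$ and the fact that the equalizer of two morphisms into the separated scheme $U$ is closed. The hypotheses of smoothness (for density and reducedness) and of semiwoundness (for the triviality of morphisms out of $\A^1$ and of homomorphisms out of $\Ga$) enter exactly at these two points.
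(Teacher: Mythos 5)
Your proof is correct and follows essentially the same route as the paper's: reduce to $k=k_s$ (using that semiwoundness is insensitive to separable extension), use constancy of morphisms $\A^1 \to U$ together with density of $U(k_s)$ to show $U$ is central, then descend the commutator to $\Ga\times\Ga \to U$ and kill it by semiwoundness again. You merely spell out the density-and-reducedness step and the biadditivity of the descended commutator, which the paper leaves implicit.
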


\begin{proof}
Let $E$ be such an extension. The group $U$ remains semiwound over $k_s$ (Proposition \ref{semiwoundsepble}), so we are free to assume that $k = k_s$. Let $u \in U(k)$. The quotient $\Ga$ of $E$ acts on $U$ by conjugation, so we obtain a $k$-scheme map $\Ga \rightarrow U$, $x \mapsto xux^{-1}$. Because $U$ is semiwound, this map is constant, hence $u$ is central in $E$. Since $u \in U(k)$ was arbitrary and $U(k)$ is dense in $U$ (because $U$ is smooth and $k = k_s$), it follows that $U$ is central in $E$. Therefore, the commutator map $E \times E \rightarrow U \subset E$ descends to a map $\Ga \times \Ga \rightarrow U$. Once again, this map must be constant because $U$ is semiwound. Thus, $E$ is commutative.
\end{proof}

\begin{proposition}
\label{asuniv=weakasuniv}
A smooth commutative unipotent $k$-group scheme $U$ is permawound if and only if it is weakly permawound.
\end{proposition}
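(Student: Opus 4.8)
The plan is to prove the nontrivial implication, namely that weak permawoundness implies permawoundness, since the converse is immediate: a short exact sequence of commutative groups $0 \to U \to E \to \Ga \to 1$ is in particular a right-exact sequence as in Definition \ref{asymptotunivdef}, so permawoundness of $U$ directly furnishes a copy of $\Ga$ inside $E$, giving weak permawoundness.

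For the forward direction, suppose $U$ is smooth, commutative, and weakly permawound, and consider an arbitrary right-exact sequence $U \longrightarrow E \xlongrightarrow{\pi} \Ga \longrightarrow 1$ of finite type $k$-group schemes, where crucially $E$ is \emph{not} assumed commutative. Set $\overline{U} := \ker(\pi)$, which by right-exactness is the image of $U \to E$, hence a quotient of $U$. Then $\overline{U}$ is smooth (being a quotient of the smooth group $U$), commutative, and unipotent, and by Proposition \ref{quotofasunivisasuniv} it is again weakly permawound. I would then split into two cases according to whether $\overline{U}$ is semiwound.

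If $\overline{U}$ is not semiwound, then it already contains a copy of $\Ga$, and since $\overline{U} \subset E$ this immediately yields the desired subgroup of $E$. The substantive case is when $\overline{U}$ is semiwound. Here the apparent obstruction is that $E$ may be noncommutative, so one cannot directly invoke weak permawoundness of $\overline{U}$, whose definition only concerns commutative extensions. This is precisely where Lemma \ref{extGasemiwdsm} enters: since $\overline{U}$ is a smooth commutative semiwound unipotent group, every extension of $\Ga$ by $\overline{U}$---and in particular $E$---is automatically commutative. Having upgraded $E$ to a commutative group, the sequence $0 \longrightarrow \overline{U} \longrightarrow E \longrightarrow \Ga \longrightarrow 1$ is a commutative extension, and weak permawoundness of $\overline{U}$ produces a copy of $\Ga$ inside $E$, completing the argument.

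The only genuine subtlety---and the step I expect to be the crux---is this reduction to the commutative case via Lemma \ref{extGasemiwdsm}. Once commutativity of $E$ is secured, the remainder is formal, resting on the hereditary behavior of weak permawoundness under quotients (Proposition \ref{quotofasunivisasuniv}) together with the definitions; the trivial case where $\overline{U}$ fails to be semiwound is what makes the semiwoundness hypothesis of Lemma \ref{extGasemiwdsm} harmless.
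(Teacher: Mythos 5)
Your proof is correct and follows essentially the same route as the paper: reduce to the quotient $\overline{U}$, dispose of the case where $\overline{U}$ contains a copy of $\Ga$, and in the semiwound case invoke Lemma \ref{extGasemiwdsm} to force $E$ to be commutative so that weak permawoundness applies. The only cosmetic difference is that you apply weak permawoundness to $\overline{U}$ (via Proposition \ref{quotofasunivisasuniv}) where the paper applies it directly to $U$; both are valid.
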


\begin{proof}
The only if direction is clear. Conversely, suppose that $U$ is weakly permawound. Let $\overline{U}$ be a quotient group of $U$. We must show that any extension $E$ of $\Ga$ by $\overline{U}$ contains a copy of $\Ga$. If $\overline{U}$ contains such a copy, then this is immediate, so assume that $\overline{U}$ is semiwound. Then Lemma \ref{extGasemiwdsm} implies that $E$ is commutative, hence contains a copy of $\Ga$ by the weak permawoundness of $U$.
\end{proof}

We now finally obtain our criterion for permawoundness of smooth commutative $p$-torsion groups.

\begin{theorem}
\label{asunivsmoothcrit}
Let $F \in k[X_1, \dots, X_n]$ be a separable, reduced $p$-polynomial with principal part $P$, and let $U := \{F = 0\} \subset \Ga^n$. Then $U$ is permawound if and only if $P$ is universal.
\end{theorem}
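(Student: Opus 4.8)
The plan is to assemble the theorem from the two key results already established: the equivalence of permawoundness and weak permawoundness for smooth commutative groups (Proposition \ref{asuniv=weakasuniv}), and the characterization of weak permawoundness for groups filtered by hypersurfaces (Proposition \ref{asunivcrit}). The point is that $U$ is itself such a hypersurface, so the filtration needed to invoke Proposition \ref{asunivcrit} is simply the trivial one of length one.

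First I would check that the hypotheses of both propositions are met. Since $F$ is smooth, the subgroup scheme $U = \{F = 0\} \subset \Ga^n$ is smooth, and it is commutative as a subgroup of the vector group $\Ga^n$. Hence Proposition \ref{asuniv=weakasuniv} applies and tells us that $U$ is permawound if and only if it is weakly permawound, reducing the problem to deciding weak permawoundness.

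For that I would apply Proposition \ref{asunivcrit} with $m = 1$, taking the filtration $0 = U_0 \subset U_1 = U$, so that $W_1 = U = \{F = 0\}$ and $P_1 = P$. The proposition then yields that $U$ is weakly permawound if and only if $P$ is universal. Chaining this with the previous equivalence gives that $U$ is permawound if and only if $P$ is universal, as desired.

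Since the theorem is a direct consequence of the two cited propositions, there is no genuine obstacle at this final step; all the substantive work has been absorbed into the earlier results. I would emphasize, though, where that work lies: the nontrivial content of Proposition \ref{asunivcrit} is its ``if'' direction, which must control extensions of $\Ga$ not merely by $U$ but by \emph{every} quotient of $U$ (handled via the finite-dimensionality of $\Ext^1(\Ga, U)$ coming from Proposition \ref{univiffH^1primfd}, the propagation of universality to quotient filtrations in Lemma \ref{swquotfdH^1prim}, and closure of quasi-weak permawoundness under extensions), while Proposition \ref{asuniv=weakasuniv} relies on Lemma \ref{extGasemiwdsm} to promote an a priori arbitrary extension of $\Ga$ by a semiwound quotient to a commutative one, so that the weak permawoundness hypothesis can be brought to bear.
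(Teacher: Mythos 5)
Your proposal is correct and matches the paper's proof exactly: the paper's argument is the one-line "Combine Propositions \ref{asuniv=weakasuniv} and \ref{asunivcrit}," which is precisely your reduction via weak permawoundness followed by an application of Proposition \ref{asunivcrit} with the length-one filtration. Your additional remarks correctly locate where the substantive work lives in those earlier results.
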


\begin{proof}
Combine Propositions \ref{asuniv=weakasuniv} and \ref{asunivcrit}.
\end{proof}

Theorem \ref{asunivsmoothcrit} provides us with a wealth of examples of permawound groups, and in fact provides all examples of such groups that are commutative and $p$-torsion, thanks to \cite[Prop.\,B.1.13, Lem.\,B.1.7]{cgp}. We will show that it provides us with enough examples to obtain the all-important ubiquity property of permawoundness (Theorem \ref{ubiquityintro}). Before doing so, we require a lemma.

\begin{lemma}
\label{redexttouniv}
Let $k$ be a field of finite degree of imperfection, and let $P \in k[X_1, \dots, X_n]$ be a monogeneous reduced $p$-polynomial. Then there exist $m \geq 0$ and a monogeneous $p$-polynomial $Q \in k[X_{n+1}, \dots, X_{n+m}]$ such that $P + Q \in k[X_1, \dots, X_{n+m}]$ is reduced and universal.
\end{lemma}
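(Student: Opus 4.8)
The plan is to recast reducedness and universality of monogeneous $p$-polynomials as linear algebra over $k^{p^N}$ for a suitable $N$, and then reduce the whole lemma to extending a directly-decomposed subspace to a basis. Write $P = \sum_{i=1}^n a_i X_i^{p^{d_i}}$ and discard any terms with $a_i = 0$, so that all $a_i \neq 0$ (the edge case $P = 0$ being handled by taking $Q$ to be any reduced universal $p$-polynomial, as produced in Example \ref{univbasicexample}). Set $N := \max_i d_i$. Since $k$ has finite degree of imperfection $r$, it is a finite-dimensional $k^{p^N}$-vector space with $[k:k^{p^N}] = p^{rN}$. As $d_i \leq N$, each $k^{p^{d_i}}$ is an intermediate field $k^{p^N} \subseteq k^{p^{d_i}} \subseteq k$, so the value set
\[
P(k^n) = \sum_{i=1}^n a_i k^{p^{d_i}}
\]
is a $k^{p^N}$-subspace of $k$, of dimension $\sum_i [k^{p^{d_i}}:k^{p^N}] = \sum_i p^{r(N-d_i)}$ when the sum is direct.

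First I would record the dictionary. Because $x \mapsto x^{p^{d_i}}$ is a bijection $k \xrightarrow{\sim} k^{p^{d_i}}$, Definition \ref{ppolymonred} says precisely that $P$ is reduced if and only if the $k^{p^N}$-linear map $\bigoplus_i k^{p^{d_i}} \to k$, $(u_i) \mapsto \sum_i a_i u_i$, is injective, i.e.\ the sum $\sum_i a_i k^{p^{d_i}}$ is direct; and $P$ is universal exactly when $P(k^n) = k$. The identical description applies to $P + Q$ once $Q$ is chosen in fresh variables, since then $(P+Q)(k^{n+m}) = P(k^n) + Q(k^m)$.

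Next I would construct $Q$. By reducedness of $P$, the subspace $P(k^n) = \bigoplus_i a_i k^{p^{d_i}}$ sits inside the finite-dimensional $k^{p^N}$-space $k$; choose a $k^{p^N}$-basis $e_1, \dots, e_m$ of a complement, so that $k = P(k^n) \oplus \bigoplus_{j=1}^m e_j k^{p^N}$. Put $Q := \sum_{j=1}^m e_j X_{n+j}^{p^N} \in k[X_{n+1}, \dots, X_{n+m}]$, a monogeneous $p$-polynomial in which each new variable occurs once. Then $(P+Q)(k^{n+m}) = P(k^n) + \sum_j e_j k^{p^N} = k$, so $P+Q$ is universal, while the displayed decomposition being a direct sum is exactly the statement that $P+Q$ is reduced; equivalently one may check $\sum_i \deg_{X_i}(P+Q)^{-r} = \sum_i p^{-rd_i} + m\,p^{-rN} = 1$ and invoke Proposition \ref{monppolynuniv}(ii). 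If $P$ is already universal the complement is $0$, and we take $m = 0$, $Q = 0$.

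The only real content, and thus the step to treat with care, is the translation itself: one must verify that the value sets genuinely assemble into $k^{p^N}$-subspaces and that ``$P+Q$ has no nontrivial zero'' is equivalent to directness of the combined sum, so that using a complement's basis as the coefficients of $Q$ cannot inadvertently produce a nontrivial common relation between $P$ and $Q$. Finiteness of the degree of imperfection enters only to guarantee $[k:k^{p^N}] < \infty$, which is precisely what makes the complement, and hence $Q$, finite.
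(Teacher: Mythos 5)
Your proof is correct. It takes a more explicit route than the paper's, though the underlying mechanism is the same. The paper argues extremally: among reduced monogeneous polynomials of the form $P+Q$ with $Q$ homogeneous of degree $p^N$, it picks one maximizing $\phi(F)=\sum_i \deg_{X_i}(F)^{-r}$ (Proposition \ref{monppolynuniv}(i) bounds this quantity by $1$, which guarantees a maximizer exists), and then shows that non-universality would let one adjoin a further variable $Y$ with coefficient $-a$ for any $a\notin P(k,\dots,k)$ while preserving reducedness, increasing $\phi$ and contradicting maximality. You instead make the linear algebra explicit in one step: the value set of a reduced monogeneous $P$ is the internal direct sum $\bigoplus_i a_i k^{p^{d_i}}$ of $k^{p^N}$-subspaces of $k$, and taking the coefficients of $Q$ to be a $k^{p^N}$-basis of a complement yields reducedness and universality of $P+Q$ simultaneously. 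Your translation of reducedness into directness of the sum, and of universality into the value set being all of $k$, is accurate, and your verification that the combined sum stays direct is exactly what is needed; the paper's greedy step is in effect your construction carried out one basis vector at a time. What your version buys is transparency (it exhibits $Q$ and computes $m=p^{rN}-\sum_i p^{r(N-d_i)}$ directly); what the paper's buys is that it leans only on the already-proved numerical criterion rather than redoing the $k^{p^N}$-vector-space bookkeeping inside the proof.
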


\begin{proof}
We first prove the following claim. Suppose given a reduced monogeneous $p$-polynomial $P \in k[X_1, \dots, X_n]$, and let $p^N := \max_i {\rm{deg}}_{X_i}(P)$. Assume that $a \notin P(k, \dots, k)$. Then $F(X_1, \dots, X_n, Y) := P(X_1, \dots, X_n) - aY^{p^N}$ is still reduced. Indeed, suppose that $(x_1, \dots, x_n, y) \in k^{n+1}$ is a zero of $F$. Let $p^d_i := {\rm{deg}}_{X_i}(P)$. If $y \neq 0$, then one obtains $$a = P(x_1/y^{p^{N-d_1}}, \dots, x_n/y^{p^{N-d_n}}),$$ in violation of the assumption that $a \notin P(k, \dots, k)$. Therefore $y = 0$, hence $x_i = 0$ for all $i$ by reducedness of $P$, proving the claim.

Now we prove the lemma. Let $r$ be the degree of imperfection of $k$. Let $p^N := \max_i {\rm{deg}}_{X_i}(P)$ as above, and choose a reduced monogeneous $p$-polynomial of the form $F := P(X_1, \dots, X_n) + Q(X_{n+1}, \dots, X_{n+m})$ such that $Q$ is homogeneous of degree $p^N$ and such that the quantity $\phi(F) := \sum_{i=1}^{n+m} {\rm{deg}}_{X_i}(F)^{-r}$ is maximized. Proposition \ref{monppolynuniv} implies that there exists such an $F$. We claim that $F$ is universal. Indeed, if not, then there is some $a \in k$ not represented by $F$. Then $G := F - aY^{p^N}$ is still reduced and of the above form, but $\phi(G) > \phi(F)$, in violation of the maximality of $F$.
\end{proof}

We may now prove the ubiquity of permawoundness.

\begin{theorem}$($Theorem $\ref{ubiquityintro}$$)$
\label{ubiquity}
Let $k$ be a field of finite degree of imperfection. Then for any smooth commutative $p$-torsion semiwound unipotent $k$-group $U$, there is an exact sequence
\[
0 \longrightarrow U \longrightarrow W \longrightarrow V \longrightarrow 0
\]
with $W$ wound, commutative, $p$-torsion, and permawound, and $V$ a vector group.
\end{theorem}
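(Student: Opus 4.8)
The plan is to realize $U$ concretely as a subgroup of a vector group, adjoin fresh variables to each defining equation so that all the relevant principal parts become universal, and then invoke the permawoundness criterion of \S\ref{asunivsubgpsvecgpssection}. Since $U$ is smooth, commutative and $p$-torsion, it has vanishing Verschiebung and hence embeds in a vector group; as $U$ is semiwound, the quotient $\Ga^n/U$ is again a vector group \cite[Th.\,B.2.5]{cgp}, so I may write $U = \ker(q)$ for a \emph{surjective} homomorphism $q = (H_1,\dots,H_t)\colon \Ga^n \to \Ga^t$ given by $p$-polynomials, and by Proposition \ref{gausselimunigps} I may arrange the $H_i$ in reduced staircase form (each $H_i$ involving a distinct leading variable $X_i$). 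The idea is then to replace each $H_i$ by $\tilde H_i := H_i + Q_i$, where $Q_i$ lives in a block of fresh variables $Y_i$ disjoint from all other blocks, and to set $W := \{\tilde H_1 = \dots = \tilde H_t = 0\}$.

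Producing the vector cokernel is the easy half. Because $q$ is surjective, for any assignment of the fresh variables the system $\tilde H_i = 0$ is solvable in the original coordinates, so the projection of $W$ onto the fresh coordinates $\Ga^M$ (where $M = \sum_i \dim Y_i$) is fppf surjective with kernel exactly $U = W \cap \{Y_\bullet = 0\}$. This gives the desired sequence $0 \to U \to W \to \Ga^M \to 0$ with $\Ga^M$ a vector group. Commutativity and $p$-torsion of $W$ are inherited from the ambient vector group, and smoothness follows from the extension itself, $U$ and $\Ga^M$ both being smooth.

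For permawoundness I would apply Proposition \ref{asunivcrit}: filter $W$ by reduced hypersurface groups as in Proposition \ref{filterhypersurf} and check that every principal part occurring in the filtration is universal. Granting this, $W$ is weakly permawound, hence permawound by Proposition \ref{asuniv=weakasuniv}, and connected by Proposition \ref{asunivconn}; since the successive quotients are reduced hypersurface groups they are semiwound \cite[Lem.\,B.1.7,$(1)\!\Longrightarrow\!(2)$]{cgp}, so the smooth connected group $W$, being an iterated extension of semiwound groups, is wound. This reduces the whole theorem to controlling the principal parts in the filtration.

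That control is the main obstacle, and it dictates \emph{how} the $Q_i$ must be chosen. The filtration of Proposition \ref{filterhypersurf} peels off one equation at a time by setting later variables to zero, and such a restriction can delete terms from the principal part of $H_i$; if I naively applied Lemma \ref{redexttouniv} to the full principal part of $H_i$, universality would be destroyed by these restrictions. The remedy is to augment \emph{robustly}: apply Lemma \ref{redexttouniv} instead to the single surviving leading monomial $c_i X_i^{p^{d_i}}$ (in the staircase variable $X_i$), choosing $Q_i$ so that $c_i X_i^{p^{d_i}} + Q_i$ is reduced and universal. The staircase ordering guarantees that $X_i$ is never zeroed before $\tilde H_i$ is itself peeled off, so every restricted principal part contains the universal polynomial $c_i X_i^{p^{d_i}} + Q_i$ and is therefore universal as well. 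The delicate point that remains—and which I expect to be the technical heart of the proof—is to verify that each restricted, separated-variable equation $R_i(X) + Q_i(Y)$ is genuinely reduced with universal principal part, equivalently that it is quasi-weakly permawound (Proposition \ref{qweakasunivcrit}); this is where one must argue carefully, rather than by a routine computation, that passing to reduced form preserves the universality supplied by the block $Q_i$.
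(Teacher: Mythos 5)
Your construction of the exact sequence $0 \to U \to W \to \Ga^M \to 0$ is fine, but the argument collapses at exactly the point you defer: the claim that each quotient $\{R_i(X) + Q_i(Y) = 0\}$ in the filtration is defined by a \emph{reduced} $p$-polynomial with universal principal part is false in general. You choose $Q_i$ so that $c_iX_i^{p^{d_i}} + Q_i$ is reduced and universal; by Proposition \ref{monppolynuniv}(i) this forces $\sum \deg^{-r} = 1$ over the variables of that sub-polynomial. The principal part of $R_i + Q_i$ also contains the leading terms of the \emph{other} surviving variables $X_{i+1},\dots,X_j$ of $H_i$, so its $\sum \deg^{-r}$ strictly exceeds $1$, and by the same proposition it cannot be reduced. (This already happens for the unrestricted $\tilde H_i = H_i + Q_i$ whenever the principal part of $H_i$ involves more than one variable, which is the typical case.) Consequently Proposition \ref{asunivcrit} does not apply — the Remark immediately following it warns precisely that the reducedness hypothesis cannot be dropped — and your derivation of both permawoundness and woundness of $W$ (the latter via semiwoundness of the filtration quotients) is unsupported. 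Re-reducing each $R_i + Q_i$ via Proposition \ref{simplifyppoly} splits off a $\Ga^a$-factor, and quasi-weak permawoundness of $\Ga^a \times H$ says nothing about $H$ (any group containing $\Ga$ is trivially quasi-weakly permawound), so the obvious repair does not go through either. You are caught in a genuine bind: keying $Q_i$ to the full principal part is destroyed by the restrictions in the filtration, while keying it to a single monomial destroys reducedness.

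The difficulty is self-inflicted. The paper's proof never deals with a system of equations: by \cite[Prop.\,B.1.13, Lem.\,B.1.7]{cgp}, a smooth commutative $p$-torsion unipotent group is the zero locus of a \emph{single} $p$-polynomial $F$ in a vector group, and semiwoundness lets one take $F$ reduced. One then applies Lemma \ref{redexttouniv} once, to the full principal part $P$ of $F$, obtaining $W := \{F + Q = 0\}$ with $P+Q$ reduced and universal; Theorem \ref{asunivsmoothcrit} gives permawoundness directly, \cite[Lem.\,B.1.7]{cgp} gives woundness, and projection onto the new coordinates gives the exact sequence. I would replace your staircase setup with this hypersurface presentation.
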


\begin{proof}
If $U = 0$ then the assertion is trivial, so assume that $U \neq 0$. By \cite[Prop.\,B.1.13, Lem.\,B.1.7]{cgp}, $U \simeq \{F = 0\} \subset \Ga^n$ for some nonzero reduced $p$-polynomial $F \in k[X_1, \dots, X_n]$. Let $P$ be the principal part of $F$. By Lemma \ref{redexttouniv}, there exist $m \geq 0$ and a monogeneous $p$-polynomial $Q \in k[X_{n+1}, \dots, X_{n+m}]$ such that $P + Q \in k[X_1, \dots, X_{n+m}]$ is reduced and universal. Let $W := \{F + Q = 0\} \subset \Ga^{n+m}$. Then $W$ is permawound by Theorem \ref{asunivsmoothcrit}, therefore connected by Proposition \ref{asunivconn}, and it is wound by \cite[Lem.\,B.1.7,$(1)\Longrightarrow(2)$]{cgp}. Furthermore, the map $W \rightarrow \Ga^m$, $(X_1, \dots, X_{n+m}) \mapsto (X_{n+1}, \dots, X_{n+m})$ is surjective with kernel $U$.
\end{proof}

\section{The groups $\R_{k^{1/p}/k}(\alpha_p)$ and $\mathscr{V}$}
\label{specgpsection}

In this section we study certain unipotent groups which will play a fundamental role in the study of permawound groups. In particular, we study the groups $\R_{k^{1/p}/k}(\alpha_p)$ and $\mathscr{V}$ (see Definition \ref{V_PLdef}) which appear in the rigidity theorem \ref{rigidityintro}.

\begin{lemma}
\label{univpluslinchangeofvars}
Let $k$ be a separably closed field of finite degree of imperfection $r$. For $P \in k[X_1, \dots, X_{p^{nr}}]$ a reduced $p$-polynomial that is homogeneous of degree $p^n$, and $0 \neq L \in k[X_1, \dots, X_{p^{nr}}]$ that is homogeneous of degree $1$, consider the $p$-polynomial $F_{P, L} := P + L$. Then for any two such pairs $(P_1, L_1)$, $(P_2, L_2)$, there exists $c \in k^{\times}$ such that $cF_{P_2, L_2}$ is obtainable from $F_{P_1, L_1}$ by a linear change of coordinates in the $X_i$.
\end{lemma}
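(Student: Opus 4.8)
The plan is to read a linear change of coordinates $A\in\GL_{p^{nr}}(k)$ as acting on the degree-$p^n$ part of $F_{P,L}$ through the Frobenius twist $A\mapsto A^{[p^n]}:=(a_{ij}^{p^n})$ (since $(\sum_j a_{ij}X_j)^{p^n}=\sum_j a_{ij}^{p^n}X_j^{p^n}$) and on the linear part through $A$ itself, while an allowed scaling by $c\in k^\times$ acts by $c$ on both. Thus I must show that $(P_1,L_1)$ and $(P_2,L_2)$ lie in one orbit of $\GL_{p^{nr}}(k)\times k^\times$. First observe that each $P_i$, being reduced, homogeneous of degree $p^n$, and written in exactly $p^{nr}$ variables, is universal by Proposition \ref{monppolynuniv}(ii); equivalently its coefficient vector $\mathbf{c}^{(i)}$ is a basis of $k$ over $k^{p^n}$.

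\emph{Step A (matching principal parts).} Given two such universal homogeneous principal parts, the transition between the two $k^{p^n}$-bases $\mathbf{c}^{(1)},\mathbf{c}^{(2)}$ is a matrix $M\in\GL_{p^{nr}}(k^{p^n})$. Setting $A:=M^{[1/p^n]}$ (entrywise $p^n$-th roots, which lie in $k$) gives $A^{[p^n]}=M$ and $\det A=(\det M)^{1/p^n}\neq 0$, so $A$ is a legitimate linear change carrying $P_1$ to $P_2$. Applying this to each pair, I may assume $P_1=P_2=:P_0=\sum_i c_iX_i^{p^n}$ with $\{c_i\}$ a basis of $k/k^{p^n}$, and it remains only to connect the two linear parts.

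\emph{Key identification.} Set $K:=k^{1/p^n}$, so $[K:k]=[k:k^{p^n}]=p^{nr}$ and $\{c_i^{1/p^n}\}$ is a $k$-basis of $K$. Using this basis, identify $\Ga^{p^{nr}}\cong\R_{K/k}(\Ga)$ by $(x_i)\mapsto y:=\sum_i c_i^{1/p^n}x_i$; then $y^{p^n}=\sum_i c_ix_i^{p^n}=P_0((x_i))\in k$, i.e.\ $P_0$ is the $p^n$-power map $\R_{K/k}(\Ga)\to\Ga$. For $A\in\GL_{p^{nr}}(k)$ one then checks, via this identification, that $P_0\circ A=cP_0$ holds if and only if $A$ corresponds to multiplication by some $\xi\in K^\times$ on $\R_{K/k}(\Ga)$, in which case $c=\xi^{p^n}$ (since $(\xi y)^{p^n}=\xi^{p^n}y^{p^n}$). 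Hence the stabilizer of $P_0$ up to scaling is \emph{exactly} $K^\times$, acting by multiplication.

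\emph{Step B (matching linear parts via a separable equation).} A linear form corresponds to $\overline L\in K^*:=\Hom_k(K,k)$, which is a one-dimensional $K$-vector space under $(\eta\cdot\overline L)(y):=\overline L(\eta y)$. The transformation attached to $\xi\in K^\times$ sends $\overline L\mapsto\xi\cdot\overline L$ while scaling by $\xi^{p^n}$, so the condition $F_{P_0,L_1}\circ A=cF_{P_0,L_2}$ becomes $\xi\cdot\overline L_1=\xi^{p^n}\,\overline L_2$ in $K^*$. Writing $\overline L_1=\eta_0\cdot\overline L_2$ for the unique $\eta_0\in K^\times$ (both forms are nonzero, and $\dim_K K^*=1$), this reduces to the single equation $\xi^{p^n-1}=\eta_0$. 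Since $\gcd(p^n-1,p)=1$, the polynomial $T^{p^n-1}-\eta_0$ is separable, and $K=k^{1/p^n}$ is separably closed because $k$ is (a purely inseparable extension of a separably closed field is separably closed); thus a root $\xi\in K^\times$ exists, yielding the desired $A$ and $c=\xi^{p^n}$. Composing with the changes from Step A finishes the proof.

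The conceptual heart of the argument, and the step I expect to require the most care, is the key identification: recognizing that fixing $P_0$ up to a scalar is exactly multiplication by $K^\times$ on $\R_{K/k}(\Ga)$, so that the whole problem collapses to the single separable equation $\xi^{p^n-1}=\eta_0$ over $K$ — which is precisely where the separable closedness hypothesis on $k$ is used.
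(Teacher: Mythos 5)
Your proof is correct, and it takes a genuinely different route from the paper's in its second half. The first step (matching principal parts by noting that the coefficient vectors of $P_1$ and $P_2$ are both $k^{p^n}$-bases of $k$ and taking entrywise $p^n$-th roots of the transition matrix) is exactly the paper's opening move. After that the paper proceeds by brute force in coordinates: it fixes the standard $Q$ built from a $p$-basis and $L_0 = X_0$, writes down an explicit system of equations for the entries of the desired linear substitution, solves it by reducing to a single equation of the form $aY = bY^{p^n}$ over the separably closed field, and then verifies by coefficient comparison that the substitution carries $F_{Q,L_0}$ to $cF_{Q,L}$. You instead identify $\Ga^{p^{nr}} \simeq \R_{K/k}(\Ga)$ for $K := k^{1/p^n}$ so that $P_0$ becomes the $p^n$-power map $y \mapsto y^{p^n}$, observe that the stabilizer of $P_0$ up to $k^\times$-scaling is exactly $K^\times$ acting by multiplication (the ``only if'' follows from $k^{p^n}$-linear independence of the coefficients of $P_0$, though only the ``if'' is needed), and identify nonzero linear forms with the nonzero elements of the one-dimensional $K$-vector space $\Hom_k(K,k)$; the matching condition then collapses to $\xi^{p^n-1} = \eta_0$, which is separable and hence solvable over the separably closed field $K$. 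The two arguments use separable closedness at precisely the same point -- your Kummer-type equation $\xi^{p^n-1}=\eta_0$ is the paper's $aY = bY^{p^n}$ in disguise -- but your version is coordinate-free, explains structurally why a single equation of degree $p^n-1$ governs the problem, and makes the transitivity of the action transparent, whereas the paper's version produces the change of variables explicitly at the cost of an opaque computation.
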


Note that a homogeneous $p$-polynomial $P \in k[X_1, \dots, X_{p^{nr}}]$ of degree $p^n$ over a field of degree of imperfection $r$ is reduced precisely when it is universal (Proposition \ref{monppolynuniv}).

\begin{proof}
Let $\lambda_1, \dots, \lambda_r \in k$ form a $p$-basis for $k$. Because $P$ is universal, its coefficients form a $k^{p^n}$-basis of $k$. Any two such bases are related by an invertible $k^{p^n}$-linear change of variables. It follows that any such $P$ may be transformed by a linear change of variables into $Q := \sum_{f \in I_n} (\prod_{i=1}^r \lambda_i^{f(i)})X_f^{p^n} \in k[X_f\mid f \in I_n]$. It therefore suffices to show that the $F_{Q, L}$ as $L$ varies over the nonzero linear forms in $k[X_f\mid f \in I_n]$ are all related by linear changes of variables as in the lemma, and one may simply show that they are all so related to $F := F_{Q, L_0}$, where $L_0 := X_0$, with $0 \in I_n$ denoting the $0$ function.

So let $L = \sum_{g \in I} \beta_g X_g$ where the $\beta_g \in k$ are not all $0$. For each $g \in I_n$, let $g' \in I_n$ denote the function
\[
g'(i) := \begin{cases}
0, & g(i) = 0 \\
p^n-g(i), & g(i) \neq 0,
\end{cases}
\]
and let $\epsilon_g \colon \{1, \dots, r\}\rightarrow \{0, 1\}$ denote the function
\[
\epsilon_g(i) := \begin{cases}
0, & g(i) = 0 \\
1, & g(i) \neq 0.
\end{cases}
\]
Consider the following system of equations in the $Y_f$ as $g$ varies over $I_n$:
\begin{equation}
\label{univpluslinsamepfeqn1}
\left( \prod_{i=1}^r \lambda_i^{\epsilon_g(i)} \right) Y_{g} = \beta_{g'} \sum_{f \in I_n} \left( \prod_{i=1}^r \lambda_i^{f(i)} \right)Y_f^{p^n}.
\end{equation}
We claim that this system has a nonzero solution $(Y_f) \in k^{I_n}$. Indeed, fix $g_0$ such that $\beta_{g_0'} \neq 0$, and consider the following equation in $Y_{g_0}$.
\[
\left( \prod_{i=1}^r \lambda_i^{\epsilon_{g_0}(i)} \right) Y_{g_0} = \beta_{g'_0}\sum_{f \in I_n} \left( \prod_{i=1}^r\lambda_i^{f(i)} \right) \left( \left(\prod_{i=1}^r \lambda_i^{\epsilon_{g_0}(i) - \epsilon_f(i)}\right) \beta_{f'}\beta_{g_0'}^{-1} \right)^{p^n} Y_{g_0}^{p^n}.
\]
Because the $\lambda_i$ are $p$-independent, the coefficient of $Y_{g_0}^{p^n}$ on the right side is nonzero (because the summand with $f = g_0$ is nonzero). Because $k = k_s$, it follows that the above equation has a nonzero solution $Y_{g_0} \in k$. Then, as one readily checks, setting
\[
Y_g := \left( \prod_{i=1}^r \lambda_i^{\epsilon_{g_0}(i) - \epsilon_g(i)} \right)\beta_{g'}\beta_{g'_0}^{-1}Y_{g_0}
\]
for all $g \in I_n$ yields a nonzero solution to the system (\ref{univpluslinsamepfeqn1}), proving the claim.

Now fix a nonzero solution $(a_{g, 0})_{g \in I_n}$ to (\ref{univpluslinsamepfeqn1}), and let 
\begin{equation}
\label{univpluslinsamepfeqn3}
c := \sum_{f \in I_n} \left( \prod_{i=1}^r\lambda_i^{f(i)} \right) a_{f, 0}^{p^n}.
\end{equation}
Note that $c \neq 0$ because the $\lambda_i$ are $p$-independent. Because the $\lambda_i$ form a $p$-basis, for each $g \in I_n$, one has unique $a_{f, g} \in k$ such that
\begin{equation}
\label{univpluslinsamepfeqn2}
\sum_{f \in I} \left( \prod_{i=1}^r\lambda_i^{f(i)} \right)a_{f, g}^{p^n} = c\prod_{i=1}^r \lambda_i^{g(i)}.
\end{equation}
Note that the notation $a_{f,g}$ is consistent with the previously defined quantities $a_{f,0}$ in the case $g = 0$. Consider the following linear change of variables on $\Ga^{I_n}$:
\begin{equation}
\label{univpluslinsamepfeqn4}
X_f \mapsto \sum_{g \in I} a_{f, g}X_g.
\end{equation}
We claim that this is invertible. Indeed, using (\ref{univpluslinsamepfeqn2}), one sees that it transforms $Q$ into $cQ$. That is, the $p^n$th power of the linear change of variables $(a_{f, g})$ transforms one $k^{p^n}$-basis for $k$ into another, hence this change of variables is invertible. We have also seen that it transforms $Q$ into $cQ$. We claim that it transforms $L_0$ into $cL$. Assuming this, we have obtained a linear change of variables sending $F_{Q, L_0}$ into $cF_{Q, L}$.

To obtain the claim, we first compute $a_{0, g}$. Using the expression (\ref{univpluslinsamepfeqn3}) for $c$, and equating the coefficients of $\prod_i \lambda_i^0$ on both sides of (\ref{univpluslinsamepfeqn2}), one obtains that
\begin{equation}
\label{univpluslinsamepfeqn5}
a_{0, g} = \left(\prod_{i=1}^r \lambda_i^{\epsilon_g(i)}\right)a_{g', 0},
\end{equation}
where $\epsilon_g$ and $g'$ are as defined previously. The new coefficient of $X_g$ after applying the change of variables (\ref{univpluslinsamepfeqn4}) to $V_{Q, L_0}$ is $a_{0, g}$, and (\ref{univpluslinsamepfeqn5}) and (\ref{univpluslinsamepfeqn3}), together with the facts that $(a_{f, 0})_{f \in I_n}$ is a solution to (\ref{univpluslinsamepfeqn1}), $g \mapsto g'$ is an involution of $I_n$, and $\epsilon_g = \epsilon_{g'}$, imply that $a_{0, g} = c\beta_g$.
\end{proof}

\begin{corollary}
\label{univpluslinsame}
Let $k$ be a separably closed field of finite degree of imperfection $r$. Then the $k$-groups $\{P + L = 0\} \subset \Ga^{p^r}$ are all $k$-isomorphic as $P$ varies over all reduced homogeneous $p$-polynomials of degree $p^n$ in $k[X_1, \dots, X_{p^{nr}}]$, and $L$ varies over the nonzero homogeneous degree $1$ polynomials in the same ring.
\end{corollary}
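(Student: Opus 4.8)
The plan is to read the corollary off directly from Lemma \ref{univpluslinchangeofvars}; all the genuine work is already contained in that lemma, and what remains is only to translate its conclusion into a statement about vanishing loci. The mechanism is that a linear change of coordinates on the $X_i$ is a $k$-group scheme automorphism of the ambient vector group $\Ga^{p^{nr}}$, and that rescaling a $p$-polynomial by a nonzero constant leaves its zero scheme unchanged.

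Concretely, given two admissible pairs $(P_1, L_1)$ and $(P_2, L_2)$, Lemma \ref{univpluslinchangeofvars} furnishes a scalar $c \in k^\times$ and a linear change of coordinates $\sigma$ on $\Ga^{p^{nr}}$ with $c\,F_{P_2,L_2} = F_{P_1,L_1}\circ\sigma$. Since $c \neq 0$ and the $F_{P_i,L_i}$ are homomorphisms to $\Ga$, I would then write
\[
\{F_{P_2,L_2}=0\} = \{c\,F_{P_2,L_2}=0\} = \{F_{P_1,L_1}\circ\sigma=0\} = \sigma^{-1}\bigl(\{F_{P_1,L_1}=0\}\bigr).
\]
As $\sigma$ is an automorphism of the group scheme $\Ga^{p^{nr}}$, it carries subgroup schemes to isomorphic subgroup schemes, so it restricts to a $k$-isomorphism between $\{F_{P_2,L_2}=0\}$ and $\{F_{P_1,L_1}=0\}$. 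Running this over all admissible pairs shows that all the groups in question are mutually $k$-isomorphic.

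I do not expect any real obstacle, precisely because the substantive construction of the transforming change of variables is the content of Lemma \ref{univpluslinchangeofvars}. The only point to keep straight is that all the pairs produce groups inside one fixed ambient vector group: this is automatic since the polynomial ring $k[X_1,\dots,X_{p^{nr}}]$ is fixed, and one may record as a sanity check, via the reduced-equals-universal remark following the lemma together with the term count in Proposition \ref{monppolynuniv}(i), that each reduced homogeneous $P$ of degree $p^n$ really does involve all $p^{nr}$ coordinates, so that the hypersurfaces $\{F_{P,L}=0\}$ are genuine subgroups of a common $\Ga^{p^{nr}}$.
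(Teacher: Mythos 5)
Your proposal is correct and is exactly the argument the paper intends: the corollary is stated without proof as an immediate consequence of Lemma \ref{univpluslinchangeofvars}, and your translation (rescaling by $c\in k^{\times}$ does not change the zero scheme, and the invertible linear change of variables is a group automorphism of the ambient vector group carrying one hypersurface group isomorphically onto the other) is precisely the intended reading. No gaps.
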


\begin{definition}
\label{V_PLdef}
Let $k$ be a field of finite degree of imperfection $r$. For any homogeneous $p$-polynomials $P, L \in k[X_1, \dots, X_{p^r}]$, $P$ of degree $p$ and $L$ of degree $1$, such that $P$ is universal (equivalently, reduced) and $L \neq 0$, let $\mathscr{V}_{P, L} := \{P + L = 0\} \subset \Ga^{p^r}$. When $k$ is separably closed, the $\mathscr{V}_{P,L}$ are all $k$-isomorphic by Corollary \ref{univpluslinsame}, and we denote the common $k$-group they define by $\mathscr{V}/k$, or just $\mathscr{V}$ when $k$ is clear from context.
\end{definition}

Note that $\mathscr{V}_{P,L}$ is smooth and connected. Indeed, one readily checks that it becomes isomorphic to $\Ga^{p^r-1}$ over $k^{1/p}$. Note also that $(\mathscr{V}_{P,L}/k)_K \simeq \mathscr{V}/K$ for any separable extension $K/k$ that is separably closed and has the same degree of imperfection as $k$. Indeed, this follows from Proposition \ref{universaloverextensions}(ii).

The other group which appears in the filtration of Theorem \ref{rigidityintro} is $\R_{k^{1/p}/k}(\alpha_p)$. Note that this is a totally nonsmooth group: $\R_{k^{1/p}/k}(\alpha_p)(k_s) = 0$. It is useful to write down an equation for it. Choose a $p$-basis $\lambda_1, \dots, \lambda_r$ for $k$. Then the elements $\prod_{i=1}^r \lambda_i^{f(i)/p}$ as $f$ varies over $I$ form a $k$-basis of $k^{1/p}$. It follows that $\R_{k^{1/p}/k}(\Ga)$ may be described as $\Ga^I$ with coordinates $X_I$,with elements written in the form $\sum_{f \in I} (\prod_{i=1}^r \lambda_i^{f(i)/p})X_I$. Then $\R_{k^{1/p}/k}(\alpha_p)$ is the subgroup consisting of those elements whose $p$th powers vanish, so it is described the equation
\begin{equation}
\label{restalphapeqn}
\sum_{f \in I} \left(\prod_{i=1}^r \lambda_i^{f(i)}\right)X_I^p = 0.
\end{equation}
By applying a linear change of variables, this is the same as the group defined by the equation $P = 0$ for any homogeneous degree $p$ universal $p$-polynomial $P \in k[X_1, \dots, X_{p^r}]$. We record this fact for later use.

\begin{proposition}
\label{eqnforweilrestalphap}
For a field $k$ of characteristic $p$ and finite degree of imperfection $r$, one has $\R_{k^{1/p}/k}(\alpha_p) \simeq \{P = 0\}$ for any $P \in k[X_1, \dots, X_{p^{r}}]$ which is universal and homogeneous of degree $p$.
\end{proposition}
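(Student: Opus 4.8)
The plan is to exploit the explicit equation \eqref{restalphapeqn} already derived for $\R_{k^{1/p}/k}(\alpha_p)$ and to reduce the proposition to the purely algebraic statement that any two universal homogeneous degree-$p$ $p$-polynomials in $p^r$ variables differ by an invertible linear change of coordinates. Fix a $p$-basis $\lambda_1, \dots, \lambda_r$ and set $P_0 := \sum_{f \in I}\left(\prod_{i=1}^r \lambda_i^{f(i)}\right)X_f^p \in k[X_f \mid f \in I]$, where $|I| = p^r$. By the discussion preceding the proposition, equation \eqref{restalphapeqn} exhibits an isomorphism $\R_{k^{1/p}/k}(\alpha_p) \simeq \{P_0 = 0\} \subset \Ga^I$, and $P_0$ is universal (Example \ref{univbasicexample}). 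It therefore suffices, given an arbitrary universal homogeneous degree-$p$ polynomial $P \in k[X_1, \dots, X_{p^r}]$, to produce an invertible $k$-linear automorphism of $\Ga^{p^r}$ identifying $\{P_0 = 0\}$ with $\{P = 0\}$.

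The key algebraic input is that the coefficients of such a $P$ form a $k^p$-basis of $k$. Writing $P = \sum_{j=1}^{p^r} c_j X_j^p$, universality says the additive map $(x_j) \mapsto \sum_j c_j x_j^p$ is surjective onto $k$; its image is precisely the $k^p$-span of the $c_j$, so these $p^r$ elements span $k$ over $k^p$. Since $k$ has finite degree of imperfection $r$, one has $\dim_{k^p} k = [k : k^p] = p^r$, whence the $c_j$ are automatically all nonzero and form a $k^p$-basis. (This also follows from Proposition \ref{monppolynuniv}.) The same applies to $P_0$, whose coefficients $\prod_{i=1}^r \lambda_i^{f(i)}$ form a $k^p$-basis of $k$ by condition (i) in the definition of a $p$-basis.

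Both coefficient families being $k^p$-bases, there is an invertible matrix $(a_{jf}) \in \GL_{p^r}(k^p)$ with $c_j = \sum_{f \in I} a_{jf}\prod_{i=1}^r \lambda_i^{f(i)}$. Since each $a_{jf} \in k^p$, write $a_{jf} = b_{jf}^p$ with $b_{jf} \in k$, and set $B := (b_{jf})$. Because the determinant is a polynomial map commuting with the Frobenius, $\det(B)^p = \det\big((b_{jf}^p)\big) = \det\big((a_{jf})\big) \neq 0$, so $B$ is invertible. Introducing the new coordinates $Y_f := \sum_j b_{jf} X_j$ (an invertible linear change of variables, with matrix the transpose of $B$) and using the characteristic-$p$ identity $\left(\sum_j b_{jf} X_j\right)^p = \sum_j b_{jf}^p X_j^p$, one computes
\[
P = \sum_j c_j X_j^p = \sum_{f \in I}\left(\prod_{i=1}^r \lambda_i^{f(i)}\right)\left(\sum_j b_{jf} X_j\right)^p = \sum_{f \in I}\left(\prod_{i=1}^r \lambda_i^{f(i)}\right)Y_f^p = P_0(Y).
\]
Thus this invertible change of variables transforms $P_0$ into $P$, yielding a $k$-isomorphism $\{P = 0\} \simeq \{P_0 = 0\} \simeq \R_{k^{1/p}/k}(\alpha_p)$, as desired.

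I expect no serious obstacle here: the proposition is essentially a bookkeeping consequence of the fact, already isolated, that universality in $p^r$ variables forces the coefficients to be a $k^p$-basis. The only point demanding a moment's care is extracting the matrix $B$ of $p$-th roots of $(a_{jf})$ and confirming its invertibility, which rests on the characteristic-$p$ facts that $k^p$-linear combinations interact with $p$-th powers via the freshman's dream and that Frobenius is injective on determinants.
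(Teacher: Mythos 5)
Your proof is correct and takes essentially the same route as the paper, which derives equation (\ref{restalphapeqn}) from a $p$-basis and then simply asserts that a linear change of variables identifies $\{P_0=0\}$ with $\{P=0\}$ for any universal homogeneous degree-$p$ polynomial $P$. You have merely supplied the (correct) details of that change of variables, namely that the coefficients of any such $P$ form a $k^p$-basis of $k$ and that the transition matrix admits an invertible entrywise $p$-th root.
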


More generally, we shall require the following description of the groups $\R_{k^{1/p^n}/k}(\alpha_p)$.

\begin{proposition}
\label{eqngenweilrestalphap}
For a field $k$ of characteristic $p$ and finite degree of imperfection $r$, one has for any $n > 0$ that $\R_{k^{1/p^n}/k}(\alpha_p) \simeq \left(\R_{k^{1/p}/k}(\alpha_p)\right)^{p^{r(n-1)}}$.
\end{proposition}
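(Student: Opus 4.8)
The plan is to make everything explicit using the $p$-basis description of Weil restriction already set up just before Proposition \ref{eqnforweilrestalphap}, and then to observe that the single defining equation of $\R_{k^{1/p^n}/k}(\alpha_p)$ decouples into $p^{r(n-1)}$ independent equations, each in a disjoint set of variables and each cutting out a copy of $\R_{k^{1/p}/k}(\alpha_p)$.

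First I fix a $p$-basis $\lambda_1, \dots, \lambda_r$ of $k$. Then $\{e_f := \prod_{i=1}^r \lambda_i^{f(i)/p^n}\}_{f \in I_n}$ is a $k$-basis of $k^{1/p^n}$: it has the correct cardinality $|I_n| = p^{rn} = [k^{1/p^n}:k]$ (computed as in (\ref{monppolynuniveqn3})), and its linear independence is immediate from the $p$-independence of the $\lambda_i$. Thus $\R_{k^{1/p^n}/k}(\Ga) \simeq \Ga^{I_n}$ with a general point written $\sum_{f \in I_n} e_f X_f$, and $\R_{k^{1/p^n}/k}(\alpha_p) \subset \Ga^{I_n}$ is the locus where the $p$th power of this element vanishes, i.e.\ where $\sum_{f \in I_n} e_f^p X_f^p = 0$ in $k^{1/p^n}$.

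The key manipulation is the index bookkeeping. Since $e_f^p = \prod_i \lambda_i^{f(i)/p^{n-1}}$ already lies in the subfield $k^{1/p^{n-1}}$, I write each $f \in I_n$ uniquely as $f = a + p^{n-1}b$ with $a \in I_{n-1}$ and $b \in I$, coming from the division of each value $f(i)$ by $p^{n-1}$ (so $0 \leq a(i) < p^{n-1}$ and $0 \leq b(i) < p$). Then $e_f^p = \big(\prod_i \lambda_i^{b(i)}\big)\cdot\big(\prod_i \lambda_i^{a(i)/p^{n-1}}\big)$, the first factor lying in $k$ and the second being the basis element of $k^{1/p^{n-1}}$ indexed by $a$. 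Grouping the defining equation by $a$ gives
\[
\sum_{a \in I_{n-1}} \left(\prod_{i=1}^r \lambda_i^{a(i)/p^{n-1}}\right) \left(\sum_{b \in I} \left(\prod_{i=1}^r \lambda_i^{b(i)}\right) X_{a + p^{n-1}b}^p\right) = 0.
\]
As the $\prod_i \lambda_i^{a(i)/p^{n-1}}$ form a $k$-basis of $k^{1/p^{n-1}}$, this single equation is equivalent to the system consisting, for each $a \in I_{n-1}$, of
\[
\sum_{b \in I} \left(\prod_{i=1}^r \lambda_i^{b(i)}\right) X_{a + p^{n-1}b}^p = 0.
\]

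Finally I conclude. The equations indexed by distinct $a \in I_{n-1}$ involve pairwise disjoint sets of coordinates, namely $\{X_{a + p^{n-1}b}\}_{b \in I}$, since $f \mapsto (a,b)$ is a bijection $I_n \xrightarrow{\sim} I_{n-1}\times I$; hence $\R_{k^{1/p^n}/k}(\alpha_p)$ is the direct product over $a \in I_{n-1}$ of the subgroups they cut out. Each such equation is exactly equation (\ref{restalphapeqn}) in the relabelled variables $X_{a + p^{n-1}b}$, $b \in I$, so by Proposition \ref{eqnforweilrestalphap} each factor is $k$-isomorphic to $\R_{k^{1/p}/k}(\alpha_p)$. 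Since $|I_{n-1}| = p^{r(n-1)}$, this gives the claimed isomorphism. The only genuine subtlety is the third step: getting the decomposition $f = a + p^{n-1}b$ right so that the single $k^{1/p^{n-1}}$-valued equation splits into independent $k$-equations over disjoint variables. Once that is in place, the product structure is automatic.
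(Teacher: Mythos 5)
Your proof is correct and follows essentially the same route as the paper: both fix a $p$-basis, write the defining equation of $\R_{k^{1/p^n}/k}(\alpha_p)$ in the coordinates indexed by $I_n$, decompose each index as $f = a + p^{n-1}b$ with $a \in I_{n-1}$ and $b \in I$, and use the freeness of $k^{1/p^{n-1}}$ over $k$ to split the single equation into $p^{r(n-1)}$ independent copies of the equation defining $\R_{k^{1/p}/k}(\alpha_p)$ via Proposition \ref{eqnforweilrestalphap}. No gaps.
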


\begin{proof}
Choose a $p$-basis $\lambda_1, \dots, \lambda_r$ of $k$. Then $\R_{k^{1/p^n}/k}(\Ga)$ may be described as $\A_k^{I_n}$ with each coordinate $X_f$ corresponding to the coefficient of $\prod_{i=1}^r\lambda_i^{f(i)/p^n}$. The equation for $\R_{k^{1/p^n}/k}(\alpha_p) \subset \R_{k^{1/p^n}/k}(\Ga)$ is then
\[
\sum_{f \in I_n} \left(\prod_{i=1}^r \lambda_i^{f(i)/p^{n-1}} \right)X_f^p = 0,
\]
or
\[
\sum_{g \in I_{n-1}} \left(\prod_{i=1}^r \lambda_i^{g(i)/p^{n-1}} \right) \sum_{\substack{f \in I_n \\ f \equiv g \pmod{p^{n-1}}}} \left(\prod_{i=1}^r \lambda_i^{(f(i)-g(i))/p^{n-1}} \right) X_f^p = 0.
\]
Each summand in the internal sum above is defined over $k$, so that the above equation is equivalent to the group given by the product over $g \in I_{n-1}$ of the group $U_g \subset \Ga^{f \equiv g \pmod{p^{n-1}}}$ described by the vanishing of the internal sum. But the map $f \mapsto (f(i) - g(i))/p^{n-1}$ is a bijection between those $f \in I_n$ that are $\equiv g\pmod{p^{n-1}}$ and $I_1$. Thus we find that $\R_{k^{1/p^n}/k}(\alpha_p) \simeq U^{I_{n-1}}$, where $U \subset \Ga^{I_1}$ is the group
\[
\sum_{h \in I_1} \left(\prod_{i=1}^r \lambda_i^{h(i)} \right)Y_h^p = 0.
\]
By Proposition \ref{eqnforweilrestalphap}, one has $U \simeq \R_{k^{1/p}/k}(\alpha_p)$. Since $\#I_{n-1} = p^{r(n-1)}$, the proposition follows.
\end{proof}

The equations for $\mathscr{V}_{P, L}$ and for $\R_{k^{1/p}/k}(\alpha_p)$, in conjunction with Theorem \ref{asunivsmoothcrit} and Proposition \ref{asunivcrit}, imply the following result.

\begin{proposition}
\label{VR(alphap)asuniv}
Let $k$ be a field of characteristic $p$ of finite degree of imperfection. Then $\mathscr{V}_{P, L}$ is permawound, and $\R_{k^{1/p}/k}(\alpha_p)$ is weakly permawound. 
\end{proposition}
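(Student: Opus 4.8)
The plan is to treat the two groups separately, invoking the two permawoundness criteria already established: the smooth criterion Theorem \ref{asunivsmoothcrit} for $\mathscr{V}_{P,L}$, and the non-smooth weak criterion Proposition \ref{asunivcrit} for $\R_{k^{1/p}/k}(\alpha_p)$. In both cases the substance lies in correctly identifying the defining $p$-polynomial together with its principal part and in verifying reducedness and (for the first group) smoothness, after which the cited results apply immediately.

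For $\mathscr{V}_{P,L} = \{P + L = 0\}$, I would first record two facts about the degree-$p$ homogeneous universal polynomial $P \in k[X_1, \dots, X_{p^r}]$. It is reduced, since for homogeneous $p$-polynomials of degree $p$ over a field of degree of imperfection $r$ universality and reducedness coincide (Proposition \ref{monppolynuniv}), and it involves all $p^r$ variables. The latter follows from a term count: universality forces the coefficients to span $k$ over $k^p$, and since $[k : k^p] = p^r$ there must be exactly $p^r$ nonzero terms, one per variable. Consequently the principal part of $F := P + L$ is exactly $P$, the degree-one summand $L$ contributing nothing to any variable's top-degree term. Moreover $F$ is separable, hence smooth, because $L \neq 0$ supplies a nonzero linear part. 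Theorem \ref{asunivsmoothcrit} then applies: $\mathscr{V}_{P,L}$ is permawound precisely because its principal part $P$ is universal.

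For $\R_{k^{1/p}/k}(\alpha_p)$, Proposition \ref{eqnforweilrestalphap} identifies it with $\{P = 0\}$ for a universal homogeneous $p$-polynomial $P$ of degree $p$. Here $P$ coincides with its own principal part and is reduced, but the group is totally nonsmooth, so Theorem \ref{asunivsmoothcrit} is unavailable. Instead I would apply Proposition \ref{asunivcrit} to the length-one filtration $0 \subset \R_{k^{1/p}/k}(\alpha_p)$, with $F_1 = P$ and $W_1 = \{P = 0\}$: the criterion states that weak permawoundness holds if and only if $P$ is universal, which it is by hypothesis.

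I do not expect a genuine obstacle, as the statement is essentially a bookkeeping consequence of the earlier criteria. The only points requiring care are the verification that the principal part of $P + L$ is all of $P$ (which rests on $P$ meeting every variable) and the deliberate switch from the smooth criterion to the weak criterion in the non-smooth case $\R_{k^{1/p}/k}(\alpha_p)$, where one must remember that weak permawoundness---not full permawoundness---is the appropriate and available conclusion.
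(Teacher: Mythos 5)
Your proposal is correct and follows exactly the route the paper intends: the paper gives no written proof beyond pointing at the defining equations together with Theorem \ref{asunivsmoothcrit} and Proposition \ref{asunivcrit}, and your write-up supplies precisely the bookkeeping those citations presuppose (that $P$ involves all $p^r$ variables so the principal part of $P+L$ is $P$, that $L\neq 0$ gives smoothness, and that the non-smooth group only admits the weak criterion). No gaps.
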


Here is a useful property of $\mathscr{V}_{P, L}$ and $\R_{k^{1/p}/k}(\alpha_p)$.

\begin{proposition}
\label{quotvralphsplit}
Let $k$ be a field of characteristic $p$ of finite degree of imperfection $r$, and let $U$ be either $\mathscr{V}_{P, L}$ or $\R_{k^{1/p}/k}(\alpha_p)$. Then for any nonzero closed $k$-subgroup scheme $0 \neq H \subset U$, the $k$-group $U/H$ is a vector group.
\end{proposition}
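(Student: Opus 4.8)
The plan is to treat both groups uniformly by exploiting their presentation as the kernel of a single map $\pi = G\colon \Ga^N \to \Ga$, where $N = p^r$ and $G = P + L$ (for $\mathscr{V}_{P,L}$) or $G = P$ (for $\R_{k^{1/p}/k}(\alpha_p)$, using Proposition \ref{eqnforweilrestalphap}). In both cases $G$ has total degree $p$ and its top-degree part is the universal homogeneous degree-$p$ polynomial $P$, so the coefficients of $P$ form a $k^p$-basis of $k$ and are in particular linearly independent over $k^p$. I will show $U/H \cong \Ga^{N-1}$.

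First I would reduce to the case that $H$ is finite. Any nonzero $H$ contains a nonzero finite subgroup scheme $H'$ (for instance $\ker F_H$ when $\dim H > 0$); granting the finite case, $U/H' $ is a vector group and $U/H \cong (U/H')/(H/H')$ is a quotient of a vector group. Here I would use that quotients of vector groups are again vector groups: such a quotient is split (quotients of split unipotent groups are split) and has vanishing Verschiebung, so by Proposition \ref{filterhypersurf} its semiwound hypersurface factor is simultaneously split and semiwound, hence trivial. The same remark gives $\Ga^N/H \cong \Ga^N$ for finite $H$.

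Now suppose $H$ is finite. Since $H \subseteq U = \ker\pi$, the map $\pi$ descends through $\Ga^N/H \cong \Ga^N$ to a surjection $\bar\pi\colon \Ga^N \to \Ga$ and realizes $U/H$ as a hypersurface group $\{\bar G = 0\} \subset \Ga^N$; crucially $G = \bar G \circ \psi$, where $\psi\colon \Ga^N \to \Ga^N$ is an isogeny with $\ker\psi = H$ (after absorbing into $\psi$ the automorphism of Proposition \ref{simplifyppoly} that makes $\bar G$ reduced). Thus $U/H \cong \Ga^{N-N'} \times \{\bar G = 0\}_{\Ga^{N'}}$ with the second factor semiwound of dimension $N'-1$, and this is a vector group precisely when $N' = 1$. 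By Proposition \ref{quotofasunivisasuniv} that factor is weakly, hence quasi-weakly, permawound, so by Proposition \ref{qweakasunivcrit} its principal part $\bar P$ is universal; Proposition \ref{monppolynuniv} then forces $N' \geq p^r$, whence (as $N' \leq N = p^r$) either $N' = 1$ or $N' = p^r = N$ with $\bar P$ homogeneous of degree $p$.

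The main obstacle is ruling out the remaining case $N' = N$, that is, showing a nonzero-kernel isogeny cannot preserve the universal degree-$p$ shape. In that case every variable occurs in $\bar G$ to degree exactly $p$, while $H \neq 0$ forces $\psi$ to be non-linear, so some $\psi_j$ has degree $D \geq p$. I would then isolate the top-degree part of $G = \bar G \circ \psi$: it equals $\sum_{j \in A} \bar c_j\, \Psi_j^{\,p}$, of degree $pD$, where $A$ indexes the $\psi_j$ of maximal degree $D$, the $\bar c_j \neq 0$ are the coefficients of $\bar P$, and $\Psi_j = \sum_i e_{ji} X_i^{D}$ is the leading form of $\psi_j$. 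This top part is $\sum_i \big(\sum_{j \in A} \bar c_j e_{ji}^{\,p}\big) X_i^{pD}$, and since the $\bar c_j$ are $k^p$-linearly independent while $e_{ji}^{\,p} \in k^p$, it cannot vanish unless all $e_{ji} = 0$, contradicting $j \in A$. Hence $\deg(\bar G \circ \psi) \geq pD \geq p^2 > p = \deg G$, a contradiction. Therefore $N' = 1$ and $U/H \cong \Ga^{N-1}$ is a vector group, which together with the reduction above proves the proposition.
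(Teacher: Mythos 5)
Your proof is correct, but it follows a genuinely different route from the paper's. The paper reduces to $k$ separably closed and to $H \simeq \alpha_p$ or $\Z/p\Z$, then intersects $U \subset \Ga^{p^r}$ with a line $L$ through the origin chosen to contain $H$: since $U$ is cut out by a single polynomial of degree $p$, the intersection $U \cap L$ is finite of order $\leq p$, hence equals $H$, and $U/H \hookrightarrow \Ga^{p^r}/L \simeq \Ga^{p^r-1}$ is an isomorphism by dimension count. That argument (adapted from Totaro) is elementary and needs none of the permawound machinery. You instead present $U/H$ as $\ker(\bar G)$ for the descended $p$-polynomial $\bar G$ with $G = \bar G \circ \psi$, invoke Propositions \ref{VR(alphap)asuniv}, \ref{quotofasunivisasuniv} and \ref{qweakasunivcrit} to force the reduced form of $\bar G$ to have universal principal part, apply the numerical criterion of Proposition \ref{monppolynuniv} to leave only the two cases $N'=1$ (done) and $N'=p^r$ with $\bar P$ homogeneous of degree $p$, and kill the latter by the leading-form computation: the degree-$pD$ part of $\bar G\circ\psi$ is $\sum_i\bigl(\sum_{j\in A}\bar c_j e_{ji}^p\bigr)X_i^{pD}$, which cannot vanish because the $\bar c_j$ are $k^p$-independent (this is exactly reducedness of $\bar P$), whereas $\deg G = p < pD$. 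That computation is sound — the only other contributions to degree $pD$ come from $\psi_j$ with $j\notin A$ (degree $\leq D/p$, so $\psi_j^p$ has degree $\leq D$) and from the linear non-principal part of $\bar G$ (degree $\leq D$) — and it amounts to the clean observation that a nontrivial isogeny of $\Ga^N$ must strictly increase the degree of a defining equation of this shape. What your route buys is that it works directly over $k$ without passing to $k_s$ and without the case split $H \simeq \alpha_p$ versus $\Z/p\Z$; what it costs is a dependence on the universality criteria of the earlier sections (none of which creates a circularity, since Corollary \ref{VRsubgpnotasuniv} is not used). Two small points to tighten: cite Proposition \ref{VR(alphap)asuniv} for the weak permawoundness of $U$ before applying Proposition \ref{quotofasunivisasuniv}, and note explicitly that in the case $N'=1$ the equality in (\ref{monppolynuniveqn1}) forces the single variable to occur with degree $1$, so the hypersurface factor is actually trivial rather than merely finite.
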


\begin{proof}
The argument given here is inspired by one given in the proof of \cite[Ex.\,9.10]{totaro}. Because $U/H$ is commutative and $p$-torsion, it is a vector group if and only if it is split. \cite[Th.\,B.2.5]{cgp}. Since this is insensitive to passing to $k_s$, we are free to assume that $k$ is separably closed. Similarly, any quotient of a power of $\Ga$ is still a power of $\Ga$ (because it is still commutative, $p$-torsion, and split unipotent). To prove the lemma for $H$, therefore, it suffices to prove it for a nonzero $k$-subgroup of $H$. If $\ker(F_{H/k}) = 0$, then $H$ is finite \'etale and since $k = k_s$, $H$ contains a $k$-subgroup isomorphic to $\Z/p\Z$. Otherwise, $\ker(F_{H/k})$ is nonzero and is killed by Frobenius and Verschiebung, hence is a power of $\alpha_p$, hence contains a copy of $\alpha_p$. We may therefore assume that $H \simeq \alpha_p$ or $\Z/p\Z$.

By definition, $\mathscr{V} \subset \Ga^{p^r}$, and we may consider $\R_{k^{1/p}/k}(\alpha_p)$ as lying in $\Ga^{p^r}$ via the equation (\ref{restalphapeqn}). We claim that for any line $L \subset \Ga^{p^r}$ through the origin, the intersection $U \cap L$ is a finite $k$-group scheme of order $\leq p$. (In fact, it is of order $p$.) We first note that $U \cap L$ is finite. Indeed, if not, then $L \subset U$, an impossibility as $U$ is semiwound by \cite[Lem.\,B.1.7,$(1)\Longrightarrow(2)$]{cgp}. Now $U \subset \Ga^{p^r}$ is defined by an equation of the form
\[
F = 0
\]
for some $F \in k[X_1, \dots, X_{p^r}]$ of degree $p$. An invertible linear change of variables in the $X_i$ yields an equation of the same form. After making such a change of variables, we may assume that $L$ is the line $X_2 = \dots = X_{p^r} = 0$. Then $U \cap L$ is the subgroup of $\Ga = {\rm{Spec}}(k[X_1])$ defined by a (nonzero, because $U \cap L \neq L$) polynomial of degree $\leq p$. That is, $U \cap L$ is a finite group scheme of order $\leq p$, as claimed.

Now choose a line $L$ such that $H \subset L$. For $H = \alpha_p$, we simply choose the line whose Lie algebra coincides with $H$, while for $H = \Z/p\Z$, we choose an element $0 \neq x \in H(k)$ and then choose $L$ containing $x$. Then $U \cap L \supset H$ is of order $\leq p$, hence coincides with $H$. Thus we have an inclusion of $k$-groups
\[
U/H = U/(U \cap L) \hookrightarrow \Ga^{p^r}/L \simeq \Ga^{p^r-1}.
\]
Since ${\rm{dim}}(U/H) = p^r-1 = {\rm{dim}}(\Ga^{p^r-1})$, this inclusion must actually be an isomorphism: $U/H \simeq \Ga^{p^r-1}$. This completes the proof of the proposition.
\end{proof}

\begin{corollary}
\label{VRsubgpnotasuniv}
Let $k$ be a field of characteristic $p$ and finite degree of imperfection $r$, and let $U = \mathscr{V}_{P, L}$ or $\R_{k^{1/p}/k}(\alpha_p)$. Then the only quasi-weakly permawound $k$-subgroups of $U$ are $0$ and $U$.
\end{corollary}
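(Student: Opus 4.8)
The plan is to verify that the two listed subgroups $0$ and $U$ really are quasi-weakly permawound, and then to show that every proper nonzero $k$-subgroup fails the defining condition. That $U$ itself is quasi-weakly permawound is immediate from Proposition \ref{VR(alphap)asuniv}, since both $\mathscr{V}_{P, L}$ and $\R_{k^{1/p}/k}(\alpha_p)$ are at least weakly permawound, and weak permawoundness implies quasi-weak permawoundness. The trivial group $0$ is quasi-weakly permawound because any commutative extension $0 \to 0 \to E \to \Ga \to 1$ forces $E \simeq \Ga$, which visibly contains a copy of $\Ga$. Thus the entire content of the corollary is the assertion that if $0 \neq H \subsetneq U$ is a proper nonzero closed $k$-subgroup scheme, then $H$ is \emph{not} quasi-weakly permawound.

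To establish this, I would produce a single commutative extension of $\Ga$ by $H$ that contains no copy of $\Ga$, thereby violating Definition \ref{weakasunivdef}. The key input is Proposition \ref{quotvralphsplit}: because $H \neq 0$, the quotient $V := U/H$ is a vector group, and it is nonzero since $H \neq U$. Writing $q\colon U \twoheadrightarrow V$ for the quotient map, I would choose any coordinate line $L \simeq \Ga \subset V$ (available as $V$ is a nonzero split group) and set $E := q^{-1}(L) \subset U$. Then $q$ restricts to an fppf surjection $E \twoheadrightarrow L \simeq \Ga$ with kernel $H$, so that
\[
0 \longrightarrow H \longrightarrow E \longrightarrow \Ga \longrightarrow 0
\]
is exact. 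Moreover $E$ is commutative, being a closed subgroup of the commutative group $U$, and it is a closed subgroup of the semiwound group $U$, hence is itself semiwound; in particular $E$ contains no copy of $\Ga$. This extension witnesses that $H$ is not quasi-weakly permawound, completing the argument.

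The genuine difficulty of the corollary has already been absorbed into Proposition \ref{quotvralphsplit}, which guarantees that $U/H$ is split and therefore contains a line $\Ga$ to pull back along; without that splitting one could not rule out $U/H$ being semiwound and hence $\Ga$-free. Given that result, the remaining steps are purely formal: one checks that the scheme-theoretic preimage $q^{-1}(L)$ is indeed a group extension of $\Ga$ by $H$, and that semiwoundness passes to the closed subgroup $E \subset U$ (so that no $k$-morphism $\A^1 \to E$ is nonconstant, and in particular $E$ admits no $\Ga$-subgroup). I do not anticipate any obstacle in these verifications.
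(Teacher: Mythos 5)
Your argument is correct and is essentially identical to the paper's proof: for $0 \subsetneq H \subsetneq U$ one uses Proposition \ref{quotvralphsplit} to see that $U/H$ is a nonzero vector group and then pulls back a copy of $\Ga$ to obtain a commutative semiwound extension of $\Ga$ by $H$ inside $U$, contradicting quasi-weak permawoundness. The preliminary verification that $0$ and $U$ themselves are quasi-weakly permawound is harmless but not needed for the statement as phrased.
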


\begin{proof}
If $0 \subsetneq H \subsetneq U$, then $U$ is a semiwound extension of the nontrivial $k$-group $U/H$ -- which is a vector group by Proposition \ref{quotvralphsplit} -- by $H$. The preimage in $U$ of any copy of $\Ga$ inside $U/H$ then yields a semiwound extension of $\Ga$ by $H$.
\end{proof}

\begin{lemma}
\label{nononzhomrestalphaptoV}
Let $k$ be a field of characteristic $p$ of finite degree of imperfection. Then there is no nonzero $k$-homomorphism $\R_{k^{1/p^n}/k}(\alpha_p) \rightarrow \mathscr{V}_{P, L}$.
\end{lemma}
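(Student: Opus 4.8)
The plan is to reduce to the case $n=1$ and then pit the (quasi-)weak permawoundness of $\R_{k^{1/p}/k}(\alpha_p)$ against the structural facts about subgroups and quotients of the two special groups that were established above. Note that no passage to $k_s$ is needed, since all of the cited results hold over a general field of finite degree of imperfection.

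First I would dispose of the exponent $n$. By Proposition \ref{eqngenweilrestalphap} there is an isomorphism $\R_{k^{1/p^n}/k}(\alpha_p) \simeq \left(\R_{k^{1/p}/k}(\alpha_p)\right)^{p^{r(n-1)}}$, exhibiting the source as a finite power of $G := \R_{k^{1/p}/k}(\alpha_p)$. Since $V := \mathscr{V}_{P,L} \subset \Ga^{p^r}$ is commutative, any homomorphism out of a finite power of $G$ into $V$ is the sum of its restrictions to the factors, so $\Hom(\R_{k^{1/p^n}/k}(\alpha_p), V)$ vanishes as soon as $\Hom(G, V) = 0$. It therefore suffices to show that every $k$-homomorphism $\phi\colon G \to V$ is zero.

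Suppose $\phi \neq 0$; I will derive a contradiction. The image $\phi(G)$ is a quotient of $G$, which is weakly permawound by Proposition \ref{VR(alphap)asuniv}; hence $\phi(G)$ is weakly permawound by Proposition \ref{quotofasunivisasuniv}, and in particular quasi-weakly permawound. As $\phi(G)$ is a nonzero $k$-subgroup of $V$ (the image sheaf of a homomorphism of finite-type affine $k$-group schemes is represented by a closed subgroup), Corollary \ref{VRsubgpnotasuniv} forces $\phi(G) = V$, so $\phi$ is surjective. Now I examine $\ker\phi$ in two cases. If $\ker\phi \neq 0$, then Proposition \ref{quotvralphsplit}, applied to $G$ and its nonzero subgroup $\ker\phi$, shows that $V = \phi(G) = G/\ker\phi$ is a vector group; but $V = \mathscr{V}_{P,L}$ is nontrivial and wound, hence not split, a contradiction. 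Therefore $\ker\phi = 0$ and $\phi\colon G \to V$ is an isomorphism. This too is impossible, because $V$ is smooth whereas $G = \R_{k^{1/p}/k}(\alpha_p)$ is nontrivial but totally nonsmooth ($G(k_s)=0$, so a nonzero $G$ cannot be smooth). Hence no nonzero $\phi$ exists.

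The substantive inputs here have all been proved already, so the remaining work is combinatorial rather than deep; the one point that demands care, and which I regard as the crux, is the two-case analysis of $\ker\phi$: one must correctly identify $\phi(G)$ as a closed $k$-subgroup so that Corollary \ref{VRsubgpnotasuniv} applies, and then use the fact that a nonzero \emph{wound} group cannot be a vector group to eliminate the case $\ker\phi\neq 0$, leaving only the smoothness discrepancy between $G$ and $V$ to rule out an isomorphism.
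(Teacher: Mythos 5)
Your proof is correct, and it rests on the same two pillars as the paper's: Proposition \ref{quotvralphsplit} and the smooth versus totally nonsmooth contrast between $\mathscr{V}_{P,L}$ and $\R_{k^{1/p}/k}(\alpha_p)$. The difference is organizational. You first establish surjectivity of a nonzero $\phi$ by routing through the permawoundness machinery (Proposition \ref{VR(alphap)asuniv}, Proposition \ref{quotofasunivisasuniv}, Corollary \ref{VRsubgpnotasuniv}), and only then invoke Proposition \ref{quotvralphsplit} to kill the case $\ker\phi\neq 0$. The paper instead applies Proposition \ref{quotvralphsplit} immediately: if $\ker\phi\neq 0$, the image $\R_{k^{1/p}/k}(\alpha_p)/\ker\phi$ is a vector group sitting inside the semiwound group $\mathscr{V}_{P,L}$, hence trivial; so $\phi$ is zero or injective, and an injection is an isomorphism for dimension reasons. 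That one-step version makes the entire permawoundness detour unnecessary and avoids the (correct but extra) point about the fppf image being a closed subgroup. Your argument buys nothing additional in generality, but it is a valid alternative, and your handling of the reduction to $n=1$ (summing restrictions to the factors of the power supplied by Proposition \ref{eqngenweilrestalphap}) makes explicit what the paper leaves implicit.
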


\begin{proof}
Due to Proposition \ref{eqngenweilrestalphap}, it suffices to treat the case $n = 1$. Let $\phi\colon \R_{k^{1/p}/k}(\alpha_p) \rightarrow \mathscr{V}_{P, L}$ be a $k$-homomorphism. Because $\mathscr{V}_{P, L}$ is semiwound \cite[Lem.\,B.1.7,$(1)\Longrightarrow(2)$]{cgp}, Proposition \ref{quotvralphsplit} implies that $\phi$ is either $0$ or injective. If $\phi$ is injective, then for dimension reasons it is an isomorphism. But $\R_{k^{1/p}/k}(\alpha_p)$ is totally nonsmooth while $\mathscr{V}_{P, L}$ is smooth and nontrivial, so the two groups cannot be isomorphic. Hence $\phi = 0$.
\end{proof}

\section{Preparations for rigidity}
\label{prepforrigiditysection}

In this section we lay the groundwork for the proof of the rigidity property \ref{rigidityintro} of permawound groups. The general strategy for proving rigidity is as follows. The key is to show that for any nontrivial smooth commutative semiwound unipotent $k$-group (not necessarily permawound), there is a nonzero $k_s$-homomorphism to $\mathscr{V}$. We may assume that $k$ is finitely generated over $\F_p$, and in that case we note that (for any $P, L$ as in Definition \ref{V_PLdef}) $\calHom(U, \mathscr{V}_{P,L})$ is the automorphism functor of the trivial extension of $U$ by $\mathscr{V}_{P,L}$. In particular, one obtains an exact sequence
\[
0 \longrightarrow {\rm{H}}^1(\Gal(k_s/k), \Hom_{k_s}(U, \mathscr{V}_{P,L})) \longrightarrow \Ext^1_k(U, \mathscr{V}_{P,L}) \longrightarrow \Ext^1_{k_s}(U, \mathscr{V}_{P,L}).
\]
In order to show that $\Hom_{k_s}(U, \mathscr{V}) \neq 0$, therefore, it suffices to construct a nontrivial extension of $U$ by $\mathscr{V}_{P,L}$ over $k$ that splits over $k_s$. In this section we prove two fundamental ingredients that will allow us to accomplish this in \S\ref{rigiditysection}. First, we consider the exact sequence defining $\mathscr{V}_{P,L}$
\[
0 \longrightarrow \mathscr{V}_{P,L} \longrightarrow \Ga^{p^r} \xlongrightarrow{P+L} \Ga \longrightarrow 0,
\]
and show that the induced map $\Hom(U, \Ga^{p^r}) \rightarrow \Hom(U, \Ga)$ is surjective over $k_s$ (Lemma \ref{mapofhomssurj}). This will allow us to show that certain extensions of $U$ by $\mathscr{V}$ over $k_s$ are trivial. Second, we will show that, over $k$ and for suitably chosen $P$ and $L$, there are nontrivial extensions of $\Z/p\Z$ by $\mathscr{V}_{P,L}$ that trivialize over $k_s$ (Proposition \ref{ext^1Z/pZVnotinj}). This will allow us to construct suitable nontrivial extensions of $U$ by $\mathscr{V}_{P,L}$ over $k$.

We begin with the following lemma, which we will require in the proof of the first ingredient mentioned above.

\begin{lemma}
\label{modnocommppart}
Let $k$ be a field with $p$-basis $\{\lambda_s\}_{s \in S}$, and let $Q \in k[X_1, \dots, X_n]$ be a homogeneous $p$-polynomial of degree $p^d$. Then there is an invertible linear change of variables $L$ in the $X_i$ such that $Q \circ L = \sum_{i=1}^n c_iX_i^{p^d}$ has the following property. Let $I$ be the set of finitely-supported functions $S \rightarrow \{0, \dots, p^d-1\}$. Then for any $1 \leq j < i \leq n$ with $c_j \neq 0$, one has $c_ic_j^{-1} = \sum_{f \in I} \left( \prod_{s\in S} \lambda_s^{f(s)} \right) a_f^{p^d}$ with $a_f \in k$ such that $a_0 = 0$.
\end{lemma}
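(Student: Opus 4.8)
The plan is to reduce the statement to a fact about the single coefficient vector of $Q$ together with the $k^{p^d}$-linear structure of $k$, and then to build the required change of variables by a triangular induction.

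First I would record the effect of a linear change of variables on the coefficients. Since $Q$ is homogeneous of degree $p^d$, every monomial occurring in it has the form $X_i^{p^d}$, so $Q = \sum_{i=1}^n c_i X_i^{p^d}$ for suitable $c_i \in k$. Under a substitution $X_i \mapsto \sum_j L_{ij}X_j$ with $(L_{ij}) \in \GL_n(k)$, additivity of the $p^d$-power map gives $Q \circ L = \sum_j \big(\sum_i c_i L_{ij}^{p^d}\big)X_j^{p^d}$, so the new coefficient row vector is $c' = cM$ with $M = (L_{ij}^{p^d})$. As $L$ ranges over $\GL_n(k)$, the matrix $M$ ranges over all of $\GL_n(k^{p^d})$, since taking $p^d$-th roots entrywise is a bijection $\GL_n(k) \to \GL_n(k^{p^d})$. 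Thus the reachable coefficient tuples are exactly those obtained from $(c_1, \dots, c_n)$ by an invertible $k^{p^d}$-linear substitution; since any two surjections $(k^{p^d})^n \twoheadrightarrow W$ differ by an element of $\GL_n(k^{p^d})$, these are precisely the tuples generating the same $k^{p^d}$-subspace $W := \mathrm{span}_{k^{p^d}}\{c_1, \dots, c_n\} \subseteq k$. So it suffices to produce a generating tuple of $W$ with the desired property.

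Next I would translate the target condition. The elements $\prod_{s \in S}\lambda_s^{f(s)}$, as $f$ runs over $I$, form a $k^{p^d}$-basis of $k$ (the degree-$p^d$ analogue of condition (i) in the definition of a $p$-basis, obtained by iterating it $d$ times; this is the fact underlying the proof of Proposition \ref{monppolynuniv}). Let $\pi_0 \colon k \to k^{p^d}$ be the $k^{p^d}$-linear projection onto the $f = 0$ coordinate; then writing an element as $\sum_{f\in I} (\prod_s \lambda_s^{f(s)}) a_f^{p^d}$, the requirement $a_0 = 0$ is exactly the vanishing of $\pi_0$ on that element. Hence the lemma asks for a generating tuple with $\pi_0(c_i c_j^{-1}) = 0$ whenever $j < i$ and $c_j \neq 0$. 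I would use a tuple of the form $(w_1, \dots, w_m, 0, \dots, 0)$, where $m = \dim_{k^{p^d}} W$ and $w_1, \dots, w_m$ is a $k^{p^d}$-basis of $W$ with $\pi_0(w_b w_a^{-1}) = 0$ for all $a < b$: for such a tuple the only pairs $j < i$ with both $c_j \neq 0$ and $c_i \neq 0$ are those with $j < i \leq m$, where $\pi_0(c_i/c_j) = \pi_0(w_i/w_j) = 0$, while pairs with $c_i = 0$ satisfy the condition trivially.

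The heart of the matter, and the step I expect to be the main obstacle, is constructing such a basis. I would do it by induction, producing $k^{p^d}$-independent $w_1, \dots, w_t$ with $\pi_0(w_b/w_a) = 0$ for all $a < b \leq t$. Given $w_1, \dots, w_{t-1}$ and any $u \in W$ outside their span, I would set $w_t = u - \sum_{c<t}\gamma_c w_c$ with $\gamma_c \in k^{p^d}$ to be determined, and impose $\pi_0(w_t/w_a) = 0$ for $a = 1, \dots, t-1$. Using the $k^{p^d}$-linearity of $\pi_0$, the inductive vanishing $\pi_0(w_c/w_a) = 0$ for $a < c$, and $\pi_0(1) = 1$, these conditions become the lower-triangular, unit-diagonal system $\gamma_a = \pi_0(u/w_a) - \sum_{c<a}\gamma_c\,\pi_0(w_c/w_a)$, which is uniquely solvable for $\gamma_1, \dots, \gamma_{t-1} \in k^{p^d}$ in turn. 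Since $w_t \equiv u$ modulo the span of $w_1, \dots, w_{t-1}$, independence is preserved; the new vanishing conditions hold by construction and the old ones are untouched. At $t = m$ this yields the required basis, and padding with zeros and applying the corresponding $M \in \GL_n(k^{p^d})$ — equivalently its entrywise $p^d$-th root $L$ — completes the proof.
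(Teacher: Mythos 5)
Your proof is correct, and it reorganizes the argument in a way that differs from the paper's. The paper proceeds by induction on the variable index $m$, applying at each step the explicit elementary substitution $X_m \mapsto X_m - \sum_{i>m} a_{0,i,m}X_i$ and then verifying by a case-by-case computation that the $f=0$ coefficients of all the relevant ratios $b_ib_j^{-1}$ vanish; it never isolates the subspace $W$ or the question of which coefficient tuples are reachable. You instead first prove a clean reduction — that the tuples attainable by invertible linear substitutions are exactly those spanning the same $k^{p^d}$-subspace $W$ of $k$, via the entrywise Frobenius bijection $\GL_n(k)\to\GL_n(k^{p^d})$ and the fact that two surjections of $(k^{p^d})^n$ onto $W$ differ by an automorphism of the source — and then construct a triangular basis of $W$ with respect to the coordinate functional $\pi_0$. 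The computational heart is the same in both proofs (a unit-lower-triangular system over $k^{p^d}$ solved recursively, using the $k^{p^d}$-linearity of the $f=0$ coordinate, i.e.\ a Gram--Schmidt-type process), but your version buys a conceptual statement of independent interest (reachability $=$ same span) and cleanly separates the linear algebra from the change of variables, at the cost of invoking the full $\GL_n$ action where the paper gets by with unipotent elementary moves. Both are complete; just make sure to note explicitly that $\pi_0$ lands in $k^{p^d}$ (so the $\gamma_c$ you solve for do lie in $k^{p^d}$), which your sketch uses implicitly.
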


\begin{proof}
We prove by induction on $m \leq n$ the existence of a change of variables such that the $f = 0$ coefficient in the expansion of $c_ic_j^{-1}$ vanishes whenever $j \leq m$, $j < i$, and $c_j \neq 0$. The $m = 0$ case is trivial, so assume that $m > 0$. We may assume by induction that the $f = 0$ term in the expansion of $c_ic_j^{-1}$ vanishes when $j < m$, $j < i$, and $c_j \neq 0$. For each $1 \leq j < i \leq n$ with $c_j \neq 0$, write 
\begin{equation}
\label{modnocommppartpfeqn1}
c_ic_j^{-1} = \sum_{f \in I} \left( \prod_{s\in S} \lambda_s^{f(s)} \right) a_{f, i, j}^{p^d}
\end{equation}
with $a_{f,i,j} \in k$. We know that $a_{0, i, j} = 0$ if $j < m$ and $j < i$. Make the change of variables which fixes $X_i$ for $i \neq m$, but which sends $X_m$ to $X_m - \sum_{i > m} a_{0, i, m}X_i$, and let $Q'$ be the resulting homogeneous $p$-polynomial of degree $p^d$. Write $Q' = \sum_{i=1}^n b_iX_i^{p^d}$. Then we claim that the $f = 0$ term in the expansion of $b_ib_j^{-1}$ vanishes whenever $j \leq m$, $j < i$, and $b_j \neq 0$, which will complete the induction.

To prove the claim, we may suppose that $b_j \neq 0$, as otherwise there is nothing to prove. We first compute that
\begin{equation}
\label{modnocommppartpfeqn4}
b_i = \begin{cases}
c_i, & i \leq m \\
c_i - c_ma_{0,i,m}^{p^d}, & i > m.
\end{cases}
\end{equation}
First consider the case $j < m$. If $i \leq m$, then $b_i = c_i$ (and $b_j = c_j$), so $b_ib_j^{-1} = c_ic_j^{-1}$ has vanishing $f = 0$ term in its expansion. If $i > m$, on the other hand, one has, using (\ref{modnocommppartpfeqn1}),
\[
b_ib_j^{-1} = (c_i - c_ma_{0,i,m}^{p^d})c_j^{-1} = \sum_{f \in I} \left( \prod_{s\in S} \lambda_s^{f(s)} \right) (a_{f,i,j} - a_{f,m,j}a_{0,i,m})^{p^d}.
\]
The coefficient of $\prod_{s \in S} \lambda_s^0$ in the above expansion is $a_{0,i,j} - a_{0,m,j}a_{0,i,m}$, and $a_{0,t,j} = 0$ for all $t > j$ by induction (since $j < m$), so this vanishes, as required.

It remains to treat the case $j = m$. For all $m < i \leq n$, one has, using (\ref{modnocommppartpfeqn4}) and (\ref{modnocommppartpfeqn1}),
\[
b_ib_m^{-1} = (c_i - c_ma_{0,i,m}^{p^d})c_m^{-1} = \sum_{f \in I} \left( \prod_{s\in S} \lambda_s^{f(s)} \right) (a_{f,i,m} - \delta_{f, 0}a_{0,i,m})^{p^d},
\]
where 
$$\delta_{f, 0} = \begin{cases}
1, & f = 0\\
0, & f \neq 0.
\end{cases}$$ The coefficient of $\prod_{s \in S} \lambda_s^0$ in the above expansion vanishes, so the induction is complete.
\end{proof}

The following lemma and Corollary \ref{ptorextbyvsplits} -- which are the key properties that make the group $\mathscr{V}$ fundamental -- will allow us to show that certain extensions split over $k_s$.

\begin{lemma}
\label{mapofhomssurj}
Let $k$ be a separably closed field of finite degree of imperfection $r$, and let $\F\colon \Ga^{p^r} \rightarrow \Ga$, $F := P + L$ for any $P, L \in k[X_1, \dots, X_{p^r}]$ with $P$ and $L$ homogeneous, $P$ of degree $p$ and $L$ of degree $1$, such that $P$ is universal (equivalently, reduced) and $L \neq 0$. Finally, let $U$ be a commutative $p$-torsion semiwound smooth unipotent $k$-group scheme. Then the map $\phi\colon \Hom(U, \Ga^{p^r}) \rightarrow \Hom(U, \Ga)$ induced by $F$ is surjective.
\end{lemma}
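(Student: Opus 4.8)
The plan is to rephrase surjectivity of $\phi$ homologically and then solve a lifting problem, producing a solution easily over $k^{1/p}$ and descending it to $k$ with the help of separable‑closedness and semiwoundness. Applying $\Hom(U,-)$ to the defining sequence $0\to\mathscr{V}_{P,L}\to\Ga^{p^r}\xrightarrow{F}\Ga\to 0$ yields a long exact sequence in which $\phi=F_*$ sits, so surjectivity of $\phi$ is equivalent to vanishing of the connecting map $\Hom(U,\Ga)\to\Ext^1(U,\mathscr{V}_{P,L})$, i.e. to the statement that every $k$‑homomorphism $\psi\colon U\to\Ga$ lifts to a $k$‑homomorphism $\sigma\colon U\to\Ga^{p^r}$ with $F\circ\sigma=\psi$. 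Writing $M:=\Hom(U,\Ga)$ and $\sigma=(\sigma_1,\dots,\sigma_{p^r})$, with $P=\sum_i c_iX_i^p$ and $L=\sum_i\ell_iX_i$, the task is to solve $\sum_i c_i\sigma_i^p+\sum_i\ell_i\sigma_i=\psi$ in $M$. Since $U$ is $p$‑torsion, $M$ is an $\F_p$‑vector space and $\im\phi=\sum_i(c_iF+\ell_i)M$, where $F$ denotes the Frobenius $\chi\mapsto\chi^p$.

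Here universality enters cleanly. Because $P$ is universal, the $c_i$ form a $k^p$‑basis of $k$, so the ``$P$‑part'' $(\sigma_i)\mapsto\sum_i c_i\sigma_i^p$ has image exactly the $k$‑span $M^{[p]}$ of the $p$‑th powers in $M$, while the ``$L$‑part'' $(\sigma_i)\mapsto\sum_i\ell_i\sigma_i$ is surjective because $L\neq 0$; combining these gives $M=\im\phi+M^{[p]}$. More precisely, in $C:=\coker\phi$ one has the congruences $\overline{c_i\eta^p}=-\overline{\ell_i\eta}$, and expanding an arbitrary $\lambda\in k$ in the basis $\{c_i\}$ turns these into $\overline{\lambda\eta^p}=-\overline{s(\lambda)\eta}$ for an additive surjection $s\colon k\to k$ with $s(c_i)=\ell_i$. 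Thus $C$ is ``Frobenius‑reducible'': every class is congruent to one supported in strictly lower Frobenius order. This is the engine for the descent, but it does not by itself terminate.

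To produce a lift to begin with, I would pass to $k^{1/p}$. There $P=\bigl(\sum_i c_i^{1/p}X_i\bigr)^p$ is a $p$‑th power, so $F=\Sigma^p+L$ with $\Sigma:=\sum_i c_i^{1/p}X_i$ linear, $P$ is no longer universal, and $\mathscr{V}_{P,L}$ becomes split, isomorphic to $\Ga^{p^r-1}$. By Proposition \ref{splitvecsubgps} the sequence splits over $k^{1/p}$, so $\phi$ is surjective there and every $\psi$ acquires a $k^{1/p}$‑lift $\sigma_i\in\Hom_{k^{1/p}}(U_{k^{1/p}},\Ga)$. The decisive observation is that $\Sigma^p=\sum_i c_i\sigma_i^p$ has coefficients in $(k^{1/p})^p=k$, hence already descends to an element of $M$, as does $\sum_i\ell_i\sigma_i=\psi-\Sigma^p$. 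The problem is thereby reduced to descending the individual $\sigma_i$ from $k^{1/p}$ to $k$, modifying the lift by the ambiguity $\Hom_{k^{1/p}}(U_{k^{1/p}},\mathscr{V}_{P,L})$.

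The hard part will be exactly this purely inseparable descent, and it is where the remaining hypotheses are consumed. I would first invoke Lemma \ref{modnocommppart} to normalize $P$ (and with it $L$) into a triangular form in which each ratio $c_ic_j^{-1}$ carries no $k^p$‑component; this is engineered so that correcting the lift one coordinate at a time does not disturb the previously corrected coordinates, producing a terminating recursion rather than a circular one. At each step the obstruction to correcting a coordinate over $k$ is a separable, Artin–Schreier‑type equation, solvable precisely because $k=k_s$; and semiwoundness of $U$ is what guarantees that the Frobenius‑reduction of the second paragraph bottoms out — equivalently, that no stray homomorphism survives to force $C\neq 0$ — so that the recursion is finite and the descended $\sigma_i$ exist. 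Packaging the triangular recursion together with the separable solvability to conclude $C=0$, equivalently that the $k^{1/p}$‑lift may be chosen over $k$, is the main technical obstacle.
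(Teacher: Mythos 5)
There is a genuine gap. The first half of your proposal is sound and essentially reproduces the paper's opening moves: reformulating surjectivity as a lifting problem, and the ``Frobenius reduction'' in the cokernel $C$ (your congruence $\overline{\lambda\eta^p}=-\overline{s(\lambda)\eta}$ is exactly the paper's claim (\ref{mapofhomssurjpfeqn1}), and contrary to your worry it \emph{does} terminate: writing $U=\{G=0\}\subset\Ga^m$, every homomorphism $U\to\Ga$ is a $p$-polynomial in the coordinates $Y_j$, and iterating the reduction brings it down to a $k$-linear combination $\sum_j t_jY_j$ modulo $\im\phi$). But the entire difficulty of the lemma is the step you explicitly defer as ``the main technical obstacle'': showing that these residual linear combinations lie in $\im\phi$, i.e.\ that $C=0$. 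In the paper this is done by applying Lemma \ref{modnocommppart} to the principal part of the defining polynomial $G$ of $U$ (not to $P$, as you propose --- $P$ and $L$ are already normalized once and for all by Lemma \ref{univpluslinchangeofvars}), then exploiting the relation $0=c_j^{-1}r^{p^d}G$ on $U$ to produce, for each $r\in k$ and each $j$, a congruence $r(Y_j+\sum_{i<j}b_{ij}Y_i)\equiv\sum_i F_{ij}(r^p)Y_i\pmod{\im\phi}$, and finally observing that the resulting self-map $\vec{r}\mapsto B\vec{r}-\mathbf{F}(\vec{r}^p)\cdot\vec{\mathbf{1}}$ of $\Ga^m$ has invertible differential $B$, hence is \'etale and surjective on $k$-points because $k=k_s$. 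None of this is present in your sketch.

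The alternative route you propose for this step --- lift over $k^{1/p}$, where the sequence splits, and then descend the components $\sigma_i$ to $k$ --- supplies no actual mechanism. There is no Galois group for $k^{1/p}/k$, so ``descent'' here would have to mean an explicit correction of the lift by an element of $\Hom_{k^{1/p}}(U_{k^{1/p}},\mathscr{V}_{P,L})$ (a large group, since $U$ may partially split over $k^{1/p}$), and you never exhibit the ``Artin--Schreier-type equations'' whose solvability over $k=k_s$ is supposed to drive the recursion, nor justify the asserted role of semiwoundness in terminating it. (In the paper, semiwoundness enters elsewhere: it guarantees $U$ is cut out by a \emph{reduced} $p$-polynomial, which is what makes all the leading coefficients $c_j$ of $Q$ nonzero and hence invertible, and it underlies Lemma \ref{nononzhomrestalphaptoV}, used in the final reduction from general $G$ to $G$ with homogeneous principal part --- a case distinction your proposal also omits.) As written, the proposal reduces the lemma to a statement that is essentially as hard as the lemma itself and leaves it unproved.
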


\begin{proof}
By Lemma \ref{univpluslinchangeofvars}, it suffices to prove the lemma for a particular choice of $P$ and $L$. We choose a $p$-basis $\{\lambda_1, \dots, \lambda_r\}$ of $k$. Take
\[
P := \sum_{f \in I_1} \left( \prod_{i=1}^r \lambda_i^{f(i)} \right)X_f^p \in k[X_f \mid f \in I_1]
\]
and $L := -X_0$. The group $U$ is isomorphic to $\{G = 0\} \subset \Ga^m$ for some reduced (nonzero) $p$-polynomial $G \in k[Y_1, \dots, Y_m]$ \cite[Prop.\,B.1.13, Lem.\,B.1.7]{cgp}. Let us first prove the lemma under the assumption that the principal part $Q$ of $G$ is homogeneous, say of degree $p^d$. If $d = 0$, then reducedness of $G$ implies that $U = 0$, and there is nothing to prove, so assume that $d > 0$. Note that a linear change of variables in the $Y_j$ preserves the property that the degree $p^d$ part of $G$ is reduced, so in particular all of the variables still appear in it. Lemma \ref{modnocommppart} therefore allows us to make a linear change of variables in the $Y_j$ and thereby assume that
\begin{equation}
\label{mapofhomssurjpfeqn2}
Q = \sum_{i=1}^m c_iY_i^{p^d}
\end{equation}
where all $c_j \neq 0$, and for all $1 \leq j < i \leq m$, one has
\begin{equation}
\label{mapofhomssurjpfeqn3}
c_ic_j^{-1} = \sum_{f \in I_d} \left( \prod_{i=1}^r \lambda_i^{f(i)}\right) a_{f,i,j}^{p^d} \mbox{ with $a_{0,i,j} = 0$}.
\end{equation}

We first prove the following claim. For any $s \geq 0$ and any $(\gamma_f)_f \in \Hom(U, \Ga)^{I_s}$, one has
\begin{equation}
\label{mapofhomssurjpfeqn1}
\sum_{f \in I_s} \left( \prod_{i=1}^r \lambda_i^{f(i)}\right) \gamma_f^{p^s} \equiv \gamma_0 \pmod{\im(\phi)},
\end{equation}
where $0 \colon \{1, \dots, r\} \rightarrow \{0, \dots, p^s-1\}$ denotes the constant function with value $0$. We proceed by induction on $s$, the $s = 0$ case being trivial. Suppose that $s > 0$. For $g \in I_1$, let
\[
\psi_g := \sum_{\substack{f \in I_s \\ f \equiv g \pmod{p}}} \left( \prod_{i=1}^r \lambda_i^{(f(i)-g(i))/p} \right) \gamma_f^{p^{s-1}}.
\]
Then
\[
\sum_{f \in I_s} \left( \prod_{i=1}^r \lambda_i^{f(i)}\right) \gamma_f^{p^s} = P(\psi_g \mid g \in I_1) \equiv \psi_0 = \sum_{f \in I_{s-1}} \left( \prod_{i=1}^r \lambda_i^{f(i)}\right) \gamma_f^{p^{s-1}} \pmod{\im(\phi)}.
\]
By induction, this last expression is equivalent modulo $\im(\phi)$ to $\gamma_0$, which proves the claim.

Every $k$-homomorphism $U \rightarrow \Ga$ is a $p$-polynomial in the $Y_j$, as follows from the anti-equivalence of categories between groups killed by Verschiebung and $k[F]$-modules \cite[Ch.\,IV, \S3, Cor.\,6.7]{demazuregabriel}, so (\ref{mapofhomssurjpfeqn1}) implies that every such homomorphism is equivalent modulo $\im(\phi)$ to a $k$-linear combination of the $Y_j$. It therefore suffices to show that every such combination lies in $\im(\phi)$. Fix $r \in k$ and $1 \leq j \leq m$. Write
$$G = Q + \sum_{\substack{1 \leq i \leq m \\ 0 \leq s < d}} c_{is}Y_i^{p^s}.$$ Using (\ref{mapofhomssurjpfeqn2}), one obtains an equality of homomorphisms $U \rightarrow \Ga$
\begin{equation}
\label{mapofhomssurjpfeqn4}
0 = c_j^{-1}r^{p^d}G = \sum_{i=1}^m c_ic_j^{-1}(rY_i)^{p^d} + r^{p^d}\sum_{\substack{1 \leq i \leq m \\ 0 \leq s < d}} c_j^{-1}c_{is}Y_i^{p^s}.
\end{equation}
Writing each term $c_j^{-1}c_{is}$ out as a sum $\sum_{f \in I_s} (\prod_{i=1}^r \lambda_{j,i,s}^{f(i)})b_f^{p^s}$ and using (\ref{mapofhomssurjpfeqn1}), one obtains
\begin{equation}
\label{mapofhomssurjpfeqn5}
r^{p^d}\sum_{\substack{1 \leq i \leq m \\ 0 \leq s < d}} c_j^{-1}c_{is}Y_i^{p^s} \equiv \sum_{i=1}^m F_{i, j}(r^p)Y_i \pmod{\im(\phi)}
\end{equation}
for some $p$-polynomials $F_{i, j} \in k[X]$ which depend only on $i, j$ (and of course $G$), but not on $r$. To analyze the first sum in (\ref{mapofhomssurjpfeqn4}), we combine (\ref{mapofhomssurjpfeqn3}) and (\ref{mapofhomssurjpfeqn1}) (with $s = d$) to conclude that
\begin{equation}
\label{mapofhomssurjpfeqn6}
\sum_{i=1}^m c_ic_j^{-1}(rY_i)^{p^d} \equiv rY_j + \sum_{i < j} a_{0,i,j}rY_i \pmod{\im(\phi)}.
\end{equation}
Combining (\ref{mapofhomssurjpfeqn4}), (\ref{mapofhomssurjpfeqn5}), and (\ref{mapofhomssurjpfeqn6}), we obtain that, for all $1 \leq j \leq m$, there exist $b_{i,j} \in k$ and $p$-polynomials $F_{i, j} \in k[X]$ ($1 \leq i \leq m$) such that, for all $r \in k$, one has
\begin{equation}
\label{mapofhomssurjpfeqn7}
r\left(Y_j + \sum_{1 \leq i < j} b_{i,j}Y_i\right) - \sum_{i=1}^m F_{i,j}(r^p)Y_i \in \im(\phi).
\end{equation}
Let $B$ be the upper-triangular $m \times m$ matrix with entry $b_{i,j}$ in the $(i, j)$ position for $i < j$ and with $1$'s along the diagonal (in particular, $B$ is invertible), let $\vec{Y} := (Y_1, \dots, Y_m)^T$, let $\mathbf{F}$ be the $m\times m$ matrix over $k[X_j]$ with $F_{i,j}$ in the $(i, j)$ position, and let $\vec{\mathbf{1}} := (1, \dots, 1)^T \in k^m$. For any $\vec{r} := (r_1, \dots, r_m)^T \in k^m$, let $\vec{r}^p$ denote the vector obtained by raising each entry of $\vec{r}$ to the $p$th power, and let
\[
\alpha(\vec{r}) := B\vec{r} - \mathbf{F}(\vec{r}^p)\cdot \vec{\mathbf{1}} \in k^m.
\]
Add (\ref{mapofhomssurjpfeqn7}) with $r = r_j$ over $j = 1, \dots, m$ to obtain
\[
\alpha(\vec{r}) := \alpha(\vec{r})\cdot \vec{Y} \in \im(\phi).
\]
Consider the $k$-group homomorphism $\alpha \colon \Ga^m \rightarrow \Ga^m$ defined by the formula $\vec{r} \mapsto \alpha(\vec{r})$. Because the $\mathbf{F}$ term involves polynomials in the $r_j^p$, this map has differential $B$ which is invertible. It is therefore an \'etale homomorphism, hence surjective. Because $k$ is separably closed and $\alpha$ is \'etale surjective, $\alpha$ is surjective on $k$-points. It follows that $\sum_{i=1}^m t_iY_i \in \im(\phi)$ for all $t_1, \dots, t_m \in k$, which completes the proof of the lemma for $U$ which we recall was assumed to be defined by a reduced $G$ whose principal part is homogeneous.

We now prove the general case in which the principal part $Q$ of $G$ may fail to be homogeneous. For each $s \geq 0$ let $H_s \in k[X_f \mid f \in I_s]$ be the $p$-polynomial
\[
H_s := \sum_{f \in I_s} \left( \prod_{i=1}^r \lambda_i^{f(i)} \right)X_f^{p^s}.
\]
Then $H_s$ is reduced, and one has a surjective homomorphism $h_s\colon \{H_s =0\} \rightarrow \{H_{s-1} = 0\}$ for $s \geq 1$ defined by replacing each $Y_f$ with $f \in I_{s-1}$ by $H_1(X_{f,1}, \dots, X_{f,p^r})$. By Proposition \ref{eqnforweilrestalphap}, $\ker(h_s)$ is a power of $\R_{k^{1/p}/k}(\alpha_p)$, hence, by induction, $H_s$ admits a filtration with this group as the successive quotients. For each $1 \leq j \leq m$, let $p^{d_j} := {\rm{deg}}_{Y_j}(G)$. Let $N := \max_j d_j$, and for each $1 \leq j \leq m$, let $R_j \in k[Z_{j,1}, \dots, Z_{j,p^{r(N-d_j)}}]$ be the $p$-polynomial $H_{N-d_j}(Z_{j,1}, \dots, Z_{j, p^{r(N-d_j)}})$. Consider the $k$-group $W$ defined by the equation
\[
R := G(R_1, \dots, R_m) = 0.
\]
The map $\pi\colon W \rightarrow U$, $(Z_{i,j})_{i,j} \mapsto (R_1, \dots, R_m)$ is surjective with kernel $\prod_{d_i < N} \R_{k^{1/p^{N-d_i}}/k}(\alpha_p)$. Let $\beta \colon U \rightarrow \Ga$ be a $k$-homomorphism. We wish to show that $\beta$ lifts to a homomorphism $U \rightarrow \Ga^{p^r}$ via the map $\Hom(U, \Ga^{p^r}) \rightarrow \Hom(U, \Ga)$ induced by $F\colon \Ga^{p^r} \rightarrow \Ga$. By the homogeneous case, $\beta \circ \pi$ lifts to a homomorphism $f\colon W \rightarrow \Ga^{p^r}$. Now $F \circ f|_{\ker(\pi)} = 0$, so $f|{\ker(\pi})$ factors through a homomorphism $\ker(\pi) \rightarrow \mathscr{V}$. The only such map is $0$ by Lemma \ref{nononzhomrestalphaptoV}, hence $f$ kills $\ker(\pi)$, and therefore factors through a homomorphism $U \rightarrow \Ga^{p^r}$ lifting $\beta$.
\end{proof}

\begin{corollary}
\label{ptorextbyvsplits}
Let $k$ be a separably closed field of finite degree of imperfection, and let $U$ be a smooth commutative $p$-torsion semiwound unipotent $k$-group. Then every $p$-torsion extension of $U$ by $\mathscr{V}$ splits.
\end{corollary}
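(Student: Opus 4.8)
The plan is to compare the given extension against the defining presentation of $\mathscr{V}$ and to exploit the surjectivity just established in Lemma~\ref{mapofhomssurj}. Write $\mathscr{V} = \mathscr{V}_{P,L}$ and recall its defining short exact sequence
\[
0 \longrightarrow \mathscr{V} \xlongrightarrow{\iota} \Ga^{p^r} \xlongrightarrow{F} \Ga \longrightarrow 0, \qquad F = P+L.
\]
Applying the functor $\Ext^\bullet(U, -)$ (in commutative $k$-group schemes) yields a long exact sequence whose relevant segment reads
\[
\Hom(U, \Ga^{p^r}) \xlongrightarrow{F_*} \Hom(U, \Ga) \xlongrightarrow{\delta} \Ext^1(U, \mathscr{V}) \xlongrightarrow{\iota_*} \Ext^1(U, \Ga^{p^r}).
\]
By Lemma~\ref{mapofhomssurj} the map $F_*$ is surjective, so the connecting homomorphism $\delta$ vanishes, and therefore $\iota_*$ is injective. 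Thus it suffices to prove that for a commutative $p$-torsion extension $E$ of $U$ by $\mathscr{V}$, its image $\iota_*[E] \in \Ext^1(U, \Ga^{p^r})$ is trivial.

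The class $\iota_*[E]$ is represented by the pushout $E'$ of $E$ along $\iota$, which fits in an extension
\[
0 \longrightarrow \Ga^{p^r} \xlongrightarrow{j} E' \xlongrightarrow{\pi} U \longrightarrow 0.
\]
Since $E'$ is a quotient of $\Ga^{p^r} \times E$ and both factors are $p$-torsion, $E'$ is $p$-torsion as well. I would then invoke Proposition~\ref{splitvecsubgps}: an extension of a commutative group by a vector group splits as soon as the middle term has vanishing Verschiebung. So the proof reduces to checking that $V_{E'} = 0$.

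For this final step I would argue with the Frobenius--Verschiebung formalism. Functoriality of Verschiebung applied to $j$ gives $V_{E'} \circ j^{(p)} = j \circ V_{\Ga^{p^r}} = 0$, so $V_{E'}$ kills $(\Ga^{p^r})^{(p)}$ and factors as $V_{E'} = \bar{V} \circ \pi^{(p)}$ for a unique homomorphism $\bar{V}\colon U^{(p)} \to E'$. Now the $p$-torsion hypothesis gives $0 = [p]_{E'} = V_{E'} \circ F_{E'} = \bar{V} \circ \pi^{(p)} \circ F_{E'} = \bar{V} \circ F_U \circ \pi$, using naturality of relative Frobenius in the last equality. Since $\pi$ is an epimorphism, $\bar{V} \circ F_U = 0$; and since $U$ is smooth, its relative Frobenius $F_U\colon U \to U^{(p)}$ is faithfully flat, hence an epimorphism, forcing $\bar{V} = 0$ and therefore $V_{E'} = 0$. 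Then $E'$ splits, so $\iota_*[E] = 0$, and injectivity of $\iota_*$ gives $[E] = 0$, i.e.\ $E$ splits. The genuine difficulty has already been absorbed into Lemma~\ref{mapofhomssurj}; the remaining steps are the homological comparison and the short Frobenius--Verschiebung bookkeeping, the only mild subtlety being to confirm that the pushout $E'$ inherits $p$-torsion so that Proposition~\ref{splitvecsubgps} applies.
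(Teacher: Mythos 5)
Your proof is correct and follows essentially the same route as the paper: push out along $\mathscr{V}\hookrightarrow\Ga^{p^r}$, kill the resulting class in $\Ext^1(U,\Ga^{p^r})$ via vanishing Verschiebung and Proposition~\ref{splitvecsubgps}, and then use the surjectivity from Lemma~\ref{mapofhomssurj} to conclude via the long exact sequence. The only cosmetic difference is that the paper deduces $V_{E'}=0$ directly from $E'$ being smooth and $p$-torsion, whereas you carry out the Frobenius--Verschiebung factorization explicitly; both are valid.
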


\begin{proof}
Write $\mathscr{V}$ as $\mathscr{V}_{P,L}$ for some $P$ and $L$ so that we have an exact sequence
\begin{equation}
\label{ptorextbyvsplitseqn1}
0 \longrightarrow \mathscr{V} \xlongrightarrow{i} \Ga^{p^r} \xlongrightarrow{P+L} \Ga \longrightarrow 0.
\end{equation}
Let $E$ be a $p$-torsion extension of $U$ by $\mathscr{V}$. Then $i_*E \in \Ext^1(U, \Ga^{p^r})$ is smooth and $p$-torsion, hence has vanishing Verschiebung map. It follows from Proposition \ref{splitvecsubgps} that $i_*(E)$ splits, hence $E$ lies in the image of the connecting map $\Hom(U, \Ga) \rightarrow \Ext^1(U, \mathscr{V})$ associated to (\ref{ptorextbyvsplitseqn1}). Then Lemma \ref{mapofhomssurj} implies that $E$ is trivial as an extension.
\end{proof}

Recall that the $k$-linear action on $\Ga$ induces a $k$-linear action on $\Ext^1(\Ga, \mathscr{F})$ for any fppf abelian sheaf $\mathscr{F}$ over $k$. Our next task is to compute $\Ext^1(\Ga, \mathscr{V}_{P, L})$. Let $r$ be the degree of imperfection of $k$. We have the exact sequence
\begin{equation}
\label{ext^1(Ga,V)computepfeqn1}
0 \longrightarrow \mathscr{V}_{P,L} \xlongrightarrow{i} \Ga^{p^r} \xlongrightarrow{F := P+L} \Ga \longrightarrow 0,
\end{equation}
which induces a connecting map $\delta\colon\Hom(\Ga, \Ga) \rightarrow \Ext^1(\Ga, \mathscr{V}_{P,L})$.

\begin{proposition}
\label{ext^1(Ga,V)compute}
Let $k$ be a field of finite degree of imperfection. The above map $k \rightarrow \Ext^1(\Ga, \mathscr{V}_{P,L})$, $\alpha \mapsto \delta(\alpha\cdot {\rm{Id}})$ is a $k$-linear isomorphism that is functorial in separable extensions $K/k$ of the same degree of imperfection.
\end{proposition}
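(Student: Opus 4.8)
The plan is to read everything off from the long exact $\Ext$-sequence attached to the defining sequence (\ref{ext^1(Ga,V)computepfeqn1}). Since $\mathscr{V}_{P,L} = \{F = 0\}$ with $F = P + L$ reduced, it is semiwound, so $\Hom(\Ga, \mathscr{V}_{P,L}) = 0$; moreover, exactly as in the proof of Proposition \ref{univiffH^1primfd} (using Lemma \ref{extGavertriv} and Proposition \ref{splitvecsubgps} to see that the map $i_*\colon \Ext^1(\Ga,\mathscr{V}_{P,L}) \to \Ext^1(\Ga,\Ga^{p^r})$ vanishes), the connecting map $\delta$ identifies $\Ext^1(\Ga, \mathscr{V}_{P,L})$ with $\coker\bigl(F_* \colon \Hom(\Ga, \Ga^{p^r}) \to \Hom(\Ga, \Ga)\bigr)$. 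Under this identification, $\delta(\alpha \cdot {\rm{Id}})$ is the class of the linear $p$-polynomial $\alpha T$, so the map to analyze is simply $\alpha \mapsto [\alpha T]$.

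For $k$-linearity I would invoke the naturality of the connecting map $\delta$ under the homothety endomorphisms $m_\mu\colon \Ga \to \Ga$, $X \mapsto \mu X$, which by definition induce the $k$-vector space structure on $\Ext^1(\Ga,\mathscr{V}_{P,L})$. Since $(\alpha\cdot{\rm{Id}})\circ m_\mu = (\mu\alpha)\cdot{\rm{Id}}$, naturality gives $\mu\cdot \delta(\alpha\cdot{\rm{Id}}) = \delta\bigl((\mu\alpha)\cdot{\rm{Id}}\bigr)$, which is precisely $k$-linearity; additivity is immediate.

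Surjectivity is the heart of the matter, and it is where the specific shape of $\mathscr{V}_{P,L}$ enters. Because $P$ is universal and homogeneous of degree $p$ in all $p^r$ variables, every coefficient of $P$ is nonzero, so $\deg_{X_i}(F) = p$ for each $i$ and hence $p^N := \max_i \deg_{X_i}(F) = p$, i.e. $N = 1$. The degree-reduction argument in the proof of Proposition \ref{univiffH^1primfd} then shows that every class in $\coker(F_*)$ is represented by a $p$-polynomial of degree $< p^N = p$, that is, by a linear one $\alpha T = \alpha\cdot{\rm{Id}}$; this gives surjectivity. For injectivity I would use Lemma \ref{leadcoeff}: if $\alpha\cdot{\rm{Id}} = F_*(Y_1, \dots, Y_{p^r})$ with the $Y_i$ not all constant, then, $F$ being reduced, the leading term of $F(Y_1, \dots, Y_{p^r})$ has degree $\max_i p\cdot\deg(Y_i) \geq p > 1$, contradicting that $\alpha T$ has degree $1$; and if all $Y_i$ are constant, then $F(Y_1, \dots, Y_{p^r})$ is constant, forcing $\alpha = 0$. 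Thus $\delta(\alpha\cdot{\rm{Id}}) = 0$ only for $\alpha = 0$.

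Finally, for functoriality along a separable extension $K/k$ of the same degree of imperfection, Proposition \ref{universaloverextensions}(ii) guarantees that $P$ remains universal over $K$, so the base change of (\ref{ext^1(Ga,V)computepfeqn1}) to $K$ is again the defining sequence of $\mathscr{V}_{P,L}$ over $K$. The connecting map $\delta$ is compatible with base change, and the homothety $\alpha\cdot{\rm{Id}}$ over $k$ pulls back to $\alpha\cdot{\rm{Id}}$ over $K$ under $k \hookrightarrow K$; this produces the commuting square expressing functoriality. I expect the only genuine subtlety to be surjectivity — specifically, verifying that the degree bound of Proposition \ref{univiffH^1primfd} is sharp enough, namely $N = 1$, that the representing $p$-polynomials are forced to be linear — while $k$-linearity, injectivity, and functoriality follow formally from the naturality of $\delta$ together with the reducedness and universality of $P$.
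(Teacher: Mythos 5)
Your proposal is correct and takes essentially the same route as the paper: the identification of $\Ext^1(\Ga,\mathscr{V}_{P,L})$ with $\coker(F_*)$ via Lemma \ref{extGavertriv} and Proposition \ref{splitvecsubgps}, the degree-reduction using universality of $P$ for surjectivity (which the paper carries out by hand rather than citing Proposition \ref{univiffH^1primfd}), and the appeal to Proposition \ref{universaloverextensions} for functoriality all match. The only local difference is injectivity, where the paper deduces non-splitting of the defining sequence from the woundness of $\mathscr{V}_{P,L}$ while you argue directly via Lemma \ref{leadcoeff} on leading coefficients; both arguments are sound.
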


\begin{proof}
$P$ remains universal over $K$ as in the lemma thanks to Proposition \ref{universaloverextensions}, and functoriality follows from the functoriality of the connecting map, so it only remains to show that the map is a $k$-linear isomorphism. The $k$-linearity follows from the definition of the $k$-action on $\Ext^1(\Ga, \mathscr{V}_{P,L})$. To prove that the map is an isomorphism, we first check that it is injective, which is equivalent to the assertion that it is nonzero. So it suffices to show that $\delta({\rm{Id}}) \neq 0$ -- that is, the sequence (\ref{ext^1(Ga,V)computepfeqn1}) does not split. If it did, then we would obtain an isomorphism $\Ga^{p^r} \simeq \mathscr{V}_{P,L} \times \Ga$, hence a surjection $\Ga^{p^r} \twoheadrightarrow \mathscr{V}_{P,L}$, in violation of the woundness of the latter group. Thus the map is injective.

It remains to prove surjectivity. So suppose given $E \in \Ext^1(\Ga, \mathscr{V}_{P,L})$. Then $E$ has vanishing Verschiebung by Lemma \ref{extGavertriv}, hence (referring to the exact sequence (\ref{ext^1(Ga,V)computepfeqn1})) so does $i_*(E) \in \Ext^1(\Ga, \Ga^{p^r})$, hence splits by Proposition \ref{splitvecsubgps}. It follows that $E$ lies in the image of the connecting map $\Hom(\Ga, \Ga) \rightarrow \Ext^1(\Ga, \mathscr{V}_{P,L})$, so $E = \delta(\phi)$ for some $\phi \in \Hom(\Ga, \Ga)$. 

We are free to modify $\phi$ by $F_*(\chi)$ for any $\chi \in \Hom(\Ga, \Ga^{p^r})$, and we claim that any element $\phi \in \Hom(\Ga, \Ga)$ is equivalent modulo $F_*(\Hom(\Ga, \Ga^{p^r}))$ to a $k$-multiple of the identity. We have that $\phi \in k[T]$ is a $p$-polynomial, which we may assume to be homogeneous. We then prove the claim by induction on ${\rm{deg}}(\phi)$, the degree $1$ case (or the case $\phi = 0$) being trivial. So assume that ${\rm{deg}}(\phi) = p^n > 1$. Because $P$ is universal and homogeneous of degree $p$, we may write $\phi = P(\psi_1, \dots, \psi_{p^r})$ for some $p$-polynomials $\psi_i \in k[T]$ either vanishing or of degree $p^{n-1}$. Then $$\phi \equiv -L(\psi_1, \dots, \psi_{p^r}) \pmod{F_*(\Hom(\Ga, \Ga^{p^r}))},$$ and the latter homomorphism either vanishes or has degree $p^{n-1}$, so we are done by induction.
\end{proof}

An important step in the proof of Theorem \ref{rigidityintro} entails proving that the groups $\mathscr{V}_{P, L}$ for suitable choice of $P$ and $L$ have nonvanishing cohomology when $K$ is infinite and finitely generated over $\F_p$. This will allow us to construct suitable nontrivial extensions of smooth unipotent groups by $\mathscr{V}_{P, L}$. We first prove the following analogous result over fraction fields of complete regular local rings.

\begin{proposition}
\label{H^1nonvanishing}
Let $R$ be a complete $r$-dimensional regular local ring of characteristic $p$ with maximal ideal $\mathfrak{m}$, fraction field $K$, and perfect residue field $R/\mathfrak{m}$. By the Cohen structure theorem, we may choose a subfield $k \subset R$ which maps isomorphically onto the residue field. Choose elements $\lambda_1, \dots, \lambda_r \in R$, and for each $i$ let $c_i \in k$ be the unique element such that $\lambda_i - c_i \in \mathfrak{m}$. Assume that the elements $\lambda_i - c_i$ are generators for $\mathfrak{m}$. Then the $\lambda_i$ form a $p$-basis for $K$. Let $P := \sum_{f \in I} \left( \prod_{i=1}^r \lambda_i^{f(i)} \right) X_f^p$ and $L := -X_{c_{p-1}}$, where $c_{p-1}$ denotes the constant function with value $p-1$. Assume that ${\rm{H}}^1(k, \Z/p\Z) \neq 0$. Then ${\rm{H}}^1(K, \mathscr{V}_{P, L}) \neq 0$.
\end{proposition}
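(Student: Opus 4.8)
The plan is to compute ${\rm H}^1(K,\mathscr V_{P,L})$ as an explicit cokernel and then detect a nonzero class by means of a residue functional valued in ${\rm H}^1(k,\Z/p\Z)=k/\wp(k)$, where $\wp(x):=x^p-x$.

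First I would feed the defining sequence $0\to\mathscr V_{P,L}\to\Ga^{p^r}\xrightarrow{F}\Ga\to 0$ (with $F=P+L$) into its long exact cohomology sequence. Since $\Ga$ is quasi-coherent we have ${\rm H}^1(K,\Ga)=0$, so ${\rm H}^1(K,\mathscr V_{P,L})\cong K/F(K^{p^r})$, the cokernel of the additive map $(x_f)_{f\in I}\mapsto\sum_{f\in I}e_f x_f^{p}-x_{c_{p-1}}$, where $e_f:=\prod_i\lambda_i^{f(i)}$. The term indexed by $f=c_{p-1}$ has coefficient $e_{c_{p-1}}=\mu^{p-1}$, with $\mu:=\prod_i\lambda_i$, and the whole point of the choice $L=-X_{c_{p-1}}$ is the Artin--Schreier identity $\mu^{p-1}y^{p}-y=\wp(\mu y)/\mu$. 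Multiplying the image by the nonzero element $\mu$ and reindexing via the bijection $f\mapsto g_f$, $g_f(i)\equiv 1+f(i)\pmod p$ (which carries $c_{p-1}$ to the zero function and uses that $\{e_g\}_{g\in I}$ is a $K^p$-basis of $K$, i.e.\ the $p$-basis property of the $\lambda_i$), I would identify $\mu\cdot F(K^{p^r})=\wp(K)+V$, where $V:=\bigoplus_{g\neq 0}e_g K^{p}$. Thus ${\rm H}^1(K,\mathscr V_{P,L})\cong K/(\wp(K)+V)$.

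Next I would construct the functional. Put $\omega:=d\log\lambda_1\wedge\cdots\wedge d\log\lambda_r\in\Omega^r_K$ and define $\rho\colon K\to k/\wp(k)$ by $\rho(x):=\Res(x\,\omega)\bmod\wp(k)$, using the top (Grothendieck) residue of $K=\operatorname{Frac}(R)$ relative to the regular system of parameters $t_i=\lambda_i-c_i$, valued in the residue field $k$. I must check that $\rho$ kills both $V$ and $\wp(K)$. For $V$: given $g\neq 0$, choose $i_0$ with $1\le g(i_0)\le p-1$; then $e_g x^{p}\omega=x^{p}\big(\prod_i\lambda_i^{g(i)-1}\big)\,d\lambda_1\wedge\cdots\wedge d\lambda_r$ is exact, because $\lambda_{i_0}^{g(i_0)-1}d\lambda_{i_0}=g(i_0)^{-1}\,d(\lambda_{i_0}^{g(i_0)})$ (here $g(i_0)$ is invertible mod $p$) while $d(x^{p})=0$, so its residue vanishes. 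For $\wp(K)$: since $\omega$ is a $d\log$ form it is fixed by the Cartier operator $C$, and $C(w^{p}\omega)=w\,\omega$; together with Cartier's residue formula $\Res(C\eta)=\Res(\eta)^{1/p}$ this yields $\Res(w^{p}\omega)=\Res(w\,\omega)^{p}$, whence $\rho(\wp(w))=\wp\big(\Res(w\,\omega)\big)\in\wp(k)$. Hence $\rho$ descends to $K/(\wp(K)+V)\cong{\rm H}^1(K,\mathscr V_{P,L})$.

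Finally, since the top residue $\Res(\,\cdot\,\omega)\colon K\to k$ is surjective, $\rho$ is surjective onto $k/\wp(k)={\rm H}^1(k,\Z/p\Z)$, which is nonzero by hypothesis; therefore ${\rm H}^1(K,\mathscr V_{P,L})\neq 0$. The hard part will be making the residue calculus rigorous when $r>1$: one must invoke local duality / iterated residues for the fraction field of a higher-dimensional complete regular local ring, and verify in that setting the Cartier-invariance of $\omega$, the exactness computation, and the surjectivity of the residue. A clean fallback is to reduce to the discrete valuation case $r=1$ (peeling off $t_2,\dots,t_r$ one variable at a time), where $\Res(x\,d\log\lambda)$ is the ordinary Laurent-series residue and all three properties are classical; note that for $p=2$, $r=1$, $\lambda=1+t$ one checks directly that $\rho(x)$ is simply the sum of the polar coefficients of $x$, and that the surviving nonzero class is $[t^{-1}]$ rather than a constant, which already shows why a genuine residue (not just reduction modulo $\mathfrak m$) is needed.
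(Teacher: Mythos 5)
Your argument is correct in substance and turns on the same key idea as the paper's proof: a residue functional on $K$, valued in $k/\wp(k)$, that annihilates the image of $F$. Indeed, composing your $\rho$ with your preliminary multiplication by $\mu=\prod_i\lambda_i$ gives exactly the functional the paper uses, namely the coefficient of $(\lambda_1-c_1)^{-1}\cdots(\lambda_r-c_r)^{-1}$. The difference is in how the residue is made rigorous and how its key property is verified. The paper identifies $R$ with $k\llbracket X_1,\dots,X_r\rrbracket$, embeds $K$ into the Hahn series field $k\llbracket T^{\Z^r}\rrbracket$ for the lexicographic order, defines the residue as a coefficient there, and then checks everything by a direct computation: in $P((\alpha_f)_f)=\sum_f\prod_i(X_i+c_i)^{f(i)}\alpha_f^p$ each summand is a $p$-th power times a polynomial of degree $\leq f(i)\leq p-1$ in each $X_i$, so only the $f=c_{p-1}$ term can contribute to the coefficient of $X_1^{-1}\cdots X_r^{-1}$, and it contributes $a^p$ while $L$ contributes $-a$; hence the residue of anything in the image lies in $\wp(k)$. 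This one computation replaces your decomposition $\mu F(K^{p^r})=\wp(K)+V$, the exactness argument killing $V$, and the Cartier-operator identity ${\rm{Res}}(C\eta)={\rm{Res}}(\eta)^{1/p}$ --- the last being the genuinely heavy input in your version and precisely the ``hard part'' you flag. Your route does go through (the iterated-Laurent-series reduction you sketch amounts to essentially the same embedding the paper uses, and the Cartier and exactness facts are classical), and it has the mild bonus of exhibiting the induced map ${\rm{H}}^1(K,\mathscr{V}_{P,L})\to k/\wp(k)$ as surjective rather than merely nonzero; but the Hahn-series computation is self-contained and avoids importing higher-dimensional residue theory.
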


\begin{proof}
The continuous $k$-algebra map $k\llbracket X_1, \dots, X_r\rrbracket \rightarrow R$, $X_i \mapsto \lambda_i-c_i$, is an isomorphism. It follows that the $\lambda_i - c_i$ form a $p$-basis for $k$, hence -- because $c \in k = k^p$ -- so too do the $\lambda_i$. We identify $R$ with $k\llbracket X_1, \dots, X_r\rrbracket$ and $\lambda_i$ with $X_i + c_i$. Order the group $G := \Z^r$ lexicographically. This makes $\Z^r$ into an ordered abelian group. Consider the ring $k\llbracket T^G\rrbracket$ of Hahn series with value group $G$. Recall that this is the ring of formal power series $f = \sum_{g \in G} c_gT^g$ with $c_g \in k$ such that the set $\{g \in G\mid c_g \neq 0\}$ is well-ordered. Then $R \subset k\llbracket T^G \rrbracket$ via $X_1^{e_1}\dots X_r^{e_r} \mapsto T^{(e_1, \dots, e_r)}$, hence $K \subset k\llbracket T^G\rrbracket$. For $f \in k\llbracket T^G\rrbracket$, we define the {\em residue} of $f$ to be the coefficient of $X_1^{-1}\dots X_r^{-1}$ in the expansion of $f$.

Using the exact sequence
\[
0 \longrightarrow \mathscr{V}_{P, L} \longrightarrow \Ga^I \xlongrightarrow{F := P+L} \Ga \longrightarrow 0,
\]
we see that the nonvanishing of ${\rm{H}}^1(K, \mathscr{V}_{P, L})$ is equivalent to the non-surjectivity of the map $K^I \rightarrow K$ induced by $F$. To show that this map is not surjective, we compute the residue of $F((\alpha_f)_f)$ for any $(\alpha_f)_f \in K^I$. More generally, we compute this residue for any $(\alpha_f)_f \in k\llbracket T^G\rrbracket^I$. The only term in $P((\alpha_f)_f) = \sum_{f \in I} \left( \prod_{i=1}^r \lambda_i^{f(i)} \right) X_f^p = \sum_{f \in I} \left( \prod_{i=1}^r (X_i+c_i)^{f(i)}\right)\alpha_f^p$ that can contribute to the residue is the $f = c_{p-1}$ term. If write $\alpha_{c_{p-1}} = \sum_{g \in G} a_gT^g$, then the residue of $P((\alpha_f)_f)$ is $a_{g_{-1}}^p$, where $g_{-1} := (-1, \dots, -1) \in G$. The residue of $F((\alpha_f)_f) = (P+L)((\alpha_f)_f)$ is therefore $a_{g_{-1}}^p - a_{g_{-1}} \in k$. By assumption, ${\rm{H}}^1(k, \Z/p\Z) \neq 0$, so the Artin-Schreier map $k \rightarrow k$, $x \mapsto x^p-x$ is not surjective. For any element $\beta \in k$ not in the image of Artin-Schreier, the element $\beta (\lambda_1-c_1)^{-1}\dots (\lambda_r-c_r)^{-1} \in K$ does not lie in $F(K^I)$, so we conclude that ${\rm{H}}^1(K, \mathscr{V}_{P, L}) \neq 0$.
\end{proof}

As a corollary, we obtain the following result. Later we will prove the much stronger Theorem \ref{infofcohomintro}.

\begin{corollary}
\label{H^1(V)nonzero}
Let $K$ be a field finitely generated over $\F_p$, and let $\lambda_1, \dots, \lambda_r \in K$ be a $p$-basis. Also let $$P := \sum_{f \in I} \left( \prod_{i=1}^r \lambda_i^{f(i)} \right)X_f^p$$ and $L := -X_{c_{p-1}} \in k[X_f \mid f \in I]$, where $c_{p-1} \in I$ is the constant function with value $p-1$. Then ${\rm{H}}^1(K, \mathscr{V}_{P,L}) \neq 0$.
\end{corollary}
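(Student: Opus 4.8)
The plan is to turn the statement into a concrete non-surjectivity assertion and then to feed a suitable completion of $K$ into Proposition \ref{H^1nonvanishing}. First I would record the cohomological reduction. Since $\Ga$ is quasi-coherent we have $\mathrm{H}^1(K,\Ga)=0$ (exactly as in the proof of Lemma \ref{H^2=0affX}), so the long exact sequence attached to
\[
0 \longrightarrow \mathscr{V}_{P,L} \longrightarrow \Ga^{p^r} \xlongrightarrow{P+L} \Ga \longrightarrow 0
\]
identifies $\mathrm{H}^1(K,\mathscr{V}_{P,L})$ with the cokernel of the additive map $(P+L)\colon K^{I}\to K$, $(\alpha_f)\mapsto \sum_{f\in I}\bigl(\prod_i\lambda_i^{f(i)}\bigr)\alpha_f^{p}-\alpha_{c_{p-1}}$. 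Thus it suffices to exhibit a single element of $K$ not in the image of this map.

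To produce such an element I would embed $K$ into $\mathrm{Frac}(R)$ for a complete regular local ring $R$ of dimension $r$ of exactly the shape in Proposition \ref{H^1nonvanishing}, i.e.\ with finite residue field and with $\lambda_1-c_1,\dots,\lambda_r-c_r$ a regular system of parameters for suitable $c_i$ lying in the field of constants $\F_q\subseteq K$. Because $\lambda_1,\dots,\lambda_r$ is a $p$-basis of the finitely generated field $K$, it is a separating transcendence basis, so $K$ is separably generated over $\F_q$ and admits a smooth affine model $X/\F_q$ on a dense open $U$ of which the $d\lambda_i$ trivialise $\Omega^1$; there $\lambda=(\lambda_1,\dots,\lambda_r)\colon U\to\A^r$ is étale. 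Choosing a closed point $x\in U$ over an $\F_q$-rational point $c\in\A^r(\F_q)$ and setting $R:=\widehat{\calO}_{X,x}$, the images $c_i:=\lambda_i(x)$ lie in $\F_q$, the residue field $\kappa(x)=\F_{q^n}$ is perfect with nonzero $\mathrm{H}^1$, and the $\lambda_i-c_i$ generate $\mathfrak{m}_x$. Proposition \ref{H^1nonvanishing} then gives that $\beta:=\gamma(\lambda_1-c_1)^{-1}\cdots(\lambda_r-c_r)^{-1}$ is not in $(P+L)(\mathrm{Frac}(R)^{I})$ for any $\gamma\in\F_{q^n}$ outside the Artin--Schreier image. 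Taking $\gamma\in\F_q$ with $\mathrm{Tr}_{\F_q/\F_p}(\gamma)\neq 0$ and arranging $p\nmid n$ makes $\mathrm{Tr}_{\F_{q^n}/\F_p}(\gamma)=n\,\mathrm{Tr}_{\F_q/\F_p}(\gamma)\neq 0$, so $\gamma$ is admissible and $\gamma,c_i\in\F_q\subseteq K$. Hence $\beta\in K$, and since $(P+L)(K^{I})\subseteq (P+L)(\mathrm{Frac}(R)^{I})$ we get $\beta\notin(P+L)(K^{I})$, i.e.\ $\mathrm{H}^1(K,\mathscr{V}_{P,L})\neq 0$.

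The step that requires care—and which I expect to be the main obstacle—is the production of the point $x$: I need it to lie over an $\F_q$-rational point of $\A^r$ (so that the $c_i$, and hence $\beta$, lie in $K$) while having residue degree prime to $p$ (so that the trace computation above furnishes an admissible $\gamma\in\F_q$). This is a finite-field point-counting question for the étale cover $\lambda$, to be handled via Lang--Weil/Chebotarev together with the existence of closed points of coprime degrees on a geometrically connected variety over a finite field. The genuinely delicate case is when the given $p$-basis ramifies over \emph{every} $\F_q$-rational point of $\A^r$, so that no $\lambda_i$ can be a regular parameter at a rational point — for instance $\theta^2=\lambda_1^{q}-\lambda_1$, where taking $\lambda_1-c$ as a parameter with $c\in\F_q$ forces $\theta$ to acquire half-integral valuation. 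In that situation I would first pass to a $p$-basis $\mu_1,\dots,\mu_r$ adapted to a regular rational point, apply the construction above to $\mathscr{V}_{P_\mu,L_\mu}$, and transfer the nonvanishing back to $\mathscr{V}_{P,L}$ along the explicit linear change of variables relating the two groups (as computed in Lemma \ref{univpluslinchangeofvars}); keeping track of the resulting linear form is the crux here, and this freedom in the choice of basis is exactly what the application to Theorem \ref{infofcohomintro} permits.
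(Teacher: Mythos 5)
Your overall strategy (reduce to non-surjectivity of $P+L$ on $K$-points, then invoke Proposition \ref{H^1nonvanishing} at a completion $K_x=\mathrm{Frac}(\widehat{\calO}_{X,x})$) is the paper's strategy, and the first half of your argument is fine. The gap is in the descent from $K_x$ back to $K$. You try to force the \emph{explicit} non-image element $\beta=\gamma(\lambda_1-c_1)^{-1}\cdots(\lambda_r-c_r)^{-1}$ to lie in $K$, which drags in all the point-selection issues you flag (rationality of the $c_i$, residue degree prime to $p$, Lang--Weil/Chebotarev), and your fallback for the "bad" case is not available: Lemma \ref{univpluslinchangeofvars} (and hence Corollary \ref{univpluslinsame}) is proved only over a \emph{separably closed} field --- its proof solves an Artin--Schreier-type equation using $k=k_s$ --- so over a finitely generated $K$ the groups $\mathscr{V}_{P,L}$ and $\mathscr{V}_{P_\mu,L_\mu}$ attached to different $p$-bases are in general non-isomorphic $K$-forms of one another, and "transferring the nonvanishing back along the linear change of variables" does not make sense over $K$. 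Since the corollary is asserted for the specific $(P,L)$ built from the given $p$-basis, this is a genuine hole, not just an inefficiency.

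The missing idea is that you never need $\beta$ itself to lie in $K$. Take \emph{any} closed point $x$ of a model $X$ on which the $d\lambda_i$ trivialise $\Omega^1_{X/\F_p}$ (no rationality or residue-degree condition; the hypothesis ${\rm{H}}^1(\kappa(x),\Z/p\Z)\neq 0$ is automatic for a finite residue field since the Artin--Schreier map is never surjective there). Proposition \ref{H^1nonvanishing} gives $\beta\in K_x$ with $\beta\notin \im(F_{K_x})$, where $F:=P+L$. Because $F$ is a smooth morphism, $\im(F_{K_x})$ contains an open neighbourhood of $0$ for the $\mathfrak{m}_x$-adic topology, hence is an open subgroup of $K_x$, hence is also closed; so a whole open neighbourhood of $\beta$ misses $\im(F_{K_x})$. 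As $K$ is dense in $K_x$, some element of $K$ lies in that neighbourhood, and it cannot lie in $F(K^I)\subseteq F(K_x^I)$. This one-line openness-plus-density argument replaces everything in your last paragraph and closes the gap.
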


\begin{proof}
Write $K$ as the function field of a smooth integral $\F_p$-scheme $X$ of finite type. Shrinking $X$ if necessary, we may assume that the $d\lambda_i$ freely generate the sheaf $\Omega^1_{X/\F_p}$. Let $x \in X$ be a closed point, and let $R$ be the completion of $\calO_{X, x}$. Let $\kappa \subset R$ be the subfield mapping isomorphically onto the residue field (it is unique because the residue field is finite), and for each $i$, let $c_i \in \kappa$ be the element such that $\lambda_i - c_i$ lies in the maximal ideal $\mathfrak{m}$ of $R$. Because the $d\lambda_i$ generate $\Omega^1_{X/\F_p}$, the elements $\lambda_i-c_i$ generate $\mathfrak{m}$. Let $K_x := {\rm{Frac}}(R)$. Proposition \ref{H^1nonvanishing} implies that ${\rm{H}}^1(K_x, \mathscr{V}_{P, L}) \neq 0$. Therefore the map $F_{K_x} = P + L\colon K_x^I \rightarrow K_x$ is not surjective, and we need to prove the same with $K_x$ replaced by $K$. Let $\beta \in K_x$ be an element not lying in the image of $F_{K_x}$. Because $F$ is smooth, $\im(F_{K_x})$ contains an open neighborhood of $0 \in K_x$, so an open neighborhood of $\beta$ is disjoint from $\im(F_{K_x})$. Because the map $K \rightarrow K_x$ has dense image, it follows that there is an element of $K$ not in the image of $F_{K_x}$, hence not in the image of $F_K$.
\end{proof}

\begin{proposition}
\label{ext^1Z/pZVnotinj}
Let $k$ be a finitely generated extension field of $\F_p$, $\lambda_1, \dots, \lambda_r \in k$ a $p$-basis. Let $$P := \sum_{f \in I} \left( \prod_{i=1}^r \lambda_i^{f(i)}\right)X_f^p$$ and $L := -X_{c_{p-1}} \in k[X_f\mid f \in I]$, where $c_{p-1} \in I$ is the constant function with value $p-1$. Then the map $\Ext^1_k(\Z/p\Z, \mathscr{V}_{P, L}) \rightarrow \Ext^1_{k_s}(\Z/p\Z, \mathscr{V}_{P, L})$ is not injective.
\end{proposition}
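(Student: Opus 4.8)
The plan is to identify the kernel of the restriction map $\Ext^1_k(\Z/p\Z, \mathscr{V}_{P,L}) \to \Ext^1_{k_s}(\Z/p\Z, \mathscr{V}_{P,L})$ with a Galois cohomology group, recognize that group as ${\rm{H}}^1(k, \mathscr{V}_{P,L})$, and then invoke the nonvanishing already established in Corollary \ref{H^1(V)nonzero} for exactly the present choice of $P$ and $L$.

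First I would apply the exact sequence displayed at the start of this section with $U = \Z/p\Z$:
\[
0 \longrightarrow {\rm{H}}^1(\Gal(k_s/k), \Hom_{k_s}(\Z/p\Z, \mathscr{V}_{P,L})) \longrightarrow \Ext^1_k(\Z/p\Z, \mathscr{V}_{P,L}) \longrightarrow \Ext^1_{k_s}(\Z/p\Z, \mathscr{V}_{P,L}).
\]
This exhibits the kernel of the map in question as the leftmost term, so it suffices to prove that this ${\rm{H}}^1$ is nonzero.

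Next I would compute the Galois module $\Hom_{k_s}(\Z/p\Z, \mathscr{V}_{P,L})$. A $k_s$-homomorphism out of the constant group $\Z/p\Z$ is determined by the image of the generator, which may be any $p$-torsion point of $\mathscr{V}_{P,L}$; since $\mathscr{V}_{P,L} \subset \Ga^{p^r}$ is $p$-torsion, this is an arbitrary element of $\mathscr{V}_{P,L}(k_s)$. Because $\Z/p\Z$ carries the trivial Galois action, this identification is $\Gal(k_s/k)$-equivariant, so $\Hom_{k_s}(\Z/p\Z, \mathscr{V}_{P,L}) \cong \mathscr{V}_{P,L}(k_s)$ as Galois modules. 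As $\mathscr{V}_{P,L}$ is smooth (see the remark following Definition \ref{V_PLdef}), its fppf and \'etale cohomology agree, and the latter is computed by Galois cohomology; hence ${\rm{H}}^1(\Gal(k_s/k), \mathscr{V}_{P,L}(k_s)) \cong {\rm{H}}^1(k, \mathscr{V}_{P,L})$.

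Finally, I would apply Corollary \ref{H^1(V)nonzero}, whose hypotheses (taking $K = k$) match the present $P$ and $L$ verbatim, to conclude that ${\rm{H}}^1(k, \mathscr{V}_{P,L}) \neq 0$. Thus the kernel of the restriction map is nonzero and the map fails to be injective. I expect no genuine obstacle at this stage: the entire analytic difficulty has been front-loaded into Proposition \ref{H^1nonvanishing} and Corollary \ref{H^1(V)nonzero}, and the only points requiring a little care are the $\Gal(k_s/k)$-equivariance of the identification $\Hom_{k_s}(\Z/p\Z, \mathscr{V}_{P,L}) \cong \mathscr{V}_{P,L}(k_s)$ and the comparison of Galois and fppf cohomology for the smooth group $\mathscr{V}_{P,L}$.
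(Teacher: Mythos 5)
Your proof is correct, but it takes a genuinely different route from the paper's. The paper pushes the Artin--Schreier sequence $0 \to \Z/p\Z \to \Ga \xrightarrow{\wp} \Ga \to 0$ through $\Ext^\bullet(-,\mathscr{V}_{P,L})$ to get an injection $\Ext^1_K(\Ga, \mathscr{V}_{P,L})/\wp^*\bigl(\Ext^1_K(\Ga, \mathscr{V}_{P,L})\bigr) \hookrightarrow \Ext^1_K(\Z/p\Z, \mathscr{V}_{P,L})$, then uses the computation $\Ext^1_K(\Ga, \mathscr{V}_{P,L}) \simeq K$ of Proposition \ref{ext^1(Ga,V)compute} to show that $\wp^*$ is identified with $-L-P\colon K^I \to K$, whose cokernel is ${\rm{H}}^1(K, \mathscr{V}_{P,L})$ --- nonzero for $K = k$ by Corollary \ref{H^1(V)nonzero} and zero for $K = k_s$ by smoothness. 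You instead take the descent exact sequence (the one displayed at the start of \S\ref{prepforrigiditysection} and used again at (\ref{nonzerohomtoVpfeqn2})) with $U = \Z/p\Z$, and exploit the special feature that $\Z/p\Z$ is constant and $\mathscr{V}_{P,L}$ is $p$-torsion, so that $\Hom_{k_s}(\Z/p\Z, \mathscr{V}_{P,L}) \cong \mathscr{V}_{P,L}(k_s)$ equivariantly and the kernel of restriction is exactly ${\rm{H}}^1(\Gal(k_s/k), \mathscr{V}_{P,L}(k_s)) = {\rm{H}}^1(k, \mathscr{V}_{P,L})$, again nonzero by Corollary \ref{H^1(V)nonzero}. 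Both arguments bottom out in the same nonvanishing result, but yours bypasses Proposition \ref{ext^1(Ga,V)compute} entirely and pins down the full kernel of the restriction map rather than just exhibiting a nonzero element of it; the paper's version has the mild advantage of being a computation functorial in all separable extensions $K/k$ of the same degree of imperfection, in the style it needs elsewhere (e.g.\ Lemma \ref{ext^2GaVinj}). The two points you flag as needing care --- equivariance of $\Hom_{k_s}(\Z/p\Z, \mathscr{V}_{P,L}) \cong \mathscr{V}_{P,L}(k_s)$ and the comparison of Galois with fppf cohomology for the smooth group $\mathscr{V}_{P,L}$ --- are indeed the only ones, and both are unproblematic.
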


\begin{proof}
Consider the exact sequence
\[
0 \longrightarrow \Z/p\Z \longrightarrow \Ga \xlongrightarrow{\wp} \Ga \longrightarrow 0,
\]
where $\wp$ is the Artin-Schreier map $x \mapsto x^p-x$. This induces for any extension field $K$ of $k$ an inclusion $\Ext^1_K(\Ga, \mathscr{V}_{P,L})/\wp^*(\Ext^1_K(\Ga, \mathscr{V}_{P,L})) \hookrightarrow \Ext^1_K(\Z/p\Z, \mathscr{V}_{P,L})$. We claim that the former group is nontrivial when $K = k$, but vanishes when $K = k_s$, which will prove the proposition. Via Proposition \ref{ext^1(Ga,V)compute}, when $K/k$ is separable algebraic, we may identify $\Ext^1_K(\Ga, \mathscr{V}_{P,L})$ $K$-linearly with $K$ via $\phi\colon \alpha \mapsto \delta(\alpha\cdot {\rm{Id}})$ in the notation of Proposition \ref{ext^1(Ga,V)compute}. Write $\wp = F - I$, where $F$ is the Frobenius isogeny and $I$ is the identity. Then $F^*(\delta(\alpha\cdot{\rm{Id}})) = \delta(\alpha F)$. We seek to write this in the form $\delta(\beta\cdot{\rm{Id}})$ for some $\beta \in K$. 

Write $\alpha = P((\gamma_f)_{f\in I})$ with $\gamma_f \in K$. Then $$\alpha F = P((\gamma_f\cdot {\rm{Id}})_{f \in I}) \equiv \gamma_{c_{p-1}}\cdot {\rm{Id}} \pmod{(P+L)_*(\Hom(\Ga, \Ga^I))},$$ so $\delta(\alpha F) = \delta(\gamma_{c_{p-1}}\cdot {\rm{Id}})$. In the map $\wp_K^*\colon \Ext^1_K(\Ga, \mathscr{V}_{P,L}) \rightarrow \Ext^1_K(\Ga, \mathscr{V}_{P,L})$, we identify the source with $K^I$ via $\phi \circ P$ and the target with $K$ via $\phi$. Then the induced map $\wp^*\colon K^I \rightarrow K$ is $-L-P$, hence the cokernel is isomorphic to ${\rm{H}}^1(K, \mathscr{V}_{P,L})$. This is nonzero when $K = k$ by Corollary \ref{H^1(V)nonzero} and vanishes when $K = k_s$ because $\mathscr{V}_{P,L}$ is smooth.
\end{proof}

\section{Rigidity}
\label{rigiditysection}

In this section we apply the results of \S\ref{prepforrigiditysection} to prove the crucial rigidity property of permawound groups (Theorem \ref{filtrationasuniv}), and we also give a version of the rigidity property over general, not necessarily separably closed fields of finite degree of imperfection (Theorem \ref{filtasunivgenfields}).

\begin{lemma}
\label{ext^2GaVinj}
Let $k$ be a field of finite degree of imperfection $r$. For any $P, L$, the map $\Ext^2_k(\Ga, \mathscr{V}_{P,L}) \rightarrow \Ext^2_{k_s}(\Ga, \mathscr{V}_{P,L})$ is injective.
\end{lemma}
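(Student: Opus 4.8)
The plan is to run the Hochschild--Serre (Galois descent) spectral sequence for $\Ext$ over the extension $k_s/k$,
\[
E_2^{p,q} = {\rm{H}}^p(\Gal(k_s/k), \Ext^q_{k_s}(\Ga, \mathscr{V}_{P,L})) \Longrightarrow \Ext^{p+q}_k(\Ga, \mathscr{V}_{P,L}),
\]
the same spectral sequence whose low-degree terms already appear at the start of \S\ref{prepforrigiditysection}. Write $G := \Gal(k_s/k)$ and let $F^\bullet$ be the induced filtration on the abutment. The base-change map of the lemma factors as $\Ext^2_k(\Ga, \mathscr{V}_{P,L}) \twoheadrightarrow E_\infty^{0,2} \hookrightarrow (\Ext^2_{k_s}(\Ga, \mathscr{V}_{P,L}))^G \subset \Ext^2_{k_s}(\Ga, \mathscr{V}_{P,L})$, so its kernel is exactly $F^1\Ext^2_k(\Ga, \mathscr{V}_{P,L})$. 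Since $E_\infty^{p,2-p} = 0$ for $p \geq 3$ (the relevant $\Ext^q$ vanishes for $q < 0$), this kernel admits a two-step filtration with graded pieces subquotients of $E_2^{1,1}$ and $E_2^{2,0}$. It therefore suffices to show that both of these $E_2$-terms vanish.

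For $E_2^{2,0} = {\rm{H}}^2(G, \Hom_{k_s}(\Ga, \mathscr{V}_{P,L}))$, I would use that $\mathscr{V}_{P,L}$ is wound (as invoked in the proof of Proposition \ref{ext^1(Ga,V)compute}) and remains so over $k_s$, woundness being insensitive to separable extension. Hence there is no nonzero homomorphism $\Ga \to \mathscr{V}_{P,L}$ over $k_s$, so $\Hom_{k_s}(\Ga, \mathscr{V}_{P,L}) = 0$ and $E_2^{2,0} = 0$.

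For $E_2^{1,1} = {\rm{H}}^1(G, \Ext^1_{k_s}(\Ga, \mathscr{V}_{P,L}))$, the key input is Proposition \ref{ext^1(Ga,V)compute}: the map $\alpha \mapsto \delta(\alpha \cdot {\rm{Id}})$ is a $k$-linear isomorphism $k \xrightarrow{\sim} \Ext^1(\Ga, \mathscr{V}_{P,L})$ functorial in separable extensions of the same degree of imperfection. Since $k_s/k$ (and every finite Galois layer) has the same degree of imperfection $r$, this functoriality identifies $\Ext^1_{k_s}(\Ga, \mathscr{V}_{P,L})$ with $k_s$ as a $G$-module carrying the tautological Galois action. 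Then $E_2^{1,1} = {\rm{H}}^1(G, k_s)$, which vanishes by the additive form of Hilbert's Theorem 90 (each finite Galois layer is free over its group algebra by the normal basis theorem, and one passes to the colimit). With both $E_2$-terms zero, the kernel vanishes and the base-change map is injective. The step requiring the most care is verifying that the isomorphism of Proposition \ref{ext^1(Ga,V)compute} is genuinely $G$-equivariant --- so that the $G$-action on the $\Ext$-group is the standard action on $k_s$ rather than some twist --- since it is precisely this equivariance that makes the additive Hilbert 90 vanishing applicable.
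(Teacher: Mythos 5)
Your proof is correct and follows essentially the same route as the paper: the Hochschild--Serre spectral sequence for $k_s/k$, vanishing of $E_2^{2,0}$ because $\Hom_{k_s}(\Ga, \mathscr{V}_{P,L}) = 0$ by (semi)woundness, and vanishing of $E_2^{1,1}$ via the Galois-equivariant identification $\Ext^1_{k_s}(\Ga, \mathscr{V}_{P,L}) \simeq k_s$ from Proposition \ref{ext^1(Ga,V)compute} together with additive Hilbert 90. Your extra care about the filtration on the abutment and the equivariance of the identification is exactly the right point to check, and the paper's proof relies on the same functoriality.
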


\begin{proof}
We have the Hochschild-Serre spectral sequence
\[
E_2^{i,j} = {\rm{H}}^i(\Gal(k_s/k), \Ext^j_{k_s}(\Ga, \mathscr{V}_{P,L})) \Longrightarrow \Ext^{i+j}_k(\Ga, \mathscr{V}_{P,L}).
\]
It suffices to prove that $E_2^{2, 0} = E_2^{1,1} = 0$. The group $E_2^{2,0}$ vanishes because $\mathscr{V}_{P,L}$ is semiwound, so $\Hom_{k_s}(\Ga, \mathscr{V}_{P,L}) = 0$. For $E_2^{1,1}$, Proposition \ref{ext^1(Ga,V)compute} provides a functorial -- hence Galois-equivariant -- isomorphism $\Ext^1_{k_s}(\Ga, \mathscr{V}_{P,L}) \simeq k_s$. It follows that \[
{\rm{H}}^1(\Gal(k_s/k), \Ext^1_{k_s}(\Ga, \mathscr{V}_{P,L})) = 0.\qedhere
\]
\end{proof}

\begin{lemma}
\label{nonsplitcommptorquot}
Let $k$ be a field of characteristic $p$. Then any smooth non-split unipotent $k$-group $U$ admits a nontrivial smooth semiwound commutative $p$-torsion quotient.
\end{lemma}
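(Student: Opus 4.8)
The plan is to split into the disconnected and connected cases, since the maximal split subgroup (and hence the reduction to a wound quotient) is only available for smooth connected groups. First suppose $U$ is not connected, so that $A := U/U^0$ is a nontrivial finite \'etale unipotent $k$-group; being finite it is automatically smooth and semiwound (it contains no copy of $\Ga$). I would pass to its abelianization $A/[A,A]$, which is nonzero because a finite \'etale unipotent group becomes over $k_s$ a nontrivial finite $p$-group, hence is nilpotent with nontrivial abelianization, and then to $(A/[A,A])/p(A/[A,A])$, which is again nonzero since it is the reduction mod $p$ of a nonzero finite abelian $p$-group. The relevant characteristic subgroups are Galois-stable, so this descends to $k$ and yields a nontrivial finite \'etale (thus smooth and semiwound) commutative $p$-torsion quotient of $U$, as required.

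Now suppose $U$ is connected and non-split. Let $U_{\rm{split}} \subseteq U$ be its maximal split $k$-subgroup, normal by \cite[Th.\,B.3.4]{cgp}, and set $W := U/U_{\rm{split}}$; then $W$ is wound and is nontrivial precisely because $U$ is non-split. By \cite[Cor.\,B.3.3]{cgp}, $W$ admits a nontrivial commutative wound quotient $C$, which in particular is smooth and connected. It remains to produce from $C$ a $p$-torsion quotient that is still semiwound. The key device is to consider the descending chain of smooth connected subgroups $C \supseteq pC \supseteq p^2 C \supseteq \cdots$, where $p^i C := \im([p]^i \colon C \to C)$ (each such image is smooth and connected, being isomorphic to $C/\ker([p]^i)$). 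By Noetherianity this chain stabilizes at some $C_\infty := p^N C = p^{N+1}C$, which then satisfies $p C_\infty = C_\infty$, i.e. $C_\infty$ is $p$-divisible; but a nonzero smooth connected unipotent group is never $p$-divisible, since over $k_{\perf}$ it is split and nontrivial, hence surjects onto $\Ga$, on which multiplication by $p$ is the zero map and in particular not surjective. Therefore $C_\infty = 0$, and choosing $N \geq 1$ minimal with $p^N C = 0$, the group $Q := p^{N-1}C$ is nonzero. Since $Q = \im([p]^{N-1})$ is simultaneously a quotient of $C$ (hence of $U$) and a smooth connected subgroup of $C$, it is commutative, it is wound and hence semiwound (woundness is inherited by smooth connected subgroups), and it is killed by $p$ because $pQ = p^N C = 0$; thus $Q$ is the desired quotient.

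The step I expect to be the main obstacle is exactly this passage from a commutative wound group to a $p$-torsion one without losing semiwoundness: the naive candidate $C/pC$ is $p$-torsion but is only a quotient of the wound group $C$, so it need not be semiwound and could in principle even be split. The observation that resolves this is that $\im([p]^{N-1})$ is at once a quotient and a subgroup of $C$, so it inherits $p$-torsionness from being the quotient $C/C[p^{N-1}]$ and inherits woundness from being a subgroup, neatly sidestepping any control over $C/pC$.
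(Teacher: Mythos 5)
Your proof is correct and follows the same route as the paper's: the same split into the disconnected case (pass to a nontrivial finite \'etale quotient and then to its commutative $p$-torsion quotient) and the connected case (reduce to a wound group via \cite[Th.\,B.3.4]{cgp} and then invoke \cite[Cor.\,B.3.3]{cgp}). The only divergence is that \cite[Cor.\,B.3.3]{cgp} already yields a nontrivial \emph{wound commutative $p$-torsion} quotient directly, so your supplementary argument with the chain $p^iC$ and the group $Q = p^{N-1}C$ --- which is correct, and a tidy way to upgrade a commutative wound quotient to a $p$-torsion one by exploiting that $\im([p]^{N-1})$ is simultaneously a quotient and a smooth connected subgroup --- is not actually needed.
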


\begin{proof}
If $U$ is not connected, then it admits a nontrivial finite \'etale quotient, which admits a nontrivial finite \'etale commutative $p$-torsion quotient. If, on the other hand, $U$ is connected, then by \cite[Th.\,B.3.4]{cgp}, there is a normal split unipotent $k$-subgroup $U_{\rm{split}} \trianglelefteq U$ such that $U/U_{\rm{split}}$ is wound. We may therefore assume that $U$ is a nontrivial wound group. Then \cite[Cor.\,B.3.3]{cgp} implies that $U$ admits a nontrivial wound commutative $p$-torsion quotient.
\end{proof}

We are now ready to prove the following fundamental result, which is of significant interest in its own right and is also the key to proving Theorem \ref{rigidityintro}.

\begin{theorem}
\label{nonzerohomtoV}
Let $k$ be a separably closed field of finite degree of imperfection $r > 0$, and let $U$ be a smooth nonsplit unipotent $k$-group scheme. Then $\Hom(U, \mathscr{V}) \neq 0$.
\end{theorem}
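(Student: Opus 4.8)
The plan is to reduce to a very special group and then to manufacture a single extension with exactly the right properties, exploiting the two ingredients of \S\ref{prepforrigiditysection}. First I would reduce the group. By Lemma \ref{nonsplitcommptorquot}, $U$ has a nontrivial smooth commutative $p$-torsion semiwound quotient $\overline{U}$, and any nonzero homomorphism $\overline{U}\to\mathscr{V}$ composes with $U\twoheadrightarrow\overline{U}$ to a nonzero homomorphism $U\to\mathscr{V}$; so I may assume $U$ is itself smooth, commutative, $p$-torsion, semiwound and nonzero. If $U$ is \emph{not} connected, the claim is immediate: over the separably closed field $k$ the component group $U/U^0$ is a nonzero product of copies of $\Z/p\Z$, and composing $U\twoheadrightarrow U/U^0\twoheadrightarrow\Z/p\Z$ with the map $\Z/p\Z\to\mathscr{V}$ sending a generator to any nonzero point of $\mathscr{V}(k)$ (which exists since $\mathscr{V}$ is a positive-dimensional variety over the infinite field $k$) gives a nonzero homomorphism. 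So the real content is the case where $U$ is connected, i.e. wound.

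Next I would descend and set up the cohomological mechanism. Since $U$ is of finite type it is defined over a subfield finitely generated over $\F_p$, which I would enlarge to a field $k_0\subseteq k$ finitely generated over $\F_p$ of degree of imperfection $r$ with a $p$-basis $\lambda_1,\dots,\lambda_r$, taking $\mathscr{V}=\mathscr{V}_{P,L}$ for the explicit $P,L$ of Corollary \ref{H^1(V)nonzero} and Proposition \ref{ext^1Z/pZVnotinj}. Because $k$ is separably closed it contains $(k_0)_s$, and both $U$ and $\mathscr{V}$ base change compatibly (using Proposition \ref{universaloverextensions} to keep $P$ universal), so a nonzero homomorphism over $(k_0)_s$ base changes to a nonzero one over $k$; thus it suffices to show $\Hom_{(k_0)_s}(U,\mathscr{V})\neq0$. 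Writing $\Gamma=\Gal((k_0)_s/k_0)$, the low-degree terms of the relevant spectral sequence give
\[
0\longrightarrow {\rm{H}}^1(\Gamma,\Hom_{(k_0)_s}(U,\mathscr{V}))\longrightarrow \Ext^1_{k_0}(U,\mathscr{V})\xlongrightarrow{\mathrm{res}}\Ext^1_{(k_0)_s}(U,\mathscr{V}),
\]
so it is enough to produce a \emph{single} nontrivial class in $\Ext^1_{k_0}(U,\mathscr{V})$ dying over $(k_0)_s$: this forces ${\rm{H}}^1(\Gamma,\Hom_{(k_0)_s}(U,\mathscr{V}))\neq0$, whence $\Hom_{(k_0)_s}(U,\mathscr{V})\neq0$. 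Crucially, by Corollary \ref{ptorextbyvsplits} \emph{every} $p$-torsion extension of $U$ by $\mathscr{V}$ splits over the separably closed field $(k_0)_s$, so I only need a nontrivial $p$-torsion extension of $U$ by $\mathscr{V}$ over $k_0$.

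To build one I would start from $\Ext^1_{k_0}(\Ga,\mathscr{V})$. Fix a nonzero homomorphism $\phi\colon U\to\Ga$ over $k_0$ (which exists as $U$ is nonzero and semiwound, and which I may take separable, hence surjective, e.g. a coordinate), and a nonzero $\eta\in\Ext^1_{k_0}(\Ga,\mathscr{V})\cong k_0$ via Proposition \ref{ext^1(Ga,V)compute}; the pullback $\phi^*\eta\in\Ext^1_{k_0}(U,\mathscr{V})$ is $p$-torsion, because the extension representing $\eta$ has vanishing Verschiebung (Lemma \ref{extGavertriv}) and so is killed by $[p]=VF$. It remains to arrange $\phi^*\eta\neq0$. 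I would detect this by restricting along a constant subgroup $\iota\colon\Z/p\Z\hookrightarrow U$ arising from a point $u_0\in U(k_0)$ of order $p$: unwinding definitions and the $k$-linearity of Proposition \ref{ext^1(Ga,V)compute}, $\iota^*(\phi^*\eta)$ is the image of $\phi(u_0)\cdot\eta$ under the restriction map $\Ext^1_{k_0}(\Ga,\mathscr{V})\to\Ext^1_{k_0}(\Z/p\Z,\mathscr{V})$ attached to the Artin--Schreier sequence $0\to\Z/p\Z\to\Ga\xlongrightarrow{\wp}\Ga\to0$, and by (the proof of) Proposition \ref{ext^1Z/pZVnotinj} this image is nonzero precisely when $\phi(u_0)\notin F(k_0^{p^r})$, the proper subgroup whose cokernel is ${\rm{H}}^1(k_0,\mathscr{V})\neq0$ (Corollary \ref{H^1(V)nonzero}).

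The main obstacle is therefore the final point-selection: guaranteeing a point $u_0$ with $\phi(u_0)\notin F(k_0^{p^r})$, since a priori $\phi(U(k_0))$ could lie inside this proper subgroup. I would resolve this by first enlarging $k_0$ by a finite separable extension inside $(k_0)_s$ — which preserves finite generation over $\F_p$, the degree of imperfection $r$, the group $\mathscr{V}_{P,L}$ together with universality of $P$, and the nonvanishing of ${\rm{H}}^1$: over a separably closed field the surjective separable homomorphism $\phi$ is surjective on rational points, so after such an enlargement I may realize any prescribed $c\in k_0\setminus F(k_0^{p^r})$ as $\phi(u_0)$. Running the construction over this enlarged $k_0$ then produces the desired nontrivial $p$-torsion, $(k_0)_s$-split extension, completing the proof. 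The delicate bookkeeping — enlarging the base to capture the required point while keeping $\mathscr{V}$, the universality of $P$, and ${\rm{H}}^1(k_0,\mathscr{V})\neq0$ intact — is exactly where Corollary \ref{ptorextbyvsplits}, Proposition \ref{ext^1Z/pZVnotinj}, and the nonvanishing of ${\rm{H}}^1(k_0,\mathscr{V})$ must be combined with care.
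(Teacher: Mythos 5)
Your overall skeleton agrees with the paper's: reduce to $U$ smooth commutative $p$-torsion semiwound, descend to a finitely generated field $k_0$ with $k=(k_0)_s$, use the inflation--restriction sequence to reduce to producing one nontrivial class in $\Ext^1_{k_0}(U,\mathscr{V}_{P,L})$ that dies over $(k_0)_s$, and use Corollary \ref{ptorextbyvsplits} to get the splitting over $(k_0)_s$ for free once the class is $p$-torsion. The divergence, and the gap, is in how you manufacture the nontrivial class. The paper uses \cite[Lem.\,B.1.10]{cgp} to present $U$ (after a finite separable enlargement of $k_0$) as an extension $0\to(\Z/p\Z)^m\to U\to\Ga^n\to 0$, takes a nonzero class in $\Ext^1_{k_0}((\Z/p\Z)^m,\mathscr{V}_{P,L})$ dying over $(k_0)_s$ via Proposition \ref{ext^1Z/pZVnotinj}, and \emph{lifts} it to $\Ext^1_{k_0}(U,\mathscr{V}_{P,L})$, the obstruction in $\Ext^2(\Ga^n,\mathscr{V}_{P,L})$ being controlled by Lemma \ref{ext^2GaVinj} (which you never invoke). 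This requires no rational points of $U$. You instead \emph{pull back} a class from $\Ext^1_{k_0}(\Ga,\mathscr{V}_{P,L})$ along a surjection $\phi\colon U\to\Ga$ and try to detect its nontriviality on a cyclic subgroup generated by a rational point $u_0\in U(k_0)$.

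The detection criterion you state is correct, but you cannot guarantee it is satisfiable. First, a wound group over a finitely generated field can have $U(k_0)=0$ (e.g.\ $W_a$ over $\F_3(a)$ has no nonzero rational points), so the point $u_0$ need not exist over $k_0$. Your fix --- choose $c\in k_0\setminus F(k_0^{I})$ first, then enlarge $k_0$ to a finite separable $k_1$ over which $c=\phi(u_0)$ for some $u_0\in U(k_1)$ --- does not close the gap: the class of $c$ in $k_1/F(k_1^{I})\simeq{\rm{H}}^1(k_1,\mathscr{V}_{P,L})$ may well vanish, since every class of ${\rm{H}}^1(k_0,\mathscr{V}_{P,L})$ dies over $(k_0)_s$ and hence over some finite separable extension. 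You give no argument that the two constraints --- $c\in\phi(U(k_1))$ and $c\notin F(k_1^{I})$ --- can be met simultaneously for a single $k_1$; the first set grows and the second set also grows as $k_1$ grows, and nothing you prove rules out that $\phi(U(k_1))\subset F(k_1^{I})$ for every finite separable $k_1/k_0$. There is also a prior worry that $\phi^*\colon\Ext^1(\Ga,\mathscr{V}_{P,L})\to\Ext^1(U,\mathscr{V}_{P,L})$ could vanish outright (its kernel is the image of the connecting map from $\Hom(\ker\phi,\mathscr{V}_{P,L})$), in which case no choice of $\eta$ helps; your detection would subsume this if it worked, but as it stands the argument is incomplete at exactly this point. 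The paper's route through the intrinsic finite \'etale subgroup and the $\Ext^2$-injectivity of Lemma \ref{ext^2GaVinj} is what circumvents this difficulty.
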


\begin{proof}
By Lemma \ref{nonsplitcommptorquot}, we may assume that $U$ is semiwound, commutative, and $p$-torsion. Choose a finitely generated extension field $K/\F_p$ of degree of imperfection $r$ such that $U$ is defined over $K$ and $k/K$ is separable. We may assume that $k = K_s$. By \cite[Lem.\,B.1.10]{cgp}, there is a finite \'etale $K$-group scheme $E \subset U$ such that $U/E$ is a vector group. Replacing $K$ by a finite separable extension, therefore, we may assume that there exists for some $m > 0, n \geq 0$ an exact sequence of $K$-group schemes
\begin{equation}
\label{nonzerohomtoVpfeqn1}
0 \longrightarrow (\Z/p\Z)^m \longrightarrow U \xlongrightarrow{f} \Ga^n \longrightarrow 0.
\end{equation}

Choose a $p$-basis $\lambda_1, \dots, \lambda_r$ of $K$, let $P := \sum_{f \in I} \left( \prod_{i=1}^r \lambda_i^{f(i)}\right)X_f^p$ and $L := -X_{c_{p-1}} \in k[X_f\mid f \in I]$, where $c_{p-1} \in I$ is the constant function with value $p-1$. Because the automorphism functor of the trivial extension of $U$ by $\mathscr{V}_{P,L}$ is $\calHom(U, \mathscr{V}_{P,L})$, one has an exact sequence
\begin{equation}
\label{nonzerohomtoVpfeqn2}
0 \longrightarrow {\rm{H}}^1(\Gal(K_s/K), \Hom_{K_s}(U, \mathscr{V}_{P,L})) \longrightarrow \Ext^1_K(U, \mathscr{V}_{P,L}) \longrightarrow \Ext^1_{K_s}(U, \mathscr{V}_{P,L})
\end{equation}
In order to show that $\Hom_{K_s}(U, \mathscr{V}) = \Hom_{K_s}(U, \mathscr{V}_{P,L})$ is nonzero, therefore, it suffices to show that the final map above is not injective.

Using Proposition \ref{ext^1Z/pZVnotinj}, choose a nonzero element $\alpha \in \Ext^1_K((\Z/p\Z)^m, \mathscr{V}_{P,L})$ that dies over $K_s$. The image of $\alpha$ under the connecting map $\Ext^1_K((\Z/p\Z)^m, \mathscr{V}_{P,L}) \rightarrow \Ext^2(\Ga^n, \mathscr{V})$ then also dies over $K_s$, so by Lemma \ref{ext^2GaVinj}, $\alpha$ lifts to an element -- necessarily nonzero -- $\beta \in \Ext^1_K(U, \mathscr{V}_{P,L})$, and we claim that $\beta$ dies over $K_s$, which will complete the proof that the final map in (\ref{nonzerohomtoVpfeqn2}) is not injective, hence of the theorem.

Since $\alpha_{K_s} = 0$, $\beta_{K_s}$ lifts to an element $\gamma \in \Ext^1_{K_s}(\Ga^n, \mathscr{V}_{P,L})$. The extension $\gamma$ is represented by a scheme by the effectivity of fpqc descent for relatively affine schemes. This group scheme has vanishing Verschiebung by Lemma \ref{extGavertriv}. It follows that the group scheme corresponding to $\beta_{K_s}$ has vanishing Verschiebung, hence is $p$-torsion, so $\beta_{K_s}$ vanishes by Corollary \ref{ptorextbyvsplits}.
\end{proof}

Our next task is to use Theorem \ref{nonzerohomtoV} to construct a filtration on permawound groups as in Theorem \ref{rigidityintro}. We first require the following proposition.

\begin{proposition}
\label{nonsmmapstorestalphap}
Let $k$ be a field of characteristic $p$ and of finite degree of imperfection, and let $U$ be a commutative unipotent $k$-group scheme that is not smooth. Then $\Hom(U, \R_{k^{1/p}/k}(\alpha_p)) \neq 0$.
\end{proposition}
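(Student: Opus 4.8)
The plan is to convert the statement into one about homomorphisms to $\alpha_p$ via Weil restriction, and then to detect such homomorphisms through the non-reducedness that non-smoothness forces after base change to $k^{1/p}$.

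First I would use the adjunction between Weil restriction and base change (available since the finite degree of imperfection makes $k^{1/p}/k$ finite), namely $\Hom_k(U,\R_{k^{1/p}/k}(\alpha_p))\cong\Hom_{k^{1/p}}(U_{k^{1/p}},\alpha_p)$. By Proposition \ref{eqnforweilrestalphap}, $\R_{k^{1/p}/k}(\alpha_p)$ is a subgroup of the vector group $\Ga^{p^r}$, so it has vanishing Verschiebung; by functoriality of Verschiebung every homomorphism $U\to\R_{k^{1/p}/k}(\alpha_p)$ therefore kills $\mathrm{im}(\mathrm{Ver}_U)$ and factors through the maximal quotient $U_1:=U/\mathrm{im}(\mathrm{Ver}_U)$, which has vanishing Verschiebung. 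Thus it suffices to produce a nonzero $k^{1/p}$-homomorphism $(U_1)_{k^{1/p}}\to\alpha_p$. Since $\alpha_p=\ker(F\colon\Ga\to\Ga)$, such a homomorphism is exactly a nonzero primitive element $t\in\calO((U_1)_{k^{1/p}})$ (i.e.\ a nonzero homomorphism to $\Ga$) with $t^p=0$.

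Next I would exploit that $U_1$ has vanishing Verschiebung, so that (by \cite{demazuregabriel} and the analysis of Propositions \ref{gausselimunigps}, \ref{filterhypersurf}, \ref{swdfilthyper}) it embeds into a vector group and its coordinate ring, and that of any base change, is generated by primitive elements. Consequently a nonzero primitive $t$ with $t^p=0$ exists precisely when $(U_1)_{k^{1/p}}$ is non-reduced as a scheme: indeed, over $k^{1/p}$ the non-reducedness is cut out by primitive elements, since a non-separable defining $p$-polynomial $\sum a_i X_i^p+\cdots$ with $a_i\in k$ acquires the primitive nilpotent $\sum a_i^{1/p}X_i$ once the $p$-th roots $a_i^{1/p}\in k^{1/p}$ are available; a suitable $p$-power of it then lands in $\alpha_p$. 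Finally, geometric reducedness over a field is detected after base change to $k^{1/p}$, so $(U_1)_{k^{1/p}}$ is non-reduced if and only if $U_1$ is non-smooth. Putting this together, the proposition reduces to the single assertion that $U_1=U/\mathrm{im}(\mathrm{Ver}_U)$ is non-smooth whenever $U$ is.

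The hard part will be exactly this last reduction: that non-smoothness of $U$ passes to the maximal Verschiebung-trivial quotient $U_1$. This is genuine content, because $\mathrm{im}(\mathrm{Ver}_U)$ can absorb the infinitesimal part of $U$, and one must rule out that all the ``extra'' tangent directions of $U$ lie inside $\Lie(\mathrm{im}(\mathrm{Ver}_U))$ while $U_1$ stays smooth. The key structural fact I would lean on is that Verschiebung induces the zero map on Lie algebras, together with the left-exactness of $\Lie$ and a dimension count: from $0\to\mathrm{im}(\mathrm{Ver}_U)\to U\to U_1\to 0$ one gets $\dim_k\Lie U\le\dim_k\Lie(\mathrm{im}(\mathrm{Ver}_U))+\dim_k\Lie U_1$, so if $U_1$ were smooth then $\mathrm{im}(\mathrm{Ver}_U)$ would carry the non-smoothness, which I would propagate to a contradiction by inducting on the nilpotency index of $\mathrm{Ver}_U$. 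If a direct argument proves stubborn, I would instead base-change to the perfect closure $k_{\perf}$ — where smoothness coincides with reducedness and the Dieudonné correspondence is available, $\mathrm{coker}(\mathrm{Ver})$ corresponding to the relevant Frobenius-torsion of the Dieudonné module — and verify the inheritance of non-smoothness there, this being legitimate since both non-smoothness and formation of $\mathrm{coker}(\mathrm{Ver})$ commute with the faithfully flat extension $k\to k_{\perf}$.
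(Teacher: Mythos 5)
Your reduction via the Weil-restriction adjunction to producing a nonzero $k^{1/p}$-homomorphism $U_{k^{1/p}}\to\alpha_p$ is sound, and the observation that any such homomorphism factors through $U_1:=U/\im(\mathrm{Ver}_U)$ is correct. The genuine gap is exactly the step you flag as the hard part: that $U_1$ is still non-smooth. Your dimension count only shows that if $U_1$ were smooth then $\im(\mathrm{Ver}_U)$ would be non-smooth, and the proposed induction on the nilpotency index of $\mathrm{Ver}_U$ does not close: non-smoothness of the \emph{subgroup} $\im(\mathrm{Ver}_U)$, or of its own Verschiebung-trivial quotient, contradicts nothing about $U_1$, and homomorphisms out of a subgroup do not extend to $U$, so the inductive information cannot be propagated back up. The natural way to actually prove that $U_1$ is non-smooth is to pass to $k_{\perf}$, observe that $U_{k_{\perf}}/(U_{k_{\perf}})_{\red}$ is a nontrivial infinitesimal group, filter it by groups killed by Frobenius and Verschiebung to obtain a surjection onto $\alpha_p$, and note that this surjection automatically kills $\im(\mathrm{Ver}_U)$ --- but that is precisely the paper's argument, after which the rest of your machinery becomes superfluous. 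There is also a smaller elision in your step over $k^{1/p}$: after extracting a non-separable $p$-polynomial $H=t^p$ (with $t$ a $p$-polynomial over $k^{1/p}$) vanishing on $U_1\subset\Ga^n$, you must check that $t$ does not itself vanish identically on $(U_1)_{k^{1/p}}$. This is true --- if $t$ vanished, expressing $t$ in terms of the defining system and comparing with $H=\sum c_iH_i$ would violate the $k^{1/p}[F]$-linear independence of the defining equations --- but it is not automatic and you do not address it.

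For comparison, the paper avoids both difficulties by not insisting on working at level $k^{1/p}$: over $k_{\perf}$ it produces a surjection $U_{k_{\perf}}\twoheadrightarrow(\alpha_p)_{k_{\perf}}$ from the nontrivial infinitesimal quotient $U_{k_{\perf}}/(U_{k_{\perf}})_{\red}$, descends it to $k^{1/p^n}$ for some $n$, applies the adjunction to get a nonzero homomorphism $U\to\R_{k^{1/p^n}/k}(\alpha_p)$, and then invokes Proposition \ref{eqngenweilrestalphap} to identify the target with a power of $\R_{k^{1/p}/k}(\alpha_p)$, a projection of which gives the desired nonzero homomorphism. If you want to salvage your approach, the cleanest fix is to adopt this last step rather than trying to force everything down to a single Frobenius level.
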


\begin{proof}
Let $V := (U_{k_{\perf}})_{\red}$, a smooth $k_{\perf}$-subgroup scheme of $U_{k_{\perf}}$. Then $V \neq U_{k_{\perf}}$ because $U$ is not smooth. Therefore, $U_{k_{\perf}}/V$ is a nontrivial infinitesimal unipotent $k_{\perf}$-group scheme. Such a group admits a filtration by nontrivial groups with vanishing Frobenius and Verschiebung. By \cite[Ch.\,IV, \S3, Cor.\,6.7]{demazuregabriel}, therefore, $U_{k_{\perf}}/V$ admits a surjective homomorphism onto $(\alpha_p)_{k_{\perf}}$. We thus obtain a nonzero $k_{\perf}$-homomorphism $U_{k_{\perf}} \rightarrow (\alpha_p)_{k_{\perf}}$. This descends to a homomorphism $U_{k^{1/p^n}} \rightarrow (\alpha_p)_{k_{1/p^n}}$ for some $n > 0$. We therefore obtain a nonzero $k$-homomorphism $U \rightarrow \R_{k^{1/p^n}/k}(\alpha_p)$. (The assumption that $k$ has finite degree of imperfection is used here to ensure that $k^{1/p^n}/k$ is a finite extension, and hence that the Weil restriction is (represented by) a scheme.) We are then done by Proposition \ref{eqngenweilrestalphap}.
\end{proof}

Before proving Theorem \ref{rigidityintro}, we prove the analogous statement for weakly permawound groups.

\begin{theorem}
\label{filtrationweakasuniv}
Let $k$ be a separably closed field of finite degree of imperfection $r > 0$, and let $U$ be a semiwound weakly permawound unipotent $k$-group scheme. Then $U$ admits a filtration $0 = U_0 \subset U_1 \subset \dots \subset U_m = U$ such that, for each $1 \leq j \leq m$, either $U_i/U_{i-1} \simeq \mathscr{V}$ or $U_i/U_{i-1} \simeq \R_{k^{1/p}/k}(\alpha_p)$.
\end{theorem}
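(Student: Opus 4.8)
The plan is to construct the filtration from the top down, repeatedly splitting off a quotient isomorphic to $\mathscr{V}$ or to $\R_{k^{1/p}/k}(\alpha_p)$. Observe first that, since weak permawoundness is defined only for commutative groups, $U$ is automatically commutative. The crux of the argument is the following claim: every \emph{nonzero} semiwound weakly permawound commutative $k$-group $U$ admits a \emph{surjection} $\phi$ onto $\mathscr{V}$ or onto $\R_{k^{1/p}/k}(\alpha_p)$ whose kernel again lies in this class. Granting the claim, I set $U_{m-1} := \ker\phi$ and iterate on $U_{m-1}$, producing a strictly descending chain $U \supsetneq U_{m-1} \supsetneq \dots$ of closed subgroup schemes. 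Since $U$ is affine of finite type, its coordinate ring is Noetherian, so closed subschemes satisfy the descending chain condition; hence the chain terminates, necessarily at $0$ (the construction can be continued so long as the current group is nonzero). Reading the resulting chain from the bottom yields the desired filtration.

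To prove the claim I distinguish two cases according to whether $U$ is smooth. If $U$ is smooth then, being nonzero and semiwound, it is nonsplit (a nonzero split group contains a copy of $\Ga \simeq \A^1$, giving a nonconstant morphism $\A^1 \to U$ and contradicting semiwoundness), so Theorem \ref{nonzerohomtoV} furnishes a nonzero homomorphism $\phi\colon U \to \mathscr{V}$. If $U$ is not smooth, then Proposition \ref{nonsmmapstorestalphap} furnishes a nonzero homomorphism $\phi\colon U \to \R_{k^{1/p}/k}(\alpha_p)$. In either case the image $\phi(U)$ is a nonzero quotient of $U$, hence weakly permawound by Proposition \ref{quotofasunivisasuniv}, hence quasi-weakly permawound; since the only quasi-weakly permawound subgroups of $\mathscr{V}$ (resp.\ of $\R_{k^{1/p}/k}(\alpha_p)$) are $0$ and the whole group by Corollary \ref{VRsubgpnotasuniv}, the nonzero image must be everything. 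Thus $\phi$ is surjective, and its kernel is a proper closed subgroup.

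It remains to check that $\ker\phi$ stays in our class. It is commutative and semiwound, being a closed subgroup of the commutative semiwound group $U$, and since $U/\ker\phi$ is isomorphic to $\mathscr{V}$ or to $\R_{k^{1/p}/k}(\alpha_p)$ — both semiwound — Proposition \ref{qwasunivsubgps} (applicable because $U$ is commutative and weakly permawound, so every subgroup is central) shows that $\ker\phi$ is again weakly permawound. This completes the induction. The conceptual heart of the argument is the passage from a merely nonzero homomorphism to a surjective one: this is exactly where the rigidity of $\mathscr{V}$ and $\R_{k^{1/p}/k}(\alpha_p)$ recorded in Corollary \ref{VRsubgpnotasuniv} is essential, and it is what forces the successive quotients to be precisely these two groups rather than arbitrary subquotients. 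The genuinely hard analytic input — the existence of a nonzero map to $\mathscr{V}$ (Theorem \ref{nonzerohomtoV}) — is already in hand, so the remaining difficulty is purely organizational: verifying surjectivity and the stability of the two defining properties under passage to $\ker\phi$.
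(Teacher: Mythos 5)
Your proposal is correct and follows essentially the same route as the paper: produce a nonzero homomorphism to $\mathscr{V}$ (smooth case, via Theorem \ref{nonzerohomtoV}) or to $\R_{k^{1/p}/k}(\alpha_p)$ (non-smooth case, via Proposition \ref{nonsmmapstorestalphap}), upgrade it to a surjection using Proposition \ref{quotofasunivisasuniv} and Corollary \ref{VRsubgpnotasuniv}, and pass to the kernel via Proposition \ref{qwasunivsubgps}. The only cosmetic difference is that the paper terminates the recursion by induction on $\dim(U)$ rather than by the descending chain condition on closed subschemes; both are valid.
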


\begin{proof}
We proceed by induction on ${\rm{dim}}(U)$. If $U = 0$, then we are done, so assume that $U \neq 0$. If $U$ is not smooth, then Proposition \ref{nonsmmapstorestalphap} provides a nonzero $k$-homomorphism $U \rightarrow \R_{k^{1/p}/k}(\alpha_p)$. If, on the other hand, $U$ is smooth, then Theorem \ref{nonzerohomtoV} provides a nonzero $k$-homomorphism $U \rightarrow \mathscr{V}$. Letting $W$ denote either $\mathscr{V}$ or $\R_{k^{1/p}/k}(\alpha_p)$, depending on whether $U$ is smooth or not, we therefore have a nonzero $k$-homomorphism $f\colon U \rightarrow W$. Proposition \ref{quotofasunivisasuniv} implies that $\im(f)$ is weakly permawound, hence quasi-weakly permawound, so by Corollary \ref{VRsubgpnotasuniv}, $f$ is surjective. Further, $\ker(f)$ is weakly permawound by Proposition \ref{qwasunivsubgps}, so we are done by induction.
\end{proof}

\begin{theorem}$($Theorem $\ref{rigidityintro}$$)$
\label{filtrationasuniv}
Let $k$ be a separably closed field of finite degree of imperfection $r > 0$, and let $U$ be a wound permawound unipotent $k$-group scheme. Then $U$ admits a filtration $1 = U_0 \trianglelefteq U_1 \trianglelefteq \dots \trianglelefteq U_m = U$ such that, for each $1 \leq i \leq m$, either $U_i/U_{i-1} \simeq \mathscr{\mathscr{V}}$ or $U_i/U_{i-1} \simeq \R_{k^{1/p}/k}(\alpha_p)$.
\end{theorem}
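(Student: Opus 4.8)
The plan is to reduce to the commutative case, which is exactly Theorem \ref{filtrationweakasuniv}, by a dimension induction modeled on the proof of that theorem. First observe that $U$, being permawound, is smooth (by definition) and connected (Proposition \ref{asunivconn}, as $r>0$ forces $k$ imperfect), and it is wound by hypothesis. If $U$ is commutative it is weakly permawound (Proposition \ref{asuniv=weakasuniv}) and semiwound, so Theorem \ref{filtrationweakasuniv} applies directly; thus the real content is the non-commutative case, which I would treat by induction on $\dim U$, peeling off a quotient isomorphic to $\mathscr{V}$ or to $\R_{k^{1/p}/k}(\alpha_p)$ at each stage and recursing on the (semiwound) kernel.

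For the inductive step, let $G$ be the current group, which I keep semiwound and nonzero. Since a nonzero semiwound group is nonsplit, I obtain a nonzero homomorphism $f\colon G \to W$ as follows: if $G$ is smooth, Theorem \ref{nonzerohomtoV} gives $\Hom(G, \mathscr{V}) \neq 0$ and I take $W = \mathscr{V}$; if $G$ is not smooth, then its abelianization $G^{\mathrm{ab}}$ is also non-smooth (for nilpotent $G$ non-smoothness is detected on $G^{\mathrm{ab}}$, since the infinitesimal quotient $(G_{k_{\perf}})/(G_{k_{\perf}})_{\red}$ is nontrivial and nilpotent, hence has nontrivial abelianization), so Proposition \ref{nonsmmapstorestalphap} gives $\Hom(G^{\mathrm{ab}}, \R_{k^{1/p}/k}(\alpha_p)) \neq 0$ and I take $W = \R_{k^{1/p}/k}(\alpha_p)$. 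In either case $\im f$ is a commutative quotient of $G$; assuming (see below) that $G^{\mathrm{ab}}$ is quasi-weakly permawound, $\im f$ is quasi-weakly permawound (Proposition \ref{quotofasunivisasuniv}), so by Corollary \ref{VRsubgpnotasuniv} it is all of $W$, i.e.\ $f$ is surjective. Then $N := \ker f \trianglelefteq G$ is semiwound (a subgroup of the semiwound $G$) and $\dim N < \dim G$.

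The main obstacle is to propagate the hypotheses to $N$. The invariant I would carry through the induction is: $G$ is semiwound and every abelianized derived subgroup $G^{(i)}/G^{(i+1)}$ is quasi-weakly permawound. Granting this for $G$, closure under the step follows: from $[N,N]\subseteq [G,G]\subseteq N$ one gets a short exact sequence $0 \to [G,G]/[N,N] \to N^{\mathrm{ab}} \to \ker(G^{\mathrm{ab}} \to W) \to 0$, whose right-hand term is quasi-weakly permawound by Proposition \ref{qwasunivsubgps} (the commutative quasi-weakly permawound group $G^{\mathrm{ab}}$ has semiwound quotient $W$) and whose left-hand term, being a quotient of $[G,G]^{\mathrm{ab}}=G^{(1)}/G^{(2)}$, is quasi-weakly permawound because $[G,G]$ satisfies the same invariant; Proposition \ref{qwasunivclundext} then yields that $N^{\mathrm{ab}}$ is quasi-weakly permawound, and the deeper derived pieces of $N$ are handled identically. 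The crux, therefore, is to verify the invariant for the initial group $U$ — that every abelianized derived subgroup of a wound permawound group is quasi-weakly permawound. This is the hard part: the natural approach, an inner induction along the lower central or derived series using Propositions \ref{qwasunivsubgps}, \ref{qwasunivclundext} and \ref{quotofasunivisasuniv}, is obstructed by the fact that quotients of wound groups need not be semiwound, so Proposition \ref{qwasunivsubgps} cannot be applied naively to the graded pieces; overcoming this — presumably by exploiting permawoundness of $U$ to control $\Ext^1(\Ga,-)$-finiteness on the commutator subquotients — is where the real work lies.

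Finally, the theorem demands that each term of the filtration be normal in $U$, whereas the construction above a priori produces the filtration of $N$ only up to normality in $N$. To arrange normality in the ambient $U$ I would use the rigidity of the building blocks $\mathscr{V}$ and $\R_{k^{1/p}/k}(\alpha_p)$ — in particular the scarcity of homomorphisms among them, such as the vanishing of $\Hom(\R_{k^{1/p}/k}(\alpha_p), \mathscr{V})$ (Lemma \ref{nononzhomrestalphaptoV}) — to select the peeling maps compatibly with the conjugation action of $U$, so that each kernel is stable under $U$ and the resulting subgroups are normal in $U$.
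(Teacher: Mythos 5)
Your reduction to the commutative case goes in the wrong direction, and the gap you yourself flag — that every abelianized derived subquotient of a wound permawound group is quasi-weakly permawound — is not a technical loose end but the whole difficulty of your approach, and it is never closed. Peeling off quotients $G \twoheadrightarrow W$ from the top forces you to propagate a permawoundness-type hypothesis to the kernel $N = \ker f$, and none of the permanence results in the paper do this: Proposition \ref{quotofasunivisasuniv} passes (weak) permawoundness to \emph{quotients}, and Proposition \ref{qwasunivsubgps} passes it to \emph{central} subgroups with \emph{semiwound quotient}; neither applies to the kernel of a surjection from a non-commutative $G$ onto $\mathscr{V}$ or $\R_{k^{1/p}/k}(\alpha_p)$. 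There is also a smaller misstep inside the same argument: you invoke Proposition \ref{quotofasunivisasuniv} to conclude that $\im f$ is quasi-weakly permawound from the quasi-weak permawoundness of $G^{\mathrm{ab}}$, but that proposition only covers permawound and weakly permawound groups — quasi-weak permawoundness is precisely the variant that is \emph{not} stable under quotients (compare Definition \ref{weakasunivdef}: weak permawoundness is the quotient-stable strengthening). Finally, the normality-in-$U$ repair at the end is not an argument.

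The paper's proof peels from the bottom instead, and this makes everything you are struggling with disappear. By \cite[Cor.\,B.3.3]{cgp}, a nontrivial wound $U$ has a nontrivial smooth \emph{central} $p$-torsion $k$-subgroup $U'$ with $U/U'$ wound. Now Proposition \ref{qwasunivsubgps} applies verbatim ($U'$ is central and $U/U'$ is semiwound), so $U'$ is weakly permawound, hence permawound by Proposition \ref{asuniv=weakasuniv}; and $U/U'$ is permawound by Proposition \ref{quotofasunivisasuniv}. Induction on dimension then reduces to the commutative case, which is Theorem \ref{filtrationweakasuniv} — the one part of your plan that is correct. Normality of the spliced filtration in $U$ is automatic: subgroups of the central $U'$ are normal in $U$, and preimages of normal subgroups of $U/U'$ are normal in $U$. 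If you want to salvage your write-up, replace the top-down peeling by this bottom-up use of \cite[Cor.\,B.3.3]{cgp}.
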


\begin{proof}
We proceed by induction on ${\rm{dim}}(U)$, the $0$-dimensional case being trivial. So assume that $U \neq 1$. By \cite[Cor.\,B.3.3]{cgp}, there is a nontrivial smooth central (and even $p$-torsion) $k$-subgroup $U' \subset U$ such that $U/U'$ is wound. Then $U/U'$ is permawound by Proposition \ref{quotofasunivisasuniv}, and $U'$ is weakly permawound by Proposition \ref{qwasunivsubgps}, therefore permawound by Proposition \ref{asuniv=weakasuniv}. By induction, therefore, we may assume that $U$ is commutative, and we are then done by Theorem \ref{filtrationweakasuniv}.
\end{proof}

We now prove a version of Theorems \ref{filtrationweakasuniv} and \ref{filtrationasuniv} over a field that is not separably closed. In this case $U$ may not admit a filtration by the groups $\R_{k^{1/p}/k}(\alpha_p)$ and $\mathscr{V}_{P,L}$ for varying $P$ and $L$. Instead, the best we can do is to obtain a filtration by $\R_{k^{1/p}/k}(\alpha_p)$ and $k_s/k$-forms of powers of a fixed $\mathscr{V}_{P,L}$. (Which $P$ and $L$ we choose does not matter, thanks to Corollary \ref{univpluslinsame}.) As we will see, such powers are classified by Galois representations of $k$ over $\F_p$. The key fact that we require is the following.

\begin{proposition}
\label{End(V)=F_p}
Let $k$ be a field of characteristic $p$ and of finite degree of imperfection $r > 0$. Then for any $P, L$ as in Definition $\ref{V_PLdef}$, $\End(\mathscr{V}_{P,L}) = \F_p$.
\end{proposition}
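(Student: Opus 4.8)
The plan is to first record the easy inclusion $\F_p \hookrightarrow \End(\mathscr{V}_{P,L})$, given by the scalar maps $[n]$ for $n \in \F_p$ (these are distinct since $\mathscr{V}_{P,L}$ is $p$-torsion), and then to establish the reverse inclusion. Recall that $\mathscr{V}_{P,L}$ is smooth, connected, and semiwound of dimension $p^r-1$. The first step is to show that every nonzero $k$-endomorphism $\phi$ is an automorphism: if $\ker(\phi) \neq 0$, then $\mathscr{V}_{P,L}/\ker(\phi)$ is a nonzero vector group by Proposition \ref{quotvralphsplit}, hence contains a copy of $\Ga$; but this quotient is isomorphic to $\im(\phi) \subseteq \mathscr{V}_{P,L}$, contradicting semiwoundness. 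Thus $\ker(\phi)=0$, so $\phi$ is a monomorphism whose image is a smooth connected closed subgroup of full dimension $p^r-1$, forcing $\im(\phi)=\mathscr{V}_{P,L}$ and hence $\phi$ an isomorphism. Consequently $\End(\mathscr{V}_{P,L})$ is a division ring.

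Next I would exploit the functorial $k$-linear isomorphism $\Ext^1(\Ga,\mathscr{V}_{P,L}) \cong k$ of Proposition \ref{ext^1(Ga,V)compute}. Any $\phi \in \End(\mathscr{V}_{P,L})$ induces $\phi_\ast$ on this one-dimensional $k$-vector space, and since $\phi_\ast$ (functorial in the second variable) commutes with the $k$-action (functorial in the first variable $\Ga$), it is $k$-linear, i.e.\ multiplication by a scalar $\chi(\phi) \in k$. Bifunctoriality of $\Ext^1$ makes $\chi \colon \End(\mathscr{V}_{P,L}) \to k$ a ring homomorphism; as the source is a division ring and $\chi(\mathrm{Id})=1$, $\chi$ is injective. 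In particular $\End(\mathscr{V}_{P,L})$ is a field $E$ with $\F_p \subseteq E \subseteq k$, and it remains to show $E = \F_p$.

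The crux is to pin down the image of $\chi$ using the Frobenius endomorphism of the source $\Ga$. Write $\Phi$ for the pullback along $F \colon \Ga \to \Ga$ acting on $\Ext^1(\Ga,\mathscr{V}_{P,L})$. By Corollary \ref{univpluslinsame} I may take $L = -X_0$ with $0 \in I$ the zero function and $P = \sum_{f \in I}(\prod_{i} \lambda_i^{f(i)})X_f^p$. Tracking the connecting map $\delta$ of Proposition \ref{ext^1(Ga,V)compute} through the relation $\alpha \cdot F \equiv a_0 \pmod{(P+L)_\ast \Hom(\Ga,\Ga^{p^r})}$, where $(a_f)_f$ is the unique expansion of $\alpha$ in the $k^p$-basis $\{c_f := \prod_i \lambda_i^{f(i)}\}$ of $k$, one finds that $\Phi$ is the inverse-Frobenius-semilinear map sending $\alpha$ to $a_0$, namely the $p$-th root of the coefficient of $c_0 = 1$ in that expansion.

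Finally, since $\Phi$ is functorial in $\Ga$ and $\phi_\ast$ in $\mathscr{V}_{P,L}$, the two commute, so $\mu := \chi(\phi)$ satisfies $\Phi(\mu\alpha) = \mu\,\Phi(\alpha)$ for all $\alpha \in k$. Testing this on $\alpha = c_g$ and expanding $\mu = \sum_h c_h \mu_h$ with $\mu_h \in k^p$, the multiplication rule $c_h c_g = c_{h\boxplus g}\,\pi(h,g)^p$ (with $h \boxplus g$ the coordinatewise sum modulo $p$ and $\pi(h,g)$ the carry term) yields $\Phi(\mu c_g) = \rho_g\,\mu_{\overline g}^{1/p}$, where $\rho_g = \prod_{i : g(i)\neq 0}\lambda_i \neq 0$; comparing with $\mu\,\Phi(c_g) = \mu\,\delta_{g,0}$ forces $\mu_h = 0$ for all $h \neq 0$ and then $\mu_0^{1/p} = \mu_0$, i.e.\ $\mu \in \F_p$. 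Hence $\chi(E) \subseteq \F_p$ and $E = \F_p$. I expect this last paragraph, the explicit determination of $\Phi$ and the combinatorial bookkeeping with the carries $\pi(h,g)$, to be the main obstacle; everything preceding it is formal. Note also that this argument works over any field of finite degree of imperfection, so no separate reduction to the separably closed case is needed.
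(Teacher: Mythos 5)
Your proof is correct, but it takes a genuinely different route from the paper's. The paper argues entirely by explicit computation: an endomorphism of $\mathscr{V}_{P,L} \subset \Ga^{p^r}$ is written as a tuple of $p$-polynomials $Q_f$ of degree $\leq 1$ in $X_0$ (via the uniqueness statement of \cite[Prop.\,6.4]{rosmodulispaces}), a degree argument forces each $Q_f$ to be linear, and a direct comparison of coefficients in the defining relation shows the resulting matrix is a scalar in $\F_p$. You instead first show $\End(\mathscr{V}_{P,L})$ is a division ring (via Proposition \ref{quotvralphsplit} and semiwoundness) that embeds in $k$ through its action on the one-dimensional space $\Ext^1(\Ga,\mathscr{V}_{P,L})\simeq k$ of Proposition \ref{ext^1(Ga,V)compute}, and then pin the image down to $\F_p$ by commuting $\phi_*$ past the Frobenius pullback $\Phi$, whose explicit $p^{-1}$-semilinear description $\alpha\mapsto a_0^{1/p}$ I have checked, as well as the carry computation $\Phi(\mu c_g)=\rho_g\mu_{\overline{g}}^{1/p}$. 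What your approach buys is conceptual clarity and reuse of the $\Ext^1$ machinery already developed; what the paper's buys is self-containedness at this point (no need for the division-ring reduction) and a template that it reuses in Proposition \ref{asunivsubgpV^n}. One small correction: your closing remark that no reduction to the separably closed case is needed is inconsistent with your invocation of Corollary \ref{univpluslinsame} to normalize $(P,L)$ to $L=-X_0$ and the standard $P$ -- that corollary (and the underlying Lemma \ref{univpluslinchangeofvars}) requires $k=k_s$, and over general $k$ the various $\mathscr{V}_{P,L}$ need not be isomorphic. The fix is the one line the paper uses: $\End_k(\mathscr{V}_{P,L})$ injects into $\End_{k_s}(\mathscr{V}_{P,L})$, so it suffices to prove the statement over $k_s$ for a single choice of $(P,L)$.
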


\begin{proof}
We may extend scalars to $k_s$ and thereby assume that $k$ is separably closed. It then suffices by Corollary \ref{univpluslinsame} to prove the proposition for a single choice of $P$ and $L$. Let us take the usual $P$: choose a $p$-basis $\lambda_1, \dots, \lambda_r$ of $k$, and take $P := \sum_{f \in I}\left(\prod_{i=1}^r \lambda_i^{f(i)}\right)X_f^p$ and $L := -X_0$, and let $F := P+L$. An endomorphism $\psi\colon \mathscr{V} \rightarrow \mathscr{V}$ is a collection of homomorphisms $G_f\colon \mathscr{V} \rightarrow \Ga$ for $f \in I$ such that 
\begin{equation}
\label{End(V)=F_ppfeqn1}
F((G_f)_{f \in I}) = 0.
\end{equation}
By \cite[Prop.\,6.4]{rosmodulispaces}, each $G_f$ is of the form $Q_f((X_g)_{g \in I})$ for some $p$-polynomial $Q_f \in k[Y_f \mid f \in I]$ with ${\rm{deg}}_{Y_0}(Q_f) \leq 1$. We claim that each $Q_f$ is linear. Indeed, suppose to the contrary that this is not the case, and fix $g_0, f_0 \in I$ such that $1 < p^d := {\rm{deg}}_{Y_{g_0}}(Q_{f_0}) = \max_{f \in I} {\rm{deg}}_{Y_{g_0}}(Q_f)$. The coefficient of $X_{g_0}^{p^{d+1}}$ on the left side of (\ref{End(V)=F_ppfeqn1}) is $P((a_f)_{f \in I})$ for some $a_f \in k$, not all $0$, and is in particular nonzero because $P$ is reduced. Since each $Q_f$ has degree $\leq 1$ in $Y_0$, the left side of (\ref{End(V)=F_ppfeqn1}) has degree $\leq p$ in $X_0$. Using the equation $F = 0$ on $\mathscr{V}$ to eliminate the $X_0^p$ term leaves the coefficient of $X_{g_0}^{p^{d+1}}$ unchanged because $d > 0$. This then yields an equation on $\mathscr{V}$ of $p$-polynomials in the $X_f$ with degree $\leq 1$ in $X_0$. By the uniqueness aspect of \cite[Prop.\,6.4]{rosmodulispaces}, it follows that this is an identity of polynomials (and not just of functions on $\mathscr{V}$). Since the coefficient of $X_{g_0}^{p^{d+1}}$ is nonzero, this is a contradiction. We conclude that each $Q_f$ is linear, as claimed.

Write $Q_f = \sum_{g \in I} c_{fg}Y_g$ with $c_{fg} \in k$. Substituting this into (\ref{End(V)=F_ppfeqn1}) and eliminating the $X_0^p$ term using the equation $F = 0$ on $\mathscr{V}$, one obtains an equation of functions on $\mathscr{V}$
\begin{equation}
\label{End(V)=F_ppfeqn5}
\sum_{f \in I} \left( \prod_{i=1}^r \lambda_i^{f(i)}\right) \left[ \left(\sum_{0 \neq g \in I} c_{fg}^pX_g^p\right) + c_{f0}^p\left(X_0 - \sum_{0 \neq g \in I} \left( \prod_{i=1}^r \lambda_i^{g(i)} \right)X_g^p\right) \right] = \sum_{g \in I}c_{0g}X_g.
\end{equation}
Since both sides have degree $\leq 1$ in $X_0$, this is an identity of polynomials, again by the uniqueness aspect of \cite[Prop.\,6.4]{rosmodulispaces}. Comparing coefficients of $X_g$ for $g \neq 0$, we obtain
\begin{equation}
\label{End(V)=F_ppfeqn4}
c_{0g} = 0 \mbox{ for } g \neq 0.
\end{equation}
Comparing coefficients of $X_g^p$ for $g \neq 0$, we obtain
\begin{equation}
\label{End(V)=F_ppfeqn2}
\sum_{f \in I} \left( \prod_{i=1}^r \lambda_i^{f(i)}\right)\left[ c_{fg}^p - c_{f0}^p \left( \prod_{i=1}^r \lambda_i^{g(i)} \right) \right] = 0 \mbox{ for } g \neq 0.
\end{equation}
The left side may be rearranged in the form $P((a_t)_{t \in I})$ for some functions $a_t$ of the $c_{fg}$. Comparing the coefficients of $\prod_i \lambda_i^{f(i)}$ on both sides of the resulting expression in (\ref{End(V)=F_ppfeqn2}), we obtain
\begin{equation}
\label{End(V)=F_ppfeqn3}
c_{fg} = \prod_{i=1}^r \lambda_i^{\epsilon_{f, g}(i)}c_{h(f, g)0} \mbox{ for } g \neq 0,
\end{equation}
where $h(f, g) \in I$ is the unique function satisfying $h(f, g) \equiv f-g \pmod{p}$ and $\epsilon_{f, g}(i) := (h(f, g)(i) + g(i) - f(i))/p \in \{0, 1\}$. Applying (\ref{End(V)=F_ppfeqn3}) with $f = 0$ and (\ref{End(V)=F_ppfeqn4}), we obtain $c_{g0} = 0$ for $g \neq 0$. Applying (\ref{End(V)=F_ppfeqn3}) again with $f \neq g$, we obtain $c_{fg} = 0$ whenever $f \neq g$. That is, $c_{fg}$ is supported along the diagonal. Comparing coefficients of $X_0$ in (\ref{End(V)=F_ppfeqn5}), one obtains $c_{00}^p = c_{00}$, so $c_{00} \in \F_p$. Applying (\ref{End(V)=F_ppfeqn3}) with $f = g$, one obtains $c_{gg} = c_{00}$ for all $g$. Call this common value $c \in \F_p$ It follows that $Q_f = cY_f$ for all $f \in I$, hence $\psi\colon \mathscr{V} \rightarrow \mathscr{V}$ is just multiplication by some element of $\F_p$, as desired.
\end{proof}

Let $k$ be a field of characteristic $p$ and finite degree of imperfection $r$. For any finite-dimensional $\F_p$-vector space $\mathscr{W}$, one may define $\mathscr{W} \otimes \mathscr{V}_{P,L}$ to be the tensor product of the constant $\F_p$-vector space scheme $\mathscr{W}$ with the $\F_p$-vector space scheme $\mathscr{V}_{P,L}$. When $\mathscr{W} = \F_p^n$, then $\mathscr{W} \otimes \mathscr{V} = \mathscr{V}^n$ canonically. In general, a non-canonical choice of isomorphism $\mathscr{W} \simeq \F_p^n$ yields an isomorphism $\mathscr{W} \otimes \mathscr{V}_{P,L} \simeq \mathscr{V}_{P,L}^n$. The construction $\mathscr{W} \mapsto \mathscr{W} \otimes \mathscr{V}_{P,L}$ is covariant in $\mathscr{W}$. One has the following proposition, which is essentially a formal consequence of results we have already proven.

\begin{proposition}
\label{asunivsubgpV^n}
Let $k$ be a field of characteristic $p$ and finite degree of imperfection $r > 0$, and let $\mathscr{W}$ be a finite-dimensional $\F_p$-vector space. The quasi-weakly permawound $k$-subgroup schemes of $\mathscr{W} \otimes \mathscr{V}_{P,L}$ are exactly those of the form $\mathscr{U} \otimes \mathscr{V}_{P,L}$ for $\mathscr{U} \subset \mathscr{W}$ an $\F_p$-vector subspace.
\end{proposition}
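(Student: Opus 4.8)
The plan is to fix a basis of $\mathscr{W}$, identifying $\mathscr{W} \otimes \mathscr{V}_{P,L}$ with $\mathscr{V}_{P,L}^n$ (abbreviate $\mathscr{V} := \mathscr{V}_{P,L}$), under which the subgroups $\mathscr{U} \otimes \mathscr{V}$ become exactly the coordinate-tensor subgroups. The easy inclusion — that each $\mathscr{U} \otimes \mathscr{V} \cong \mathscr{V}^{\dim \mathscr{U}}$ is quasi-weakly permawound — I would obtain by induction on $\dim\mathscr{U}$ from the extension-stability of quasi-weak permawoundness (Proposition \ref{qwasunivclundext}) together with the quasi-weak permawoundness of $\mathscr{V}$ itself (Proposition \ref{VR(alphap)asuniv}), applied to the evident filtration of $\mathscr{V}^m$ with successive quotients $\mathscr{V}$.

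For the converse, let $H \subseteq \mathscr{V}^n$ be quasi-weakly permawound; the plan is to show $H = \mathscr{U} \otimes \mathscr{V}$ by induction on $n$, the cases $n \leq 1$ being Corollary \ref{VRsubgpnotasuniv}. The crucial device is to project away the last coordinate, $\rho \colon \mathscr{V}^n \to \mathscr{V}^{n-1}$, and to examine the \emph{subgroup} $H_n := H \cap \ker\rho = H \cap (0 \times \mathscr{V})$ rather than the image $\rho(H)$. Since $H/H_n \cong \rho(H)$ is a subgroup of the semiwound group $\mathscr{V}^{n-1}$, hence semiwound, Proposition \ref{qwasunivsubgps} shows that $H_n$ is itself quasi-weakly permawound, and Corollary \ref{VRsubgpnotasuniv} then forces $H_n \in \{0, \mathscr{V}\}$. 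If $H_n = \mathscr{V}$, then $0 \times \mathscr{V} \subseteq H$, so $H = H^\flat \times \mathscr{V}$ with $H^\flat := \rho(H)$; here $H^\flat \cong H^\flat \times 0 \subseteq H$ has semiwound quotient $\mathscr{V}$, so is quasi-weakly permawound by Proposition \ref{qwasunivsubgps}, and induction gives $H^\flat = \mathscr{U}^\flat \otimes \mathscr{V}$, whence $H = (\mathscr{U}^\flat \oplus \F_p e_n) \otimes \mathscr{V}$. If instead $H_n = 0$, then $\rho|_H$ is a closed immersion identifying $H$ with $H^\flat := \rho(H)$, which is therefore quasi-weakly permawound; by induction $H^\flat = \mathscr{U}^\flat \otimes \mathscr{V}$, and $H$ is the graph of a homomorphism $\psi \colon H^\flat \to \mathscr{V}$. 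Finally $\End(\mathscr{V}) = \F_p$ (Proposition \ref{End(V)=F_p}) yields $\Hom(\mathscr{U}^\flat \otimes \mathscr{V}, \mathscr{V}) \cong (\mathscr{U}^\flat)^*$, so $\psi$ is induced by a linear functional $\mu$ on $\mathscr{U}^\flat$; unwinding, the graph of $\psi$ is precisely $(\mathrm{graph}\,\mu) \otimes \mathscr{V}$, again of the desired form.

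The main obstacle to anticipate — and the reason for routing the dichotomy through the kernel $H_n$ rather than the projection $\rho(H)$ — is that quasi-weak permawoundness is \emph{not} known to pass to quotients, in contrast to weak permawoundness (Proposition \ref{quotofasunivisasuniv}); its definition only controls short exact sequences $0 \to H \to E \to \Ga \to 0$ in which $H$ is a genuine subgroup. Consequently one cannot simply assert that the image $\rho(H) \subseteq \mathscr{V}$ is quasi-weakly permawound and invoke Corollary \ref{VRsubgpnotasuniv}. Working instead with $H_n$, to which Proposition \ref{qwasunivsubgps} does apply, sidesteps this difficulty entirely, after which the reconstruction of $H$ from $H^\flat$ and the $\F_p$-linear datum is purely formal, as the remark preceding the proposition suggests.
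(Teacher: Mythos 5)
Your proof is correct and follows essentially the same route as the paper's: induction on $n$, projecting away one $\mathscr{V}$-factor, using Proposition \ref{qwasunivsubgps} and Corollary \ref{VRsubgpnotasuniv} to pin down the kernel of the projection restricted to $H$, and invoking $\End(\mathscr{V}_{P,L}) = \F_p$ to identify the resulting graph with $\mathscr{U} \otimes \mathscr{V}_{P,L}$. The only (immaterial) difference is organizational: the paper first disposes of the case where every coordinate copy $\mathscr{V}_i$ lies in $H$ and then chooses an $i$ with $H \cap \mathscr{V}_i = 0$, whereas you fix the last coordinate and treat the two possible values of $H \cap \ker\rho$ separately.
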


\begin{proof}
Subgroups of the given form are isomorphic to powers of $\mathscr{V}_{P,L}$, hence permawound. To see that these are the only such subgroups, we proceed by induction on ${\rm{dim}}(\mathscr{W})$, the $0$-dimensional case being trivial. So assume that $\mathscr{W} \neq 0$. We may assume that $\mathscr{W} = \F_p^n$. Let $U \subset \mathscr{V}_{P,L}^n$ be quasi-weakly permawound. If $\mathscr{V}_i := \{0\}^{i-1} \times \mathscr{V}_{P,L} \times \{0\}^{n-i} \subset U$ for all $1 \leq i \leq n$, then $U = \mathscr{V}_{P,L}^n$, so assume that there is some $i$ such that $\mathscr{V}_i \not\subset U$.

Let $\pi^i\colon \mathscr{V}_{P,L}^n \rightarrow \mathscr{V}_{P,L}^{n-1}$ denote the projection away from the $i$th factor. The exact sequence
\[
0 \longrightarrow U \cap \mathscr{V}_i \longrightarrow U \xlongrightarrow{\pi^i} \mathscr{V}_{P,L}^{n-1},
\]
together with Proposition \ref{qwasunivsubgps}, shows that $U \cap \mathscr{V}_i$ is quasi-weakly permawound. Because $U \cap \mathscr{V}_i \neq \mathscr{V}_i$ by assumption, Corollary \ref{VRsubgpnotasuniv} implies that $U \cap \mathscr{V}_i = 0$. The map $\pi^i\colon U \rightarrow \mathscr{V}_{P,L}^{n-1}$ is therefore injective. By induction, $\pi^i(U) = \mathscr{W} \otimes \mathscr{V}_{P,L}$ for some $\F_p$-subspace $\mathscr{W} \subset \F_p^{n-1}$. Let $\pi_i\colon \mathscr{V}_{P,L}^n \rightarrow \mathscr{V}_{P,L}$ denote projection onto the $i$th factor. The composition $\mathscr{W} \otimes \mathscr{V}_{P,L} \xrightarrow{\sim} U \xrightarrow{\pi_i} \mathscr{V}_{P,L}$ is, thanks to Proposition \ref{End(V)=F_p}, induced by an $\F_p$-linear map $f\colon \mathscr{W} \rightarrow \F_p$. If $\mathscr{U} \subset \mathscr{W} \times \F_p$ is the graph of $f$, then $U = \mathscr{U} \otimes \mathscr{V}_{P,L}$.
\end{proof}

Let $k$ be a field of characteristic $p$ and finite degree of imperfection $r$, and fix a choice of $P$ and $L$ as in Definition \ref{V_PLdef}. Thanks to Proposition \ref{End(V)=F_p}, we have an isomorphism -- canonical up to a universal choice of sign -- between the group $\Hom_{{\rm{cts}}}(\Gal(k_s/k), \GL_n(\F_p))$ of continuous $n$-dimensional Galois representations of $k$ over $\F_p$ and the group ${\rm{H}}^1(\Gal(k_s/k), {\rm{Aut}}_{k_s}(\mathscr{V}_{P, L}^n))$ of $k_s/k$-forms of $\mathscr{V}_{P, L}^n$. Fix such a choice of sign, and, for such a Galois representation $\rho$, let $(\mathscr{V}_{P,L})_{\rho}$ denote the associated form of $\mathscr{V}_{P,L}^n$. The following two results are the versions of Theorems \ref{filtrationweakasuniv} and \ref{filtrationasuniv} over fields that are not necessarily separably closed.

\begin{theorem}
\label{filtrationweakasunivgenfields}
Let $k$ be a field of characteristic $p$ and finite degree of imperfection $r > 0$. Fix $P, L$ as in Definition $\ref{V_PLdef}$. Then for every semiwound weakly permawound $k$-group scheme $U$, there is a filtration $0 = U_0 \subset \dots \subset U_m = U$ by $k$-group schemes such that, for each $1 \leq i \leq m$, the $k$-group $U_i/U_{i-1}$ is isomorphic either to $\R_{k^{1/p}/k}(\alpha_p)$, or to $(\mathscr{V}_{P,L})_{\rho}$ for some irreducible continuous finite-dimensional $\F_p$-representation $\rho$ of $\Gal(k_s/k)$.
\end{theorem}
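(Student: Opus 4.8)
The plan is to argue by induction on the pair $(\dim U,\ \mathrm{ord}_{\mathrm{inf}}(U))$ ordered lexicographically, where $\mathrm{ord}_{\mathrm{inf}}(U)$ denotes the order of the finite infinitesimal group $U_{k_{\perf}}/(U_{k_{\perf}})_{\red}$ (this makes sense because over the perfect field $k_{\perf}$ the reduced subscheme $(U_{k_{\perf}})_{\red}$ is a smooth subgroup with infinitesimal quotient). The base case $U=0$ is trivial. As in the proof of Theorem \ref{filtrationweakasuniv}, I would split into two cases according to whether $U$ is smooth, peel off a single quotient in each case, and recurse on its kernel. The reason for the refined measure is that the non-smooth case will produce a quotient of dimension $0$, so dimension alone cannot drive the induction.

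First, the non-smooth case. If $U$ is not smooth, Proposition \ref{nonsmmapstorestalphap} furnishes a nonzero $k$-homomorphism $f\colon U \to \R_{k^{1/p}/k}(\alpha_p)$. Since $\im(f)$ is a quotient of $U$ it is weakly permawound (Proposition \ref{quotofasunivisasuniv}), hence quasi-weakly permawound, so Corollary \ref{VRsubgpnotasuniv} forces $\im(f)=\R_{k^{1/p}/k}(\alpha_p)$; thus $f$ is surjective. Its kernel $K$ is semiwound (being a subgroup of $U$) and weakly permawound by Proposition \ref{qwasunivsubgps} (using that $\R_{k^{1/p}/k}(\alpha_p)$ is semiwound). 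Because $\R_{k^{1/p}/k}(\alpha_p)$ is finite and infinitesimal we have $\dim K=\dim U$, while over $k_{\perf}$ the map $f$ kills $(U_{k_{\perf}})_{\red}$ (a smooth group has no nonzero homomorphism to an infinitesimal one), so $(K_{k_{\perf}})_{\red}=(U_{k_{\perf}})_{\red}$ and $U_{k_{\perf}}/K_{k_{\perf}}$ is a nontrivial quotient of $U_{k_{\perf}}/(U_{k_{\perf}})_{\red}$. Hence $\mathrm{ord}_{\mathrm{inf}}(K)<\mathrm{ord}_{\mathrm{inf}}(U)$, the measure drops, and induction applied to $K$ yields the desired filtration with top quotient $\R_{k^{1/p}/k}(\alpha_p)$.

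The smooth case is the heart of the argument. Here I would fix $P,L$ and use Theorem \ref{nonzerohomtoV} over $k_s$ to see that $M:=\Hom_{k_s}(U_{k_s},\mathscr{V}_{P,L})$ is nonzero; by Proposition \ref{End(V)=F_p} it is an $\F_p$-vector space, and it carries a continuous action of $\Gal(k_s/k)$ because any one homomorphism is defined over a finite separable extension. Choosing a nonzero Galois orbit and taking its $\F_p$-span gives a nonzero finite-dimensional $\Gal(k_s/k)$-stable subspace $M_0\subseteq M$. The evaluation homomorphism $U_{k_s}\to \Hom_{\F_p}(M_0,\mathscr{V}_{P,L})=M_0^{\vee}\otimes_{\F_p}\mathscr{V}_{P,L}$ is Galois-equivariant, so by descent it is a $k$-homomorphism $U\to (\mathscr{V}_{P,L})_{\rho_0}$, where $\rho_0$ is the representation of $\Gal(k_s/k)$ on $M_0^{\vee}$. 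Its image over $k_s$ is quasi-weakly permawound (a quotient of $U$), so by Proposition \ref{asunivsubgpV^n} it equals $\mathscr{U}\otimes\mathscr{V}_{P,L}$ for a subspace $\mathscr{U}\subseteq M_0^{\vee}$; were $\mathscr{U}$ proper, a nonzero functional on $M_0^{\vee}$ vanishing on it would produce a nonzero element of $M_0$ annihilating all of $U$, a contradiction, so the evaluation map is surjective. Passing to an irreducible quotient representation $\rho_0\twoheadrightarrow\rho$ and composing with the induced surjection $(\mathscr{V}_{P,L})_{\rho_0}\twoheadrightarrow (\mathscr{V}_{P,L})_{\rho}$ gives a surjection $U\twoheadrightarrow (\mathscr{V}_{P,L})_{\rho}$ with $\rho$ irreducible. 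Its kernel is semiwound and, by Proposition \ref{qwasunivsubgps}, weakly permawound; since $(\mathscr{V}_{P,L})_{\rho}$ has positive dimension (as $r>0$), the kernel has strictly smaller dimension, and induction finishes the proof.

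I expect the main obstacle to be the organization of the smooth case, namely the passage from the mere existence of a $k_s$-homomorphism to $\mathscr{V}$ (Theorem \ref{nonzerohomtoV}) to an honest $k$-rational surjection onto a form $(\mathscr{V}_{P,L})_{\rho}$ with $\rho$ irreducible. The two ingredients that make this work are the rigidity of subgroups of powers of $\mathscr{V}_{P,L}$ (Proposition \ref{asunivsubgpV^n}, which pins down the image of the evaluation map) and the scalar endomorphism ring $\End(\mathscr{V}_{P,L})=\F_p$ (Proposition \ref{End(V)=F_p}, which turns $M$ into a genuine Galois representation and legitimizes the forms $(\mathscr{V}_{P,L})_{\rho}$). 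A secondary subtlety, flagged above, is termination: the $\R_{k^{1/p}/k}(\alpha_p)$-steps leave the dimension unchanged, so one must track the infinitesimal order as a secondary induction parameter and verify that it strictly decreases.
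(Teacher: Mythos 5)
Your treatment of the smooth case is correct and is essentially the paper's own argument, just repackaged: where you form the span $M_0$ of a Galois orbit in $\Hom_{k_s}(U_{k_s},\mathscr{V}_{P,L})$ and use the evaluation map into $M_0^{\vee}\otimes\mathscr{V}_{P,L}$, the paper takes the product $\prod_i \phi_i$ of the $\Gal(K/k)$-conjugates of a single nonzero homomorphism $\phi_1$ over a finite Galois splitting field $K$ and descends its kernel; the two constructions produce the same surjection $U\twoheadrightarrow(\mathscr{V}_{P,L})_{\rho}$, and you correctly identify the two load-bearing inputs (Proposition \ref{asunivsubgpV^n} to pin down the image, Proposition \ref{End(V)=F_p} to legitimize the Galois-representation bookkeeping). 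One small omission: to apply Proposition \ref{asunivsubgpV^n} to the image over $k_s$ you need $U_{k_s}$ to be weakly permawound, which is Proposition \ref{weakasunivinhbyexts}, not merely ``quotients of weakly permawound groups are weakly permawound.''

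The non-smooth case, however, rests on a false premise: $\R_{k^{1/p}/k}(\alpha_p)$ is \emph{not} finite or infinitesimal. It is the hypersurface $\sum_{f\in I}(\prod_i\lambda_i^{f(i)})X_f^p=0$ in $\Ga^{p^r}$, hence has dimension $p^r-1>0$ (the paper uses exactly this in Proposition \ref{quotvralphsplit}); it is totally nonsmooth in the sense that $\R_{k^{1/p}/k}(\alpha_p)(k_s)=0$, but over $k_{\perf}$ its reduced subscheme is a copy of $\Ga^{p^r-1}$, so a smooth group can certainly map onto it after base change to $k_{\perf}$. Consequently your claims that $\dim K=\dim U$ and that $(K_{k_{\perf}})_{\red}=(U_{k_{\perf}})_{\red}$ are both wrong, and the argument that your secondary parameter $\mathrm{ord}_{\mathrm{inf}}$ strictly decreases does not go through as written. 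The repair is immediate and makes your proof simpler, not harder: since the quotient $\R_{k^{1/p}/k}(\alpha_p)$ has positive dimension, the kernel has strictly smaller dimension in \emph{both} cases, so induction on $\dim(U)$ alone suffices, exactly as in the paper's proofs of Theorems \ref{filtrationweakasuniv} and \ref{filtrationweakasunivgenfields}. The lexicographic pair and the infinitesimal order should simply be deleted.
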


\begin{theorem}
\label{filtasunivgenfields}
Let $k$ be a field of characteristic $p$ and finite degree of imperfection $r > 0$. Fix $P, L$ as in Definition $\ref{V_PLdef}$. Then for every wound permawound $k$-group scheme $U$, there is a filtration $1 = U_0 \trianglelefteq \dots \trianglelefteq U_m = U$ by $k$-group schemes such that, for each $1 \leq i \leq m$, the $k$-group $U_i/U_{i-1}$ is isomorphic either to $\R_{k^{1/p}/k}(\alpha_p)$, or to $(\mathscr{V}_{P,L})_{\rho}$ for some irreducible continuous finite-dimensional $\F_p$-representation $\rho$ of $\Gal(k_s/k)$.
\end{theorem}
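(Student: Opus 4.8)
The plan is to deduce Theorem \ref{filtasunivgenfields} from Theorem \ref{filtrationweakasunivgenfields} by exactly the same reduction used to derive Theorem \ref{filtrationasuniv} from Theorem \ref{filtrationweakasuniv}. First I would proceed by induction on $\dim(U)$, the zero-dimensional case being trivial. So assume $U \neq 1$. By \cite[Cor.\,B.3.3]{cgp}, there is a nontrivial smooth central $p$-torsion $k$-subgroup $U' \subset U$ with $U/U'$ wound. Then $U/U'$ is permawound by Proposition \ref{quotofasunivisasuniv}, and $U'$ is weakly permawound by Proposition \ref{qwasunivsubgps}, hence permawound by Proposition \ref{asuniv=weakasuniv}. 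Splicing a filtration of $U'$ of the desired type with one of $U/U'$ obtained by induction, I may therefore reduce to the commutative case.

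In the commutative case $U$ is semiwound (being wound) and weakly permawound, so Theorem \ref{filtrationweakasunivgenfields} applies directly and produces precisely the required filtration, with graded pieces isomorphic to $\R_{k^{1/p}/k}(\alpha_p)$ or to $(\mathscr{V}_{P,L})_{\rho}$ for irreducible $\rho$. Thus the theorem follows.

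The one point requiring care is whether the inductive step genuinely preserves the shape of the filtration: the graded pieces must be $\R_{k^{1/p}/k}(\alpha_p)$ or forms $(\mathscr{V}_{P,L})_{\rho}$ attached to the \emph{fixed} $P, L$. Since both the base case (the commutative case handled by Theorem \ref{filtrationweakasunivgenfields}) and the inductive output use the same fixed $P, L$, concatenating the two filtrations introduces no new types of quotient, so there is no obstruction here. I expect no real difficulty in this deduction; all the substance has been absorbed into Theorem \ref{filtrationweakasunivgenfields} (and ultimately into the rigidity machinery of \S\ref{prepforrigiditysection} and Theorem \ref{nonzerohomtoV}), and this final statement is a short formal corollary exactly parallel to the passage from Theorem \ref{filtrationweakasuniv} to Theorem \ref{filtrationasuniv}.
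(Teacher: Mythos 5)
Your proposal is correct and matches the paper's argument exactly: the paper likewise reduces Theorem \ref{filtasunivgenfields} to Theorem \ref{filtrationweakasunivgenfields} by the same central-subgroup induction used to pass from Theorem \ref{filtrationweakasuniv} to Theorem \ref{filtrationasuniv}, and then invokes the weakly permawound version in the commutative case.
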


\begin{proof}[Proof of Theorems \ref{filtrationweakasunivgenfields} and \ref{filtasunivgenfields}]
The reduction of Theorem \ref{filtrationasuniv} to Theorem \ref{filtrationweakasuniv} also works here to reduce Theorem \ref{filtasunivgenfields} to Theorem \ref{filtrationweakasunivgenfields}, so we prove the latter. We show that, if $U \neq 0$, then it admits a surjection onto either $\R_{k^{1/p}/k}(\alpha_p)$ or $(\mathscr{V}_{P,L})_{\rho}$ for some irreducible $\rho$, and the proof then proceeds exactly as in the proof of Theorem \ref{filtrationweakasuniv}. If $U$ is not smooth, then Proposition \ref{nonsmmapstorestalphap} furnishes a nonzero homomorphism $f\colon U \rightarrow \R_{k^{1/p}/k}(\alpha_p)$. Since $U$ is weakly permawound, $\im(f)$ is quasi-weakly permawound, so $f$ is surjective by Corollary \ref{VRsubgpnotasuniv}.

Assume, on the other hand, that $U$ is smooth. By Theorem \ref{nonzerohomtoV}, there is a finite Galois extension $K/k$ and a nonzero $K$-homomorphism $\phi_1\colon U_K \rightarrow (\mathscr{V}_{P,L})_K$. Let $\phi_1, \dots, \phi_n\colon U_K \rightarrow (\mathscr{V}_{P,L})_K$ denote the $\Gal(K/k)$-conjugates of $\phi_1$, and let $H_i := \ker(\phi_i)$. Then $\cap_{i=1}^n H_i \subset U_K$ is $\Gal(K/k)$-invariant, hence descends uniquely to a $k$-subgroup $H \subset U$. Further, $H_K$ is the kernel of the map $\phi := \prod_i \phi_i\colon U_K \rightarrow \prod_i (\mathscr{V}_{P,L})_K$. Because $U$ is weakly permawound over $k$, it is so over $K$ by Proposition \ref{weakasunivinhbyexts}. Thus $\im(\phi)$ is quasi-weakly permawound, hence it is $K$-isomorphic to $\mathscr{V}_{P,L}^m$ for some $0 < m \leq n$ by Proposition \ref{asunivsubgpV^n}. (We also implicitly use Proposition \ref{universaloverextensions} to ensure that $P$ is still reduced and universal over $K$.) Therefore, $U/H$ is a $K/k$-form of $\mathscr{V}_{P,L}^m$, hence isomorphic to $(\mathscr{V}_{P,L})_{\beta}$ for some continuous $m$-dimensional Galois representation over $\F_p$. Choose a nonzero irreducible quotient $\rho$ of $\beta$. Then the composition $U/H \rightarrow (\mathscr{V}_{P,L})_{\beta} \rightarrow (\mathscr{V}_{P,L})_{\rho}$ is surjective.
\end{proof}

\section{Cohomology of unipotent groups}
\label{cohunipgpsection}

In this section we illustrate the utility of permawound unipotent groups by applying them to the study of the cohomology of unipotent groups. In particular, we prove that for a non-split smooth unipotent group $U$ over an infinite field $K$ finitely generated over $\F_p$, ${\rm{H}}^1(K, U)$ is infinite (Theorem \ref{infofcohomintro}).
The idea underlying the proof is quite simple. We reduce to the semiwound commutative $p$-torsion case, and ubiquity (Theorem \ref{ubiquity}) then allows us to assume that $U$ is permawound. If $U$ admits a surjection onto $\mathscr{V}_{P, L}$ with $P$ and $L$ as in Corollary \ref{H^1(V)nonzero}, then \cite[Lem.\,2.4]{nguyensros} reduces us to the case $U = \mathscr{V}_{P, L}$. This case is then handled by using Proposition \ref{H^1nonvanishing} to show that $\mathscr{V}_{P,L}$ has nonvanishing ${\rm{H}}^1$ over infinitely many completions of $K$, and the desired infinitude is then easily obtained from a version of weak approximation in this setting.

In general, rigidity (Theorem \ref{filtrationasuniv}) implies that $U$ surjects onto $\mathscr{V}_{P,L}$ not over $K$, but over a finite separable extension $E/K$. The trick is to show that the extension $E/K$ trivializes at infinitely many completions of $K$. That is, for infinitely many completions $K_w$ of $K$, the map ${\rm{Spec}}(E \otimes_K K_w) \rightarrow {\rm{Spec}}(K_w)$ admits a section. It then follows that over such $K_w$ there is a surjection $U \twoheadrightarrow \mathscr{V}_{P,L}$ and one then proceeds as above.

The key first step, therefore, is to obtain this trivialization at infinitely many places of $K$. This is a geometric analogue of the fact that, for a finite extension $F/K$ of number fields, infinitely many primes of $K$ split completely in $F$. The proof of that fact relies upon noting that the zeta functions of the two fields both have their rightmost pole at the same point $1$. (What one really needs is that it is at the same $x$-coordinate.) We will therefore proceed similarly to prove the desired local trivialization, by using properties of the zeta functions of varieties corresponding to the function fields $E$ and $K$. We begin with the following proposition -- interesting in its own right -- from which the local trivialization follows easily. In what follows, for a scheme $W$, $W_{\rm{cl}}$ denotes the set of closed points of $W$.

\begin{proposition}
\label{mapvarstrivinfloc}
Let $f\colon X \rightarrow Y$ be a generically finite morphism between positive-dimensional $\F_p$-schemes of finite type, and let $S$ be the set of closed points $y \in Y$ such that $f^{-1}(y) \rightarrow y$ admits a section. Then $S$ is infinite.
\end{proposition}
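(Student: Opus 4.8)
The plan is to translate the existence of a section into a statement about rational points over finite fields and then compare point counts on $X$ and $Y$ using their zeta functions. First I would note that a closed point $y \in Y$ has finite residue field $\kappa(y) = \F_{p^{\deg y}}$, and that a section of $f^{-1}(y) \to y$ is exactly a $\kappa(y)$-rational point of the fiber; thus $y \in S$ iff there is a closed point $x \in f^{-1}(y)$ with $\kappa(x) = \kappa(y)$. Passing to a finite field of size $p^m$, this says: a closed point $y$ with $\deg y = m$ lies in $S$ precisely when one (equivalently each) of the $m$ points of $Y(\F_{p^m})$ supported at $y$ lifts along $f$ to a point of $X(\F_{p^m})$, the point being that a lift forces the residue field of the point of $X$ above $y$ to coincide with $\F_{p^m} = \kappa(y)$. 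Since closed points of an irreducible component of $Y$ dominated by $X$ are closed points of $Y$ and sections restrict, and since generic finiteness gives $\dim X = \dim \overline{f(X)}$, I may replace $X, Y$ by integral schemes with $f$ dominant and generically finite, so that $K(X)/K(Y)$ is finite; write $n = \dim Y = \dim X \ge 1$ and $d = [K(X):K(Y)]$.

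Next I would run the comparison of point counts. By generic flatness there is a dense open $U \subseteq Y$ with $f^{-1}(U) \to U$ finite locally free of rank $d$; set $Z := Y \setminus U$, so $\dim Z \le n-1$ and, as $X$ is integral, $\dim f^{-1}(Z) \le n-1$ too. On $U$ each fiber has at most $d$ rational points, so
\[
\# f^{-1}(U)(\F_{p^m}) \le d \cdot \#\{\phi \in U(\F_{p^m}) : \phi \text{ lifts to } X(\F_{p^m})\}.
\]
Using $\#X(\F_{p^m}) = \#f^{-1}(U)(\F_{p^m}) + \#f^{-1}(Z)(\F_{p^m})$ and the crude dimension bound $\#f^{-1}(Z)(\F_{p^m}) = O(p^{m(n-1)})$, a lower bound on $\#X(\F_{p^m})$ feeds into a lower bound on the number of liftable points of $Y(\F_{p^m})$. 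For that lower bound I would invoke the Lang--Weil estimate (equivalently, the location of the rightmost pole of the zeta function of $X$): if $\F_{p^b}$ is the field of constants of $X$, then $X$ is geometrically integral over $\F_{p^b}$ and $\#X(\F_{p^m}) = b\,p^{mn} + O(p^{m(n-1/2)})$ whenever $b \mid m$, so $\#X(\F_{p^m}) \ge p^{mn}$ for all large $m$ divisible by $b$. On the other hand, liftable points of $Y(\F_{p^m})$ supported at points of degree $< m$ (necessarily $\le m/2$) number at most $\sum_{e \le m/2} \#Y(\F_{p^e}) = O(p^{mn/2})$, while those supported at degree-exactly-$m$ points number exactly $m \cdot \#\{y : \deg y = m,\ y \in S\}$.

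Combining these bounds gives, for all large $m$ with $b \mid m$,
\[
m \cdot \#\{y : \deg y = m,\ y \in S\} \ge \tfrac{1}{d}\,p^{mn} - O(p^{mn/2}) > 0,
\]
so $S$ contains a closed point of degree exactly $m$. Letting $m$ range over large multiples of $b$ then produces closed points of arbitrarily large, hence infinitely many distinct, degrees in $S$, proving that $S$ is infinite.

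The main obstacle is the lower bound $\#X(\F_{p^m}) \gtrsim p^{mn}$ along an arithmetic progression of $m$: this is exactly the arithmetic content of the statement (the analogue of infinitely many primes splitting completely), and it cannot be extracted by elementary dimension counting, which only yields upper bounds. It genuinely requires the Weil/Lang--Weil control of the leading term of the point count, i.e.\ the rightmost pole of the zeta function of $X$. The handling of the constant field $\F_{p^b}$ -- ensuring geometric integrality over it so Lang--Weil applies, and keeping track of the factor $b$ relating $\F_p$-points to $\F_{p^b}$-points -- is the one technical wrinkle, but it merely restricts $m$ to multiples of $b$, which is harmless for the conclusion.
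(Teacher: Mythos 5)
Your proof is correct and rests on the same arithmetic input as the paper's: the Weil-conjecture control of the leading term of the point count of $X$, i.e.\ the location of the rightmost pole of $\zeta_X$. The paper packages this more briefly as a contradiction with the abscissa of convergence: if $S$ were finite then every closed point $x\in X$ outside a finite set would satisfy $|\kappa(x)| \ge |\kappa(f(x))|^2$, so the Euler product for $\zeta_X$ would converge on ${\rm{Re}}(s) > d/2$, contradicting the simple pole at $s=d$ for irreducible $X$; no generic flatness, Lang--Weil error terms, or bookkeeping of the constant field is needed. Your quantitative version buys a bit more (closed points of $S$ of every sufficiently large degree divisible by $b$) at the cost of that extra apparatus. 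Two harmless slips: the dominant error in your displayed inequality is the Lang--Weil term $O(p^{m(n-1/2)})$, not $O(p^{mn/2})$, and when $b=1$ the main term only gives $\#X(\F_{p^m}) \ge (1-\varepsilon)p^{mn}$ rather than $\ge p^{mn}$; neither affects positivity for large $m$.
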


\begin{proof}
For an $\F_q$-scheme $W$ of finite type, recall that the zeta function of $W$ is the meromorphic function of $s$ defined by the Euler product
\[
\zeta_W(s) := \prod_{w \in W_{\rm{cl}}}(1 - |\kappa(w)|)^{-s})^{-1}.
\]
If $d := {\rm{dim}}(W)$, then this product converges absolutely and uniformly on ${\rm{Re}}(s) > d + \epsilon$ for any $\epsilon > 0$. Furthermore, by the celebrated Weil conjectures proved by Dwork, Grothendieck, and Deligne, it is a rational function of $T := q^{-s}$, and has a simple pole at $s = d$ when $W$ is irreducible.

In particular, these remarks apply to $X$ and $Y$, which have the same dimension -- $d$, say -- because of the generic finiteness of $f$. We may shrink $X$ and $Y$ and thereby assume them to be irreducible. If the set $S$ referred to in the proposition were finite, then one would have $$|\kappa(x)| \geq |\kappa(f(x))|^2$$ for all but finitely many $x \in X_{\rm{cl}}$. It would then follow that the Euler product expansion of $\zeta_X$ was absolutely and uniformly convergent on ${\rm{Re}}(s) > d/2 + \epsilon$, which is impossible as $\zeta_X$ has a pole at $s = d$.
\end{proof}

We may now prove the desired trivialization result.

\begin{proposition}
\label{infmanytrivpts}
Let $f\colon X \rightarrow Y$ be a generically \'etale morphism between positive-dimensional integral $\F_p$-schemes of finite type with function fields $E$ and $K$, respectively. For a closed point $y \in Y$, let $K_y$ denote the fraction field of the completion $\widehat{\calO}_{Y, y}$ of the local ring $\calO_{Y,y}$. Then the set $T$ of $y \in Y_{\rm{cl}}$ such that the map ${\rm{Spec}}(E \otimes_K K_y) \rightarrow {\rm{Spec}}(K_y)$ has a section is infinite.
\end{proposition}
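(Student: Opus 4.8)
The plan is to reduce to the case of a finite étale morphism over a regular base and then to deduce the existence of a section over each $K_y$ from the existence of a rational point in the corresponding closed fibre, which is precisely what Proposition \ref{mapvarstrivinfloc} supplies for infinitely many $y$.

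First I would shrink $Y$ to a dense open subscheme $Y'$ that is regular (possible since $Y$ is reduced of finite type over the perfect field $\F_p$, so its regular locus is dense open) and over which $f$ is finite étale (possible since $f$ is generically étale and generically finite between integral schemes of the same dimension). Setting $X' := f^{-1}(Y')$, this is again integral and positive-dimensional, with the same function fields $E$ and $K$. Because finite type $\F_p$-schemes are Jacobson, the closed points of $Y'$ are exactly the closed points of $Y$ lying in $Y'$, so it suffices to produce infinitely many $y \in Y'_{\rm{cl}}$ with the desired property. Applying Proposition \ref{mapvarstrivinfloc} to the generically finite morphism $X' \to Y'$ yields an infinite set $S$ of closed points $y$ for which the fibre $f^{-1}(y) \to \Spec \kappa(y)$ admits a section, i.e.\ for which there is a point of $X'$ over $y$ with residue field $\kappa(y)$.

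Next, fix such a $y$ and set $R := \widehat{\calO}_{Y', y}$. Since $\calO_{Y', y}$ is regular local, $R$ is a complete regular local domain, so $K_y = {\rm{Frac}}(R)$ is a field containing $K$ and the generic point of $\Spec R$ maps to the generic point $\eta$ of $Y$. The base change $X'_R := X' \times_{Y'} \Spec R \to \Spec R$ is finite étale, and the rational point in its closed fibre (coming from $y \in S$) lifts, by Hensel's lemma — equivalently, by the structure of finite étale algebras over the Henselian local ring $R$, one of whose factors is then forced to be $R$ itself — to a section $s \colon \Spec R \to X'_R$. Composing $s$ with the projection to $X'$ gives a $Y'$-morphism $\sigma \colon \Spec R \to X'$. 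On generic points $\sigma$ sends the generic point of $\Spec R$ to a point of $X'$ lying over $\eta$; since $f$ is finite and $X'$ is integral, the generic fibre is $X'_\eta = \Spec E$, a single reduced point because $f$ is étale, so $\sigma$ restricts to a morphism $\Spec K_y \to \Spec E$ over $K$, i.e.\ a $K$-algebra homomorphism $E \to K_y$. This is exactly a section of $\Spec(E \otimes_K K_y) \to \Spec K_y$, whence $y \in T$; as $S$ is infinite, so is $T$.

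The main obstacle is the middle step: upgrading the mere rational point in the closed fibre to a genuine section over the complete local ring $R$, and then verifying that this section is generically dominant, so that it yields a field homomorphism $E \to K_y$ rather than a map into a proper closed subscheme. Arranging that $R$ be a domain — hence that $K_y$ be an honest field and $\Spec R$ irreducible with generic point over $\eta$ — is exactly why I impose regularity of the base in the reduction step; the lifting itself is then a clean application of the Henselianity of the complete local ring $R$ to the finite étale morphism $X'_R \to \Spec R$.
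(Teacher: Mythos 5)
Your proposal is correct and follows essentially the same route as the paper: both apply Proposition \ref{mapvarstrivinfloc} after shrinking to make $f$ finite (étale), then use Henselianity of the complete local ring to lift the rational point in the closed fibre to a section over $\widehat{\calO}_{Y,y}$, and finally pass to fraction fields. Your added care in shrinking to the regular locus (so that $\widehat{\calO}_{Y,y}$ is a domain and $K_y$ is genuinely a field) and in checking that the section is generically dominant is a reasonable tightening of details the paper leaves implicit, but it is not a different argument.
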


\begin{proof}
We may shrink $X$ and $Y$ and thereby assume that $f$ is finite. Then let $S$ be as in Proposition \ref{mapvarstrivinfloc}. We will show that $S \subset T$. For $y \in S$, $f^{-1}(\widehat{\calO}_{Y, y})$ is the spectrum of a finite \'etale $\widehat{\calO}_{Y, y}$-algebra $A$. Because $\widehat{\calO}_{Y, y}$ is complete, and therefore Henselian, $A$ is a finite product of local finite \'etale $\widehat{\calO}_{Y, y}$-algebras, and by definition of $S$, at least one of these algebras has residue field isomorphic to that of $\widehat{\calO}_{Y, y}$. Because $\widehat{\calO}_{Y, y}$ is Henselian, it follows that the map ${\rm{Spec}}(A) \rightarrow {\rm{Spec}}(\widehat{\calO}_{Y, y})$ admits a section, and the desired conclusion then follows by passing to fraction fields.
\end{proof}

Before proving the main result, we require a version of weak approximation.

\begin{proposition}
\label{weakapprox}
Let $X$ be a nonempty integral quasi-projective $k$-scheme with function field $K$. For $x \in X_{\rm{cl}}$, let $K_x$ denote the fraction field of the completion $\widehat{\calO}_{X, x}$ of the local ring $\calO_{X, x}$. Then the image of the embedding $K \hookrightarrow \prod_{x \in X_{\rm{cl}}} K_x$ is dense, where the latter set is given the product topology arising from the $\mathfrak{m}_x$-adic topologies on the $K_x$, with $\mathfrak{m}_x$ the maximal ideal of $\calO_{X, x}$.
\end{proposition}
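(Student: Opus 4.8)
The plan is to unwind the product topology into a finite simultaneous-approximation problem, solve it for integral targets by the Chinese Remainder Theorem, and then reduce arbitrary targets to integral ones by clearing denominators.

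Since a basic open set of the product topology constrains only finitely many coordinates, density of the image of $K$ amounts to the following statement: for any finite collection of distinct closed points $x_1,\dots,x_n \in X$, any $a_i \in K_{x_i}$, and any $N \ge 1$, there is an $a \in K$ with $a - a_i \in \mathfrak{m}_{x_i}^N\,\widehat{\calO}_{X,x_i}$ for every $i$. Because $X$ is quasi-projective, any finite set of points lies in an affine open $U = \Spec(A) \subseteq X$, and passing from $X$ to $U$ changes neither $K = \mathrm{Frac}(A)$ nor the local rings $\calO_{X,x_i}$ and their completions; so I may assume $X = \Spec(A)$ with $A$ a finitely generated $k$-domain, and I write $\mathfrak{p}_i \subset A$ for the maximal ideal of $x_i$.

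The core is the case of integral targets, $a_i \in \widehat{\calO}_{X,x_i}$. Here I would use that, $\mathfrak{p}_i$ being maximal, the ring $A/\mathfrak{p}_i^N$ is local with support only at $\mathfrak{p}_i$, so the localization map $A/\mathfrak{p}_i^N \xrightarrow{\ \sim\ } A_{\mathfrak{p}_i}/\mathfrak{p}_i^N A_{\mathfrak{p}_i} = \widehat{\calO}_{X,x_i}/\mathfrak{m}_{x_i}^N$ is an isomorphism. The powers $\mathfrak{p}_1^N,\dots,\mathfrak{p}_n^N$ of distinct maximal ideals are pairwise comaximal, so the Chinese Remainder Theorem gives a surjection $A \twoheadrightarrow \prod_{i=1}^n A/\mathfrak{p}_i^N \cong \prod_{i=1}^n \widehat{\calO}_{X,x_i}/\mathfrak{m}_{x_i}^N$; any preimage of $(a_i \bmod \mathfrak{m}_{x_i}^N)_i$ is an $a \in A \subseteq K$ of the required form. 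Equivalently, $A$ --- and hence $K$ --- is dense in the integral subring $\prod_{x}\widehat{\calO}_{X,x}$.

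To treat a general target I would write $a_i = b_i/s_i$ with $b_i, s_i \in \widehat{\calO}_{X,x_i}$ and reduce to the integral case by producing a single nonzero $d \in A$ with $d\,a_i \in \widehat{\calO}_{X,x_i}$ for all $i$: having such a $d$, one approximates the integral tuple $(d\,a_i)_i$ by the previous step and divides by $d$. The main obstacle is exactly the construction of this global denominator, i.e.\ the simultaneous clearing of the polar parts of the $a_i$. The natural attempt is to exploit that $A$ is a domain and to intersect the ideals $s_i\widehat{\calO}_{X,x_i} \cap A$; the delicate point, and where I expect the real work to lie, is that an element of the completion need not divide any nonzero element of $A$, so this step must use the specific geometry of the situation (for instance that in the intended application the poles occur along a fixed $k$-rational divisor, so that the relevant denominators are units at the points in question) rather than formal commutative algebra. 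By contrast, the Chinese Remainder step is entirely formal.
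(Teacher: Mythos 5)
Your reduction to finitely many points, the passage to an affine chart $\Spec(A)$, and the Chinese Remainder argument for integral targets (via $A/\mathfrak{p}_i^N \xrightarrow{\ \sim\ } \widehat{\calO}_{X,x_i}/\mathfrak{m}_{x_i}^N$ and pairwise comaximality of the $\mathfrak{p}_i^N$) all agree with the paper. The gap is exactly where you locate it: the passage from targets in $K_{x_i}$ to integral targets. Your plan --- produce a single $d \in A$ with $d\,a_i \in \widehat{\calO}_{X,x_i}$ for all $i$ --- cannot work in general, for precisely the reason you give: a nonzero denominator $s_i \in \widehat{\calO}_{X,x_i}$ can be ``analytically transcendental'' and then divides no nonzero element of $A$, so no global denominator in $A$ exists. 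That route is a dead end, not merely a hard step, and the fix is not the special geometry of the intended application.

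The paper's proof avoids the problem by reordering the reductions. It first invokes the density of $K$ in each \emph{single} $K_x$ to replace each $\alpha_x \in K_x$ by an element of $K$ that is $\mathfrak{m}_x$-adically close to it; this is a one-place statement, so no simultaneity is required and no global denominator of completion elements is ever needed. Only once every target lies in $K = \mathrm{Frac}(A)$ does it clear denominators: finitely many elements of $K$ obviously admit a common denominator $d \in A$, one approximates the resulting integral elements $d\alpha_x$ by a single $\beta' \in A$ via the Chinese Remainder Theorem, and then divides by $d$ again, the point being that the precision lost in dividing by the fixed element $d$ is bounded independently of $N$. So the missing idea in your write-up is the single-point density of $K \hookrightarrow K_x$, applied \emph{before} rather than after the denominator-clearing step; with that inserted (and justified), the rest of your argument goes through essentially as in the paper.
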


\begin{proof}
Let $S \subset X_{\rm{cl}}$ be a finite subset, and for each $x \in S$ let $\alpha_x \in K_x$. We must show that there exists $\beta \in K$ as close as we desire to each of the $\alpha_x$ simultaneously. Because $X$ is quasi-projective, any finite subset of $X$ is contained in an affine open, so we may assume that $X = {\rm{Spec}}(A)$ is affine. Because $K$ is dense in each $K_x$, we may assume that each $\alpha_x \in K$. Then multiplying through by a common denominator, we may even assume that each $\alpha_x \in A$, because choosing $\beta \in A$ suitably close to the $\alpha_x$ after clearing denominators will yield the desired approximation by redividing by the common denominator. Thus we have $\alpha_x \in A$ for $x \in S$ and we seek $\beta \in A$ as $x$-adically close as we may desire to each of the $\alpha_x$ simultaneously. But each $x$ corresponds to a maximal ideal $I_x$ of $A$. In particular, the $I_x$ are pairwise comaximal, hence the desired conclusion follows from the Chinese Remainder Theorem.
\end{proof}

We are now prepared to prove the main result of this section.

\begin{theorem}$($Theorem $\ref{infofcohomintro}$$)$
\label{infofcohom}
Let $K$ be an infinite finitely generated extension field of $\F_p$, and let $U$ be a smooth unipotent $K$-group scheme that is not split. Then ${\rm{H}}^1(K, U)$ is infinite.
\end{theorem}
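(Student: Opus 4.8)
The plan is to follow the outline given at the start of this section, reducing in stages to the permawound case and then exploiting nonvanishing of the cohomology of $\mathscr{V}_{P,L}$ over infinitely many completions of $K$. First observe that since $K$ is infinite and finitely generated over $\F_p$ it is imperfect of finite degree of imperfection $r\geq 1$ (with $r$ equal to the transcendence degree of $K$ over $\F_p$), so all the machinery built above applies. I would reduce to the case that $U$ is wound, commutative, $p$-torsion, and permawound as follows. By Lemma \ref{nonsplitcommptorquot}, $U$ admits a nontrivial smooth semiwound commutative $p$-torsion quotient $Q$; writing $1\to N\to U\to Q\to 1$ and filtering $N$ by $U$-normal subgroups with commutative unipotent graded pieces, the vanishing $\mathrm{H}^2(K,A)=0$ for commutative unipotent $A$ (Lemma \ref{H^2=0affX} applied to $\Spec(K)$, noting that the relevant twisted forms are again commutative unipotent) shows that $\mathrm{H}^1(K,U)\to \mathrm{H}^1(K,Q)$ is surjective. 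Thus it suffices to treat $Q$, and ubiquity (Theorem \ref{ubiquity}) produces $0\to Q\to W\to V\to 0$ with $W$ wound commutative $p$-torsion permawound and $V$ a vector group; since $\mathrm{H}^1(K,V)=0$, the map $\mathrm{H}^1(K,Q)\to \mathrm{H}^1(K,W)$ is surjective. Hence it is enough to show $\mathrm{H}^1(K,W)$ is infinite. By Theorem \ref{asunivsmoothcrit} we may write $W\simeq\{F=0\}\subset\Ga^n$ for a reduced $p$-polynomial $F$ with universal principal part, giving the exact sequence $0\to W\to\Ga^n\xrightarrow{F}\Ga\to 0$ and the identification $\mathrm{H}^1(L,W)\cong L/F(L^n)$ for every field extension $L/K$.

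Next I would produce a surjection onto $\mathscr{V}_{P,L}$ defined over a finite extension and descend it locally. Fix $P,L$ with respect to a $p$-basis $\lambda_1,\dots,\lambda_r$ of $K$ as in Corollary \ref{H^1(V)nonzero}. By Theorem \ref{nonzerohomtoV} there is a nonzero homomorphism $W_{K_s}\to\mathscr{V}=(\mathscr{V}_{P,L})_{K_s}$, defined already over some finite separable $E/K$; as $W_E$ is weakly permawound (Proposition \ref{weakasunivinhbyexts}) its image is a quasi-weakly permawound subgroup of $(\mathscr{V}_{P,L})_E$, hence all of it by Corollary \ref{VRsubgpnotasuniv}, so we obtain a surjection $\psi\colon W_E\twoheadrightarrow(\mathscr{V}_{P,L})_E$. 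Realize $K$ as the function field of a smooth integral affine $\F_p$-scheme $Y$ of dimension $r$ on which the $d\lambda_i$ freely generate $\Omega^1$ and the $\lambda_i$ are regular, and let $X\to Y$ be a generically \'etale model of $E/K$. By Proposition \ref{infmanytrivpts} there is an infinite set $T$ of closed points $y\in Y$ for which $E$ embeds into $K_y$ over $K$, where $K_y$ is the fraction field of $\widehat{\calO}_{Y,y}$. For each $y\in T$, base change of $\psi$ along $E\hookrightarrow K_y$ yields a surjection $W_{K_y}\twoheadrightarrow(\mathscr{V}_{P,L})_{K_y}$, so (the kernel being commutative) $\mathrm{H}^1(K_y,W)\to\mathrm{H}^1(K_y,\mathscr{V}_{P,L})$ is surjective. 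Moreover Proposition \ref{H^1nonvanishing} applies to $\widehat{\calO}_{Y,y}$, a complete regular local ring of dimension $r$ with finite (hence perfect) residue field $\kappa(y)$ satisfying $\mathrm{H}^1(\kappa(y),\Z/p\Z)\neq 0$ and with $\lambda_i-c_i$ generating the maximal ideal, giving $\mathrm{H}^1(K_y,\mathscr{V}_{P,L})\neq 0$. Therefore $\mathrm{H}^1(K_y,W)\neq 0$ for all $y\in T$.

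Finally I would globalize via weak approximation. Because $F$ is a separable $p$-polynomial its differential at the origin is a nonzero, hence surjective, linear map, so by the implicit function theorem over the complete local ring $\widehat{\calO}_{Y,y}$ the image $F(K_y^n)$ is an open subgroup of $K_y$. Then for any finite $S\subset T$, weak approximation (Proposition \ref{weakapprox}) shows that the restriction map $\mathrm{H}^1(K,W)=K/F(K^n)\to\prod_{y\in S}K_y/F(K_y^n)=\prod_{y\in S}\mathrm{H}^1(K_y,W)$ is surjective: given target classes with representatives $\beta_y\in K_y$, any $\beta\in K$ that is $\mathfrak{m}_y$-adically close enough to each $\beta_y$ satisfies $\beta-\beta_y\in F(K_y^n)$, so $\beta$ has the prescribed image. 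As each factor has at least two elements, $|\mathrm{H}^1(K,W)|\geq 2^{|S|}$; since $T$ is infinite, $|S|$ may be taken arbitrarily large and $\mathrm{H}^1(K,W)$ is infinite, completing the proof.

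I expect the crux to be the descent of the surjection onto $\mathscr{V}_{P,L}$ from the finite separable extension $E$ down to infinitely many local fields $K_y$: rigidity only supplies such a surjection after a finite separable base change, and one must arrange that $E/K$ nonetheless splits at infinitely many places so that $W_{K_y}$ itself surjects onto $\mathscr{V}_{P,L}$. This is exactly what the zeta-function argument of Propositions \ref{mapvarstrivinfloc}--\ref{infmanytrivpts} provides, and combining it cleanly with the local nonvanishing of Proposition \ref{H^1nonvanishing} and the globalization through weak approximation is the main content; the remaining reductions and cohomological bookkeeping are routine.
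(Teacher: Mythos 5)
Your proposal is correct and follows essentially the same route as the paper's proof: reduce to the commutative $p$-torsion permawound case via Lemma \ref{nonsplitcommptorquot} and ubiquity, obtain a surjection onto $(\mathscr{V}_{P,L})_E$ over a finite separable extension via rigidity, split $E/K$ at infinitely many completions by Proposition \ref{infmanytrivpts}, get local nonvanishing from Proposition \ref{H^1nonvanishing}, and globalize with weak approximation and openness of $F(K_y^n)$. The only deviations are cosmetic (reproving the $\mathrm{H}^1$-surjectivity input via $\mathrm{H}^2$-vanishing rather than citing the reference, and phrasing the final count as surjectivity onto a product of local cohomology groups).
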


\begin{proof}
Combining Lemma \ref{nonsplitcommptorquot} and \cite[Lem.\,2.4]{nguyensros}, we may assume that $U$ is semiwound, commutative, and $p$-torsion. By ubiquity (Theorem \ref{ubiquity}), we reduce to the case in which $U$ is permawound. Let $P$ and $L$ be as in Corollary \ref{H^1(V)nonzero}. By rigidity (Theorem \ref{filtrationasuniv}), for some finite separable extension $E/K$, $U_E$ admits a surjection onto either $\R_{E^{1/p}/E}(\alpha_p)$ or $(\mathscr{V}_{P,L})_E$. Because the former is not smooth, there must be a surjection onto the latter. Now write $K$, respectively $E$, as the function fields of smooth quasi-projective $\F_p$-schemes $Y$, respectively $X$, of finite type such that the extension $E/K$ arises via an $\F_p$-morphism $f\colon X \rightarrow Y$. Shrinking $Y$ (hence also $X$) if necessary, we may assume that the $d\lambda_i$ freely generate the sheaf $\Omega^1_{Y/\F_p}$. It then follows that, for each $y \in Y_{\rm{cl}}$, in the notation of Proposition \ref{H^1nonvanishing}, the elements $\lambda_i - c_i$ generate the maximal ideal of the local ring at $y$. Because $E/K$ is separable, $f$ is generically \'etale, so Proposition \ref{infmanytrivpts} ensures that, for infinitely many $y \in Y_{\rm{cl}}$, the map ${\rm{Spec}}(E \otimes_K K_y) \rightarrow {\rm{Spec}}(K_y)$ admits a section. In particular, there is a surjective $K_y$-homomorphism $U_y \twoheadrightarrow (\mathscr{V}_{P,L})_{K_y}$, so by Proposition \ref{H^1nonvanishing} and \cite[Lem.\,2.4]{nguyensros} again, we find that ${\rm{H}}^1(K_y, U) \neq 0$ for infinitely many $y \in Y_{\rm{cl}}$.

Because $U$ is smooth, commutative, and $p$-torsion, it is isomorphic to $\{F = 0\} \subset \Ga^n$ for some $p$-polynomial $0 \neq F \in k[X_1, \dots, X_n]$ \cite[Prop.\,B.1.13]{cgp}. Therefore, for any field extension $M/K$, one has a natural (once one fixes an isomorphism between $U$ and $\{F = 0\}$) isomorphism ${\rm{H}}^1(M, U) \simeq M/F(M^n)$. Let $m > 0$ be given. Choose a subset $S \subset Y_{\rm{cl}}$ of size $m$ such that ${\rm{H}}^1(K_y, U) \neq 0$ for all $y \in S$. This means that, for each $y \in S$, one has $\alpha_y \in K_y$ such that $\alpha_y \not\in F(K_y^n)$. For each subset $T \subset S$, let $\alpha_T \in \prod_{y \in S} K_y$ be the element that is $\alpha_y$ on the $y$ factors in $T$ and $0$ for those $y$ not in $T$. Using Proposition \ref{weakapprox}, choose for each $T$ an element $\beta_T \in K$ that is very close to $\alpha_T$. Then we claim that the $\beta_T$ yield distinct elements in $K/F(K^n) \simeq {\rm{H}}^1(K, U)$ for distinct $T$. It will then follow that ${\rm{H}}^1(K, U)$ has size at least $2^m$, and since $m > 0$ was arbitrary, this will complete the proof of the theorem.

It only remains to check that the $\beta_T$ do indeed yield distinct cohomology classes. Given $T \neq T'$, let $s \in S$ be a place where they differ -- say, $s \in T - T'$. Then $\beta_T - \beta_{T'}$ is close to $\alpha_s \in K_s$. Because $F$ is a smooth morphism, it induces an open map $K_s^n \rightarrow K_s$. It follows that every element of $K_s$ close to $\alpha_s$ lies in the same nontrivial cohomology class as $\alpha_s$. Therefore, $\beta_T - \beta_{T'} \not\in F(K_s^n)$, so also $\not\in F(K^n)$. It follows that $\beta_T - \beta_{T'}$ yields a nontrivial class in ${\rm{H}}^1(K, U)$, as desired.
\end{proof}

\begin{remark}
The smoothness assumption is crucial in Theorem \ref{infofcohom}. Indeed, the unipotent $K$-group $U := \R_{K^{1/p}/K}(\alpha_p)$ satisfies ${\rm{H}}^1(K, U) = 0$ because it is defined by a universal monogeneous $p$-polynomial by Proposition \ref{eqnforweilrestalphap}. One can say more about the cohomology of unipotent groups beyond the smooth case, but doing so here would take us too far afield, so we postpone such a discussion for now.
\end{remark}

\section{Unipotent quotients of commutative pseudo-reductive groups}
\label{unipquotcpredsec}

In this section we apply permawound unipotent groups to give a partial answer to Totaro's question \ref{totaroquestion} about commutative pseudo-reductive groups over fields of degree of imperfection $1$. More precisely, we will use permawound groups to provide a remarkably simple proof of the affirmative answer to Question \ref{totaroquestion} in the case in which $U$ is $p$-torsion. A treatment of general $U$ is significantly more involved, so we postpone a discussion of this more general case for now.

We begin by identifying the group $\mathscr{V}$ of Definition \ref{V_PLdef} when $k$ has degree of imperfection $1$. The following result is \cite[Ch.\,VI, Prop.\,5.3]{oesterle}.

\begin{proposition}
\label{eqnforweilrestgm}
Let $k$ be a field of degree of imperfection $1$, and let $\lambda \in k - k^p$. Then $\R_{k^{1/p}/k}(\Gm)/\Gm \simeq \mathscr{V}_{P, L}$, where $P := \sum_{i=0}^{p-1} \lambda^iX_i^p \in k[X_0, \dots, X_{p-1}]$ and $L := -X_{p-1}$.
\end{proposition}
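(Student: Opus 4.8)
The plan is to construct the isomorphism explicitly through a logarithmic derivative. First I would fix $\mu := \lambda^{1/p}$ and identify $\R_{k^{1/p}/k}(\Ga)$ with $\Ga^p = \Spec k[Y_0,\dots,Y_{p-1}]$ using the $k$-basis $1,\mu,\dots,\mu^{p-1}$ of $k^{1/p}$, so that for a $k$-algebra $R$ a point is $u = \sum_i a_i\mu^i$ with $a_i \in R$ and group law coming from multiplication in $R[\mu] = R[T]/(T^p-\lambda)$. Since $k^{1/p}/k$ is purely inseparable of degree $p$, the norm is the $p$-th power map, and $u^p = \sum_i a_i^p\mu^{ip} = \sum_i \lambda^i a_i^p = P(a)$; hence $G := \R_{k^{1/p}/k}(\Gm)$ is the open subscheme $\{P \neq 0\}\subset \Ga^p$, with $\Gm \hookrightarrow G$ the scalars $t \mapsto (t,0,\dots,0)$.

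Because $p\mu^{p-1}=0$, the derivation $\partial := d/dT$ descends to $R[\mu]$ with $\partial(\mu^i)=i\mu^{i-1}$, and I would define the logarithmic derivative $\operatorname{dlog}(u) := u^{-1}\partial u$. Writing $u^{-1}=u^{p-1}/P(a)$ shows $\operatorname{dlog}$ is a morphism $G \to \R_{k^{1/p}/k}(\Ga)=\Ga^p$ that is regular on all of $G$, and the Leibniz rule makes it a $k$-homomorphism into the additive group. It clearly kills the scalars $\Gm$; conversely $\partial u = \sum_{i=1}^{p-1} i a_i\mu^{i-1}=0$ forces $a_1=\dots=a_{p-1}=0$ (the coefficients $1,\dots,p-1$ are units in $k$), so the scheme-theoretic kernel of $\operatorname{dlog}$ is exactly $\Gm$.

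The heart of the argument is to compute the image. Writing $w := \operatorname{dlog}(u)=\sum_m x_m\mu^m$, I would use that $\partial^p=0$ on $R[\mu]$ (each falling factorial $i(i-1)\cdots(i-p+1)$ vanishes mod $p$ for $0\le i\le p-1$) together with Jacobson's formula for the $p$-th power of the operator $\partial+w$: conjugation gives $u^{-1}\circ\partial\circ u = \partial+w$ as operators, and since $\partial^p$ is again a derivation one obtains $w^p + \partial^{p-1}(w) = u^{-1}\partial^p(u)=0$. Now $\partial^{p-1}(\mu^m)=0$ for $m<p-1$ while $\partial^{p-1}(\mu^{p-1})=(p-1)!=-1$ by Wilson's theorem, so $\partial^{p-1}(w)=-x_{p-1}$; and $w^p = \sum_m\lambda^m x_m^p = P(x)$. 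Hence the image point satisfies $P(x)=x_{p-1}$, i.e.\ $(P+L)(x)=0$ with $L=-X_{p-1}$, so $\operatorname{dlog}$ factors through $\mathscr{V}_{P,L}$.

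To finish I would note that $\mathscr{V}_{P,L}$ is smooth, connected and $(p-1)$-dimensional, while the image of the homomorphism $\operatorname{dlog}\colon G \to \mathscr{V}_{P,L}$ is a closed subgroup of dimension $\dim G - \dim\ker = p-1$; since $\mathscr{V}_{P,L}$ is integral of the same dimension, its (reduced) image fills it, so $\operatorname{dlog}$ is a surjective homomorphism with kernel $\Gm$ and therefore induces an isomorphism $\R_{k^{1/p}/k}(\Gm)/\Gm \xrightarrow{\sim} \mathscr{V}_{P,L}$. I expect the only delicate point to be the clean justification of the identity $w^p+\partial^{p-1}(w)=0$; one can instead verify it directly from the relation $w\,u=\partial u$ by iterating the Leibniz rule, but the operator-theoretic (Jacobson) derivation is the cleanest route.
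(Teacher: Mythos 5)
Your proof is correct. Note, however, that the paper does not actually prove this proposition: it simply cites Oesterl\'e [Ch.\,VI, Prop.\,5.3], so there is no in-text argument to compare against. What you supply is a complete, self-contained replacement for that citation, and every step checks out: the identification $\R_{k^{1/p}/k}(\Gm)=\{P\neq 0\}\subset\Ga^p$ via $u^p=\sum_i\lambda^i a_i^p$; the fact that $\partial=d/dT$ descends because $pT^{p-1}=0$ and satisfies $\partial^p=0$ on $R[\mu]$; the computation of the scheme-theoretic kernel of $\operatorname{dlog}$ as exactly the scalar $\Gm$ (using that $1,\dots,p-1$ are units); and the key identity $w^p+\partial^{p-1}(w)=u^{-1}\partial^p(u)=0$, which follows from Jacobson's formula $(\partial+w)^p=\partial^p+w^p+\partial^{p-1}(w)$ -- the higher correction terms vanish here because multiplication operators commute, so $\operatorname{ad}(t w+\partial)^{p-1}(w)=\partial^{p-1}(w)$ is independent of $t$. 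Together with $\partial^{p-1}(\mu^{p-1})=(p-1)!=-1$ (Wilson) and $w^p=P(x)$, this gives exactly the defining equation $(P+L)(x)=0$ of $\mathscr{V}_{P,L}$, and the dimension count (using that $\mathscr{V}_{P,L}$ is smooth and connected of dimension $p-1$, as noted after Definition \ref{V_PLdef}) closes the argument, since a homomorphism with trivial kernel from the smooth connected $(p-1)$-dimensional group $G/\Gm$ onto a closed subgroup of the integral $(p-1)$-dimensional target must be an isomorphism onto it. This logarithmic-derivative computation is the classical route (and is essentially what underlies Oesterl\'e's statement); the only point I would make explicit in a final write-up is the verification that the Jacobson correction terms $s_i$ vanish for $i\geq 2$ in this commutative setting, which you already flag as the delicate step.
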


The following theorem answers Question \ref{totaroquestion} in the affirmative when $U$ is $p$-torsion.

\begin{theorem}$($Theorem $\ref{totquesyesintro}$$)$
\label{totquesyes}
Let $k$ be a field of degree of imperfection $1$. Then every commutative $p$-torsion wound unipotent $k$-group is the maximal unipotent quotient of a commutative pseudo-reductive $k$-group.
\end{theorem}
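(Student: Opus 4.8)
The plan is to recast membership in the class of maximal unipotent quotients as the existence of an \emph{anisotropic} extension class, and then to manufacture such a class by pulling back the tautological one living on $\mathscr{V}$. Concretely, for a smooth connected commutative unipotent $k$-group $U$, I would first record the equivalence: $U$ is the maximal unipotent quotient of a commutative pseudo-reductive $k$-group if and only if there exist a $k$-torus $T$ and a class $\xi \in \Ext^1_k(U,T)$ whose restriction to every nontrivial smooth connected subgroup of $U_{k_s}$ is nonzero. Indeed, given such $\xi$, the corresponding extension $C$ of $U$ by $T$ is smooth connected with maximal torus $T$, and a nontrivial smooth connected unipotent subgroup of $C_{k_s}$ would split the pullback of $\xi$ over its (nontrivial, smooth, connected) image, contradicting anisotropy; so $C$ is pseudo-reductive, and the converse is the reverse reading. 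Since $\mathscr{R}_{u,k}$ commutes with separable extension, the anisotropy condition may be checked over $k_s$.

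The group $\mathscr{V}$ is the model. By Proposition \ref{eqnforweilrestgm}, $\mathscr{V}_{P,L} = \R_{k^{1/p}/k}(\Gm)/\Gm$, so the extension class of $\R_{k^{1/p}/k}(\Gm)$ furnishes an anisotropic $\xi_{\mathscr{V}} \in \Ext^1_k(\mathscr{V}_{P,L},\Gm)$ (anisotropic precisely because $\R_{k^{1/p}/k}(\Gm)$ is pseudo-reductive). The formal point I would exploit is that pullback of pseudo-reductive groups is well behaved: for a homomorphism $f\colon U \to M$ with $M = C_M/T_M$ a maximal unipotent quotient, the fibre product $f^{*}C_M$ is an extension of $U$ by $T_M$ whose $k$-unipotent radical is exactly the maximal smooth connected subgroup of $\ker f$. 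Hence it suffices to produce one homomorphism $f = (f_1,\dots,f_n)\colon U \to M_1 \times \cdots \times M_n$ into a product of (forms of) $\mathscr{V}$ such that $\bigcap_i \ker f_i$ contains no nontrivial smooth connected $k_s$-subgroup; then $f^{*}(\prod_i C_{M_i})$ is the sought pseudo-reductive group, and $U$ is its maximal unipotent quotient.

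Everything therefore reduces to a detection statement, which I would prove over $k_s$ and then descend: the intersection $N$ of the kernels of all homomorphisms $U_{k_s} \to \mathscr{V}$ contains no nontrivial smooth connected subgroup. Using ubiquity (Theorem \ref{ubiquity}) I may enlarge $U$ to a permawound group, so that the relevant quotients remain permawound (Proposition \ref{quotofasunivisasuniv}); then, supposing for contradiction that $V \subseteq N$ is nontrivial, smooth and connected, $V$ is wound and Theorem \ref{nonzerohomtoV} supplies a nonzero $g\colon V \to \mathscr{V}$. Extending $g$ to $U_{k_s}$ would contradict $V \subseteq N$, and the obstruction to doing so lies in $\Ext^1_{k_s}(U_{k_s}/V,\mathscr{V})$, which is $p$-torsion because $g$ is. Here the $p$-torsion hypothesis of the theorem is decisive: on the maximal semiwound quotient of $U_{k_s}/V$ every $p$-torsion extension by $\mathscr{V}$ splits, by Corollary \ref{ptorextbyvsplits}, killing the obstruction there. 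For the passage back to $k$, the $k_s$-homomorphisms to $\mathscr{V}$ fall into finite Galois orbits whose products descend to $k$-homomorphisms into $k_s/k$-forms $(\mathscr{V}_{P,L})_{\rho}$; using $\End(\mathscr{V}_{P,L}) = \F_p$ (Proposition \ref{End(V)=F_p}) these forms are themselves maximal unipotent quotients of suitable twists of $\R_{k^{1/p}/k}(\Gm)^m$ and carry anisotropic classes, exactly in the spirit of the proof of Theorem \ref{filtasunivgenfields}.

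The main obstacle is the extension step just described. The quotient $U_{k_s}/V$ need not be semiwound, and on its maximal split subgroup one has $\Ext^1(\Ga,\mathscr{V}) \cong k \neq 0$ (Proposition \ref{ext^1(Ga,V)compute}), so the $p$-torsion obstruction class must be shown to vanish on that split part as well, not merely on the semiwound quotient. The resolution I would pursue combines two facts: the woundness of $U$ forces the preimage in $U$ of any split subgroup of $U_{k_s}/V$ to be a nonsplit (wound) extension of a vector group by $V$, and one has the freedom to choose $g$ within the full space $\Hom(V,\mathscr{V})$ of detecting homomorphisms; pinning down a $g$ whose obstruction restricts to zero on the split part — equivalently, a genuinely extendable detecting homomorphism — is the crux on which the whole argument turns.
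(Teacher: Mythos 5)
Your reduction to the ``detection statement'' is sound as far as it goes, but the detection statement itself is left unproven, and you say so yourself: extending a detecting homomorphism $g\colon V \to \mathscr{V}$ from a nontrivial smooth connected $V \subseteq U_{k_s}$ to all of $U_{k_s}$ is exactly where the argument breaks. The obstruction $\delta(g) \in \Ext^1_{k_s}(U_{k_s}/V, \mathscr{V})$ has no reason to vanish: $U_{k_s}/V$ can have a nontrivial split part, $\Ext^1(\Ga, \mathscr{V}) \simeq k_s \neq 0$ (Proposition \ref{ext^1(Ga,V)compute}), and Corollary \ref{ptorextbyvsplits} only kills the obstruction on the semiwound quotient. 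Nothing in your sketch produces a $g$ in $\Hom(V,\mathscr{V})$ whose obstruction dies on the split part, nor an argument that such a $g$ must exist; your final paragraph is a statement of the problem, not a solution. Until this is filled, you cannot rule out a nontrivial smooth connected $V$ killed by every homomorphism $U_{k_s} \to \mathscr{V}$, and the anisotropic class is never constructed.

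The paper's proof avoids this entirely by never attempting to detect arbitrary smooth connected subgroups. After reducing to $k = k_s$ (via \cite[Lem.\,9.1]{totaro}) and to $U$ permawound (via ubiquity together with stability of the conclusion under passage to subgroups, as in your own second paragraph), it inducts on $\dim U$ along the rigidity filtration: writing $0 \to U' \to U \to \overline{U} \to 0$ with $U'$ isomorphic to $\mathscr{V}$ or $\R_{k^{1/p}/k}(\alpha_p)$ and $\overline{U}$ the maximal unipotent quotient of some commutative pseudo-reductive $\overline{C}$, the case $U' \simeq \mathscr{V}$ is dispatched because Corollary \ref{ptorextbyvsplits} splits the extension, so $U \simeq \overline{U} \times \mathscr{V}$ and one takes $\overline{C} \times \R_{k^{1/p}/k}(\Gm)$; the case $U' \simeq \R_{k^{1/p}/k}(\alpha_p)$ is dispatched by the fibre product $\overline{C} \times_{\overline{U}} U$, which is pseudo-reductive for the cheap reason that $U'$ is totally nonsmooth, so it contains no nonzero smooth connected subgroup at all. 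If you want to salvage your route, the shortest repair is to adopt this inductive structure: one only ever needs to handle the single quotient map onto $\mathscr{V}$ or to absorb a totally nonsmooth kernel, never to extend homomorphisms from an arbitrary subgroup.
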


\begin{proof}
By \cite[Lem.\,9.1]{totaro}, we may assume that $k$ is separably closed. Note that the conclusion of the theorem is preserved upon passage to subgroups: If $U' \subset U$ is an inclusion of commutative wound unipotent $k$-groups, and $U$ is the maximal unipotent quotient of the commutative pseudo-reductive group $C$, then $U'$ is the maximal unipotent quotient of the smooth connected commutative $k$-subgroup $C' := C \times_U U'$ of $C$, which is pseudo-reductive because $C$ is. Thanks to the ubiquity of permawound unipotent groups (Theorem \ref{ubiquity}), therefore, in proving the theorem we may assume that $U$ is permawound.

We proceed by induction on ${\rm{dim}}(U)$, the $0$-dimensional case being trivial. So assume that $U \neq 0$. By rigidity (Theorem \ref{filtrationasuniv}), there is an exact sequence
\[
0 \longrightarrow U' \longrightarrow U \xlongrightarrow{\pi} \overline{U} \longrightarrow 1
\]
with $\overline{U}$ wound (and permawound by Proposition \ref{quotofasunivisasuniv}) and $U'$ isomorphic to either $\R_{k^{1/p}/k}(\alpha_p)$ or $\mathscr{V}$. By induction, we may assume that $\overline{U}$ is the maximal unipotent quotient of a commutative pseudo-reductive group $\overline{C}$. If $U' \simeq \mathscr{V}$, then $U \simeq \overline{U} \times \mathscr{V}$ by Corollary \ref{ptorextbyvsplits}. Since $\mathscr{V}$ is the maximal unipotent quotient of the pseudo-reductive group $\R_{k^{1/p}/k}(\Gm)$ (Proposition \ref{eqnforweilrestgm}), it follows that $U$ is the maximal unipotent quotient of the commutative pseudo-reductive group $C := \overline{C} \times \R_{k^{1/p}/k}(\Gm)$.

Suppose, on the other hand, that $U' \simeq \R_{k^{1/p}/k}(\alpha_p)$. Then $U$ is the maximal unipotent quotient of the smooth connected commutative $k$-group $C := \overline{C} \times_{\overline{U}} U$ (because $C$ is an extension of $U$ by a torus, which also explains why it is smooth and connected), and we claim that $C$ is pseudo-reductive. We have an exact sequence
\[
0 \longrightarrow U' \longrightarrow C \xlongrightarrow{\pi} \overline{C} \longrightarrow 0.
\]
To see that $C$ is pseudo-reductive, suppose that $W \subset C$ is smooth connected unipotent. We must show that $W = 0$. We have $\pi(W) = 0$ because $\overline{C}$ is pseudo-reductive, so $W \subset U'$. But $U'$ is totally nonsmooth, so this implies that $W = 0$.
\end{proof}

\appendix

\section{Semiwound groups}
\label{semiwoundsection}

The notion of woundness plays a central role in the theory of smooth connected unipotent groups (and therefore of more general algebraic groups) over imperfect fields. It is useful to extend this notion to arbitrary unipotent groups, which we do in this brief appendix.

\begin{proposition}
\label{semiwoundconds}
Let $U$ be a unipotent group scheme over a field $k$. The following are equivalent:
\begin{itemize}
\item[(i)] There is no $k$-group inclusion $\Ga \hookrightarrow U$.
\item[(ii)] There is no nonzero $k$-group homomorphism $\Ga \rightarrow U$.
\item[(iii)] Every $k$-morphism $\A^1_k \rightarrow U$ is the constant map to some $u \in U(k)$.
\end{itemize}
\end{proposition}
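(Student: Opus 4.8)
The plan is to prove the cyclic chain of implications $(\mathrm{i}) \Rightarrow (\mathrm{iii}) \Rightarrow (\mathrm{ii}) \Rightarrow (\mathrm{i})$, arranging things so that the only substantial step reduces to the smooth connected case already handled in \cite[Prop.\,B.3.2]{cgp}. Two of the three implications are immediate. For $(\mathrm{iii}) \Rightarrow (\mathrm{ii})$, a $k$-group homomorphism $\phi\colon \Ga \to U$ is in particular a $k$-morphism $\A^1_k \to U$, so by $(\mathrm{iii})$ it is constant with value $\phi(0) = e$; hence $\phi = 0$, which is $(\mathrm{ii})$. For $(\mathrm{ii}) \Rightarrow (\mathrm{i})$, any $k$-group inclusion $\Ga \hookrightarrow U$ would be a nonzero homomorphism, so its nonexistence is forced by $(\mathrm{ii})$.

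The content lies in $(\mathrm{i}) \Rightarrow (\mathrm{iii})$. Given a $k$-morphism $f\colon \A^1_k \to U$, note that $f(0) \in U(k)$ since $0 \in \A^1_k(k)$, and after replacing $f$ by $x \mapsto f(x)f(0)^{-1}$ (again a morphism, as right translation by the $k$-point $f(0)^{-1}$ is a scheme automorphism of $U$) we may assume $f(0) = e$; it then suffices to show this normalized $f$ is the constant morphism to $e$. Let $H \subseteq U$ be the smallest closed $k$-subgroup scheme containing the image of $f$. Because $\A^1_k$ is geometrically integral and $f(0) = e$, the standard structure theory of subgroups generated by pointed geometrically integral schemes shows that $H$ is smooth and connected; being a closed subgroup of the unipotent group $U$, it is also unipotent. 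Now $(\mathrm{i})$ for $U$ gives, in particular, that there is no $k$-subgroup inclusion $\Ga \hookrightarrow H$ (such an inclusion would compose to $\Ga \hookrightarrow U$), so since $H$ is a smooth connected unipotent $k$-group, \cite[Prop.\,B.3.2]{cgp} shows that $H$ is $k$-wound. By the definition of woundness \cite[Def.\,B.2.1]{cgp}, every $k$-morphism $\A^1_k \to H$ is constant; applying this to $f$ (whose image lies in $H$) yields that $f$ is constant equal to $e$, establishing $(\mathrm{iii})$.

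The main obstacle is precisely this reduction to the smooth connected case: it is essential that the subgroup $H$ generated by the image of $f$ be smooth and connected even when $U$ itself is neither, and even over imperfect $k$, so that the equivalences of \cite[Prop.\,B.3.2]{cgp}---which are formulated only for smooth connected unipotent groups---may legitimately be invoked for $H$. Once this is in place the argument is purely formal. I would also remark that the same framework gives $(\mathrm{i}) \Rightarrow (\mathrm{ii})$ directly, should one prefer a different cycle: a nonzero $\phi\colon \Ga \to U$ has finite kernel (any proper closed subgroup scheme of $\Ga$ being finite), so its image $\Ga/\ker\phi$ is a one-dimensional smooth connected \emph{split} unipotent group, hence isomorphic to $\Ga$, yielding an inclusion $\Ga \hookrightarrow U$. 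The route through $H$ is cleaner, however, since it sidesteps any classification of quotients of $\Ga$.
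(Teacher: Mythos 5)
Your proof is correct, and it reaches the smooth connected case handled by \cite[Prop.\,B.3.2]{cgp} by a genuinely different reduction than the paper's. The paper replaces $U$ globally: each of the three conditions is checked to be equivalent to the same condition for the maximal smooth $k$-subgroup scheme of $U$ (via \cite[Lem.\,C.4.1, Rem.\,C.4.2]{cgp}, since $\A^1_k$ and $\Ga$ are geometrically reduced, any morphism from them factors through that subgroup) and then for its identity component, after which \cite[Prop.\,B.3.2]{cgp} is quoted verbatim. You instead leave $U$ untouched, prove a cycle of implications with only one nontrivial step, and for that step pass to the subgroup $H$ generated by the pointed image of the given morphism $f\colon \A^1_k \to U$; the smoothness and connectedness of $H$ come for free from the structure theory of subgroups generated by geometrically integral schemes through the identity (\cite[Prop.\,A.2.11]{cgp} or its classical antecedents), and \cite[Prop.\,B.3.2]{cgp} is then applied to $H$. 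The paper's route has the mild advantage of transporting all three conditions simultaneously to a single smooth connected group; yours isolates the one implication carrying content and sidesteps any discussion of how the conditions interact with passage to $U^{\rm sm}$ and $U^0$. One point worth making explicit in your write-up: the ``smallest closed $k$-subgroup scheme containing the image of $f$'' does coincide with the generated subgroup of the structure theory you invoke (the latter is a closed subgroup scheme containing the image, and any closed subgroup scheme containing the image contains all products of it with its inverse), so the smooth-connected conclusion legitimately applies to your $H$.
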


\begin{proof}
We first note a trivial but useful reformulation of condition (iii) as the condition that the only $k$-morphism $\A^1_k \rightarrow G$ with $0 \mapsto 1 \in G(k)$ is the constant map to $1$. Each of the conditions for $U$ is equivalent to the same condition for the maximal smooth $k$-subgroup scheme of $U$ (see \cite[Lem.\,C.4.1, Rem.\,C.4.2]{cgp}), so we are free to assume that $U$ is smooth. Further, each condition for $U$ is equivalent to the same condition for the identity component of $U$. Thus we may further assume that $U$ is connected. The desired equivalence is then \cite[Prop.\,B.3.2]{cgp}.
\end{proof}

\begin{definition}
\label{semiwounddef}
We say that a unipotent group scheme $U$ over a field $k$ is {\em $k$-semiwound}, or just semiwound when $k$ is clear from context, when the equivalent conditions of Proposition \ref{semiwoundconds} hold for $U$.
\end{definition}

As with woundness, semiwoundness is insensitive to separable field extension.

\begin{proposition}
\label{semiwoundsepble}
Let $K/k$ be a separable extension of fields, and let $U$ be a unipotent $k$-group scheme. Then $U$ is $k$-semiwound if and only if $U_K$ is $K$-semiwound.
\end{proposition}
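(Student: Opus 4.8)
The plan is to reduce to the case of a smooth connected unipotent group, where the statement is the known insensitivity of woundness to separable extension \cite[Prop.\,B.3.2]{cgp}, and then to verify that this reduction is compatible with base change along $K/k$. One direction is trivial: if $U_K$ is $K$-semiwound then $U$ is $k$-semiwound, since any nonzero $k$-homomorphism $\Ga \rightarrow U$ base changes to a nonzero $K$-homomorphism $\Ga \rightarrow U_K$, contradicting condition (ii) of Proposition \ref{semiwoundconds} over $K$. The substantive content is the reverse implication, and I would prove the biconditional in one stroke via the reduction.

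First I would set $W := (U^{\mathrm{sm}})^0$, the identity component of the maximal smooth $k$-subgroup scheme of $U$; this is a smooth connected unipotent $k$-group. Exactly as in the proof of Proposition \ref{semiwoundconds}, conditions (i)--(iii) for $U$ are equivalent to the same conditions for $U^{\mathrm{sm}}$ and hence for $W$ \cite[Lem.\,C.4.1, Rem.\,C.4.2]{cgp}. Since $W$ is smooth and connected, semiwoundness and woundness coincide for it, so $U$ is $k$-semiwound if and only if $W$ is $k$-wound; applying the same reduction over $K$, one has that $U_K$ is $K$-semiwound if and only if $((U_K)^{\mathrm{sm}})^0$ is $K$-wound.

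The crux is that this reduction passes correctly through the base change. The formation of the maximal smooth subgroup scheme commutes with separable field extension \cite[Rem.\,C.4.2]{cgp}, giving $(U^{\mathrm{sm}})_K = (U_K)^{\mathrm{sm}}$; and since the identity component is geometrically connected (it contains the identity section, a $k$-point), its formation commutes with arbitrary field extension, so that $W_K = \bigl((U^{\mathrm{sm}})_K\bigr)^0 = \bigl((U_K)^{\mathrm{sm}}\bigr)^0$. Chaining the equivalences, $U$ is $k$-semiwound if and only if $W$ is $k$-wound, if and only if $W_K$ is $K$-wound by \cite[Prop.\,B.3.2]{cgp}, if and only if $U_K$ is $K$-semiwound.

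The main obstacle is entirely the base-change bookkeeping in the previous paragraph: one must be sure that both the maximal smooth subgroup scheme and the identity component commute with the (possibly transcendental) separable extension $K/k$. For the former I rely on \cite[Rem.\,C.4.2]{cgp}, and for the latter on geometric connectedness of the identity component; once these are in hand, the remainder is a direct application of Proposition \ref{semiwoundconds} and \cite[Prop.\,B.3.2]{cgp}, with no further computation required.
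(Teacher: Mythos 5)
Your proof is correct and follows essentially the same route as the paper's: reduce to the maximal smooth subgroup and then the identity component, check that both reductions commute with the separable base change (citing \cite[Lem.\,C.4.1, Rem.\,C.4.2]{cgp} for the former), and conclude via \cite[Prop.\,B.3.2]{cgp} for the smooth connected case. No gaps.
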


\begin{proof}
The if direction is clear. For the only if direction, we first note that semiwoundness is insensitive to replacing a group with its maximal smooth subgroup. Since the formation of the maximal smooth subgroup is insensitive to separable field extension \cite[Lem.\,C.4.1, Rem.\,C.4.2]{cgp}, we are free to assume that $U$ is smooth. Since semiwoundness is insensitive to replacing a group by its identity component, and since the formation of the identity component is insensitive to arbitrary field extension, we may also assume that $U$ is connected. The proposition then follows from \cite[Prop.\,B.3.2]{cgp}.
\end{proof}

\section{Breen's spectral sequences}
\label{breenseqapp}

In this appendix, we discuss a few spectral sequences constructed by Breen that we utilize in the proof of Lemma \ref{Ext^2explicit}. For more details, see \cite[\S1]{breen} and especially \cite[page 1250]{breen2}. Let $S$ be a scheme. Associated to any commutative $S$-group scheme $G$, there is a descending complex $A(G) = A(G)_{\bullet}$ of fppf abelian sheaves concentrated in nonnegative degrees such that each term of $A(G)$ is a product of sheaves of the form $\Z[G^n]$ (the sheaf freely generated by $G^n$). Further, $A(G)_0 = \Z[G]$, the canonical map $G \rightarrow A(G)_0$ induces an isomorphism 
\begin{equation}
\label{G=H_0(A)}
G \simeq {\rm{H}}_0(A(G)),
\end{equation}
and we have 
\begin{equation}
\label{H_1(A)=0}
{\rm{H}}_1(A(G)) = 0 \hspace{.3 in}
{\rm{H}}_2(A(G)) = G/2G.
\end{equation}
We also have
\[
A(G)_1 = \Z[G^2],
\]
\[
A(G)_2 = \Z[G^2] \oplus \Z[G^3],
\]
and the differential $A(G)_1 = \Z[G^2] \rightarrow \Z[G] = A(G)_0$ is the map induced by $m - \pi^2_1 - \pi^2_2:  G \times G \rightarrow G$, where $m, \pi^2_i:  G \times G \rightarrow G$ are the multiplication and projection maps, respectively, while the differential $A(G)_2 = \Z[G^2] \oplus \Z[G^3] \rightarrow \Z[G^2] = A(G)_1$ is induced on the first component by the map ${\rm{Id}} - \sigma\colon G^2 \rightarrow G^2$, where $\sigma\colon G^2 \rightarrow G^2$ is the switching map $(g_1, g_2) \mapsto (g_2, g_1)$, and on the second component the differential is induced by the map $$(\pi^3_1, m\circ \pi_{23}) + \pi_{23} - (m\circ \pi_{12}, \pi^3_3) - \pi_{12} \colon G^3 \rightarrow G^2,$$ where $\pi^3_i:  G^3 \rightarrow G$ is projection onto the $i$th factor, $\pi_{ij}:  G^3 \rightarrow G^2$ is projection onto the $i$th and $j$th factors, and $m:  G^2 \rightarrow G$ is once again multiplication.

Breen obtains first a spectral sequence
\[
F_1^{i, j} = \Ext^j(A(G)_i, H) \Longrightarrow \Ext^{i+j}(A(G), H).
\]
We have a canonical isomorphism $\Ext^j(\Z[G], \cdot) \simeq {{\rm{H}}}^j(G, \cdot)$ of functors
on fppf abelian sheaves, since both are the derived functors of $\Gamma(G, \cdot)$, thanks to Yoneda's Lemma. Thus, the sequence above becomes
\begin{equation}
\label{F_1specseq22}
F_1^{i,j} = {{\rm{H}}}^j(X_i, H) \Longrightarrow \Ext^{i+j}(A(G), H),
\end{equation}
where $X_i$ is some explicit disjoint union of products of copies of $G$, and in particular, 
\begin{equation}
\label{X_idescription}
X_0 = G \hspace{.3 in}  X_1 = G^2 \hspace{.3 in} X_2 = G^2 \coprod G^3
\end{equation}
with the differentials $F_1^{0, j} \rightarrow F_1^{1, j}$ being 
\begin{equation}
\label{breenX_0to1differential}
m^* - (\pi^1_1)^* - (\pi^2_2)^*,
\end{equation}
and the differential $F_1^{1, j} \rightarrow F_1^{2,j}$ being
\begin{equation}
\label{breenX_1to2differential}
(1^* - \sigma^*) \times (\pi^3_1, m\circ \pi_{23})^* + \pi_{23}^* - (m\circ \pi_{12}, \pi^3_3)^* - \pi_{12}^*.
\end{equation}

In fact, Breen shows that we may replace the above sequence with another one that is somewhat more convenient, involving ``reduced'' cohomology groups $\widetilde{\rm{H}}^j(X_i, H)$ defined as follows. Let $Y_i$ be the analogue of $X_i$ for the $0$ group; that is, $Y_i$ is a corepresenting object for $\Hom(A(0)_i, \cdot)$ (so $Y_i$ is a disjoint union of copies of $S$). Then via the identity section $S \rightarrow G$, we obtain maps $Y_i \rightarrow X_i$, and we define $\widetilde{\rm{H}}^j(X_i, H) : = \ker({{\rm{H}}}^j(X_i, H) \rightarrow {{\rm{H}}}^j(Y_i, H))$ to be the kernel of the induced map on cohomology. Breen proved that these reduced cohomology groups provide a spectral sequence analogous to (\ref{F_1specseq22}) and with the same abutment. That is, we have a spectral sequence
\begin{equation}
\label{E_1specseqbreen20}
E_1^{i,j} = \widetilde{{{\rm{H}}}}^j(X_i, H) \Longrightarrow \Ext^{i+j}(A(G), H).
\end{equation}

The second spectral sequence constructed by Breen takes the following form: 
\begin{equation}
\label{E_2spectralseqbreen21}
{}^{\prime}E_2^{i,j} = \Ext^i_S({\rm{H}}_j(A(G)), H) \Longrightarrow \Ext^{i+j}_S(A(G), H).
\end{equation}

\noindent \address
\vspace{.3 in}

\noindent \email


\begin{thebibliography}{ram}

\bibitem[Bor]{borel} Armand Borel, {\em Linear Algebraic Groups}, 2nd edition, Springer GTM 126, 1991.

\bibitem[Bre1]{breen2} Lawrence Breen, {\em On A Nontrivial Higher Extension of Representable Abelian Sheaves}, Bulletin of the American Mathematical Society, vol. 75, no. 6 (1969), pp. \, 1249-1253.

\bibitem[Bre2]{breen} Lawrence Breen, {\em Un th\'eor\`eme d'annulation pour certain $\Ext^i$ de faisceaux ab\'eliens},
Annales scientifiques de l'E.N.S.\,8(3) (1975), pp.\,339-352.

\bibitem[Con]{conradsolvable} Brian Conrad, ``The structure of solvable groups'' in {\em Autour des sch\'emas en groupes} (vol. II), Panoramas et Synth\`eses no. 46, Soc. Math. de France, 2015.

\bibitem[CGP]{cgp} Brian Conrad, Ofer Gabber, Gopal Prasad, {\em Pseudo-reductive Groups}, Cambridge Univ.\,Press (2nd edition), 2015.

\bibitem[DG]{demazuregabriel} Michel Demazure, Pierre Gabriel, {\em Groupes Alg\'ebriques}, North-Holland Publishing Company, 1970.

\bibitem[NNR]{nguyensros} {\fontencoding{T5}\selectfont Nguy\~ \ecircumflex n Duy T\^an}, {\fontencoding{T5}\selectfont  Nguy\~ \ecircumflex n Qu\tabb'\ocircumflex c Th\' \abreve ng}, Zev Rosengarten, {\em On The Galois And Flat Cohomology Of Unipotent Algebraic Groups Over Local And Global Function Fields II}, to appear in The Michigan Mathematical Journal.

\bibitem[Oes]{oesterle} Joseph Oesterl\'e, {\em Nombres de Tamagawa et Groupes Unipotents
en Caract\'eristique $p$}, Inv.\:Math.\,{\bf 78}(1984), 13--88.

\bibitem[Rosengarten1]{rostateduality} Zev Rosengarten, {\em Tate Duality In Positive Dimension Over Function Fields}, to appear in Memoirs of The American Mathematical Society, available at {\tt{https://arxiv.org/pdf/1805.00522.pdf}}.

\bibitem[Rosengarten2]{rosmodulispaces} Zev Rosengarten, {\em Moduli Spaces of Morphisms Into Solvable Algebraic Groups}, to appear in Algebra And Number Theory, available at {\tt{https://arxiv.org/pdf/2107.08320.pdf}}.

\bibitem[Rosenlicht]{rosenlicht} Maxwell Rosenlicht, {\em Some Rationality Questions On Algebraic Groups}, Annali di Matematica Pura ed Applicata volume 43, pp.\,25--50 (1957).

\bibitem[Tit]{tits} Jacques Tits, {\em Lectures On Algebraic Groups}, Yale University Press, New Haven, CT, 1967.

\bibitem[TV]{tossici} Dajano Tossici, Angelo Vistoli, {\em On The Essential Dimension Of Infinitesimal Group Schemes}, American Journal of Mathematics Vol.\,135, No.\,1 (February 2013), pp.\,103-114.

\bibitem[Tot]{totaro} Burt Totaro, {\em Pseudo-abelian Varieties}, {\em Annales Scientifiques de l'École Normale Supérieure} 46(5) (2013), pp.\,693-721.

\end{thebibliography}
\end{document}